 \theoremstyle{plain}
\newtheorem{theorem}{Theorem}[section]
\newtheorem{lemma}[theorem]{Lemma}
\theoremstyle{definition}
\newtheorem{definition}[theorem]{Definition}
\newtheorem{corollary}[theorem]{Corollary}
\newtheorem{proposition}[theorem]{Proposition}
\theoremstyle{remark}
\newtheorem{remark}[theorem]{Remark}
\numberwithin{equation}{section}
\begin{document}

\title[Fully nonlinear elliptic equations]
 {Fully non-linear elliptic equations on complex manifolds}
 %{A priori estimate  and regularity for fully non-linear elliptic equations on  Hermitian manifolds}
% {A priori estimate for fully nonlinear elliptic equations on a closed Hermitian manifold}

%    Only \author and \address are required; other information is
%    optional.  Remove any unused author tags.
 
%    author one information
  %\author[R.-R. Yuan]{Rirong Yuan}
 \author{Rirong Yuan }
 \address{School of Mathematics, South China University of Technology, Guangzhou 510641, China}
%\curraddr{}
 \email{yuanrr@scut.edu.cn}
%\thanks{Research supported in part by the National Natural Science Foundation of China (Grant No. 11801587).}
%\thanks{The author is supported by the National Natural Science Foundation of China under Grant No. 11801587.}

%    \subjclass is required.
 % \subjclass[2010]{35J15, 53C21, 53A30, 58J05, 45J60.}
%\subjclass[2020]{}
%\keywords{ }    

 %\date{ }
 % \date{\today}

\dedicatory{}

\begin{abstract} 
In this paper, we study a broad class of fully nonlinear elliptic equations on Hermitian manifolds. On one hand, under the optimal structural assumptions we derive $C^{2,\alpha}$-estimate for solutions of the equations on closed Hermitian manifolds.  
On the other hand, we treat the Dirichlet problem. In both cases, we prove the existence theorems with unbounded condition.
\end{abstract}

\maketitle

 % \tableofcontents

  %\setcounter{section}{-1}
  
  \section{Introduction}
  \label{sec1-introduction}  
  
Let $(M, \omega)$ be a 
  Hermitian manifold of complex dimension $n$,
 % where
 % $J$ is the complex structure, and   
  $\omega=\sqrt{-1}g_{i\bar j} dz^i\wedge d\bar z^j$,
  %is the K\"ahler form.
and let
  $\chi= \sqrt{-1}\chi_{i\bar j}dz^i\wedge d\bar z^j$ be a smooth real  $(1,1)$-form.  
 We consider the fully nonlinear elliptic equation
   \begin{equation}
   	\label{equ1-main}
   	\begin{aligned} 	
   		F(\chi+\sqrt{-1}\partial\overline{\partial}u)= \psi  
   	\end{aligned}
  \end{equation}  
for a given sufficiently smooth function $\psi$ on $M$. 
%with  $\sup_M\psi<\sup_\Gamma f$.
%where $\psi \in C^\infty(M)$. 
The operator has the form 
\begin{equation} 
	\label{Hessian-form1}
	\begin{aligned} 	F(\chi+\sqrt{-1}\partial\overline{\partial}u)\equiv f(\lambda[\omega^{-1}(\chi+\sqrt{-1}\partial\overline{\partial}u)]),
	\end{aligned}
\end{equation}
where $\lambda[\omega^{-1}(\chi+\sqrt{-1}\partial\overline{\partial}u)]=(\lambda_1,\cdots,\lambda_n)$ denotes the $n$-tuple   of eigenvalues of   $\chi+\sqrt{-1}\partial\overline{\partial}u$ with respect to $\omega$.
%where $\lambda(\omega^{-1} \tilde{\chi})=(\lambda_1,\cdots,\lambda_n)$ denotes the $n$-tuple of eigenvalues of real $(1,1)$-form $\tilde{\chi}$ with respect to $\omega$. 
 Henceforth, 
% as suggested in
  %by Caffarelli-Nirenberg-Spruck 
%  \cite{CNS3},  
   $f$ is a smooth symmetric function defined in an open symmetric convex cone  %$\Gamma\subsetneq\mathbb{R}^n$ 
    $\Gamma\subset \mathbb{R}^n$, 
    with vertex at the origin 
    and  %nonempty boundary 
$\partial \Gamma\neq\emptyset$,  
%containing the positive cone $\Gamma_n$, i.e.,
$$\Gamma_n\equiv \{(\lambda_1,\cdots,\lambda_n)\in \mathbb{R}^n: \mbox{ each component } \lambda_i>0 \}\subseteq\Gamma.$$  
In addition, we assume that $f$ satisfies the following fundamental hypotheses:
\begin{equation}
	\label{elliptic}
	\begin{aligned} 
		f_{i}(\lambda)=f_{\lambda_i}(\lambda)
		\equiv\frac{\partial f}{\partial \lambda_{i}}(\lambda) > 0 \mbox{ in } \Gamma, \,\, \forall 1\leqslant i\leqslant n,
	\end{aligned}
\end{equation}  
\begin{equation}\label{concave}\begin{aligned}
		\mbox{$f$ is a concave function in $\Gamma$,}
\end{aligned}\end{equation} 
%and
\begin{equation}
	\label{nondegenerate}
	\begin{aligned}
 	\inf_M\psi>\sup_{\partial\Gamma} f, 
	\end{aligned}
\end{equation}
 where  $$\underset{\partial\Gamma}{\sup} f \equiv \underset{\lambda_0\in\partial\Gamma}{\sup} \underset{\lambda\to\lambda_0}{\limsup}f(\lambda). $$ 
 %
%Conditions   \eqref{elliptic} and \eqref{concave} allow the important case: 
% \begin{equation}		\begin{aligned}  	f=  \sigma_k^{1/k} (2\leqslant k\leqslant n), \mbox{ or } f=(\sigma_k/\sigma_l)^{\frac{1}{k-l}} (1\leqslant  l<k\leqslant n), 		\mbox{ } \Gamma=\Gamma_k, 		\nonumber  \end{aligned}  	\end{equation}    
%where  $\sigma_k$ is the $k$-th elementary symmetric function,   $\Gamma_k$ is  the $k$-th G{\aa}rding cone. 
%This is a fully nonlinear  equation of similar type to the Hessian equations on which there is a huge amount of literature starting from the seminal work of Caffarelli-Nirenberg-Spruck \cite{CNS3}. %who extended the work of Ivochkina \cite{Ivochkina1981} on equations of Monge-Amp\`ere type.
%In real variables,  the study of equations of this type was pioneered by Caffarelli-Nirenberg-Spruck \cite{CNS3}.    
%
 
Following \cite{CNS3} we call  $u$ an \textit{admissible} function, if %for all $z\in M$ 
at each point $z$
\begin{align*}
	\lambda[\omega^{-1}(\chi+\sqrt{-1}\partial\overline{\partial}u)] 
	(z)
	\in \Gamma.
\end{align*}%Note that the eigenvalues of linearlized operator of  \eqref{equ1-main}  at $u$ are precisely given by
%\[f_1(\lambda), f_2(\lambda), \cdots, f_n(\lambda) \mbox{ for } \lamda= \lambda(\omega^{-1}(\chi+\sqrt{-1}\partial\overline{\partial}u)).\] 
The condition \eqref{elliptic} ensures the ellipticity of 
\eqref{equ1-main}  at admissible solutions, while \eqref{nondegenerate} prevents the equation from degeneracy. Assumption
\eqref{concave} guarantees that %the operator
 $F(\tilde{\chi})$ is concave 
%with respect to
for  $\tilde{\chi}$ %whenever
with $\lambda(\omega^{-1}\tilde{\chi})\in\Gamma$.  
 The assumptions  allow us to apply  Evans-Krylov theorem  %\cite{Evans82,Krylov83},
% adapted to complex setting (see e.g. \cite{TWWYEvansKrylov2015}), 
 to derive $C^{2,\alpha}$ bounds from \textit{a priori}
  estimates up to second order. %of complex Hessian. %\begin{equation}	\label{estimate-upto-2}	\begin{aligned}	\sup_M	|\partial\overline{\partial}u|_\omega\leqslant C.  \nonumber\end{aligned} \end{equation}

 The study of equations of this type was pioneered 
%The equations of this type was first studied 
by Caffarelli-Nirenberg-Spruck \cite{CNS3}.
% In   \cite{CNS3} they studied the Dirichlet problem on certain doumains in $\mathbb{R}^n$.  
%
Perhaps   the most important operator of the form  \eqref{Hessian-form1} is the complex Monge-Amp\`ere operator, where we take $f=\sum_{i=1}^n\log\lambda_i$, $\Gamma=\Gamma_n$.   The complex  Monge-Amp\`ere equation has played a significant role in Yau's  \cite{Yau78}  proof of Calabi's conjecture in connection with
% in relation to
the Ricci curvature   and volume form over K\"ahler manifolds.
Over the past decades,
their work has fundamental impact in the study of fully nonlinear equations of elliptic and parabolic type. More recently, Guan \cite{Guan12a} and Sz\'ekelyhidi \cite{Gabor} made important contributions to 
%the subject
the notion of subsolution, which is   
 a key ingredient for 
  a priori estimate and solvability of fully nonlinear 
  elliptic
 equations  on closed  manifolds. 
%It turns out that their subsolution is a key ingredient for the study of fully nonlinear elliptic equations  on closed manifolds.

% The study of equations of this type was pioneered by Caffarelli-Nirenberg-Spruck \cite{CNS3}, who extended the work of Ivochkina \cite{Ivochkina1981} on equations of Monge-Amp\`ere type. Their result was improved by Trudinger \cite{Trudinger95}. In fact, over the past decades, the a priori estimate and Dirichlet problem for Hessian type fully nonlinear elliptic equations on Riemannian manifolds were well-studied; we refer to \cite{Guan-Dirichlet}  for recent progress on those topics with important contributions from  \cite{Guan12a,LiYY1990,Urbas2002}. 

It is an important problem to establish $C^{2,\alpha}$-estimate  
for general fully nonlinear elliptic equations
on closed complex manifolds,  since  
%Yau \cite{Yau78} proved the  Calabi  conjecture.  
there have been increasing interests in fully nonlinear PDEs  %beyond the complex Monge–Ampère equation 
from complex geometry. 
The problem was studied  in  existing 
  literature,   among which 
\cite{Dinew2017Kolo,HouMaWu2010,Chen04,SW08,FLM2011,FuWangWuFormtype2010,FuWangWuFormtype2015,Tosatti2013Weinkove,Tosatti2019Weinkove,STW17,Jacob-Yau17,CJY2020CJM,Gabor}, to name just a few, focusing on complex  $k$-Hessian equation, $J$-flow and extensions, 
Form-type Calabi-Yau equation, Monge-Amp\`ere equation for $(n-1)$-PSH functions,  deformed Hermitian Yang-Mills equation, 
 and more general  equations
satisfying an extra assumption that
 %under an extra assumption that
 %technical assumption
\begin{equation}	
	\label{addistruc}	
	\begin{aligned}
		\lim_{t\rightarrow+\infty} f(t\lambda)>f(\mu), \,  \forall \lambda,\, \mu\in\Gamma.
	\end{aligned}
\end{equation} 
Such a condition or certain slightly stronger assumption was commonly imposed as a key assumption in a huge amount of literature  starting   from the landmark work of Caffarelli-Nirenberg-Spruck \cite{CNS3}.  
It is interesting 
and challenging  
to drop such additional conditions from the viewpoint of geometry and PDEs.
%Nevertheless,  there are various functions dissatisfying \eqref{addistruc}; see  \cite{Guan-Dirichlet,Guan2021Zhang} for  such examples building on a lemma of  %Huisken-Sinestrari 
%\cite{Huisken1999Sinestrari}. 
%This motivates us to derive $C^{2,\alpha}$-estimate for more general equations under optimal structural assumptions, without assuming  \eqref{addistruc}. 

When $M$ admits  nonempty boundary $\partial M$,   given  a function $\psi$ and a  smooth real $(1,1)$-form  $\chi$ on $\bar M\equiv M\cup\partial M$,
%motivated by  increasing interests from complex geometry and analysis, 
it is natural to consider the Dirichlet problem
  \begin{equation}\label{mainequ-Dirichlet}
	\begin{aligned}
   		F(\chi+\sqrt{-1}\partial\overline{\partial}u)=\psi \mbox{ in } \bar M, \,\, u=\varphi  \mbox{ on }  \partial M.
	\end{aligned}
\end{equation}   
%The study of Dirichlet problem of this type was pioneered by Caffarelli-Nirenberg-Spruck \cite{CNS3}. 
The Dirichlet problem for complex Monge-Amp\`ere equation in $\mathbb{C}^n$ was solved by  Caffarelli-Kohn-Nirenberg-Spruck
 \cite{CKNS2} on strictly pseudoconvex domains.
In \cite{Guan1998The}  Guan  extended their result   to arbitrary bounded smooth domains in $\mathbb{C}^n$, 
 %replacing the strict  pseudoconvexity of $\partial\Omega$ by 
 assuming  existence of subsolution.  
   Under the assumption of existence of a strict subsolution, Li   \cite{LiSY2004} treated the Dirichlet problem for  general equations  on   bounded domains in unbounded case.
  Nevertheless,
%as is well known,   
the existence and regularity of Dirichlet problem for general fully nonlinear elliptic equations
are widely open  on curved complex manifolds. 
The primary obstruction is  gradient estimate (first order estimate). 
 To the best knowledge of the author, the direct proof of gradient estimate %for   \eqref{equ1-main} 
is  rarely known, beyond the complex Monge-Amp\`ere equation 
%by \cite{Hanani1996}, recovered by \cite{Blocki09gradient,Guan2010Li}
\cite{Hanani1996,Blocki09gradient,Guan2010Li} and   other fairly restrictive cases
% including among others 
%which includes among others that
considered by
\cite{Guan2015Sun,yuan2018CJM,yuan2021cvpde,ZhangXW}.
This is in contrast with  Hessian 
%type fully nonlinear  
equations on Riemannian manifolds;
we refer to \cite{Guan-Dirichlet} for %significant
recent progress on  direct proof of gradient estimate 
%with further restrictions   to structural condition of equations, 
with  some earlier work by \cite{LiYY1990,Urbas2002}. 
% In this paper we employ the blow-up argument that was used in the case of the complex $k$-Hessian equations.

 The subject of this paper is to  %address %tackle
consider the  above-mentioned problems.  In particular, 
  we  can draw some existence theorems for the equation on closed Hermitian manifolds and the Dirichlet problem 
  in the unbounded case
  \begin{equation}
  	\label{unbounded-1}
  	\begin{aligned}
  		\lim_{t\rightarrow+\infty}  f(\lambda_1,\cdots,\lambda_{n-1},\lambda_n+t)=+\infty, \,\, \forall \lambda=(\lambda_1,\cdots,\lambda_n)\in\Gamma.
  	\end{aligned}
  \end{equation}

%\subsection{$C^{2,\alpha}$-estimate and applications} 

\subsection{Equations on closed manifolds}

The first object of this article is to derive 
\textit{a priori} estimate for equation  \eqref{equ1-main}   on closed Hermitian manifolds.  
For our purpose, we require the notion  of $\mathcal{C}$-subsolution.   
 
\begin{definition}
	[$\mathcal{C}$-subsolution] We say that a $C^2$ function $\underline{u}$ is a $\mathcal{C}$-subsolution of \eqref{equ1-main}, if  for any $z\in M$  the following set is bounded
	\[ \left\{ \mu\in\Gamma: f(\mu)=\psi(z),  \, \mu  \in \lambda [\omega^{-1}(\chi+\sqrt{-1}\partial\overline{\partial}\underline{u})](z)+\Gamma_n \right\}.\]
	
\end{definition} 

The notion of $\mathcal{C}$-subsolution was  
 developed 
%and proposed
 by Guan \cite{Guan-Dirichlet} and more specifically Sz\'ekelyhidi \cite{Gabor}. Subsequently,  Phong-T\^{o}  \cite{Phong-To2017} proposed a parabolic version  of $\mathcal{C}$-subsolution. 
For  $J$-equation and inverse $\sigma_k$ flow, the $\mathcal{C}$-subsolution coincides with the cone conditions   considered in earlier work  \cite{FLM2011,SW08}. 
%In recent work  \cite{CJY2020CJM}, the  $\mathcal{C}$-subsolution plays an central role in the   study of  deformed Hermitian Yang-Mills equation.
 %For more application, we refer to recent work \cite{CJY2020CJM} on  deformed Hermitian Yang-Mills equation. 
Such a notion is very important, since   
the $\mathcal{C}$-subsolution assumption can be viewed as an indication of a priori estimate and existence  for fully nonlinear  equations  on closed manifolds. 
%see e.g. \cite{CJY2020CJM}.  
%We now sketch the important roles of the \textit{good cone} in our proof:
% that it will play in our proofs. %As shown by Lemma \ref{lemma1-key} that $\mathring{\Gamma}_{\mathcal{G}}^{f}$ is an open symmetric convex cone containing the positive cone.
%This lemma enables us to prove that the   limit function of standard blowing up sequence is an $\mathring{\Gamma}_{\mathcal{G}}^{f}$-solution in the sense of \cite[Definition 15]{Gabor}. 
% The notion of \textit{good cone} plays crucial roles in  analyzing  the  function $v:\mathbb{C}^n\to \mathbb{R}$ as the limit of a standard blowing up sequence. %Moreover, it is an  appropriate {test cone} to measure the partial uniform ellipticity. 
%
%We derive $C^{2,\alpha}$-estimate for  the equation \eqref{equ1-main}  on closed Hermitian manifolds.
 %
%With assumptions \eqref{elliptic}-\eqref{nondegenerate}, 

Our first result  may be stated as follows:
 %claims that  
 %beyond \eqref{elliptic}-\eqref{nondegenerate}, 
% the existence of $\mathcal{C}$-subsolution is the %primary
% only obstruction to  $C^{2,\alpha}$ %a priori
%  estimate    on closed manifolds. 

\begin{theorem}
	\label{thm1-main}
	Let $(M, \omega)$ be a closed Hermitian manifold.  
% Assume that $f$ satisfies \eqref{elliptic}, \eqref{concave} and \eqref{nondegenerate}. 
In addition to \eqref{elliptic}, \eqref{concave} and \eqref{nondegenerate}, we assume $\psi\in C^\infty(M)$ and that 
 %the equation
	 \eqref{equ1-main} admits a $\mathcal{C}$-subsolution $\underline{u}\in C^2(M)$.
	Let $u\in C^4(M)$ be an admissible solution to  \eqref{equ1-main} with %normalization 
	$\sup_M u=0$,
 then for any $\alpha\in (0,1)$ we have 
	\begin{align*}
\|u\|_{C^{2,\alpha}} \leqslant C,
	\end{align*}
  where $C$ is a positive constant 
   depending on the  data  
   $(M, \omega)$,  $\alpha$, $f$, $\psi$, $\chi$
   and  $\underline{u}$.
\end{theorem}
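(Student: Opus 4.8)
The plan is to obtain the $C^{2,\alpha}$ bound by the standard hierarchy of a priori estimates: reduce it to a second-order estimate $\|u\|_{C^2}\leqslant C$, then invoke the Evans--Krylov theorem (applicable because of the ellipticity \eqref{elliptic} and concavity \eqref{concave}). So the real content is the $C^0$, $C^1$, and $C^2$ bounds, and the key novelty here is that we only assume \eqref{nondegenerate} together with the $\mathcal{C}$-subsolution hypothesis --- in particular we do \emph{not} assume \eqref{addistruc}. First I would establish the $C^0$ estimate. Since $\sup_M u=0$, the upper bound is immediate; for the lower bound I would use the $\mathcal{C}$-subsolution $\underline u$ together with an Alexandrov--Bakelman--Pucci type argument on a small coordinate ball around a minimum point of $u-\underline u$, in the manner of Sz\'ekelyhidi \cite{Gabor} and B\l ocki: the boundedness of the set in the definition of $\mathcal{C}$-subsolution gives, at points where $u-\underline u$ is small, a definite lower bound on the ``gap'' that is exploited via ABP. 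This is where the nondegeneracy \eqref{nondegenerate} enters, ensuring the relevant level set of $f$ stays away from $\partial\Gamma$.

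Next comes the first-order estimate. The gradient bound $\sup_M|\nabla u|\leqslant C$ is, as the introduction stresses, usually the delicate point, but on a \emph{closed} manifold with a $\mathcal{C}$-subsolution it follows from a blow-up argument: if $|\nabla u|$ were unbounded, one rescales and passes to a limit, obtaining a maximal admissible function on $\mathbb{C}^n$ which, using the concavity of $f$ and the structure of $\Gamma$ (with $\Gamma_n\subseteq\Gamma$), must be constant --- a contradiction. This is essentially Sz\'ekelyhidi's argument (or alternatively Guan's), and it does not require \eqref{addistruc}. After that, the second-order estimate $\sup_M|\sqrt{-1}\partial\bar\partial u|\leqslant C(1+\sup_M|\nabla u|^2)$ is obtained by the maximum principle applied to a test quantity of the form $W=\log\lambda_{\max}(\omega^{-1}(\chi+\sqrt{-1}\partial\bar\partial u))+\phi(|\nabla u|^2)+\Lambda(\underline u-u)$ for suitable $\phi$ and large $\Lambda$; differentiating twice, using the concavity inequality $\sum f_i(\mu_i-\lambda_i)\geqslant f(\mu)-f(\lambda)$ against the $\mathcal{C}$-subsolution, and absorbing the bad third-order and curvature terms. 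Combining with the gradient bound already in hand yields $\|u\|_{C^2}\leqslant C$.

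With the $C^2$ estimate in place, the eigenvalues $\lambda[\omega^{-1}(\chi+\sqrt{-1}\partial\bar\partial u)]$ lie in a fixed compact subset $K\Subset\Gamma$ (compactness away from $\partial\Gamma$ uses \eqref{nondegenerate} again), so the equation is uniformly elliptic and $F$ is uniformly concave on $K$; the Evans--Krylov theorem then gives an interior $C^{2,\alpha}$ estimate on coordinate charts, and a covering argument produces the global bound $\|u\|_{C^{2,\alpha}}\leqslant C$ with $C$ depending only on $(M,\omega)$, $\alpha$, $f$, $\psi$, $\chi$, $\underline u$.

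The main obstacle I anticipate is the interaction between the second-order estimate and the \emph{absence} of \eqref{addistruc}: in the standard proofs one uses that condition (or something stronger) precisely to control the ``linearized operator applied to $\underline u - u$'' term and to keep the argument from degenerating when some $\lambda_i$ drift toward $\partial\Gamma$. Replacing it cleanly by \eqref{nondegenerate} plus the $\mathcal{C}$-subsolution --- making sure every step (ABP for $C^0$, the test function choice for $C^2$, the compactness of the eigenvalue set) goes through with only these weaker hypotheses --- is the technical heart of the argument and the place where care is most needed.
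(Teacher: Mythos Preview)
Your outline has the right overall architecture ($C^0$ via ABP, quantitative $C^2$, blow-up for $C^1$, Evans--Krylov), but there are two genuine gaps, and they are exactly the places where dropping \eqref{addistruc} bites.

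First, the logical order of $C^1$ and $C^2$ is inverted. The blow-up argument for the gradient bound \emph{requires} the quantitative second-order estimate \eqref{estimate-quantitative} as input: after rescaling by $R_j=\sup_M|\nabla u_j|$, one needs $|\partial\bar\partial \tilde u_j|\leqslant C$ uniformly in order to extract a $C^{1,\alpha}_{loc}\cap W^{2,p}_{loc}$ limit $v$. So Theorem~\ref{thm2-second-order} must be established first, and the gradient bound comes afterwards.

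Second, and more seriously, you assert that the Sz\'ekelyhidi blow-up ``does not require \eqref{addistruc}''. It does. In Sz\'ekelyhidi's version one shows the limit $v$ is a $\Gamma$-solution in the sense of Definition~\ref{def1-Gammasolution} and then applies Theorem~\ref{thm-Liouville}. The step ``$v$ is a $\Gamma$-supersolution'' uses precisely that $f(t\mu)\to\sup_\Gamma f$ for $\mu\in\Gamma$, i.e.\ \eqref{addistruc}. Without it, $\Gamma$ is the wrong cone: the paper introduces the cone $\Gamma_{\mathcal G}^f=\{\lambda\in\Gamma:\lim_{t\to\infty}f(t\lambda)>-\infty\}$ (see \eqref{component1}), proves it is an open symmetric convex cone with $\Gamma_n\subseteq\mathring\Gamma_{\mathcal G}^f\subseteq\Gamma$ (Lemma~\ref{lemma1-key}), and---crucially---proves the monotonicity Lemma~\ref{lemma2-key}: $f(\lambda+\mu)\geqslant f(\lambda)$ for all $\mu\in\Gamma_{\mathcal G}^f$. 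This lemma is what allows one to show that $v$ is a $\mathring\Gamma_{\mathcal G}^f$-solution (both the sub- and super-solution parts need the specific structure of $\Gamma_{\mathcal G}^f$; cf.\ \eqref{goodcone-advant1} and the contradiction step using Lemma~\ref{lemma2-key}), after which Theorem~\ref{thm-Liouville} applies to $\mathring\Gamma_{\mathcal G}^f$.

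For the second-order estimate there is a parallel issue you flag but do not resolve: without \eqref{addistruc} the usual lower bound $\sum f_i\geqslant\delta>0$ may fail, so the bad terms coming from torsion and curvature cannot simply be absorbed into $\Lambda\,\mathcal L(\underline u-u)$ plus a harmless $C\sum f_i$. The paper's fix is the dichotomy Lemma~\ref{lemma3-key}: at any point either $\mathcal L(\underline u-u)\geqslant\varepsilon(1+\sum f_i)$, or $f_i\geqslant\varepsilon\sum_j f_j$ for every $i$. The test function in \eqref{gqy-81} uses $\varphi(u-\underline u)$ with $\varphi''>0$ (not a linear $\Lambda(\underline u-u)$), and the proof splits into the two cases of the lemma together with the Hou--Ma--Wu casewise argument ($\delta\lambda_1\gtrless-\lambda_n$); in Case~2 one invokes Lemma~\ref{Guan-sum1} to close the estimate when only the partial-ellipticity alternative holds.
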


%The result states that the $\mathcal{C}$-subsplution assumption is an indication of a priori estimate  for fully nonlinear  equations  on closed manifolds.
%Our assumptions on $f$ is optimal, since each of conditions \eqref{elliptic}, \eqref{concave} and \eqref{nondegenerate} is necessary. 
The theorem is optimal in the sense that it fails if any of conditions \eqref{elliptic}, \eqref{concave} and \eqref{nondegenerate} is removed. This settles a problem left open by Sz\`ekelyhidi \cite{Gabor}, wherein he  derived the $C^{2,\alpha}$-estimate
under the  extra assumption   \eqref{addistruc}. 
 %We should stress that 
 As we know, the additional
condition \eqref{addistruc} is a vital  ingredient for  his  approach towards  second order estimate and especially  gradient estimate via blow-up argument.  
%The first result of this paper is to drop this technical condition.
 % is not necessary for our result. 
 %To  achieve this, for $(f,\Gamma)$ we define the cone as follows: 
 %in this paper we  introduce the  notion of \textit{good cone}. %of $(f,\Gamma)$. 
%\begin{definition}	[\textit{Good cone}]	\label{def1-goodcomponent}	For $(f,\Gamma)$ we define the cone as follows	\begin{equation}	 	\label{component1}		\begin{aligned}			\Gamma_{\mathcal{G}}^{f}:=\left\{\lambda\in\Gamma: \lim_{t\to +\infty}f(t\lambda)>-\infty\right\}.  % \nonumber		\end{aligned}	\end{equation}	We call $\Gamma_{\mathcal{G}}^{f}$ the \textit{good cone  of $(f,\Gamma)$}. For simplicity, we also call it  \textit{good cone}.
	%
%	 Denote $\mathring{\Gamma}_{\mathcal{G}}^{f}$ the interior   of the  \textit{good cone}.\end{definition}

 %In  Theorem \ref{thm1-main} we drop the technical condition \eqref{addistruc}.  
Below we sketch the proof of  Theorem \ref{thm1-main}.
 %According to   Evans-Krylov theorem \cite{Evans82,Krylov83}, adapted to complex setting (see e.g. \cite{TWWYEvansKrylov2015}),
%As mentioned above, by Evans-Krylov theorem 
It suffices to set up the estimate for complex Hessian of the solution $u$. With this bound at hand, 
 %the equation becomes uniformly elliptic. Then 
similar to \cite{Guan2010Li} we can establish the bound on real Hessian of $u$ using maximum principle and then the usual Evans-Krylov theorem \cite{Evans82,Krylov83} gives  $C^{2,\alpha}$ estimate.
 %$\sup_M	|\partial\overline{\partial}u|_\omega\leqslant C. $ 
%   \begin{equation}	\label{estimate-upto-2}	\begin{aligned}	\sup_M	|\partial\overline{\partial}u|_\omega\leqslant C.  \nonumber\end{aligned} \end{equation}

The $C^0$-estimate for %our equation 
 \eqref{equ1-main}
 is known by the work of Sz\'ekelyhidi \cite{Gabor}, 
%where he derived  the $C^0$-estimate, 
generalizing the method that B{\l}ocki \cite{Blocki04-zeroestimate}
used in the case of   complex Monge-Amp\`ere equation.  The strategy is based on Alexandroff-Bakelman-Pucci 
 inequality, which is different  from  Yau's \cite{Yau78}   Moser iteration method. The $C^0$-estimate for complex Monge-Amp\`ere equation and more general 
  %complex $k$-Hessian and 
  complex $(k,l)$-quotient  equations 
  on closed Hermitian manifolds
  can also be found  in \cite{TW10} and \cite{SunWei-cpam}, respectively.
Recently,
%please refer to 
Guo-Phong-Tong \cite{Guo-Phong-Tong2023} %for recent progress on  
 gave a PDE proof of sharp $L^\infty$-estimate for complex Monge-Amp\`ere equation and some other fully nonlinear   elliptic equations.
 
 The main issue  is to establish first and second order estimates. 
 In order to set up first order estimate, %(gradient estimate), 
 we 
 use the blow-up method that  was %employed 
  developed by Dinew-Ko{\l}odziej \cite{Dinew2017Kolo}  in the case of  complex $k$-Hessian  equations.  
 %Their work shows an alternative approach towards gradient estimate.  
 The  argument is based on a Liouville type theorem and the second order estimate of the form  	
 \begin{equation}
 	\label{estimate-quantitative}
 	\begin{aligned} 	
 		\sup_{M} |\partial\overline{\partial} u|_\omega\leqslant C\left(1+\sup_M|\nabla u|_\omega^2\right).
 \end{aligned} \end{equation}  
Such a second   estimate was first established by Hou-Ma-Wu \cite{HouMaWu2010} for complex $k$-Hessian equations on closed K\"ahler manifolds.
 %Fortunately, 
 %Their work shows an alternative approach towards gradient estimate.  
 %and has a great impact on the study of complex fully nonlinear equations. 
 %It motivates various experts to treat more general equations on closed complex manifolds, which among others  Monge-Amp\`ere equation for $(n-1)$-PSH functions  \cite{Tosatti2013Weinkove,Tosatti2019Weinkove,STW17},  complex $k$-Hessian equation \cite{ZhangD2017}, and especially general equations satisfying the  extra condition \eqref{addistruc}  \cite{Gabor}.
 More recently, their works were  extended by Sz\'ekelyhidi \cite{Gabor} to general fully nonlinear elliptic equations satisfying \eqref{addistruc}. 
   For more related work, we  refer to \cite{Tosatti2013Weinkove,Tosatti2019Weinkove,STW17} for   Monge-Amp\`ere equation for $(n-1)$-PSH functions, and also  to \cite{ZhangD2017} for complex $k$-Hessian equation. 
   %and also to \cite{CJY2020CJM} for deformed Hermitian Yang-Mills equation.

%In the first part of this article, we remove the extra condition  \eqref{addistruc}.
%In order to prove Theorem \ref{thm1-main}, the first key issue is to  set up second order estimate   of the form \eqref{estimate-quantitative} in generic case.
%Our second order estimate is stated as follows.
Below, we set up second order estimate   of the form \eqref{estimate-quantitative} in generic case.
\begin{theorem}
	\label{thm2-second-order} 
	Under the assumptions of Theorem \ref{thm1-main}, we have  
	\begin{equation}
		%\label{estimate-quantitative}
		\begin{aligned} 	
			\sup_{M} |\partial\overline{\partial} u|_\omega\leqslant C\left(1+\sup_M|\nabla u|_\omega^2\right), \nonumber
	\end{aligned} \end{equation}  
where $C$  
   depends on the given data  $(M,\omega)$,  $f$, $|\psi|_{C^2(M)}$, $|\chi|_{C^2(M)}$,  $\underset{M}{\mathrm{osc \, }} u$
   %$(\sup_M u-\inf_Mu)$,
and $|\underline{u}|_{C^{2}(M)}$.             
\end{theorem}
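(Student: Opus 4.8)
The plan is to bound the largest eigenvalue $\lambda_1$ of $A:=\omega^{-1}(\chi+\sqrt{-1}\partial\overline{\partial}u)$ at the maximum point of a suitable auxiliary function, following the scheme of Hou--Ma--Wu combined with the $\mathcal{C}$-subsolution dichotomy of Guan--Sz\'ekelyhidi. Once $\sup_M\lambda_1\le C(1+\sup_M|\nabla u|_\omega^2)$ is known, the standard fact $\Gamma\subseteq\Gamma_1$ together with $f(\lambda(A))=\psi$ controls all eigenvalues of $A$, hence $|\partial\overline{\partial}u|_\omega$. Writing $K:=1+\sup_M|\nabla u|_\omega^2$, I would consider
\[
 W=\log\lambda_1+\phi\big(|\nabla u|_\omega^2\big)+\psi(\underline{u}-u),
\]
where $\phi(t)=-\tfrac12\log\!\big(1-\tfrac{t}{2K}\big)$ (so that $\tfrac{1}{4K}\le\phi'\le\tfrac{1}{2K}$ and $\phi''=2(\phi')^2$; it is precisely this $K^{-1}$ scaling that produces the quadratic gradient dependence in the conclusion) and $\psi$ is a convex decreasing function of $\underline{u}-u$, to be fixed. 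Let $x_0$ be a maximum point of $W$; we may assume $\lambda_1(x_0)$ is large, otherwise we are done. Near $x_0$ I choose a local unitary coframe diagonalizing $A(x_0)$ with $\lambda_1\ge\cdots\ge\lambda_n$ and normalizing the first derivatives of $g$, perturbing $A$ by a small constant diagonal matrix if $\lambda_1$ is not simple so that $\log\lambda_1$ is of class $C^2$ near $x_0$.

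Next I apply the linearized operator $L:=F^{i\overline j}\partial_i\partial_{\overline j}=\sum_i F^{i\overline i}\partial_i\partial_{\overline i}$ (with $F^{i\overline i}=f_i(\lambda)$ at $x_0$) to $W$ and use $L(W)(x_0)\le 0$. The third-order terms are handled exactly as in Hou--Ma--Wu: concavity of $F$ gives $F^{i\overline j,k\overline l}\partial_1 A_{i\overline j}\overline{\partial_1 A_{k\overline l}}\le 0$, and since the top eigenvalue is convex in the matrix entries the second-order perturbation formula contributes the nonnegative terms $\tfrac{1}{\lambda_1}\sum_i\sum_{p>1}\tfrac{F^{i\overline i}|\partial_i A_{p\overline 1}|^2}{\lambda_1-\lambda_p}$; combining these with the critical equations $\partial_i\lambda_1/\lambda_1=-\phi'\partial_i|\nabla u|_\omega^2-\psi'\partial_i(\underline{u}-u)$, with $\phi''=2(\phi')^2$, and with the usual splitting of indices into $\{i:F^{i\overline i}\le\delta^{-1}F^{1\overline 1}\}$ and its complement, all the ``bad'' terms $F^{i\overline i}|\partial_i\lambda_1|^2/\lambda_1^2$ are absorbed. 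Interchanging $\partial_i\partial_{\overline i}A_{1\overline 1}$ with $\partial_1\partial_{\overline 1}A_{i\overline i}$ produces curvature and torsion commutators which, after division by $\lambda_1$ and using $\lambda_i\ge -C\lambda_1$, are of size $O(\cF)$ with $\cF:=\sum_i F^{i\overline i}$; together with $\partial\overline{\partial}\psi$ and $\partial\overline{\partial}\chi$ these are the only non-perturbative errors, while the positive term $\phi'L(|\nabla u|_\omega^2)\ge\phi'\big(\sum_i F^{i\overline i}|u_{i\overline i}|^2-C\cF-C\big)$, with $\sum_i F^{i\overline i}|u_{i\overline i}|^2\ge\tfrac12\sum_i f_i\lambda_i^2-C\cF$, is the driving quantity. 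Thus at $x_0$ one arrives at an inequality of the schematic form $0\ge\phi'\big(\tfrac12\sum_i f_i\lambda_i^2-C\cF\big)-C\cF-C+\psi'(x_0)\,L(\underline{u}-u)$, where $L(\underline{u}-u)=\sum_i F^{i\overline i}(\underline{A}_{i\overline i}-\lambda_i)$.

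The $\mathcal{C}$-subsolution now enters through the structural lemma of Guan--Sz\'ekelyhidi: there exist $\kappa,R>0$ so that at $x_0$, provided $|\lambda(A)(x_0)|\ge R$, either (i) $\sum_i f_i(\lambda)(\underline{A}_{i\overline i}-\lambda_i)(x_0)\le-\kappa\cF$ — using first a Schur--Horn/rearrangement inequality, legitimate because $f_1\le\cdots\le f_n$ by concavity, to pass from the eigenvalues of the subsolution's matrix to its diagonal entries — or (ii) $F^{i\overline i}\ge\kappa\cF$ for all $i$, i.e. the equation is uniformly elliptic at $x_0$. In case (i), since $\psi'<0$ the term $\psi'(x_0)L(\underline{u}-u)\ge|\psi'(x_0)|\kappa\cF$ is a large positive multiple of $\cF$; in case (ii), uniform ellipticity both absorbs the leftover third-order terms and yields $\sum_i f_i\lambda_i^2\ge\kappa\cF\lambda_1^2$, reducing the inequality to $0\ge\cF\big(c\lambda_1^2/K-C\big)-C$. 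In either case, so long as $\cF$ does not degenerate, one deduces $\lambda_1(x_0)\le CK$.

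The crux — and the exact place where earlier treatments invoked the auxiliary hypothesis \eqref{addistruc} — is the delicate balance in these two cases when $\cF=\sum_i f_i$ is small: in case (i) one wants $|\psi'|$ large to beat the $C\cF$ error, yet $\psi$ lives on the bounded interval of values of $\underline{u}-u$ and the constraint $\psi''\ge 2(\psi')^2$ needed to absorb the $\psi'$-part of the bad third-order term caps $|\psi'|$; in case (ii) a small $\cF$ kills the gain $c\cF\lambda_1^2/K$ against the standalone error term. I expect this to be the main obstacle, and the plan is to remove it by using the $\mathcal{C}$-subsolution quantitatively — extracting from the boundedness of the set in the definition of $\mathcal{C}$-subsolution an effective control that plays the role previously served by \eqref{addistruc} — possibly at the cost of enlarging the test function by a term adapted to the level set $\{f=\psi\}$ and the asymptotic cone of $f$. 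Granting this, $\lambda_1(x_0)\le CK$ in all cases; evaluating $W\le W(x_0)$ at an arbitrary point of $M$ then gives $\sup_M\lambda_1\le C\big(1+\sup_M|\nabla u|_\omega^2\big)$, and since $\lambda(A)\in\Gamma\subseteq\Gamma_1$ with $f(\lambda(A))=\psi$ bounded, all eigenvalues of $A$, hence $|\partial\overline{\partial}u|_\omega$, are controlled, which is the assertion.
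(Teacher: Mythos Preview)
Your outline matches the paper's strategy (Hou--Ma--Wu test function, Sz\'ekelyhidi's perturbation for eigenvalue multiplicity, the index splitting, and the $\mathcal{C}$-subsolution dichotomy), and you correctly isolate the crux: without \eqref{addistruc} there is no uniform lower bound on $\cF=\sum_i f_i$, so in both alternatives of the dichotomy the ``standalone'' error term $-C$ (coming e.g.\ from $\psi_{1\bar 1}/\lambda_1$ and $\phi'\mathfrak{Re}(\psi_ku_{\bar k})$) can swamp the gain.  However, you do not close this gap --- you write ``Granting this'' and move on.  That is exactly the new content of the paper, and it is resolved by two concrete, elementary ingredients that you should supply rather than defer.

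First, in alternative (i) the paper does \emph{not} use the standard Sz\'ekelyhidi conclusion $\sum_i f_i(\underline{\lambda}_i-\lambda_i)\ge\varepsilon\cF$; it proves the sharper Lemma~\ref{lemma5-key}/\ref{lemma3-key}:
\[
\sum_i f_i(\underline{\lambda}_i-\lambda_i)\ \ge\ \varepsilon\bigl(1+\cF\bigr).
\]
The extra ``$+\,\varepsilon$'' is the whole point: choosing $C_*$ large in $\varphi(x)=C_*/(1+x-\inf_M(u-\underline u))$ makes $|\varphi'|$ uniformly large on the bounded range of $u-\underline u$, so $|\varphi'|\varepsilon$ beats the bare $-C$ error independently of $\cF$.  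Note also the constraint on the auxiliary function is only $\varphi''>8\delta\varphi'^2$ for a small $\delta$ (coming from the Case~I splitting), not $\varphi''\ge 2\varphi'^2$ as you write; the latter would indeed cap $|\varphi'|$, the former does not.  (Incidentally, the sign in your statement of (i) is reversed: the subsolution gives a \emph{lower} bound on $\sum_i f_i(\underline A_{i\bar i}-\lambda_i)$, not an upper one.)

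Second, in alternative (ii) (uniform ellipticity) the paper handles small $\cF$ via Lemma~\ref{Guan-sum1}: concavity alone gives $|\lambda|\,\cF\ge c_\sigma>0$ on each level set $\partial\Gamma^\sigma$ once $|\lambda|$ is large.  Hence the leading positive term $\tfrac{\varepsilon}{32K}|\lambda|^2\cF\ge \tfrac{\varepsilon c_\sigma}{32K}|\lambda|$ dominates the constant error whenever $\lambda_1\ge CK$, regardless of how small $\cF$ is.  With these two lemmas in hand your outline becomes a complete proof; without them the argument stops precisely where you flagged it.
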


With the aid of  second order estimate  
 of the form \eqref{estimate-quantitative}, 
similar to \cite{Dinew2017Kolo,Tosatti2013Weinkove}
we can construct a  function $v:\mathbb{C}^n\to \mathbb{R}$ as the limit of %standard 
appropriate blowing up sequence. 
% In order to adopt Liouville type theorem,
 %prove gradient bound with contradiction method, 
% without assuming \eqref{addistruc},
%we need to analyze the limiting function.
%we need to give a proper characterization of the limiting function.
To apply  Liouville type theorem,
%the   issue %left %open
 %that we have in mind 
 what we need 
is to give a proper characterization of the  function $v$. 
To this end,
 for   $(f,\Gamma)$ we define the %asymptotical 
cone %as follows:
\begin{equation}
	\label{component1}
	\begin{aligned}
		\Gamma_{\mathcal{G}}^{f} =
		\left\{\lambda\in\Gamma: \lim_{t\to +\infty}f(t\lambda)>-\infty\right\}.
	\end{aligned}
\end{equation}
In addition, 
$\mathring{\Gamma}_{\mathcal{G}}^{f}$
denotes the interior of $\Gamma_{\mathcal{G}}^{f}$.
%This notion allows us to analyze the limiting function.  
And then we may 
 employ  Lemma \ref{lemma2-key} to 
prove that %such a limit function 
$v$ is a  $\mathring{\Gamma}_{\mathcal{G}}^{f}$-solution in the sense of \cite[Definition 15]{Gabor}. 
This will contradict to Sz\'ekelyhidi's Liouville type theorem  \cite[Theorem 20]{Gabor} (in analogy with 
 Dinew-Ko{\l}odziej 
\cite[Theorem 3.2]{Dinew2017Kolo})  and then give the desired gradient bound.

 Comparing with the   literature  mentioned above,  in the proof of Theorem \ref{thm2-second-order} a new difficulty we face is that the  positive lower bound  $\sum_{i=1}^n f_i\geqslant \delta$ possibly fails   
 % without assuming \eqref{addistruc}.
when dropping  \eqref{addistruc}.
 In order to overcome this, %in the  paper
  we propose  Lemmas \ref{lemma5-key} and   \ref{lemma3-key} below.
  %This  feature enables us to handle general equations more simply under optimal conditions.
 %we combine an idea of Guan \cite{Guan-Dirichlet} and Lemma \ref{lemma3-key} we propose below. 
% The lemma of this type can be dated back %at least
 %  to the work of Guan \cite{Guan12a}. 
 %This lemma allows us to use  $\underline{u}-u$ to 
 %produces good terms  so that we can control bad terms. 
 %in the construction of auxiliary function.
 The idea of exploiting such type lemma is from Guan \cite{Guan12a}.
 With the aid of this lemma, 
 we can make use of $(\underline{u}-u)$ 
  %where $u$ and $\underline{u}$ are a solution and $\mathcal{C}$-subsolution, respectively,
 in the construction of auxiliary function in order to overcome the difficulties caused by geometric quantities and equations. 
The newly introduced terms of the form $1+\big|\sum  f_i \lambda_i\big|$
 in Lemma \ref{lemma5-key} will greatly simplify the process of managing and solving general equations under optimal conditions.
 As usual,  
  the casewise  argument of Hou-Ma-Wu \cite{HouMaWu2010} is employed. 
  This trick is %originally 
  motivated by
 the treatment of Chou-Wang \cite{Chou2001Wang} on real  Hessian equation. 
% In addition, we follow an idea of Tosatti-Weinkove \cite{Tosatti2019Weinkove} to treat the trouble third order terms  arising from nontrivial torsion tensors of non-K\"ahler metrics. 
In addition, we follow the technique 
of Szek\'elyhidi \cite{Gabor}
to treat the troublesome third order terms  arising from nontrivial torsion tensors of non-K\"ahler metrics.

%\begin{remark} 	  Concavity conditions are fundamental in the theory of fully nonlinear elliptic and parabolic equations. In our approach to  derive the $C^{2,\alpha}$-estimate for \eqref{equ1-main}, the concavity condition \eqref{concave} is critical in every stage, besides allowing us to apply the Evans-Krylov theorem to establish $C^{2,\alpha}$ bounds from $C^2$ estimates for admissible solutions. 	  By Nadirashvili-Vladut \cite{Nadirashvili2013Vladut}, Theorem \ref{thm1-main}  fails in complex dimension $n\geqslant 3$ without condition \eqref{concave}. \end{remark}

Note that  in the unbounded case,   any admissible function is  a $\mathcal{C}$-subsolution to \eqref{equ1-main}.
As a consequence, 
%using the continuity method 
%(see e.g. \cite{SunWei-JGA2016}) 
we   obtain  the following existence result.
%in the unbounded case.
 \begin{theorem}
 	\label{thm1-existence}
 	Assume $\sup_{\partial\Gamma}f =-\infty$ and that $f$ satisfies \eqref{concave} and \eqref{unbounded-1}.
 	%\begin{equation}	\label{condition1-partialGamma-infty}	\begin{aligned}		\sup_{\partial\Gamma}	f =-\infty 			\end{aligned}	\end{equation}
 	Suppose that $M$ carries  a $C^2$ admissible function $\underline{u}$. 
 	Then for any $\psi\in C^\infty(M)$, 
 	%there exists a   smooth  	admissible solution $u$ and some constant $c$ such that
we can find a constant $c$ and a   smooth 
 	admissible function $u$ with $\sup_M u=0$ such that
 	\begin{equation}
 		\label{equ2-solution}
 		\begin{aligned}
 	f(\lambda[\omega^{-1}(\chi+\sqrt{-1}\partial\overline{\partial}u)])=\psi+c.
 	\nonumber
 			\end{aligned}
 	\end{equation} 
 \end{theorem}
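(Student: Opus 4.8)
The plan is to solve \eqref{equ2-solution} by the method of continuity, feeding in the a priori estimates of Theorems \ref{thm1-main} and \ref{thm2-second-order}. After mollifying we may assume $\underline u\in C^\infty(M)$ (mollification preserves admissibility since $\Gamma$ is convex); set $\psi_0:=f(\lambda[\omega^{-1}(\chi+\sqrt{-1}\partial\overline{\partial}\underline u)])\in C^\infty(M)$ and $\psi_t:=t\psi+(1-t)\psi_0$ for $t\in[0,1]$. Let $T$ be the set of $t\in[0,1]$ for which
\[
F(\chi+\sqrt{-1}\partial\overline{\partial}u)=\psi_t+c,\qquad \sup_M u=0,
\]
admits a smooth admissible pair $(u,c)=(u_t,c_t)$. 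Then $0\in T$, since $(u_0,c_0)=(\underline u-\sup_M\underline u,\,0)$ works, so it suffices to prove that $T$ is open and closed in $[0,1]$.

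\emph{Openness.} This follows from the implicit function theorem applied, near a solution $(u_{t_0},c_{t_0})$, to the map $(u,c)\mapsto F(\chi+\sqrt{-1}\partial\overline{\partial}u)-c$ on $\{u\in C^{2,\alpha}(M):\int_M u\,dV_\omega=0\}\times\mathbb R$: by \eqref{elliptic} the linearized operator $v\mapsto F^{i\bar j}v_{i\bar j}$ is elliptic with vanishing zeroth order part, so on the closed manifold $M$ it is Fredholm of index $0$ with kernel the constants and one–dimensional cokernel spanned (in $L^2$) by a positive function; hence $(v,b)\mapsto F^{i\bar j}v_{i\bar j}-b$ is an isomorphism onto $C^{0,\alpha}(M)$. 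Subtracting $\sup_M u$ restores the normalization and elliptic regularity gives smoothness, so $T$ is open.

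\emph{Closedness.} This rests on uniform estimates along the family, and the only point special to the present hypotheses is a uniform bound on $c_t$. At a maximum point $p$ of $u_t$ one has $\sqrt{-1}\partial\overline{\partial}u_t(p)\le 0$, so the admissible eigenvalue vector $\lambda(p):=\lambda[\omega^{-1}(\chi+\sqrt{-1}\partial\overline{\partial}u_t)](p)\in\Gamma$ satisfies $\lambda(p)\le\lambda[\omega^{-1}\chi](p)$; consequently $\lambda[\omega^{-1}\chi](p)\in\overline\Gamma$ and $\lambda(p)\le\lambda[\omega^{-1}\chi](p)+\varepsilon(1,\dots,1)\in\Gamma$ for every $\varepsilon>0$. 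Since $\{\lambda[\omega^{-1}\chi](q):q\in M\}$ is compact, monotonicity \eqref{elliptic} together with the hypothesis $\sup_{\partial\Gamma}f=-\infty$ — which forces $f$ to be bounded above on a fixed neighbourhood of that compact set inside $\Gamma$ — gives $f(\lambda(p))\le C$, whence $c_t\le C+\|\psi_t\|_{C^0}$. At a minimum point $q$ of $u_t-\underline u$ one has $\chi+\sqrt{-1}\partial\overline{\partial}u_t(q)\ge\chi+\sqrt{-1}\partial\overline{\partial}\underline u(q)$, so $f(\lambda(q))\ge\psi_0(q)\ge\min_M\psi_0$ by \eqref{elliptic}, whence $c_t\ge\min_M\psi_0-\|\psi_t\|_{C^0}$. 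Thus $|c_t|\le C$ uniformly, so $\|\psi_t+c_t\|_{C^2(M)}\le C$ uniformly, while $\underline u$ is a $\mathcal C$–subsolution of every equation in the family: indeed, under \eqref{unbounded-1} any admissible function is a $\mathcal C$–subsolution of \eqref{equ1-main}, because \eqref{elliptic} and \eqref{unbounded-1} force $f(\mu)\to+\infty$ along any sequence $\mu\in\lambda[\omega^{-1}(\chi+\sqrt{-1}\partial\overline{\partial}\underline u)](z)+\Gamma_n$ with $|\mu|\to\infty$. Theorems \ref{thm1-main} and \ref{thm2-second-order} (whose $C^0$ part, combined with $\sup_M u_t=0$, controls $\|u_t\|_{C^0}$) then give a uniform bound $\|u_t\|_{C^{2,\alpha}(M)}\le C$, and differentiating the equation and invoking Schauder theory yields uniform $C^k$ bounds for all $k$. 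Passing to a subsequential limit as $t\to t_\infty\in\overline T$, the limiting $(u_{t_\infty},c_{t_\infty})$ is smooth and solves the equation; the $C^2$ bound together with $\sup_{\partial\Gamma}f=-\infty$ confines $\lambda[\omega^{-1}(\chi+\sqrt{-1}\partial\overline{\partial}u_{t_\infty})]$ to a compact subset of $\Gamma$, so it is admissible and $t_\infty\in T$. Therefore $T=[0,1]$; taking $t=1$ gives a smooth admissible $u$ with $\sup_M u=0$ and $f(\lambda[\omega^{-1}(\chi+\sqrt{-1}\partial\overline{\partial}u)])=\psi+c$ with $c=c_1$.

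The whole weight of the argument lies in the a priori estimates of Theorems \ref{thm1-main} and \ref{thm2-second-order}; granting these, the only genuinely new ingredients are the uniform control of the unknown constant $c_t$ (from above via $\sup_{\partial\Gamma}f=-\infty$, from below via the admissible function $\underline u$) and the remark that \eqref{unbounded-1} turns $\underline u$ into a bona fide $\mathcal C$–subsolution, which is precisely what lets the continuity method close up. Accordingly, the uniform estimate on $c_t$ is the step I expect to require the most care.
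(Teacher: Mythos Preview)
Your continuity argument is correct and is precisely what the paper intends: the paper does not write out a proof of Theorem~\ref{thm1-existence} but simply observes, in the sentence immediately preceding it, that under \eqref{unbounded-1} any admissible function is automatically a $\mathcal C$-subsolution, so Theorem~\ref{thm1-main} feeds a standard continuity method (the paper points to \cite{Phong-To2017} for the analogous scheme). One small slip in your write-up: the upper bound on $c_t$ has nothing to do with $\sup_{\partial\Gamma}f=-\infty$; it follows from monotonicity together with the fact that $\{\lambda[\omega^{-1}\chi](q)+\vec{\bf 1}:q\in M,\ \lambda[\omega^{-1}\chi](q)\in\overline\Gamma\}$ is a compact subset of $\Gamma$ on which $f$ is continuous (or, equivalently, from the linear upper bound coming from concavity). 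The hypothesis $\sup_{\partial\Gamma}f=-\infty$ is what makes \eqref{nondegenerate} hold automatically along the path and keeps the limiting eigenvalues inside $\Gamma$, exactly as you use it in the closedness step.
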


When $f$  satisfies
  % the extra assumption 
\eqref{addistruc} and \eqref{elliptic}  additionally, the theorem can be found in \cite{Phong-To2017}. %via parabolic method. 
 % (In fact the estimate of \cite{Gabor} for elliptic equations  can also imply the existence).
%proved by   Phong-T\^{o}  \cite{Phong-To2017} using   parabolic method. 
% As a contrast, 
% in Theorem \ref{thm1-existence} 
%  such two assumptions are dropped.  
%Note that we do not impose \eqref{elliptic} 
%In Theorem \ref{thm1-existence}, we can drop such two assumptions.   
  The existence result includes among others the celebrated theorem of Yau \cite{Yau78} and the extension of Tosatti-Weinkove \cite{TW10} to Hermitian case,  with important contributions from \cite{Cherrier1987,Guan2010Li,Hanani1996}.
  % We observe that the 
    %above structural     conditions are satisfied by 
  Some  more special cases embraced by  Theorem \ref{thm1-existence}  are the complex $k$-Hessian equations and  Monge-Amp\`ere equation for $(n-1)$-PSH functions.  
% Theorem \ref{thm1-existence} further recovers  some  earlier existence results regarding to complex $k$-Hessian equations \cite{Dinew2017Kolo}  and Monge-Amp\`ere equation for $(n-1)$-PSH functions \cite{Tosatti2013Weinkove,Tosatti2019Weinkove}.  
      %Beyond these cases, 
      In addition, the above structural 
      conditions 
    allow the function 
 \begin{equation}\label{Guan-2}	
 	\begin{aligned}
 	f =\frac{\sigma_{k+1}}{\sigma_k} +\sum_{j=1}^k \beta_j\log{\sigma_j}, \,\, \Gamma=\Gamma_k, \,\, \forall\beta_j\geqslant0, \,  \sum_{j=1}^{k}  \beta_j >0,	 
 	\end{aligned}
 \end{equation}  
%considered by \cite{Guan-Dirichlet},
 where  $\sigma_k$ is the $k$-th elementary symmetric function,  
 and $\Gamma_k$ is  the $k$-G{\aa}rding cone.   
   
   The Hessian equation  with   \eqref{Guan-2} was  considered by \cite{Guan-Dirichlet}.
   Notice that the function does not satisfy \eqref{addistruc}.   More such  functions can be found in literature \cite{Guan-Dirichlet,Guan2021Zhang}.

 \subsection{The Dirichlet problem}
 The second subject of this paper is to study the Dirichlet problem. %\eqref{mainequ-Dirichlet}. 
 To do this, we require the notion of \textit{admissible} subsolution.
 \begin{definition}
 	We say that a $C^2$ function $u$ is an admissible subsolution of  Dirichlet problem~\eqref{mainequ-Dirichlet}, if $u$ is an admissible function subject to
 \begin{equation}
 	\label{subsolution1}
 	\begin{aligned}
 		F(\chi+\sqrt{-1}\partial\overline{\partial}\underline{u})\geqslant \psi \mbox{ in } \bar M, \,\, \underline{u}=\varphi 
 		\mbox{ on } \partial M.
 	\end{aligned}
 \end{equation} 
\end{definition}

  %The perspective of subsolution is a key ingredient for  estimate and solvability  of Dirichlet problem for fully nonlinear equations since the work of  \cite{Guan1993Spruck,Guan1998The}.  
  The concept of subsolution was imposed  by \cite{Guan1998The,Guan1993Spruck,Hoffman1992Boundary} as a vital tool  to deal with  second order boundary estimate for Dirichlet problem of Monge-Amp\`ere equation.
  % on bounded domains.  
  %Since then the subsolution is a key ingredient for  Dirichlet problem of fully nonlinear elliptic equations. 
  Also the perspective of  subsolutions has a great advantage in applications to  certain geometric problems  that it relaxes restrictions to the shape of boundary; such as   
 %the Plateau type problem \cite{Guan2002Spruck,Guan2004Spruck}, 
 Guan's   \cite{GuanP2002The} proof of  the Chern-Levine-Nirenberg problem  
 % regarding to %the extremal function on
  % intrinsic norms 
 and Chen's 
 \cite{Chen}   work on Donaldson's conjecture 
  %on geodesics in the space of K\"ahler potentials 
   (see also \cite{Guan2009Zhang}  for   Sasaki geometry counterpart).

%The perspective of subsolution was imposed by \cite{Guan1998The,Guan1993Spruck} as a vital tool to deal with  second order boundary estimate  for Dirichlet problem of Monge-Amp\`ere equation on bounded domains.  Since then the subsolution is a key ingredient for  Dirichlet problem of fully nonlinear elliptic equations. Also the concept of subsolutions has a great advantage in applications to  certain geometric problems  that it relaxes restrictions to the shape of boundary; see e.g. \cite{Chen,Guan2002Spruck,GuanP2002The,Guan2009Zhang}.
  
In this paper, we solve the Dirichlet problem under subsolution assumption.

 \begin{theorem}	\label{thm1-1}
	Let $(\bar M, \omega)$ be a compact Hermitian manifold with smooth boundary. Let $f$ 
	%be a symmetric function subject to 
	satisfy the hypotheses %\eqref{elliptic}
	\eqref{concave}  and \eqref{unbounded-1}. Assume the %datum
	data $\varphi \in C^{\infty}(\partial M)$ and $\psi \in C^{\infty}(\bar{M})$ satisfy~\eqref{nondegenerate} and  support  a $C^{2,1}$-admissible subsolution. Then the Dirichlet problem~\eqref{mainequ-Dirichlet} possesses a unique smooth admissible solution $u \in C^{\infty}(\bar{M})$.
\end{theorem}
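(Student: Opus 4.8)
\textbf{Proof strategy for Theorem \ref{thm1-1}.}
The plan is to follow the standard continuity-method route for the Dirichlet problem, reducing solvability to a family of \emph{a priori} estimates up to second order, after which the Evans--Krylov theorem and Schauder theory upgrade to $C^{\infty}(\bar M)$. Consider the family $F(\chi+\sqrt{-1}\partial\overline\partial u^{t})=t\psi+(1-t)F(\chi+\sqrt{-1}\partial\overline\partial\underline u)+(1-t)$ for $t\in[0,1]$, or some equivalent deformation that joins the subsolution (at $t=0$) to the equation \eqref{mainequ-Dirichlet} (at $t=1$); at $t=0$ the subsolution $\underline u$ itself solves the equation, so the set of $t$ for which a smooth admissible solution exists is nonempty. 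Openness follows from the implicit function theorem in H\"older spaces once \eqref{elliptic} is known --- and here is where the Remark applies: by Lemma \ref{lemma1-unbound-yield-elliptic}, \eqref{concave} together with \eqref{unbounded-1} forces $f_i>0$ in $\Gamma$, so the linearized operator is uniformly elliptic at any admissible solution. Closedness is the substance and requires the estimates below, uniform in $t$.

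The estimates proceed in the usual order. First, $C^{0}$: an upper bound comes from comparison with the subsolution $\underline u$ and the concavity of $f$ (the admissible function lies ``above'' $\underline u$ in the appropriate sense), while a lower bound follows from \eqref{nondegenerate}, which keeps the equation nondegenerate, together with a barrier built from $\underline u$; the boundary data $u=\varphi=\underline u$ on $\partial M$ pins down the values there. Second, the gradient estimate: the interior gradient bound is obtained via the blow-up argument of Dinew--Ko\l odziej as adapted in this paper, using the quantitative second-order estimate \eqref{estimate-quantitative} of Theorem \ref{thm2-second-order} (whose proof uses Lemma \ref{lemma3-key} precisely to avoid needing the lower bound $\sum f_i\geqslant\delta$ that \eqref{addistruc} would provide), so that combining \eqref{estimate-quantitative} with a uniform $|\partial\overline\partial u|$ bound closes the loop; the boundary gradient estimate is handled by constructing upper and lower barriers from $\underline u$ and the distance function to $\partial M$, exploiting the subsolution inequality \eqref{subsolution1}. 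Third, the second-order estimate: $\sup_{\bar M}|\partial\overline\partial u|_{\omega}$ is controlled by the interior/global argument behind Theorem \ref{thm2-second-order} together with a boundary Hessian estimate; the latter is the classical but delicate part, treating tangential-tangential, tangential-normal, and normal-normal second derivatives on $\partial M$ separately, again using $\underline u$ as a subsolution in the Guan--Spruck--Trudinger style to handle the double-normal term without the extra structural assumption.

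The main obstacle is the \emph{boundary} estimates in the unbounded setting \eqref{unbounded-1}, specifically the double-normal second derivative on $\partial M$ and the boundary gradient bound, since the usual constructions of barriers rely on having $\sum_i f_i$ bounded below or on \eqref{addistruc}; here one must instead run the argument through Lemma \ref{lemma3-key} and the $\mathcal C$-subsolution-type machinery, using that the admissible subsolution $\underline u$ (which automatically plays the role of a $\mathcal C$-subsolution in the unbounded case, as noted before Theorem \ref{thm1-existence}) controls the relevant cone geometry near $\partial M$. Once all these estimates are in place uniformly in $t\in[0,1]$, the set of admissible $t$ is closed, hence equals $[0,1]$, yielding a solution at $t=1$; smoothness follows by bootstrapping, and uniqueness follows from the maximum principle applied to the difference of two admissible solutions using ellipticity \eqref{elliptic} and concavity \eqref{concave}.
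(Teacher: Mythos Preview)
Your overall architecture is right: continuity method reduced to uniform $C^2$ estimates, with the gradient bound obtained a posteriori by blow-up once the quantitative second-order estimate \eqref{estimate-quantitative} (interior plus boundary) is in hand. Two small corrections: in the $C^0$ step the subsolution gives the \emph{lower} bound $u\geqslant\underline u$, while the upper bound comes from a supersolution $\check u$ solving \eqref{supersolution-1}; and the boundary gradient bound is not an obstacle at all---it follows immediately from the sandwich $\underline u\leqslant u\leqslant\check u$ on $\bar M$ with equality on $\partial M$ (Lemma \ref{lemma-c0-bc1}).

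The genuine gap is in your plan for the double-normal derivative $\mathfrak g_{n\bar n}$ on $\partial M$. You propose to ``run the argument through Lemma \ref{lemma3-key} and the $\mathcal C$-subsolution-type machinery,'' but that lemma is a differential dichotomy designed for maximum-principle arguments at interior (or near-boundary) points; it is what drives the \emph{tangential-normal} estimate (Proposition \ref{mix-general}), not the double-normal one. For $\mathfrak g_{n\bar n}$ the analysis is purely algebraic at the boundary point itself, and the paper's mechanism is different: one must track how the eigenvalues of the $n\times n$ Hermitian matrix $\bigl(\mathfrak g_{i\bar j}(z_0)\bigr)$ behave as $\mathfrak g_{n\bar n}$ is allowed to grow. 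The key new tool is the \emph{quantitative} Lemma \ref{yuan's-quantitative-lemma}, a sharpening of \cite[Lemma 1.2]{CNS3}, which says that once $\mathfrak g_{n\bar n}$ exceeds a quantity \emph{quadratic} in the mixed entries $|\mathfrak g_{\alpha\bar n}|$, the first $n-1$ eigenvalues are pinned near the tangential diagonal $\mathfrak g_{\alpha\bar\alpha}$ and the last eigenvalue is essentially $\mathfrak g_{n\bar n}$. Combined with a CNS-style supporting-hyperplane argument (Lemma \ref{keylemma1-yuan3}, bounding $(1-t_0)^{-1}$) and a direct use of the unbounded condition \eqref{unbounded-1} to force $f$ above $\psi$, this yields Proposition \ref{proposition-quar-yuan2}: $\mathfrak g_{n\bar n}\leqslant C(1+\sum_\alpha|\mathfrak g_{\alpha\bar n}|^2)$. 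This is exactly the quantitative form needed to feed into the blow-up argument; a Guan--Spruck--Trudinger barrier alone would not produce the quadratic dependence on the mixed terms, which is what makes the combined boundary estimate \eqref{bdy-sec-estimate-quar1} close with Proposition \ref{mix-general}.
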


We emphasize in Theorem \ref{thm1-1} 
the importance of allowing manifolds with boundary of arbitrary geometry, assuming  existence of  subsolution.
%
%To show the above result,
	To achieve this,
%Theorems  \ref{thm1-1}, 
%\ref{mainthm-10-degenerate-2}  and \ref{mainthm-10-degenerate},
%\ref{thm1-2}, 
%a key step is to establish the following quantitative version of boundary estimate.
%To achieve our goal,  
%As we clarified above,  %with the estimate above at hand, 
a specific problem that we have in mind is to establish 
%a quantitative version of boundary estimate.
%a second order 
boundary estimate of the form  
\begin{equation}
	\label{bdy-sec-estimate-quar1}
	\begin{aligned}
	\sup _{\partial M}  |\partial\overline{\partial}u|_\omega \leqslant  C\left(1+\sup _{M}|\nabla u|_\omega^{2}\right).
	\end{aligned}
\end{equation}
The quantitative boundary estimate \eqref{bdy-sec-estimate-quar1}  for the Dirichlet problem of complex Monge-Amp\`ere equation was first established by Chen \cite{Chen} on some product manifolds and later   by Phong-Sturm \cite{Phong-Sturm2010} on  manifolds with holomorphically flat boundary. Subsequently, such restrictions to boundary were dropped by Boucksom \cite{Boucksom2012}.
%dropped the restriction to boundary.
%extended their estimate to general manifolds with boundary of arbitrary geometry.  
 More recently,   the estimate \eqref{bdy-sec-estimate-quar1} for the Dirichlet problem of complex $k$-Hessian equations
was obtained by 
Collins-Picard \cite{Collins2019Picard}.
% extended 
%the quantitative boundary estimate  
%\eqref{bdy-sec-estimate-quar1} to  complex $k$-Hessian equations. 
%Unfortunately, 
%As usual,  the  subsolution technique %used in
%the subsolution method
%local barrier technique 
%of \cite{Guan1993Spruck,Guan1998The} %(further  refinedby \cite{Guan12a}) 
%is a key ingredient.
However, their methods rely  heavily %essentially
 %specifically 
  on the  specific 
  structure of the Monge-Amp\`ere and $k$-Hessian operators, which cannot be further  adapted to treat general equations.

Below, based on Lemma \ref{yuan's-quantitative-lemma} we establish the quantitative boundary estimate.

\begin{theorem}
	\label{thm1-bdy}
	
	Let $(\bar M, \omega)$ and $f$ be as in Theorem~\ref{thm1-1}. Assume that $\varphi \in C^{3}(\partial M)$ and $\psi \in C^{1}(\bar{M})$ satisfy~\eqref{nondegenerate} and support a $C^{2}$-admissible subsolution $\underline{u}$. Then any admissible solution $u \in C^{3}(M) \cap C^{2}(\bar{M})$ to Dirichlet problem~\eqref{mainequ-Dirichlet} satisfies
	\[
	\sup _{\partial M}  |\partial\overline{\partial}u|_\omega \leqslant  C\left(1+\sup _{M}|\nabla u|_\omega^{2}\right),
	\]
	where $C$ is a %uniform 
	positive constant depending on $|\varphi|_{C^{3}(\bar{M})}$, $|\nabla u|_{C^{0}(\partial M)}$, $|\underline{u}|_{C^{2}(\bar{M})}$, $|\psi|_{C^{1}(M)}$, $\partial M$ up to third derivatives, and other known data under control. %(but not on $\left.\sup _{M}|\nabla u|\right)$.
	
\end{theorem}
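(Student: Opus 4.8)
\textbf{Proof proposal for Theorem \ref{thm1-bdy}.}

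The plan is to adapt the classical barrier technique of Caffarelli--Kohn--Nirenberg--Spruck and Guan to the general operator $F$, working in a boundary coordinate chart and splitting the complex Hessian into tangential--tangential, tangential--normal, and normal--normal components. Fix a boundary point $p\in\partial M$ and choose local holomorphic coordinates $(z^1,\dots,z^n)$ centered at $p$ so that $\partial M$ is given near $p$ by a defining function $\rho$ with $\rho<0$ in $M$, and arrange that $\partial_n$ is (complex) normal and $\partial_1,\dots,\partial_{n-1}$ are tangential at $p$. Since $u=\varphi$ on $\partial M$, the tangential--tangential second derivatives of $u$ at $p$ are determined by $\varphi$, $\rho$, and the first normal derivative of $u$; this gives the bound on those entries in terms of $|\varphi|_{C^3}$, $|\nabla u|_{C^0(\partial M)}$, and the geometry of $\partial M$. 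The content is therefore in the mixed and the pure normal directions.

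For the mixed (tangential--normal) estimate I would introduce a barrier of the form
\begin{equation*}
v = \underline{u} - u + t\rho - N\rho^2 + \sum_{\tau<n} a_\tau |z^\tau|^2,
\end{equation*}
apply the linearized operator $L = \sum_{i,j} F^{i\bar j}\partial_i\partial_{\bar j}$ (where $F^{i\bar j}$ is evaluated at the Hessian of $u$), and use: the concavity \eqref{concave} to get $L(\underline u - u)\le $ (controlled terms) plus the positive term $\sum F^{i\bar j}(\chi+\sqrt{-1}\partial\bar\partial\underline u - \chi-\sqrt{-1}\partial\bar\partial u)_{i\bar j}\ge 0$; the non-degeneracy \eqref{nondegenerate} together with the $\mathcal C$-subsolution property of $\underline u$ (which holds here since $\underline u$ is a subsolution) to produce, via Sz\'ekelyhidi-type reasoning, a positive lower bound $\sum F^{i\bar j}\ge\kappa$ on the set where the largest eigenvalue of $\sqrt{-1}\partial\bar\partial u$ is large; and the unbounded condition \eqref{unbounded-1} in the form of the key structural lemma (Lemma \ref{lemma3-key}, referenced in the introduction) to handle the case where the normal--normal entry of the Hessian degenerates. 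Applying a tangential derivative $\partial_\tau$ ($\tau<n$) to the equation produces $L(\partial_\tau u)$ bounded by a constant times $1+|\nabla u|_\omega$ times $\sum F^{i\bar j}g_{i\bar j}$ plus error terms from differentiating $\chi$ and $\psi$; choosing $A$ large and $\varepsilon$ small in an operator $\Phi = A v \pm \partial_\tau(u-\varphi) \mp (\text{correction})$ and applying the maximum principle on a half-ball $B_\delta\cap\bar M$, with $\Phi\ge 0$ on the boundary of that region by construction, yields $|u_{\tau\bar n}(p)|\le C(1+\sup_M|\nabla u|_\omega^2)$.

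For the pure normal--normal estimate at $p$ the standard route is the following: having bounded all eigenvalues except possibly the one corresponding to the $\partial_n\partial_{\bar n}$ direction, consider the function on the unit sphere of tangent directions $\zeta\mapsto$ (second Hessian form of $u$ evaluated at $\zeta$) minus the corresponding subsolution quantity; either this is bounded, and then by a linear-algebra argument (the eigenvalues of a Hermitian matrix are controlled once all but one diagonal entry in every unitary frame are controlled, using ellipticity of $F$ and the fact that $\lambda[u]$ stays in a fixed compact subset of $\Gamma$ away from $\partial\Gamma$ by \eqref{nondegenerate}) we are done, or its minimum is achieved with a definite negative value and one runs a second barrier argument as in CKNS/Guan. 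The technical point where \eqref{unbounded-1} is essential — and where I expect the \emph{main obstacle} — is exactly this last step: without the auxiliary assumption \eqref{addistruc} one does not a priori have $\sum F^{i\bar j}\ge\delta>0$ globally, so the subsolution term cannot be thrown away cheaply, and one must instead exploit $\underline u$ more carefully (using $\underline u - u$ in the barrier as in the proof of Theorem \ref{thm2-second-order}) and invoke Lemma \ref{lemma3-key} to extract a usable lower bound for $\sum F^{i\bar j}$ precisely in the regime where the normal eigenvalue of $\sqrt{-1}\partial\bar\partial u$ is large. Assembling the tangential--tangential, mixed, and normal--normal bounds gives a bound on $|\partial\bar\partial u|_\omega$ at every boundary point in terms of the stated quantities, which is the claim.
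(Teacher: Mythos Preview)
Your tangential--tangential and mixed derivative sketches are roughly in the right spirit, though the mixed barrier you write is missing the term $\frac{1}{\sqrt{K}}\sum_{\tau<n}|(u-\varphi)_\tau|^2$ that the paper uses; this term produces a good contribution $\frac{1}{\sqrt{K}}\sum_{i\neq r}f_i\lambda_i^2$ which is what actually absorbs the bad $\sum f_i|\lambda_i|$ terms via Lemma \ref{lemma2-guan}, and without it the barrier you wrote does not close to give the quantitative bound linear in $\sup_M|\nabla u|$.

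The genuine gap is in the double normal direction. The approach you describe --- minimize a second-order tangential quantity over the unit sphere and run a second CKNS/Guan-type barrier --- is exactly the classical route, and the paper explicitly remarks that the boundary estimates of \cite{CNS3} and \cite{Trudinger95} obtained that way are \emph{not} in quantitative form. The difficulty is that in such an argument the constants depend on the already-controlled entries of the Hessian, hence on the mixed entries $\mathfrak{g}_{\alpha\bar n}$, and one does not get the clean inequality $\mathfrak{g}_{n\bar n}\leqslant C(1+\sum_\alpha|\mathfrak{g}_{\alpha\bar n}|^2)$ needed to combine with the mixed estimate and land on $C(1+\sup_M|\nabla u|^2)$. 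Invoking Lemma \ref{lemma3-key} here does not help: that lemma is used in the paper only for the mixed estimate, not the double normal one.

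What the paper actually does for $\mathfrak{g}_{n\bar n}$ is different and is the main new ingredient. It proves a quantitative refinement of the eigenvalue perturbation lemma of \cite{CNS3} (Lemma \ref{yuan's-quantitative-lemma}): once the $(n,\bar n)$ entry exceeds an explicit quadratic expression in the off-diagonal entries $\mathfrak{g}_{\alpha\bar n}$, the eigenvalues of the boundary Hessian decouple, with $n-1$ of them $\epsilon$-close to the tangential block and the last one at least $\mathfrak{g}_{n\bar n}$. The matrix $A(R)$ with $(n,\bar n)$-entry $R$ is then split as $A'(R)+A''(R)$ using a parameter $t_0$ defined by when a convex combination of $\underline{\mathfrak{g}}_{\alpha\bar\beta}$ and the Levi form first hits $\partial\Gamma_\infty$; Lemma \ref{keylemma1-yuan3} bounds $(1-t_0)^{-1}$ by a barrier argument of its own (this is where $\delta_{\psi,f}^{-1}$ enters), and the unbounded condition \eqref{unbounded-1} is used directly via \eqref{key-03-yuan3} to pick $R_1$ so that $f(\lambda[A(R_c)])\geqslant\psi$ at an explicit $R_c$ quadratic in $\sum_\alpha|\mathfrak{g}_{\alpha\bar n}|^2$. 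This forces $\mathfrak{g}_{n\bar n}\leqslant R_c$, which is Proposition \ref{proposition-quar-yuan2}. None of this is visible in your outline, and without Lemma \ref{yuan's-quantitative-lemma} the quantitative form of the estimate does not follow.
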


%  \iffalse
\begin{remark}
	
	As shown by
	%new  	draft   	of \cite{yuan-regular-DP} (see  Lemma \ref{lemma1-unbound-yield-elliptic}), 
	Lemma \ref{lemma1-unbound-yield-elliptic}, 
	the unbound and concavity of $f$
	%conditions \eqref{unbounded-1} and \eqref{concave}
	together imply  \eqref{elliptic}.
	Thus we do not impose % the monotonicity condition
	\eqref{elliptic} in Theorems \ref{thm1-existence}, \ref{thm1-1} and \ref{thm1-bdy}.
	
\end{remark}
%\fi

%\begin{remark}
In \cite{CNS3}, the condition
%assumption 
 \eqref{unbounded-1} is necessary for second order boundary estimate for double normal derivative; 
 subsequently, it was further removed by Trudinger   \cite{Trudinger95}. However,   their boundary estimates are   not in a quantitative form. 
%The reason is that  Lemma \ref{lemmaCNS3} does not figure out how the eigenvalues of matrix $A$ concentrate explicitly near the correspondingdiagonal elements  when $|\mathrm{{\bf a}}|$ is sufficiently large.
In order to draw \eqref{bdy-sec-estimate-quar1}, we propose Lemma
  \ref{yuan's-quantitative-lemma}   
to  follow %the track of 
the behavior of  the equation near boundary as $u_{n\bar n}$ %tends to infinity 
grows  quadratically in terms of $u_{\alpha \bar n}$ $(\alpha<n)$. Together with Lemmas   \ref{keylemma1-yuan3}, \ref{lemma1-bdyestimate} and \ref{lemma2-key}, we may establish   quantitative boundary estimate for double normal derivative.
 Our estimate is new even  for equations in Euclidean space.  
In addition, the boundary estimate is  fairly delicate  in some case; see Theorems \ref{thm2-bdy} and \ref{thm2-bdy-leviflat}.
%(Theorems \ref{thm2-bdy} and \ref{thm2-bdy-leviflat}).
Such delicate  estimates enable us to study 
%in Sections \ref{sec1-degenerate}-\ref{sec6}
%Dirichlet problem for
 degenerate equation in the sense that
%We say that the equation is degenerate if it satisfies that
\begin{equation}
 \label{degenerate-RHS}
	\begin{aligned}
		\inf _{M} \psi=\sup _{\partial \Gamma} f.
	\end{aligned}
\end{equation}  which will be shown in Sections \ref{sec1-degenerate} and \ref{sec6}.
In this   case we further assume
\begin{equation}
 	\label{continuity1}
	\begin{aligned}
		f\in C^\infty(\Gamma)\cap C(\bar{\Gamma}),
		\mbox{ where } \bar{\Gamma} =\Gamma\cup\partial\Gamma.
	\end{aligned}
\end{equation}

In conclusion, in the first part of this paper we derive global second   estimate and especially extend the blow-up argument 
%under the structural conditions which are optimal.
 to 
 generic case. % without assuming \eqref{addistruc}.
 As a result, under the optimal structural conditions,  we establish   $C^{2,\alpha}$-estimate  for general  Hessian type
 fully nonlinear elliptic equations %\eqref{equ1-main} 
on closed Hermitian manifolds. 
%With 
Given the assumptions \eqref{elliptic}-\eqref{nondegenerate} and the smoothness condition $\psi\in C^\infty(M)$, 
our result indicates that the sole impediment to achieving $C^{2,\alpha}$ estimate on closed manifolds arises from the existence of a $\mathcal{C}$-subsolution. 
In the second part  we first
derive the quantitative boundary estimate  and then solve the Dirichlet problem,
% for fully nonlinear elliptic equations,
 possibly with degenerate right-hand side.
No matter the equations are degenerate or not,  %except for the equations cited above,
% complex Monge-Amp\`ere equation, %\cite{Blocki09geodesic,Boucksom2012,Chen,Guan2010Li,Phong-Sturm2010},
the %gradient estimate and 
existence results for Dirichlet problem of general fully nonlinear elliptic equations are rarely known until this work.  
% For other related works on fully nonlinear elliptic equations we also refer to  \cite{Collins2019Picard,Guan2015Sun,yuan2018CJM,yuan2021cvpde} in which some equations are considered in quite restricted
% context.
Moreover, we construct subsolutions on products, which are of numerous interests from complex geometry and analysis. 
 
In addition, our method   applies with almost no change to real Hessian equations  \begin{equation}\label{equation2-realhessian}\begin{aligned}	f(\lambda[g^{-1}(\nabla^2u+A)])=\psi. 
			\nonumber
	\end{aligned}\end{equation}

In the previous draft \cite{yuan-regular-DP}   of this paper, the Dirichlet problem \eqref{mainequ-Dirichlet} was investigated under the extra assumption \eqref{addistruc}.

 %\vspace{0.5mm}
 The paper is organized as follows. In Section \ref{sec-preliminaries}  %we summarize some useful and key results. 
 we  present  notation used in the paper. 
 %And then we 
 %%give some preliminaries about
 %review Sz\'ekelyhidi's Liouville type theorem 
 %% the computation of $\mathcal{L}(|\nabla u|_\omega^2)$, 
 %and some properties of the function $f$.
 In Section \ref{sec3-keylemma} we prove key lemmas for  \textit{a priori} estimates.
 In Sections  \ref{sec-second-estimate} and \ref{sec-gradient-estimate} we derive   \textit{a priori} estimate for equations on closed manifolds. More precisely, 
 we
 % first prove Lemma \ref{lemma3-key}  and then  
 first derive second order estimate in Section \ref{sec-second-estimate} and 
 then  prove the gradient bound by extending  
 blow-up argument in Section \ref{sec-gradient-estimate}.   
 In Section \ref{sec1-type2} we first construct admissible functions using Morse theory and then solve the equations with type 2 cones.
 In Sections \ref{sec1-Bdy-setup} and \ref{sec1-Bdy-estimate} we establish   quantitative boundary estimate for Dirichlet problem.  
 In Section \ref{sec1-degenerate} we study  Dirichlet problem for degenerate equations. 
 %and prove Theorems \ref{thm1-2} and \ref{thm2-diri-de}.
 In Section \ref{sec6} the subsolutions are constructed when 
 $M$ is a product of a closed complex manifold %of complex dimension $(n-1)$ 
 with a compact Riemann surface with boundary. 
 In addition, we study Dirichlet problem on such products with less regularity assumptions on boundary.
  In Section \ref{sec-proofofquantitativelemma1}
  we complete the proof of Lemma \ref{yuan's-quantitative-lemma}.

\medskip
%\section{Preliminaries}
\section{Notation}
\label{sec-preliminaries}

%\subsection{Notation} 
Throughout this paper,  % without specific clarification, 
we use the notation as follows:
\begin{itemize}
	\item   $\vec{\bf 1}=(1,\cdots,1)\in \mathbb{R}^n$. 
	
	  	\item  $\delta_{\psi,f}=\inf_{M}\psi-\sup_{\partial \Gamma} f.$
	  	
	\item %For $\sigma\in(\sup_{\partial\Gamma}f,\sup_\Gamma f)$,
%	For $\sigma$,   denote 
 $\partial\Gamma^\sigma=\{\lambda\in\Gamma: f(\lambda)=\sigma\}, \, \Gamma^\sigma=\{\lambda\in\Gamma: f(\lambda)>\sigma\},$
 $\bar \Gamma^\sigma=\Gamma^\sigma\cup\partial\Gamma^\sigma.$ 
 
 %\item %Let $\Gamma_\infty$ be the projection of $\Gamma$ to the subspace of former $n-1$ subscripts. Namely, $\left(\lambda_{1}, \cdots, \lambda_{n-1}\right)\in \Gamma_\infty$ if and only if there is $\lambda_n>0$ such that $\left(\lambda_{1}, \cdots, \lambda_{n-1}, \lambda_{n}\right) \in \Gamma$.
 %$\Gamma_\infty$ denotes the projection of $\Gamma$ to the subspace of former $n-1$ subscripts. Namely, $\left(\lambda_{1}, \cdots, \lambda_{n-1}\right)\in \Gamma_\infty$ if and only if there is $\lambda_n>0$ such that $\left(\lambda_{1}, \cdots, \lambda_{n-1}, \lambda_{n}\right) \in \Gamma$.
 \item $\Gamma_\infty=\{(\lambda_{1}, \cdots, \lambda_{n-1}): (\lambda_{1}, \cdots, \lambda_{n-1},R) \in\Gamma\mbox{ for some } R>0\}.$

\item Given a $C^2$ function $w$, for simplicity we denote $$\mathfrak{g}[w]\equiv \chi+\sqrt{-1}\partial\overline{\partial} w.$$   
In particular,
  we write 
$\mathfrak{g}=\mathfrak{g}[u]$ and $\mathfrak{\underline{g}}= 
\mathfrak{g} [\underline{u}]$ for  solution $u$ and  subsolution $\underline{u}$, respectively. 
%
 %\item 
 In addition, denote
 $  
 \lambda=\lambda (\omega^{-1}\mathfrak{g}), 
 \,\,
 \underline{\lambda}=\lambda (\omega^{-1}\mathfrak{\underline{g}}). $
 
\item 
In local coordinates $z=(z^1,\cdots,z^n)$, $z^i=x^i+\sqrt{-1}y^i$
\begin{align*}
%	\omega=\,&\sqrt{-1}g_{i\bar j} dz^i\wedge d\bar z^j, \\
	\mathfrak{g}
	= \sqrt{-1}\mathfrak{g}_{i\bar j} dz^i\wedge d\bar z^j,  \,\,
	\mathfrak{\underline{g}} 
	= 
	\sqrt{-1}\mathfrak{\underline{g}}_{i\bar j} dz^i\wedge d\bar z^j.  
\end{align*}

%We use the notation
%	\begin{equation}		\begin{aligned}			\mathfrak{g}_{i\bar j k} =\nabla_k \mathfrak{g}_{i\bar j}, \, 	\mathfrak{g}_{i\bar j \bar k} =\nabla_{\bar k} \mathfrak{g}_{i\bar j}, \,			\mathfrak{g}_{i\bar j k\bar l} =\nabla_{\bar l} \nabla_{k} \mathfrak{g}_{i\bar j}, \, 		\chi_{i\bar j k} =\nabla_k \chi_{i\bar j}, \, 	\chi_{i\bar j \bar k} =\nabla_{\bar k} \chi_{i\bar j}.		\end{aligned}	\end{equation} 

\item  In  computation we use derivatives with respect to the Chern connection $\nabla$ of $\omega$,
%In local coordinates $z=(z^1,\cdots, z^{n})$,
and write
$\partial_{i}=\frac{\partial}{\partial z^{i}}$, 
$\overline{\partial}_{i}=\frac{\partial}{\partial \bar z^{i}}$,
$\nabla_{i}=\nabla_{\frac{\partial}{\partial z^{i}}}$,
$\nabla_{\bar i}=\nabla_{\frac{\partial}{\partial \bar z^{i}}}$,  $\nabla_{x^i}=\nabla_{\frac{\partial}{\partial x^i}}$,  $\nabla_{y^i}=\nabla_{\frac{\partial}{\partial y^i}}$, $\nabla_{t^k}=\nabla_{\frac{\partial}{\partial t^k}}$.
Given a function $v$,  we denote
\begin{equation} \label{formula-1}\begin{aligned}
		v_i=\,&
		\partial_i v,  
		\mbox{  } v_{\bar i}= 
		\partial_{\bar i} v,  
		\, v_{x^i}=\nabla_{x^i} v, \,  v_{y^i}=\nabla_{y^i} v, \,
		 v_{t^k}=\nabla_{t^k} v, \, 	 v_{t^kt^l}=\nabla_{t^l}\nabla_{t^k} v,
		\\
		v_{i\bar j}= \,& 
		%\nabla_{\bar j}\nabla_{i} v=
		\partial_i\overline{\partial}_j v, 
		\,
		v_{ij}=
		%	\nabla_{j}\nabla_{i} v =
		\partial_j \partial_i v -\Gamma^k_{ji}v_k,    
		v_{i\bar j k} 
		 =   \partial_k v_{i\bar j} -\Gamma_{ki}^l v_{l\bar j},  \cdots, \mbox{etc}, %\nonumber
\end{aligned}\end{equation}
where $\Gamma_{ij}^k$ are the Christoffel symbols
defined  by 
$\nabla_{\frac{\partial}{\partial z^i}} \frac{\partial}{\partial z^j}=\Gamma_{ij}^k \frac{\partial}{\partial z^k}.$  
 Moreover, for %any real $(1,1)$-form
  $\tilde{\chi}=\sqrt{-1}\tilde{\chi}_{i\bar j}dz^i\wedge d\bar z^j$  we write $$\tilde{\chi}_{i\bar j k} =\nabla_k \tilde{\chi}_{i\bar j}, \,\, 	\tilde{\chi}_{i\bar j \bar k} =\nabla_{\bar k} \tilde{\chi}_{i\bar j}, 
 \,\,
 \tilde{\chi}_{i\bar jk\bar l}=\nabla_{\bar l}\nabla_k\tilde{\chi}_{i\bar j}.$$

\item  $F^{i\bar j}=\frac{\partial F(\mathfrak{g})}{\partial \mathfrak{g}_{i\bar j}},$  $F^{i\bar j, k\bar l}=\frac{\partial^2 F(\mathfrak{g})}{\partial \mathfrak{g}_{i\bar j}\partial \mathfrak{g}_{k\bar l}}.$ 
Moreover, 
%$F^{i\bar j}\mathfrak{g}_{i\bar j}=\sum_{i=1}^n f_i(\lambda)\lambda_i,  F^{i\bar j}g_{i\bar j}=\sum_{i=1}^n f_i(\lambda).$ 
 $F^{i\bar j}\mathfrak{g}_{i\bar j}=\sum_{i=1}^n f_i(\lambda)\lambda_i,$ $ F^{i\bar j}g_{i\bar j}=\sum_{i=1}^n f_i(\lambda).$
%\begin{equation}	\label{identity1-f}	\begin{aligned} 		F^{i\bar j}\mathfrak{g}_{i\bar j}=\sum_{i=1}^n f_i(\lambda)\lambda_i, \,  F^{i\bar j}g_{i\bar j}=\sum_{i=1}^n f_i(\lambda).	\nonumber\end{aligned}\end{equation}
In view of \cite[Lemma 6.2]{CNS3} (with proper order  $\underline{\lambda}_1,\cdots, \underline{\lambda}_n$),
we have
\begin{equation} 
	\label{L1-u-ubar}
	\begin{aligned}
		F^{i\bar j}(\underline{\mathfrak{g}}_{i\bar j}-\mathfrak{g}_{i\bar j}) 
		\geqslant    \sum_{i=1}^n f_i(\lambda) (\underline{\lambda}_i-\lambda_i).	%\nonumber
	\end{aligned}
\end{equation}    

	\item $\mathcal{L}$ stands for the linearized operator of    \eqref{equ1-main}. Locally, it is given by
	\begin{equation}
		\label{linearoperator21}
		\begin{aligned}
			\mathcal{L}v
			=  F^{i\bar j}v_{i\bar j}. %\nonumber
		\end{aligned}
	\end{equation}  
%where $F^{i\bar j}=\frac{\partial F(\mathfrak{g})}{\partial \mathfrak{g}_{i\bar j}}.$ 
\end{itemize}
%Throughout this paper, without specific clarification, for any $(1,1)$-real form $\tilde{\chi}$ we denote $\lambda(\tilde{\chi})=\lambda(\omega^{-1}\tilde{\chi}).$ 
%Also,  assume that %$\sup_M\psi<\sup_\Gamma f$ and  the boundary data $\varphi$ can be extended to a $C^{2,1}$  function %on $\bar{M}$.
%on a neighborhood of $\partial M$. (For simplicity, we still denote it by $\varphi$).   

 \medskip

\section{Key lemmas}
\label{sec3-keylemma}

\subsection{A lemma for subsolution}

We prove a lemma in order to 
%overcome some difficulties for deriving 
derive second estimate. The lemma is a refinement of  \cite[Theorem 2.18]{Guan12a}, \cite[Lemma 1.9]{Guan-Dirichlet} and
\cite[Proposition 5]{Gabor}. 

\begin{lemma}
	\label{lemma5-key}
	Assume  \eqref{elliptic} and \eqref{concave} hold.
	Fix $\sigma\in (\sup_{\partial \Gamma}f,\sup_\Gamma f)$. Suppose that $\mu\in\mathbb{R}^{n}$ 
	satisfies for some  $\delta$, $R>0$ that
	% is such that for some  $\delta$, $R>0$	
	\begin{equation}
		\label{set2-bound-2}
		\begin{aligned} 
			(\mu-2\delta \vec{\bf 1}+\Gamma_n) \cap \partial\Gamma^\sigma \subset B_R(0). %\nonumber
		\end{aligned}
	\end{equation}
	% ($(\mu-2\delta \vec{\bf 1}+\Gamma_n) \cap \partial\Gamma^\sigma\neq\emptyset$).
	%That is $(\mu-2\delta \vec{\bf 1}+\Gamma_n) \cap \partial\Gamma^\sigma $ is a bounded nonempty set.
	%for some  $\delta$, $R>0$.
	Then there is a constant $\varepsilon$ depending   on  $\sigma$, $\mu$, $\delta$, $R$, $f$  (explicitly see \eqref{const-lemma-choice1})  such that if $\lambda\in\partial\Gamma^\sigma$, then either
	\begin{equation}
		\label{inequ-case1-2}
		\begin{aligned}
			\sum_{i=1}^n f_i(\lambda) (\mu_i-\lambda_i) \geqslant \varepsilon \left(1+\sum_{i=1}^n f_i(\lambda)+\big|\sum_{i=1}^n f_i(\lambda)\lambda_i\big|\right),
		\end{aligned}
	\end{equation}
	or 
	\begin{equation}
		\label{inequ-case2-2}
		\begin{aligned}
			f_i(\lambda) \geqslant \varepsilon \left(1+\sum_{j=1}^n f_j(\lambda) +\big|\sum_{j=1}^n f_j(\lambda)\lambda_j\big| \right), \, \forall 1\leqslant i\leqslant n.
		\end{aligned}
	\end{equation}
	
\end{lemma}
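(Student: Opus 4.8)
The plan is to argue by contradiction and compactness, exploiting the two competing alternatives \eqref{inequ-case1-2} and \eqref{inequ-case2-2}. Suppose no such $\varepsilon$ exists; then there is a sequence $\lambda^{(k)}\in\partial\Gamma^\sigma$ for which both alternatives fail with $\varepsilon=1/k$. Failure of \eqref{inequ-case2-2} means that after relabelling there is an index, say the one achieving the minimum of the $f_i$, with $f_{i_k}(\lambda^{(k)})/\sum_j f_j(\lambda^{(k)})\to 0$; failure of \eqref{inequ-case1-2} means
\[
\sum_{i=1}^n f_i(\lambda^{(k)})(\mu_i-\lambda^{(k)}_i) < \frac1k\Bigl(1+\sum_{i=1}^n f_i(\lambda^{(k)})\Bigr).
\]
The first step is to dispose of the bounded case: if $\{\lambda^{(k)}\}$ stays in a fixed ball $B_\rho(0)$, then by Lemma \ref{lemma1-cvpde-yuan} the quantity $\sum_i f_i(\lambda^{(k)})$ is bounded below by a positive constant depending only on $\rho$ and $f$, and by passing to a subsequence $\lambda^{(k)}\to\lambda^\ast\in\partial\Gamma^\sigma$ (the level set is closed in $\Gamma$, and the limit cannot escape to $\partial\Gamma$ because $f\to\sup_{\partial\Gamma}f<\sigma$ there); then continuity of $f$ and of the $f_i$ on $\Gamma$, together with \eqref{elliptic}, forces a genuine $\varepsilon$ at $\lambda^\ast$ in the second alternative, a contradiction. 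So we may assume $|\lambda^{(k)}|\to\infty$.

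The heart of the argument is the unbounded case, and this is where the hypothesis \eqref{set2-bound-2} on $\mu$ enters. Normalize: set $\nu^{(k)}=\lambda^{(k)}/|\lambda^{(k)}|$, a unit vector, and pass to a subsequence with $\nu^{(k)}\to\nu\in\bar\Gamma$, $|\nu|=1$. After dividing through by $\sum_i f_i(\lambda^{(k)})>0$ and using homogeneity-free concavity \eqref{concave-1} in the form
\[
\sum_{i=1}^n f_i(\lambda^{(k)})(\mu_i-2\delta-\lambda^{(k)}_i)\ \geqslant\ f(\mu-2\delta\vec{\bf 1})-f(\lambda^{(k)})=f(\mu-2\delta\vec{\bf 1})-\sigma,
\]
I would combine this with the failure of \eqref{inequ-case1-2} to get
\[
2\delta\sum_{i=1}^n f_i(\lambda^{(k)})\ \leqslant\ \frac1k\Bigl(1+\sum_{i=1}^n f_i(\lambda^{(k)})\Bigr) - f(\mu-2\delta\vec{\bf 1})+\sigma,
\]
which is impossible for $k$ large once we know $\sum_i f_i(\lambda^{(k)})$ does not collapse to $0$; and indeed by Lemma \ref{Guan-sum1} (applied on $\partial\Gamma^\sigma$, $|\lambda^{(k)}|$ large) we have $|\lambda^{(k)}|\sum_i f_i(\lambda^{(k)})\geqslant c_\sigma$, so $\sum_i f_i(\lambda^{(k)})$ is positive but only known to be $\gtrsim 1/|\lambda^{(k)}|$, not bounded below — so a little more care is needed. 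The fix is the standard one: the ray through $\nu$ must meet $\partial\Gamma^\sigma$ (because $f\to\sup_\Gamma f$ along interior rays and $\sigma<\sup_\Gamma f$, while $\nu$ cannot point into $\partial\Gamma$ since then $\lambda^{(k)}$, being within bounded distance of that ray in direction, would have $f(\lambda^{(k)})$ near $\sup_{\partial\Gamma}f<\sigma$), so pick $t_\ast>0$ with $t_\ast\nu\in\partial\Gamma^\sigma$; the bound \eqref{set2-bound-2} says $t_\ast\nu\in B_R(0)$ is not forced, but the \emph{direction} $\nu$ satisfies $\mu-2\delta\vec{\bf 1}+s\vec{\bf 1}$ eventually leaves $B_R(0)$ along $\Gamma_n$, hence by \eqref{set2-bound-2} leaves $\partial\Gamma^\sigma$, which translates into: for large $k$, the point $\lambda^{(k)}$ lies so that $\mu-2\delta\vec{\bf 1}-\lambda^{(k)}$ has a definite negative inner product with the positive gradient direction $(f_1,\dots,f_n)$. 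Concretely, \eqref{set2-bound-2} guarantees $\mu-2\delta\vec{\bf 1}\notin\lambda^{(k)}+\bar\Gamma_n^{\,\circ}$ eventually—more useful: some component $(\mu-2\delta)_{j}-\lambda^{(k)}_{j}$ is $\leqslant$ a fixed constant while the others are very negative—and since all $f_i>0$, this makes $\sum_i f_i(\lambda^{(k)})(\mu_i-\lambda^{(k)}_i)$ controllably negative or at least not small relative to $\sum_i f_i$, contradicting the failure of \eqref{inequ-case1-2}.

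The main obstacle, as the sketch above reveals, is making the geometric content of \eqref{set2-bound-2} quantitative in the unbounded regime: the inequality we want, \eqref{inequ-case1-2}, compares $\sum f_i(\lambda)(\mu_i-\lambda_i)$ to $1+\sum f_i(\lambda)$, and when $|\lambda|\to\infty$ the denominator $\sum f_i(\lambda)$ may tend to $0$ (this is precisely the phenomenon the author flags as the novelty — the lower bound $\sum f_i\geqslant\delta$ can fail without \eqref{addistruc}). So one must either (i) show that along the bad sequence $\sum f_i(\lambda^{(k)})$ is in fact bounded below — plausible because the second alternative failing means the $f_i$ are comparable to each other, and Lemma \ref{Guan-sum1} then upgrades $|\lambda|\sum f_i\geqslant c_\sigma$ via the comparability to $\sum f_i \gtrsim n\min f_i$ and a separate lower bound — or (ii) exploit that when $\sum f_i(\lambda)$ is tiny the inequality \eqref{inequ-case1-2} degenerates to $\sum f_i(\lambda)(\mu_i-\lambda_i)\gtrsim\varepsilon$, i.e. one just needs $\sum f_i(\lambda)(\mu_i-\lambda_i)$ bounded below by a fixed positive constant, and this follows from \eqref{L1-u-ubar}-style concavity plus \eqref{set2-bound-2} forcing $\mu-\lambda^{(k)}$ to have a large component in a direction of positive $f_i$-weight. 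I expect route (ii), combined with a clean dichotomy on the size of $\sum f_i(\lambda)$ (say $\geqslant\delta$ versus $<\delta$), to be the cleanest, mirroring the casewise structure the author attributes to Guan; the routine part is then just bookkeeping with \eqref{concave-1}, Lemma \ref{lemma1-cvpde-yuan}, and Lemma \ref{Guan-sum1}.
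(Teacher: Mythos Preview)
Your approach has a genuine gap in the unbounded case, and the whole contradiction/compactness framework is unnecessary.

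The concavity step you write,
\[
\sum_{i} f_i(\lambda^{(k)})(\mu_i-2\delta-\lambda^{(k)}_i)\geqslant f(\mu-2\delta\vec{\bf 1})-\sigma,
\]
requires $\mu-2\delta\vec{\bf 1}\in\Gamma$, which is not given, and even when it holds the right side may be $\leqslant 0$ (if $f(\mu-2\delta\vec{\bf 1})\leqslant\sigma$). In that case you only get $\sum f_i(\mu_i-\lambda_i)\geqslant 2\delta\sum f_i - C$ with $C\geqslant 0$, which says nothing when $\sum f_i$ is small --- precisely the regime you worry about. Your route (i) is also mistaken: failure of \eqref{inequ-case2-2} means the $f_i$ are \emph{not} comparable (some $f_{i_0}$ is much smaller than $\sum f_j$), so you cannot upgrade Lemma~\ref{Guan-sum1} that way. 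Your route (ii) gestures at the right idea but never names the mechanism.

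The missing trick --- and this is what makes the paper's proof a four-line direct argument with no compactness at all --- is to apply concavity not at $\mu-2\delta\vec{\bf 1}$ but at $\tilde\mu+R_0\vec{\bf e}_n$, where $\tilde\mu=\mu-\delta\vec{\bf 1}$ and $n$ is the index with the \emph{smallest} $f_n(\lambda)$. The hypothesis \eqref{set2-bound-2} is exactly what guarantees a uniform $R_0$ with $f(\tilde\mu+R_0\vec{\bf e}_i)>\sigma$ for every $i$ (this is the content of ``bounded intersection'': far along each coordinate ray from $\tilde\mu$ one is past the level set). Concavity then gives
\[
\sum_i f_i(\lambda)(\mu_i-\lambda_i)=\sum_i f_i(\lambda)(\tilde\mu_i-\lambda_i)+\delta\sum_i f_i
\geqslant \bigl(f(\tilde\mu+R_0\vec{\bf e}_n)-\sigma\bigr)-R_0 f_n(\lambda)+\delta\sum_i f_i.
\]
Now the dichotomy is immediate: either $f_n\geqslant\frac{\delta}{2R_0}\sum f_i$ and \eqref{inequ-case2-2} holds, or $R_0 f_n\leqslant\frac{\delta}{2}\sum f_i$ and the display yields \eqref{inequ-case1-2} with $\varepsilon=\min\{\delta/2,\,\delta/(2R_0),\,\min_i(f(\tilde\mu+R_0\vec{\bf e}_i)-\sigma)\}$. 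No sequences, no limits, no case split on $|\lambda|$.
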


\begin{proof}  
	
	Our proof %of Lemma \ref{lemma5-key}
	is based only  on  the
	concavity assumption. 
	%	As in \cite{Gabor}, we denote
	%	\begin{equation}\begin{aligned}	\widetilde{\Gamma}=\left\{\mu\in\mathbb{R}^n: \mbox{there is $t>0$ such that  }\mu+t\vec{\bf e}_i\in\Gamma, \, \forall 1\leqslant i\leqslant n \right\}, \nonumber	\end{aligned}	\end{equation} 
	Denote $\tilde{\mu}=\mu-\delta \vec{\bf 1}$. By \eqref{set2-bound-2},
	%$\tilde{\mu}\in \widetilde{\Gamma}$ and 
	there exists a  positive constant $R_0$ depending only on $\tilde{\mu}$ and $R$ such that 
	\begin{equation}
		\label{key1-proof-lemma3.2}
		\begin{aligned}  
			f(\tilde{\mu}+R_0\vec{\bf e}_i)>\sigma, \,\, \tilde{\mu}+R_0\vec{\bf e}_i\in\Gamma, \,\, \forall 1\leqslant i\leqslant n, % \nonumber
		\end{aligned}
	\end{equation} 
	where $\vec{\bf e}_i$ 
	is the $i$-th standard basis vector in $\mathbb{R}^n$. 
	%Denote $\delta_0=\underset{1\leqslant i\leqslant n}{\min}\{f(\tilde{\mu}+R_0\vec{\bf e}_i)-\sigma\}$. 
	
	%To see this please refer to Remark \ref{remark3.2}.
	To see this, we pick some $R_0>0$ such that $ \tilde{\mu}_i+R_0>R$, $\forall 1\leqslant i\leqslant n.$ Suppose by contradiction that $\tilde{\mu}+R_0 \vec{\bf e}_{i'} \in \mathbb{R}^n\setminus \Gamma^\sigma$ for some $i'$. Then we  take suitable (small) constant $0<\epsilon<R_0$ such that   $\tilde{\mu}+(R_0-\epsilon) \vec{\bf e}_{i'} \in \mathbb{R}^n\setminus \bar \Gamma^\sigma$ and $\tilde{\mu}_{i'}+R_0-\epsilon>R$. From this we know that there is some $t>0$ such that $\tilde{\mu}+(R_0-\epsilon) \vec{\bf e}_{i'}+t\vec{\bf1}\in\partial\Gamma^\sigma$, which contradicts to \eqref{set2-bound-2}.  
	% This is a contradiction since $(\tilde{\mu}+\Gamma_n)\cap\partial\Gamma^\sigma\subset B_R(0)$ by \eqref{set2-bound-2}.
	
	%	By the concavity of $f$, we have
	%	\begin{equation}\begin{aligned} 	\sum_{i=1}^n f_i(\lambda) (\mu_i-\lambda_i) 		\geqslant %\,& 
			%		f(\mu+R_0\vec{\bf e}_n)-f(\lambda)-R_0f_n(\lambda).   \nonumber	\end{aligned}	\end{equation} 
		
	From \eqref{key1-proof-lemma3.2} we can see that
	there is a small constant $0<\epsilon_1\ll1$  depending only on $\tilde{\mu}$ and $\delta$ such that 
	\begin{equation} 
		\label{key1-proof-lemma3.2-2}
			f((1+\epsilon_1)\tilde{\mu}+R_0\vec{\bf e}_i)>\sigma, \, 	f((1-\epsilon_1)\tilde{\mu}+R_0\vec{\bf e}_i)>\sigma, \, \forall 1\leq i\leq n.
	\end{equation}  
Fix such $\epsilon_1$,  we take  
\begin{equation}
	\label{def-c}
	\begin{aligned}
	\frac{1}{1+c}=
	\begin{cases}
		1-\epsilon_1 \,& \mbox{ if } \sum_{i=1}^n f_i(\lambda)\lambda_i\geq0,\\
			1+\epsilon_1 \,& \mbox{ otherwise.}
			% \sum_{i=1}^n f_i(\lambda)\lambda_i<0
	\end{cases}
\end{aligned}
\end{equation}
%Note that $$\frac{c}{1+c}=\begin{cases}	\epsilon_1 \,& \mbox{ if } \sum_{i=1}^n f_i(\lambda)\lambda_i\geq0,\\-\epsilon_1 \,& \mbox{ otherwise.}\end{cases}$$ 
Denote 
$\delta_0=\min\left\{\underset{1\leqslant i\leqslant n}{\min}\{f((1+\epsilon_1)\tilde{\mu}+R_0\vec{\bf e}_i)-\sigma\}, \underset{1\leqslant i\leqslant n}{\min}\{ f((1-\epsilon_1)\tilde{\mu}+R_0\vec{\bf e}_i)-\sigma\}\right\}.$  

	Below we prove that the conclusion holds for 
\begin{equation}
	\label{const-lemma-choice1}
	\begin{aligned}
		\varepsilon= \min\left\{ \frac{\delta_0}{2R_0}, \frac{\delta(1-\epsilon_1)}{2R_0}, \frac{\epsilon_1}{2R_0},
		\frac{\delta_0}{2(1+\epsilon_1)},
		\frac{\delta}{2},\frac{\epsilon_1}{2(1+\epsilon_1)}\right\}.
	\end{aligned}
\end{equation}

	Without loss of generality,  
	assume 
	$\lambda_1\leqslant \cdots\leqslant\lambda_n$. Then $f_1(\lambda)\geqslant \cdots \geqslant f_n(\lambda).$  
	If 
	$$f_n(\lambda)>\frac{\delta_0}{2R_0}  	
	+\frac{\delta}{2R_0(1+c)}\sum_{i=1}^n f_i(\lambda)
 +\frac{c}{2R_0(1+c)}\sum_{i=1}^n f_i(\lambda)\lambda_i, $$ 
	then by \eqref{def-c} $$f_n(\lambda)>\frac{\delta_0}{2R_0}  	
	+\frac{\delta(1-\epsilon_1)}{2R_0 }\sum_{i=1}^n f_i(\lambda)
	+\frac{\epsilon_1}{2R_0}
	\big|\sum_{i=1}^n f_i(\lambda)\lambda_i\big|.$$ 
	%(Note $f_1(\lambda)\geqslant \cdots \geqslant f_n(\lambda)$). 
	
	From now on we assume  $$ \frac{\delta_0}{2}  
	+\frac{\delta}{2(1+c)}\sum_{i=1}^n f_i(\lambda)		+\frac{c}{2(1+c)}\sum_{i=1}^n f_i(\lambda)\lambda_i	\geqslant R_0f_n(\lambda). $$
	%	\begin{equation}	\begin{aligned}		\frac{\delta_0}{2}+	\frac{\delta}{2}  \sum_{i=1}^n f_i(\lambda) \geqslant R_0f_n(\lambda).  \nonumber	\end{aligned}	\end{equation}
	Note $\mu=\tilde{\mu}+\delta \vec{\bf 1}.$
	By the concavity of $f$, using \eqref{concave-1} below we have
	\begin{equation} 
		\begin{aligned}
		\frac{1}{1+c}	\sum_{i=1}^n f_i(\lambda) (\mu_i-\lambda_i)    
			=\,&   	\sum_{i=1}^n f_i(\lambda) (\frac{1}{1+c}\tilde{\mu}_i-\lambda_i)  +\frac{\delta}{1+c}\sum_{i=1}^n f_i(\lambda)  +\frac{c}{1+c}\sum_{i=1}^n f_i(\lambda)\lambda_i 
		 \\=\,&   	\sum_{i=1}^{n-1} f_i(\lambda) (\frac{1}{1+c}\tilde{\mu}_i-\lambda_i)   	+ f_n(\lambda) (\frac{1}{1+c}\tilde{\mu}_n
		 +R_0-\lambda_n) 	 
		 \\\,&  
		 	-R_0f_n(\lambda)
	+\frac{\delta}{1+c}\sum_{i=1}^n f_i(\lambda)	 +\frac{c}{1+c}\sum_{i=1}^n f_i(\lambda)\lambda_i
			\\ 
			\geqslant\,& 
			f(\frac{1}{1+c}\tilde{\mu}+R_0\vec{\bf e}_n)-f(\lambda)
			+\frac{\delta}{1+c}\sum_{i=1}^n f_i(\lambda)	
			\\\,& 
			+\frac{c}{1+c}\sum_{i=1}^n f_i(\lambda)\lambda_i
				-R_0f_n(\lambda)
			\\ 
			\geqslant \,& 
		 \delta_0  
		 +\frac{\delta}{1+c}\sum_{i=1}^n f_i(\lambda)	+\frac{c}{1+c}\sum_{i=1}^n f_i(\lambda)\lambda_i		-R_0f_n(\lambda)
		 \\\geqslant \,&
		  \frac{\delta_0}{2}  	
		 +\frac{\delta}{2(1+c)}\sum_{i=1}^n f_i(\lambda)+\frac{c}{2(1+c)}\sum_{i=1}^n f_i(\lambda)\lambda_i.   \nonumber
		\end{aligned}
	\end{equation}  
That is
\begin{align*}
	\sum_{i=1}^n f_i(\lambda) (\mu_i-\lambda_i)  
	\geqslant \,& 
	\frac{\delta_0(1+c)}{2}
	+\frac{\delta}{2}
	\sum_{i=1}^n f_i(\lambda)
	+\frac{(1+c)\epsilon_1}{2}\big|\sum_{i=1}^n f_i(\lambda)\lambda_i\big|
	\\	\geqslant \,& 
	\frac{\delta_0}{2(1+\epsilon_1)}
	+\frac{\delta}{2}
	\sum_{i=1}^n f_i(\lambda)
	+\frac{ \epsilon_1}{2(1+\epsilon_1)}\big|\sum_{i=1}^n f_i(\lambda)\lambda_i\big|.
\end{align*}
	% The conclusion holds for $\varepsilon= \min\{ \frac{\delta}{2}, \frac{\delta}{2R_0}, \frac{\delta_0}{2}, \frac{\delta_0}{2R_0}\}.$
	%	We thus complete  the proof.

\end{proof}

\begin{remark}

	In contrast to previous works  \cite{Guan12a, Guan-Dirichlet, Gabor}, Lemma \ref{lemma5-key} stands out prominently due to its inclusion of  the new terms of the form $1+\big|\sum  f_i \lambda_i\big|$
  in both   \eqref{inequ-case1-2} and \eqref{inequ-case2-2}. 
		This  feature enhances the scope %and rigor 
	of the results. 
\end{remark}

%\begin{remark}	In Lemma \ref{lemma5-key} 	$(\mu-2\delta \vec{\bf 1}+\Gamma_n) \cap \partial\Gamma^\sigma\neq \emptyset.$\end{remark}

As a consequence, we have
\begin{lemma} 
	\label{lemma3-key}
	Let $(\bar M,\omega)$ be a compact Hermitian manifold of complex dimension $n\geqslant2$ (possibly with boundary).
	Suppose  that there exists a $\mathcal{C}$-subsolution $\underline{u}\in C^{2}(\bar M)$ to the equation \eqref{equ1-main}.
	% Let $\underline{u}$ be $\mathcal{C}$-subsolution.  
	%	There exist positive constants $R_0$, $\varepsilon$ 
	%with the following property: If $|\lambda|\geqslant R_0$, then we either have
	%	such that if $|\lambda|\geqslant R_0$  then we either have
	Then there exists a positive constant  $\varepsilon$, independent of $(\delta_{\psi,f})^{-1}$,
	%with the following property: If $|\lambda|\geqslant R_0$, then we either have
	such that  either  
	\begin{equation}
		\label{inequ-case1}
		\begin{aligned}
			F^{i\bar j}(\underline{\mathfrak{g}}_{i\bar j}-\mathfrak{g}_{i\bar j})
			\geqslant \varepsilon(1+ F^{i\bar j}g_{i\bar j}+ \big|F^{i\bar j}\mathfrak{g}_{i\bar j}\big|)  %\nonumber
		\end{aligned}
	\end{equation}
	or
	\begin{equation}
		\label{inequ-case2}
		\begin{aligned}
			F^{i\bar j}\geqslant \varepsilon (1+F^{p\bar q}g_{p\bar q}
			+\big|F^{p\bar q}\mathfrak{g}_{p\bar q}\big|)g^{i\bar j}.
			%\nonumber
		\end{aligned}
	\end{equation}
\end{lemma}

\begin{proof}

	Since $\underline{u}$ is a $\mathcal{C}$-subsolution,   $(\underline{\lambda}  +\Gamma_n) \cap \partial\Gamma^{\psi(z)}$ is bounded for any $z\in\bar M$.
%	 where  $\underline{\lambda}(z)=\lambda(\omega^{-1}\mathfrak{\underline{g}})(z)$.
	 Thus, one
	can choose $\delta$ sufficiently small such that  for some $R>0$,
	\begin{equation}
		\label{set2-bound}
		\begin{aligned} 
			(\underline{\lambda}  -2\delta \vec{\bf 1}+\Gamma_n) \cap \partial\Gamma^{\psi(z)} \subset B_R(0), \, \, \forall z\in\bar M.    \nonumber
		\end{aligned}
	\end{equation} 
Combining Lemma \ref{lemma5-key}, \eqref{L1-u-ubar} and $F^{i\bar j}\mathfrak{g}_{i\bar j}=\sum  f_i(\lambda)\lambda_i$, we have either \eqref{inequ-case1} or \eqref{inequ-case2}.
From the choice of $\varepsilon$ in \eqref{inequ-case1-2}, we see that the constant does not depend  on $(\delta_{\psi,f})^{-1}.$
	
\end{proof}

\begin{remark} 
	Note that from concavity assumption \eqref{concave} we can infer that
	\begin{equation}\label{concave-1} 	\begin{aligned}	\sum_{i=1}^n f_i(\lambda)(\mu_i-\lambda_i)\geqslant f(\mu)-f(\lambda), \,\, \forall \lambda, \, \mu\in\Gamma.	\end{aligned}\end{equation}
	%	Note that \eqref{concave-1}   
	In comparison, Lemmas \ref{lemma5-key} 
	and \ref{lemma3-key}
	 %has more thoroughly exploited the information related to concavity.
	 %has more fully explored the information of concavity.
   %Lemma  \ref{lemma5-key} more fully exploits the information about concavity.
 more fully exploit the information about concavity.
  
  \end{remark}

\subsection{Monotonicity}

The following lemma asserts that, together with the unbounded condition, the  concavity assumption may imply the monotonicity. 
%This was observed in new draft of \cite{yuan-regular-DP}.

\begin{lemma}   %[\cite{yuan-regular-DP}]
	\label{lemma1-unbound-yield-elliptic}
	%In the presence of concave and %\eqref{unbounded-1}
	If $f$ satisfies \eqref{concave} and %the unbounded condition 
	\eqref{unbounded-1} in $\Gamma$,
	%\begin{equation} 	\label{unbounded-3}	\begin{aligned}  \lim_{t\to+\infty}f(\lambda_1,\cdots,\lambda_{n-1},\lambda_n+t)=\sup_{\Gamma}f, 	\,\, \forall\lambda\in\Gamma,	\end{aligned}\end{equation}  
	then   \eqref{elliptic} holds.
	
\end{lemma}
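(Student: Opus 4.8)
The plan is to show that if $f$ is concave in $\Gamma$ and satisfies the unboundedness condition \eqref{unbounded-1}, then each $f_i>0$ on $\Gamma$. Fix $\lambda\in\Gamma$ and an index $i$; without loss of generality take $i=n$. First I would note that concavity of $f$ along the ray $t\mapsto f(\lambda_1,\dots,\lambda_{n-1},\lambda_n+t)$ forces the one-variable function $t\mapsto f(\lambda+t\vec{\bf e}_n)$ to be concave and nondecreasing on $[0,\infty)$: indeed, a concave function of one real variable that tends to $+\infty$ as $t\to+\infty$ cannot be decreasing anywhere, hence its derivative $f_n(\lambda+t\vec{\bf e}_n)$ is $\geqslant 0$ for all $t\geqslant 0$; moreover by concavity this derivative is nonincreasing in $t$, so $f_n(\lambda)\geqslant f_n(\lambda+t\vec{\bf e}_n)\geqslant 0$. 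This already gives $f_n(\lambda)\geqslant 0$ for every $\lambda\in\Gamma$.

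The remaining point is to upgrade $f_n(\lambda)\geqslant 0$ to the strict inequality $f_n(\lambda)>0$. Here I would argue by contradiction: suppose $f_n(\lambda^0)=0$ at some $\lambda^0\in\Gamma$. Since $f_n(\lambda^0+t\vec{\bf e}_n)$ is nonincreasing in $t$ and bounded below by $0$, it must be identically $0$ for all $t\geqslant 0$; hence $f(\lambda^0+t\vec{\bf e}_n)$ is constant in $t$ on $[0,\infty)$, directly contradicting \eqref{unbounded-1} applied at $\mu=\lambda^0$. Therefore $f_n(\lambda)>0$ for all $\lambda\in\Gamma$, and since the argument is symmetric in the components and $f$ is a symmetric function, $f_i(\lambda)>0$ for every $i$ and every $\lambda\in\Gamma$, which is \eqref{elliptic}.

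One should double-check the subtlety that $\Gamma$ is an open cone, so for $\lambda\in\Gamma$ the whole ray $\lambda+t\vec{\bf e}_n$, $t\geqslant 0$, need not lie in $\Gamma$ a priori — but in fact it does, because $\Gamma\supseteq\Gamma_n$ implies $\vec{\bf e}_n$ is, up to scaling, in the recession directions of $\Gamma$: more precisely, if $\lambda\in\Gamma$ then $\lambda+t\vec{\bf e}_n = \tfrac{1}{1+t\varepsilon}\big((1+t\varepsilon)\lambda\big) + \tfrac{t\varepsilon}{1+t\varepsilon}\big(\tfrac{1}{\varepsilon}\vec{\bf e}_n + (1+t\varepsilon)\lambda - \lambda\big)$ — cleaner is simply to observe $\Gamma_n\subseteq\Gamma$ gives $\vec{\bf e}_n$ in the closure of recession cone, and openness plus convexity then yields $\lambda+t\vec{\bf e}_n\in\Gamma$ for all $t\geqslant0$ whenever $\lambda\in\Gamma$. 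So \eqref{unbounded-1} is meaningfully stated on the full ray, and the one-variable concavity argument applies.

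The main obstacle is really just the strictness step: concavity alone gives $f_i\geqslant 0$, and one must use \eqref{unbounded-1} in an essential way — not merely that $f$ is bounded below along the ray but that it is genuinely unbounded above — to rule out the degenerate flat case $f_n\equiv 0$. Everything else is elementary one-variable convexity bookkeeping together with the observation that $\vec{\bf e}_n$ points into a recession direction of $\Gamma$.
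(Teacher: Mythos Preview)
Your argument is correct and uses essentially the same idea as the paper: concavity of the one-variable function $t\mapsto f(\lambda+t\vec{\bf e}_n)$ combined with its divergence to $+\infty$ forces $f_n(\lambda)>0$. The paper compresses your two steps (nonnegativity, then strictness by contradiction) into the single secant inequality
\[
f_n(\lambda)\geqslant \frac{f(\lambda_1,\dots,\lambda_{n-1},\lambda_n+t)-f(\lambda)}{t}>0
\]
for $t\gg1$, and reduces to the smallest partial $f_n$ by ordering $\lambda_1\leqslant\cdots\leqslant\lambda_n$ so that $f_1\geqslant\cdots\geqslant f_n$; but this is only a stylistic difference.
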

\begin{proof}
	Suppose $\lambda_1\leqslant\cdots\leqslant\lambda_n$. Then $f_1(\lambda)\geqslant\cdots\geqslant f_n(\lambda)$.
	In view of the concavity and unbound of $f$, by setting $t\gg1$  we know 
	\begin{equation}
		\begin{aligned}
			f_n(\lambda)\geqslant\frac{f(\lambda_1,\cdots,\lambda_{n-1},\lambda_n+t)-f(\lambda)}{t}>0. \nonumber
		\end{aligned}
	\end{equation} 
\end{proof}

\subsection{Lemmas for blow-up argument}
\label{sec-gradient-estimate-sub1}
Let $\Gamma_{\mathcal{G}}^{f}$ be as in  \eqref{component1}. 
First, we verify that
\begin{lemma} 
	\label{lemma1-key}
	Suppose that $f$ satisfies \eqref{elliptic}  and \eqref{concave}  in $\Gamma$. Then
	$\mathring{\Gamma}_{\mathcal{G}}^{f}$ is an  open symmetric convex cone with vertex at origin and  $\Gamma_n\subseteq\mathring{\Gamma}_{\mathcal{G}}^{f}\subseteq\Gamma$. 
	%and  nonempty boundary.
\end{lemma}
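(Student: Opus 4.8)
The plan is to show four things about $\mathring{\Gamma}_{\mathcal{G}}^{f}$: that $\Gamma_{\mathcal{G}}^{f}$ is a convex cone contained in $\Gamma$, that it is symmetric, that it contains $\Gamma_n$ (indeed with room to spare, so that $\Gamma_n$ sits in its interior), and then that taking the interior preserves the cone/symmetry/convexity structure and the inclusions. The cone and symmetry properties of $\Gamma_{\mathcal{G}}^{f}$ itself are almost immediate: if $\lambda\in\Gamma_{\mathcal{G}}^{f}$ and $s>0$, then $\lim_{t\to+\infty}f(ts\lambda)=\lim_{t\to+\infty}f(t\lambda)>-\infty$ (reparametrize $t\mapsto t/s$), so $s\lambda\in\Gamma_{\mathcal{G}}^{f}$; and symmetry is inherited directly from the symmetry of $f$ and of $\Gamma$. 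The inclusion $\Gamma_{\mathcal{G}}^{f}\subseteq\Gamma$ holds by definition of the set.

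The substantive point is convexity. First I would record the elementary but crucial fact that along any ray $t\mapsto f(t\lambda)$ with $\lambda\in\Gamma$ the function is \emph{nondecreasing} in $t$: this follows from \eqref{elliptic} since $\frac{d}{dt}f(t\lambda)=\sum_i f_i(t\lambda)\lambda_i$, and although the sign of individual $\lambda_i$ may be negative, one should instead argue via concavity — for $t'>t>0$, $f(t'\lambda)-f(t\lambda)\geqslant \sum_i f_i(t'\lambda)(t'-t)\lambda_i$, hmm, that still has the same issue. The clean route is: for $0<t<t'$ write $t\lambda = \frac{t}{t'}(t'\lambda) + (1-\frac{t}{t'})\cdot 0$; but $0\notin\Gamma$, so instead compare $f(t\lambda)$ with $f(t'\lambda)$ using that $t'\lambda - t\lambda \in \Gamma_n \cdot (\text{something})$ is not quite right either. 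The correct statement to use is that $g(t):=f(t\lambda)$ is concave in $t$ (restriction of a concave function to a line), hence $g'(t)$ is nonincreasing; combined with the limit $\lim_{t\to+\infty}g(t)$ existing in $(-\infty,+\infty]$ or being $-\infty$, we get that $\lambda\in\Gamma_{\mathcal{G}}^{f}$ iff $g$ is bounded below iff $\lim_{t\to\infty}g(t)>-\infty$, and by concavity this limit is monotone under the relevant operations. For convexity of $\Gamma_{\mathcal{G}}^{f}$: given $\lambda,\mu\in\Gamma_{\mathcal{G}}^{f}$, concavity of $f$ gives $f\!\left(\frac{t}{2}(\lambda+\mu)\right)=f\!\left(\tfrac12(t\lambda)+\tfrac12(t\mu)\right)\geqslant \tfrac12 f(t\lambda)+\tfrac12 f(t\mu)$, and both terms on the right are bounded below as $t\to+\infty$; hence $\frac{1}{2}(\lambda+\mu)\in\Gamma_{\mathcal{G}}^{f}$, and together with the cone property this yields convexity.

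Next, $\Gamma_n\subseteq\mathring{\Gamma}_{\mathcal{G}}^{f}$: for $\lambda\in\Gamma_n$ the limit $\lim_{t\to+\infty}f(t\lambda)$ is either $+\infty$ or a finite number, but in any case $>-\infty$ by monotonicity of $g(t)=f(t\lambda)$ along the ray (which is nondecreasing since all $\lambda_i>0$, so $g'(t)=\sum f_i(t\lambda)\lambda_i>0$), hence $\Gamma_n\subseteq\Gamma_{\mathcal{G}}^{f}$; since $\Gamma_n$ is open, $\Gamma_n\subseteq\mathring{\Gamma}_{\mathcal{G}}^{f}$. Finally, $\mathring{\Gamma}_{\mathcal{G}}^{f}$ is open by definition, and the interior of a convex symmetric cone with vertex at the origin is again a convex symmetric cone with vertex at the origin (standard: interior of a convex set is convex; interior commutes with the linear symmetries permuting coordinates and with scaling by $s>0$); and $\mathring{\Gamma}_{\mathcal{G}}^{f}\subseteq\Gamma_{\mathcal{G}}^{f}\subseteq\Gamma$ while $\Gamma_n=\mathring{\Gamma}_n\subseteq\mathring{\Gamma}_{\mathcal{G}}^{f}$.

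The main obstacle I anticipate is being careful about the monotonicity/boundedness dichotomy along rays — precisely, justifying that $\lambda\in\Gamma_{\mathcal{G}}^{f}$ is equivalent to $g(t)=f(t\lambda)$ being bounded below, and that the defining limit really exists in $(-\infty,+\infty]$ — which is where \eqref{elliptic} and \eqref{concave} get used in tandem. Everything else (cone, symmetry, openness, the inclusions) is formal. One should also note that $\Gamma_{\mathcal{G}}^{f}$ itself need not be open, which is exactly why the statement passes to the interior $\mathring{\Gamma}_{\mathcal{G}}^{f}$; the inclusion $\Gamma_n\subseteq\mathring{\Gamma}_{\mathcal{G}}^{f}$ is what guarantees $\mathring{\Gamma}_{\mathcal{G}}^{f}$ is nonempty and genuinely contains the positive cone.
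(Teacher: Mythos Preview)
Your argument is correct and follows essentially the same route as the paper: both establish the cone, symmetry, and containment properties of $\Gamma_{\mathcal{G}}^{f}$ directly from the definition, derive convexity from the concavity inequality $f(t(s\lambda+(1-s)\mu))\geqslant sf(t\lambda)+(1-s)f(t\mu)$, obtain $\Gamma_n\subseteq\Gamma_{\mathcal{G}}^{f}$ from \eqref{elliptic}, and then pass to the interior. Your digression about monotonicity along arbitrary rays is unnecessary and can be cut --- you only ever use $g'(t)>0$ for $\lambda\in\Gamma_n$, which is immediate.
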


\begin{proof}
	%	[Proof of Lemma \ref{lemma1-key}]
It suffices to consider  $\Gamma_{\mathcal{G}}^{f}$.
	By the definition, $\Gamma_{\mathcal{G}}^{f}$ is a cone with vertex at origin and % nonempty boundary,
	$\Gamma_{\mathcal{G}}^{f}\subseteq\Gamma$.
	By \eqref{elliptic}, $\Gamma_n\subseteq \Gamma_{\mathcal{G}}^{f}.$
	From the symmetry of $f$ and $\Gamma$, it follows that $\Gamma_{\mathcal{G}}^{f}$ is symmetric. By the concavity of $f$ and convexity of $\Gamma$, for all $\lambda,\, \mu\in\Gamma, $
	\begin{align*}
		f(t(s\lambda+(1-s)\mu)) \geqslant sf(t\lambda)+(1-s)f(t\mu), \,   \forall 0\leqslant s\leqslant 1, \forall  t>0.
	\end{align*}
	This implies that $\Gamma_{\mathcal{G}}^{f}$ is convex.  
\end{proof}

The cone $\Gamma_{\mathcal{G}}^{f}$ is a key ingredient for %description of 
analyzing the limiting function of blow-up sequence.
 We start with an observation 
 %in the previous draft \cite{yuan-regular-DP} of this paper 
 \begin{equation}
 	\label{010} 
 	\begin{aligned}
 		\sum_{i=1}^n f_i(\lambda)\mu_i \geqslant \limsup_{t\rightarrow+\infty} f(t\mu)/t, 
 		\,\, \forall \lambda, \, \mu\in\Gamma.  %\nonumber
 \end{aligned}\end{equation} 
 This inequality follows from  the concavity of $f$. %\eqref{concave-1}.  
  Now we come to our key lemma. 
\begin{lemma}
	\label{lemma2-key}
	Suppose that $f$ satisfies  %\eqref{elliptic} and 
	\eqref{concave} in $\Gamma$. Then 
	%for any $\lambda\in\Gamma$ and $\mu\in \Gamma_{\mathcal{G}}^{f}$,
	\begin{equation}
		\label{lemma315} 
		\begin{aligned}
			f(\lambda+\mu)\geqslant f(\lambda), 
		%\mbox{ and } \sum_{i=1}^n f_i(\lambda)\mu_i \geqslant 0,
			\,\, \forall \lambda\in\Gamma, \, 
			\forall	\mu\in  \Gamma_{\mathcal{G}}^{f}.
		\end{aligned}
	\end{equation} 
\end{lemma}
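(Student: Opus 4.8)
The plan is to reduce the inequality to a one-variable statement: restrict $f$ to the ray $t\mapsto \lambda+t\mu$ and show that the resulting concave function of $t$ is non-decreasing on $[0,\infty)$.

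First I would fix $\lambda\in\Gamma$ and $\mu\in\Gamma_{\mathcal{G}}^{f}$. Since $\Gamma$ is an open convex cone and $\Gamma_{\mathcal{G}}^{f}\subseteq\Gamma$, the point $\lambda+t\mu$ lies in $\Gamma$ for every $t\geqslant 0$, so $g(t):=f(\lambda+t\mu)$ is a well-defined smooth function on $[0,\infty)$ with
$$g'(t)=\sum_{i=1}^n f_i(\lambda+t\mu)\mu_i .$$
Applying the observation \eqref{010} with $\lambda+t\mu\in\Gamma$ in the role of $\lambda$ there, we get
$$g'(t)\geqslant \limsup_{s\to+\infty}\frac{f(s\mu)}{s},\qquad\forall\, t\geqslant 0 .$$

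Next I would check that the right-hand side is nonnegative, and this is exactly where the membership $\mu\in\Gamma_{\mathcal{G}}^{f}$ enters. The function $s\mapsto f(s\mu)$ is concave, being the composition of the concave $f$ with a linear map, so $\lim_{s\to+\infty}f(s\mu)$ exists in $(-\infty,+\infty]$; the defining condition of $\Gamma_{\mathcal{G}}^{f}$ in \eqref{component1} says this limit is $>-\infty$, hence $f(s\mu)\geqslant C$ for some constant $C$ and all large $s$, and therefore $\limsup_{s\to+\infty}f(s\mu)/s\geqslant \lim_{s\to+\infty}C/s=0$. Combining with the previous display gives $g'(t)\geqslant 0$ for all $t\in[0,\infty)$, so $g$ is non-decreasing; in particular $f(\lambda+\mu)=g(1)\geqslant g(0)=f(\lambda)$, which is \eqref{lemma315}.

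I do not expect a serious obstacle here; the only point that needs care is the step producing $\limsup_{s\to+\infty}f(s\mu)/s\geqslant 0$, since a merely concave $f$ can fail to be monotone along a ray, and it is precisely the hypothesis $\mu\in\Gamma_{\mathcal{G}}^{f}$ that rules this out. As an alternative I would note that one can bypass differentiation entirely by writing $\lambda+\mu=\tfrac{t-1}{t}\lambda+\tfrac1t(\lambda+t\mu)$ for $t>1$, applying concavity \eqref{concave} to get $f(\lambda+\mu)\geqslant \tfrac{t-1}{t}f(\lambda)+\tfrac1t f(\lambda+t\mu)$, and letting $t\to+\infty$ after checking that $\liminf_{t\to+\infty}\tfrac1t f(\lambda+t\mu)\geqslant 0$ (again via concavity along the ray and the boundedness below coming from $\mu\in\Gamma_{\mathcal{G}}^{f}$); I would present the derivative version as the cleaner route.
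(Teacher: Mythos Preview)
Your proof is correct and follows essentially the same route as the paper: both use the observation \eqref{010} together with $\mu\in\Gamma_{\mathcal{G}}^{f}$ to conclude $\sum_i f_i(\cdot)\,\mu_i\geqslant 0$, and then invoke concavity. The only cosmetic difference is that the paper applies the supporting-plane inequality \eqref{concave-1} once at the point $\lambda+\mu$, whereas you integrate $g'(t)\geqslant 0$ along the segment---these are equivalent.
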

\begin{proof}
 %[Proof of Lemma \ref{lemma2-key}]
 
	%As in \cite{yuan-regular-DP}, 
	%The concavity assumption \eqref{concave} gives 
%	From \eqref{concave-1}, we conclude that for any $ \lambda, \, \mu\in\Gamma$
From \eqref{010},   for any $\lambda\in\Gamma$ and $\mu\in 	\Gamma_{\mathcal{G}}^{f}$, we have
$\sum_{i=1}^n f_i(\lambda)\mu_i \geqslant 0.$  
Combining  \eqref{concave-1},	
we obtain \eqref{lemma315}.
%\begin{equation}	\label{hhh-1}	\begin{aligned}		f(\lambda+\mu)-f(\lambda)\geqslant  \sum_{i=1}^n f_i(\lambda+\mu)\mu_i 	\end{aligned}\end{equation}	

\end{proof}

Moreover, we have some criteria for the cone $\Gamma_{\mathcal{G}}^{f}.$
\begin{lemma}	  
 	\label{lemma3.4-strongerversion}
Given $\mu\in\Gamma$.
Suppose that $f$ satisfies %\eqref{elliptic} and 
\eqref{concave} in $\Gamma$. Then the following are mutually 
equivalent.
\begin{enumerate}
	\item 
	$\underset{t\to+\infty}{\lim} f(t\mu)>-\infty$, i.e., $\mu\in\Gamma_{\mathcal{G}}^{f}$.
	
	\item  $\underset{t\rightarrow+\infty}{\limsup} f(t\mu)/t\geqslant0.$
	
	\item  $\sum_{i=1}^n f_i(\lambda)\mu_i\geqslant 0$, $\forall \lambda\in\Gamma.$
		
	\item $f(\lambda+\mu)\geqslant f(\lambda)$, $\forall \lambda\in\Gamma$.
\end{enumerate} 
\end{lemma}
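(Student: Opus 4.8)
The plan is to establish the four equivalences by proving a cycle of implications, using the two already-recorded inequalities: the concavity consequence \eqref{010}, namely $\sum_{i=1}^n f_i(\lambda)\mu_i \geqslant \limsup_{t\to+\infty} f(t\mu)/t$ for all $\lambda,\mu\in\Gamma$, and the subgradient inequality \eqref{concave-1}. I would prove the ring $(1)\Rightarrow(2)\Rightarrow(3)\Rightarrow(4)\Rightarrow(1)$.

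First, $(1)\Rightarrow(2)$: if $\lim_{t\to+\infty}f(t\mu)>-\infty$, then $f(t\mu)$ is bounded below for large $t$ (being monotone in $t$ along the ray by concavity and \eqref{elliptic}—or simply because the limit exists as a finite number or $+\infty$), so $f(t\mu)/t$ has nonnegative limsup; dividing a bounded-below or increasing-to-a-limit quantity by $t\to+\infty$ forces $\limsup_{t\to+\infty} f(t\mu)/t\geqslant 0$. Next, $(2)\Rightarrow(3)$ is immediate from \eqref{010}: $\sum_{i=1}^n f_i(\lambda)\mu_i \geqslant \limsup_{t\to+\infty} f(t\mu)/t \geqslant 0$ for every $\lambda\in\Gamma$. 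Then $(3)\Rightarrow(4)$ follows from the concavity inequality \eqref{concave-1} applied with the pair $\lambda$ and $\lambda+\mu$ (both in $\Gamma$ since $\Gamma$ is a convex cone containing $\mu$): $f(\lambda+\mu)-f(\lambda)\geqslant \sum_{i=1}^n f_i(\lambda)\big((\lambda_i+\mu_i)-\lambda_i\big)=\sum_{i=1}^n f_i(\lambda)\mu_i\geqslant 0$. Finally, $(4)\Rightarrow(1)$: iterating $f(\lambda+\mu)\geqslant f(\lambda)$ gives $f(\lambda+k\mu)\geqslant f(\lambda)$ for every positive integer $k$; taking $\lambda=\mu$ yields $f((k+1)\mu)\geqslant f(\mu)$, and since $f(t\mu)$ is nondecreasing in $t$ (again by concavity plus monotonicity, or directly: for $t\in[k,k+1]$, $t\mu$ lies on the segment between $k\mu$ and $(k+1)\mu$ shifted appropriately—more cleanly, $f(t\mu)\geqslant f(\lfloor t\rfloor\mu)\geqslant f(\mu)$ using (4) with $\lambda = \lfloor t\rfloor\mu$ is not quite it, so I would instead note $f(t\mu)=f\big((t-\lfloor t\rfloor)\mu + \lfloor t\rfloor\mu\big)\geqslant f((t-\lfloor t\rfloor)\mu)$ is the wrong direction too). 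The clean argument for $(4)\Rightarrow(1)$: by (4), $t\mapsto f(t\mu)$ satisfies $f((t+1)\mu)\geqslant f(t\mu)$, and a concave function of one variable that does not decrease by integer steps is bounded below on $[1,\infty)$; indeed concavity of $g(t):=f(t\mu)$ gives, for $t\geqslant 1$, $g(t)\geqslant \min\{g(1), \liminf g\}$, and $g(k)\geqslant g(1)$ for all integers $k\geqslant 1$ forces $\lim_{t\to+\infty} g(t)=\sup g\geqslant g(1)>-\infty$, so $\mu\in\Gamma_{\mathcal{G}}^{f}$.

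The only delicate point is the one-variable real-analysis step in $(4)\Rightarrow(1)$: I must rule out that $g(t)=f(t\mu)$ drifts to $-\infty$ between integer points. Since $g$ is concave on $(0,\infty)$, if $g(t_0)<g(1)$ for some $t_0>1$ then $g$ is strictly decreasing on $[t_0,\infty)$ with slope bounded away from zero below, contradicting $g(k)\geqslant g(1)$ for arbitrarily large integers $k$. Thus $g(t)\geqslant g(1)$ for all $t\geqslant 1$, and being concave and bounded below it converges, giving (1). Everything else is a direct substitution into \eqref{010} and \eqref{concave-1}, so the proof is short; I would present it as the four-step cycle above, flagging that \eqref{elliptic} is not needed for $(2)\Leftrightarrow(3)\Leftrightarrow(4)$ but the monotonicity of $g$ along rays—used only in $(1)\Leftrightarrow(2)$ and $(4)\Rightarrow(1)$—is purely a consequence of concavity once one knows $g$ is bounded below, so in fact the lemma needs only \eqref{concave} as stated in its hypotheses.

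\begin{proof}
We prove $(1)\Rightarrow(2)\Rightarrow(3)\Rightarrow(4)\Rightarrow(1)$.

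$(1)\Rightarrow(2)$. Write $g(t)=f(t\mu)$. If $\lim_{t\to+\infty}g(t)>-\infty$, then $g$ is bounded below on $[1,\infty)$, so $\limsup_{t\to+\infty}g(t)/t\geqslant 0$.

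$(2)\Rightarrow(3)$. By \eqref{010}, for every $\lambda\in\Gamma$ we have $\sum_{i=1}^n f_i(\lambda)\mu_i\geqslant \limsup_{t\to+\infty}f(t\mu)/t\geqslant 0$.

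$(3)\Rightarrow(4)$. Since $\Gamma$ is a convex cone and $\mu\in\Gamma$, both $\lambda$ and $\lambda+\mu$ lie in $\Gamma$ for $\lambda\in\Gamma$. Applying \eqref{concave-1} with the pair $\lambda,\lambda+\mu$,
\begin{align*}
f(\lambda+\mu)-f(\lambda)\geqslant \sum_{i=1}^n f_i(\lambda)\big((\lambda_i+\mu_i)-\lambda_i\big)=\sum_{i=1}^n f_i(\lambda)\mu_i\geqslant 0.
\end{align*}

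$(4)\Rightarrow(1)$. Iterating $(4)$ with $\lambda=k\mu$ gives $g(k+1)\geqslant g(k)$ for every integer $k\geqslant 1$, hence $g(k)\geqslant g(1)$ for all integers $k\geqslant 1$. The function $g$ is concave on $(0,\infty)$. If $g(t_0)<g(1)$ for some $t_0>1$, then concavity forces $g$ to be non-increasing with a negative slope on $[t_0,\infty)$, so $g(t)\to-\infty$, contradicting $g(k)\geqslant g(1)$ for arbitrarily large integers $k$. Therefore $g(t)\geqslant g(1)$ for all $t\geqslant 1$, and a concave function on $[1,\infty)$ that is bounded below converges to its supremum. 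Thus $\lim_{t\to+\infty}f(t\mu)>-\infty$, i.e. $\mu\in\Gamma_{\mathcal{G}}^{f}$.
\end{proof}
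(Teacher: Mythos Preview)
Your proof is correct and uses the same two ingredients as the paper, namely \eqref{010} for $(2)\Rightarrow(3)$ and \eqref{concave-1} for $(3)\Rightarrow(4)$. The only difference is in how you close the cycle: you argue $(4)\Rightarrow(1)$ via a one-variable concavity argument, whereas the paper closes via $(3)\Rightarrow(1)$, which is shorter---taking $\lambda=t\mu$ in $(3)$ gives $\frac{d}{dt}f(t\mu)=\sum_i f_i(t\mu)\mu_i\geqslant 0$, so $f(t\mu)$ is nondecreasing and hence bounded below by $f(\mu)$.
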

\begin{proof}
	Obviously,   $\mathrm{(3)}$ $\Rightarrow$  $\mathrm{(1)}$ $\Rightarrow$  $\mathrm{(2)}$.
	By \eqref{010}, $\mathrm{(2)}$ $\Rightarrow$  $\mathrm{(3)}$. From  \eqref{concave-1},  $\mathrm{(3)}$ $\Leftrightarrow$  $\mathrm{(4)}$.

\end{proof}

 \begin{remark}In Lemmas \ref{lemma2-key} and  \ref{lemma3.4-strongerversion},	the condition \eqref{elliptic} is not needed. \end{remark}

 \begin{remark}
 	
 If $f$ is a homogeneous function  of degree one, 	i.e., 
 	 $f(t\lambda)=tf(\lambda)$ $\forall \lambda\in\Gamma, t>0$,  
 	using \eqref{010}  
 	we then obtain a G{\aa}rding  type inequality 
 	\begin{equation}
 		\sum_{i=1}^n	f_i(\lambda)\mu_i\geqslant  f(\mu), \mbox{ } \forall\lambda, \, \mu\in\Gamma. \nonumber
 	\end{equation}
 \end{remark}

 \subsubsection*{\bf Criteria for 
 	%symmetric concave 
 	the	function  satisfying \eqref{addistruc}}

According to Lemma \ref{lemma3.4-strongerversion},   the  assumption \eqref{addistruc}
 is equivalent to one of the following conditions: 
 \begin{equation}
	\label{addistruc-0}
	\begin{aligned}
		\text {For any } \lambda \in \Gamma,\mbox{ } \lim _{t \rightarrow+\infty} f(t \lambda)>-\infty,
	\end{aligned}
\end{equation}
\begin{equation}
	\label{addistruc-1}
	\begin{aligned}
		\mbox{For any } \lambda\in\Gamma, \mbox{  } 
		\limsup_{t\rightarrow+\infty} f(t\lambda)/t\geqslant 0,
	\end{aligned}
\end{equation}  
	\begin{equation}\label{addistruc-10}   \begin{aligned}
		f(\lambda+\mu)>f(\lambda), \mbox{ }\forall \lambda, \, \mu\in \Gamma,
\end{aligned} \end{equation}  
	\begin{equation}
		\label{addistruc-4}
		\begin{aligned}
			\sum_{i=1}^n f_i(\lambda)\mu_i>0, \mbox{   } \forall \lambda, \, \mu\in \Gamma.
		\end{aligned}
	\end{equation}

\begin{lemma}
	\label{lemma3.4}
	%For $f$ satisfying 
	In the presence of  \eqref{elliptic} and \eqref{concave}, the following
	% statements 
	are %mutually 
	equivalent.
	% to each other.
	% \eqref{addistruc} $\Leftrightarrow$ \eqref{addistruc-3} $\Leftrightarrow$ \eqref{addistruc-2}.  
	\begin{enumerate}
		\item $f$ satisfies  \eqref{addistruc}.
		\item $f$ satisfies  \eqref{addistruc-0}, i.e.,  $\Gamma_{\mathcal{G}}^{f}=\Gamma$.
		\item  $f$ satisfies  \eqref{addistruc-1}.	
	%	\item $f$ satisfies \eqref{addistruc-10-2}.
		\item  $f$ satisfies \eqref{addistruc-10}.	
	%	\item  $f$ satisfies \eqref{addistruc-2}.
		\item  $f$ satisfies \eqref{addistruc-4}.
  % \item $f$ satisfies  \eqref{addistruc-3}.

	\end{enumerate}
\end{lemma}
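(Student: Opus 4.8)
\textbf{Proof proposal for Lemma \ref{lemma3.4}.}
The plan is to prove the equivalences by combining the criteria for a single vector already established in Lemma \ref{lemma3.4-strongerversion} with a pointwise-to-global upgrade, quantifying over all $\lambda,\mu\in\Gamma$. Note first that the conditions (1)--(5) are exactly the ``strict, for all $\mu$'' counterparts of the four equivalent conditions in Lemma \ref{lemma3.4-strongerversion}, supplemented by the reformulation $\Gamma_{\mathcal{G}}^f=\Gamma$ of \eqref{addistruc-0}. So the skeleton is: apply Lemma \ref{lemma3.4-strongerversion} with a fixed but arbitrary $\mu\in\Gamma$ to get (2) $\Leftrightarrow$ (3) $\Leftrightarrow$ (5) once we know these hold for every $\mu$, and handle the strict inequalities in (1), (4), (5) separately.

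First I would show (2) $\Leftrightarrow$ (3) $\Leftrightarrow$ (5) hold ``for all $\mu$'' simultaneously. For each fixed $\mu\in\Gamma$, Lemma \ref{lemma3.4-strongerversion} gives that $\mu\in\Gamma_{\mathcal{G}}^f$ iff $\limsup_{t\to+\infty}f(t\mu)/t\geqslant 0$ iff $\sum_i f_i(\lambda)\mu_i\geqslant 0$ for all $\lambda$. Quantifying over $\mu$: $\Gamma_{\mathcal{G}}^f=\Gamma$ is precisely \eqref{addistruc-0}, which is the ``$\geqslant$, for all $\mu$'' version of (3); and the ``for all $\lambda,\mu$'' version of $\sum_i f_i(\lambda)\mu_i\geqslant 0$ is the non-strict analogue of (5). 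The only subtlety is the strict sign in \eqref{addistruc-4}: I would argue that $\sum_i f_i(\lambda)\mu_i\geqslant 0$ for all $\mu\in\Gamma$ already forces strictness, because $\Gamma$ is open, so for any $\mu\in\Gamma$ one can write $\mu=\mu'+\mu''$ with $\mu',\mu''\in\Gamma$ (e.g. $\mu'=\mu''=\mu/2$), hence $\sum_i f_i(\lambda)\mu_i = \tfrac12\sum_i f_i(\lambda)\mu_i + \tfrac12\sum_i f_i(\lambda)\mu_i$; to get genuine strictness use that $\Gamma_n\subseteq\Gamma$ so $\mu-\varepsilon\vec{\bf 1}\in\Gamma$ for small $\varepsilon>0$, giving $\sum_i f_i(\lambda)\mu_i \geqslant \varepsilon\sum_i f_i(\lambda) > 0$ by \eqref{elliptic}. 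Similarly \eqref{addistruc} is the strict version of (2)$=\eqref{addistruc-0}$: from $\lim_{t\to+\infty}f(t\mu)>-\infty$ for all $\mu\in\Gamma$, applied to $\mu-\varepsilon\vec{\bf 1}\in\Gamma$, concavity and monotonicity give $f(t\mu)=f(t(\mu-\varepsilon\vec{\bf1})+t\varepsilon\vec{\bf1})\geqslant f(t(\mu-\varepsilon\vec{\bf1}))$ plus a term that can be made to beat any fixed $f(\nu)$; more simply, $\lim_{t}f(t\mu)\geqslant \lim_t f(t(\mu-\varepsilon\vec{\bf1})) =:L>-\infty$, and then $f(t\mu)\geqslant f(t\mu - s\vec{\bf 1}) \geqslant$ slides to show the limit strictly exceeds any $f(\nu)$ by feeding the slack $\varepsilon\vec{\bf 1}$ through \eqref{010}.

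Next I would close the loop with (4). The implication (4) $\Rightarrow$ (1) is trivial (take $\mu$ fixed and let it be any element, compare $f(\lambda+\mu)>f(\lambda)$; iterating $f(\lambda+k\mu)$ is increasing and the standard argument bounds $f(t\mu)$ below). For (5) $\Rightarrow$ (4): by \eqref{concave-1} we have $f(\lambda+\mu)-f(\lambda)\geqslant \sum_i f_i(\lambda+\mu)((\lambda+\mu)_i-\lambda_i)=\sum_i f_i(\lambda+\mu)\mu_i$, wait—I should instead use concavity in the other direction, namely $f(\lambda+\mu)-f(\lambda)\geqslant \sum_i f_i(\lambda+\mu)\mu_i$ which by (5) applied at the point $\lambda+\mu\in\Gamma$ is $>0$. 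Conversely (4) $\Rightarrow$ (5) follows from \eqref{concave-1} since $\sum_i f_i(\lambda)\mu_i \geqslant f(\lambda+\mu)-f(\lambda)>0$. This is essentially the (3)$\Leftrightarrow$(4) step of Lemma \ref{lemma3.4-strongerversion} with strict inequalities, so I would simply cite that and note the strictness is preserved by the openness-of-$\Gamma$ trick above.

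The main obstacle, such as it is, is bookkeeping the strict versus non-strict inequalities: Lemma \ref{lemma3.4-strongerversion} is stated with $\geqslant$ and $\lim>-\infty$, whereas conditions (1), (4), (5) here demand strict inequalities for \emph{all} $\mu\in\Gamma$. The resolution in every case is the same elementary observation—$\Gamma$ is open and contains $\Gamma_n$, so any $\mu\in\Gamma$ has a little room $\mu-\varepsilon\vec{\bf 1}\in\Gamma$, and \eqref{elliptic} turns the resulting slack $\varepsilon\sum_i f_i(\lambda)>0$ into genuine strictness. I expect no genuine difficulty beyond arranging these implications in an efficient cycle, say (1) $\Rightarrow$ (2) $\Rightarrow$ (3) $\Rightarrow$ (5) $\Rightarrow$ (4) $\Rightarrow$ (1), invoking Lemma \ref{lemma3.4-strongerversion} for the non-strict core and the openness trick for the upgrades.
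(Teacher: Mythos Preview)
Your proposal is correct and follows the same approach the paper takes: the paper simply declares Lemma \ref{lemma3.4} to be a consequence of Lemma \ref{lemma3.4-strongerversion} and gives no further proof, leaving the passage from non-strict to strict inequalities implicit. You have correctly identified and executed the only nontrivial step---using openness of $\Gamma$ (so $\mu-\varepsilon\vec{\bf 1}\in\Gamma$) together with \eqref{elliptic} to upgrade $\geqslant$ to $>$---and your cycle of implications via \eqref{concave-1} is sound, despite some hesitation in the write-up around (5)$\Rightarrow$(4).
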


\subsection{A quantitative lemma}
\label{refinementofCNS}

One of key ingredients for quantitative boundary estimate
for double normal derivative is how to follow the track of %the behavior of  
the eigenvalues of %the matrix
 $\left(\mathfrak{g}_{i\bar j}\right)$
%$\left(\mathfrak{g}(\xi_i, J\bar \xi_j)\right)$
as $\mathfrak{g}_{n\bar n}$ 
tends to infinity. To this end, we propose the following lemma.  
%(In fact, this lemma can be viewed as a quantitative version of  \cite[Lemma 1.2]{CNS3}). 
(Since the proof  is somewhat complicated and long, we will leave it to Section \ref{sec-proofofquantitativelemma1}).

\begin{lemma}
	\label{yuan's-quantitative-lemma}
	Let $A$ be an $n\times n$ Hermitian matrix
	\begin{equation}\label{matrix3}\left(\begin{matrix}
			d_1&&  &&a_{1}\\ &d_2&& &a_2\\&&\ddots&&\vdots \\ && &  d_{n-1}& a_{n-1}\\
			\bar a_1&\bar a_2&\cdots& \bar a_{n-1}& \mathrm{{\bf a}} \nonumber
		\end{matrix}\right)\end{equation}
	with $d_1,\cdots, d_{n-1}, a_1,\cdots, a_{n-1}$ fixed, and with $\mathrm{{\bf a}}$ variable.
	Denote the eigenvalues of $A$ by $\lambda=(\lambda_1,\cdots, \lambda_n)$.
	%With the same notation in Lemma \ref{refinement3}.
	Let $\epsilon$ be a fixed positive constant.
	Suppose that  the parameter $\mathrm{{\bf a}}$ %in $A$ 
	satisfies the quadratic
	growth condition  
	\begin{equation}
		\begin{aligned}
			\label{guanjian1-yuan}
			\mathrm{{\bf a}}\geqslant \frac{2n-3}{\epsilon}\sum_{i=1}^{n-1}|a_i|^2 +(n-1)\sum_{i=1}^{n-1} |d_i|+ \frac{(n-2)\epsilon}{2n-3}.
		\end{aligned}
	\end{equation}
	%where  $\epsilon$ is a positive constant.
	% \begin{equation}
	%a \geqslant \frac{2}{\epsilon}\sum_{i=1}^{n-1}|a_i|^2 +(n-1)\sum_{i=1}^{n-1} |d_i|+n \epsilon. \nonumber
	% \end{equation}
	Then the eigenvalues %(possibly with an order)
	%(with an appropriate order)
	(possibly with a proper permutation)
	behave like
	\begin{equation}
		\begin{aligned}
			d_{\alpha}-\epsilon 	\,& < 
			\lambda_{\alpha} < d_{\alpha}+\epsilon, \mbox{  } \forall 1\leqslant \alpha\leqslant n-1, \\ \nonumber
			%	\end{aligned}	\end{equation}
			%\begin{equation}\begin{aligned}
			\,& 	\mathrm{{\bf a}} \leqslant \lambda_{n}
			< \mathrm{{\bf a}}+(n-1)\epsilon. %\nonumber
		\end{aligned}
	\end{equation}
\end{lemma}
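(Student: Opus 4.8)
The strategy is to show that under the quadratic growth condition \eqref{guanjian1-yuan}, the matrix $A$ is close (after conjugation by a unitary matrix) to a block-diagonal matrix, and then invoke a perturbation estimate for eigenvalues. More concretely, I would write $A$ as the sum of the diagonal-plus-one-big-entry ``model'' matrix
\[
A_0=\mathrm{diag}(d_1,\dots,d_{n-1},\mathrm{{\bf a}})
\]
plus the off-diagonal perturbation $E$ whose only nonzero entries are the $a_\alpha$ in the last row and the $\bar a_\alpha$ in the last column. The eigenvalues of $A_0$ are $d_1,\dots,d_{n-1},\mathrm{{\bf a}}$, and when $\mathrm{{\bf a}}$ is very large the eigenvalue $\mathrm{{\bf a}}$ is separated from the cluster $\{d_1,\dots,d_{n-1}\}$ by a gap of size roughly $\mathrm{{\bf a}}-\max_\alpha d_\alpha$. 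The quadratic growth hypothesis is exactly what forces this gap to dominate $\|E\|$ (which is of order $(\sum|a_i|^2)^{1/2}$), so a gap-type perturbation argument should localize the eigenvalues into the claimed windows.

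\textbf{Key steps.} First I would record the elementary bounds: $\lambda_n=\max_i\lambda_i$ satisfies $\lambda_n\ge \mathrm{{\bf a}}$ by testing the Rayleigh quotient on the last coordinate vector $e_n$ (since $\langle Ae_n,e_n\rangle=\mathrm{{\bf a}}$), and by the trace identity $\sum_i\lambda_i=\sum_\alpha d_\alpha+\mathrm{{\bf a}}$. Second, I would estimate $\lambda_n$ from above: decompose a unit eigenvector $v$ for $\lambda_n$ as $v=(v',v_n)$ with $v'\in\mathbb{C}^{n-1}$; from $Av=\lambda_n v$ read off the top block equations $d_\alpha v_\alpha+a_\alpha v_n=\lambda_n v_\alpha$, so $v_\alpha=\frac{a_\alpha v_n}{\lambda_n-d_\alpha}$ whenever $\lambda_n>d_\alpha$ (which holds since $\lambda_n\ge\mathrm{{\bf a}}$ is large), and the bottom equation $\sum_\alpha \bar a_\alpha v_\alpha+\mathrm{{\bf a}}\,v_n=\lambda_n v_n$ then gives
\[
\lambda_n-\mathrm{{\bf a}}=\sum_{\alpha=1}^{n-1}\frac{|a_\alpha|^2}{\lambda_n-d_\alpha}.
\]
Combining this with \eqref{guanjian1-yuan} (which makes each denominator at least $\mathrm{{\bf a}}-\max d_\alpha\gtrsim \frac{2n-3}{\epsilon}\sum|a_i|^2$, modulo the explicit constants chosen in the hypothesis) yields $\lambda_n<\mathrm{{\bf a}}+(n-1)\epsilon$. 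Third, for the remaining $n-1$ eigenvalues I would use a minimax/interlacing argument: the eigenvalues $\lambda_1,\dots,\lambda_{n-1}$ of $A$ interlace, up to controlled error, with the eigenvalues $d_1,\dots,d_{n-1}$ of the $(n-1)\times(n-1)$ principal submatrix $\mathrm{diag}(d_1,\dots,d_{n-1})$; quantitatively, one can use the characterization $\lambda_k=\min_{\dim S=k}\max_{v\in S,|v|=1}\langle Av,v\rangle$ together with the fact that on the subspace $\{v_n=0\}$ the form is exactly $\sum_\alpha d_\alpha|v_\alpha|^2$, while the correction coming from allowing $v_n\ne0$ is, after the substitution above, of order $\sum|a_\alpha|^2/(\mathrm{{\bf a}}-d_\alpha)<\epsilon$ by \eqref{guanjian1-yuan}. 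Running this bound in both directions (using also that $\lambda_1+\cdots+\lambda_{n-1}=\sum_\alpha d_\alpha+\mathrm{{\bf a}}-\lambda_n$ together with the two-sided bound on $\lambda_n$) pins each $\lambda_\alpha$ into $(d_\alpha-\epsilon,d_\alpha+\epsilon)$ after a suitable permutation.

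\textbf{Main obstacle.} The conceptual content is routine gap perturbation theory, so the real work — and the reason the proof is ``somewhat complicated and long'' as the authors warn — is bookkeeping the constants so that the clean inequalities above come out with exactly the coefficients $\frac{2n-3}{\epsilon}$, $(n-1)$, and $\frac{(n-2)\epsilon}{2n-3}$ appearing in \eqref{guanjian1-yuan}, rather than merely ``for $\mathrm{{\bf a}}$ sufficiently large depending on the data.'' In particular one must be careful that the $n-1$ denominators $\lambda_n-d_\alpha$ are simultaneously controlled, that the fixed-point / self-consistency relation $\lambda_n-\mathrm{{\bf a}}=\sum|a_\alpha|^2/(\lambda_n-d_\alpha)$ is solved with the sharp bound, and that the permutation matching $\lambda_\alpha$ to $d_\alpha$ is consistent with the ordering. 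This is why I would defer the full computation, as the paper does, and here only lay out the structure: (i) trivial lower bound $\lambda_n\ge\mathrm{{\bf a}}$; (ii) the self-consistent identity for $\lambda_n$ and its upper bound; (iii) the minimax localization of $\lambda_1,\dots,\lambda_{n-1}$.
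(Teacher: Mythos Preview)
Your approach is correct and, once the pieces are assembled in the right order, is considerably shorter than the paper's argument. The essential observation you are circling around can be stated crisply as follows. Cauchy interlacing for the principal $(n-1)\times(n-1)$ submatrix $\mathrm{diag}(d_1,\dots,d_{n-1})$ gives $\lambda_\alpha\le d_{(\alpha)}$ for $1\le\alpha\le n-1$ (with $d_{(1)}\le\cdots\le d_{(n-1)}$ the ordered $d_i$'s). The eigenvector identity in your step~(ii), together with the hypothesis \eqref{guanjian1-yuan}, actually yields the \emph{sharp} bound $0\le\lambda_n-\mathrm{{\bf a}}\le \epsilon/(2n-3)$, not merely $<(n-1)\epsilon$; indeed $\lambda_n-d_\alpha\ge \mathrm{{\bf a}}-|d_\alpha|\ge \frac{2n-3}{\epsilon}\sum|a_i|^2$, so $\lambda_n-\mathrm{{\bf a}}=\sum_\alpha|a_\alpha|^2/(\lambda_n-d_\alpha)\le \epsilon/(2n-3)$. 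Feeding this into the trace identity $\sum_{\alpha=1}^{n-1}(d_{(\alpha)}-\lambda_\alpha)=\lambda_n-\mathrm{{\bf a}}$ and using that each summand is nonnegative (by interlacing) gives $0\le d_{(\alpha)}-\lambda_\alpha<\epsilon$ for every $\alpha$. That is the whole proof; no further bookkeeping is needed, and your worry that matching the constants is ``the real work'' is unfounded.

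By contrast, the paper does not invoke Cauchy interlacing at all. It first proves (Lemma~\ref{refinement}) a dichotomy via the characteristic polynomial: any eigenvalue is either within $\epsilon$ of \emph{some} $d_i$ or within $\frac{1}{\epsilon}\sum|a_i|^2$ of $\mathrm{{\bf a}}$, and a trace count rules out the latter for all but one eigenvalue. This only gives $|\lambda_\alpha-d_{i_\alpha}|<\epsilon$ for \emph{some} index $i_\alpha$, so the paper then runs a continuity/deformation argument in $\mathrm{{\bf a}}$ (Lemmas~\ref{refinement111}--\ref{refinement3}) to count how many eigenvalues fall into each interval, first when the $d_i$ are distinct and $\epsilon$ is below half the minimal gap, then reducing repeated $d_i$ by factoring the characteristic polynomial, and finally handling arbitrary $\epsilon$ by working with $\epsilon/(2n-3)$-intervals and their connected components. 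Your route via interlacing bypasses all of this machinery: interlacing immediately gives the correct one-to-one matching $\lambda_\alpha\leftrightarrow d_{(\alpha)}$, and the trace closes the gap. The paper's approach has the minor advantage of being self-contained from the characteristic polynomial, but yours is both more conceptual and much shorter.
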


\begin{remark}
This lemma can be viewed as a quantitative version of  \cite[Lemma 1.2]{CNS3}.
\end{remark}

\medskip
 
% \section{A priori estimate for  equations on closed Hermitian manifolds} \label{sec-estimates-closedmanifold}  \medskip
 
 \section{Second  estimate and proof of Theorem \ref{thm2-second-order}} 
\label{sec-second-estimate}

As above,  denote $\lambda(\omega^{-1}\mathfrak{g})=(\lambda_1,\cdots,\lambda_n)$. Assume  $\lambda_{1}:  M\rightarrow \mathbb{R}$ is the largest eigenvalue and $\lambda_1\geqslant \lambda_2\geqslant\cdots\geqslant\lambda_n$ at each point. 
We consider the quantity 
\begin{equation}
	\label{gqy-81}
	\begin{aligned}
		Q\equiv\lambda_{1}e^{H},
	\end{aligned}
\end{equation}
where the test function has the form
$	H=\phi(|\nabla u|_\omega^{2})+\varphi(u-\underline{u}).$ 
Here  \begin{equation}
	\label{def1-K}
	\begin{aligned} 
		\phi(t)=-\frac{1}{2}\log  (1-\frac{t}{2K}), 
	\,\,
		K=1+\sup_{M}  \left(|\nabla u|_\omega^{2}+ |\nabla (u-\underline{u})|_\omega^{2}\right) 
	\end{aligned}
\end{equation} 
  is the function used in \cite{HouMaWu2010},     
and  $\varphi:[\inf_{M} (u-\underline{u}),+\infty) \rightarrow \mathbb{R}$ is the function with 	$$\varphi(x)=\frac{C_{*}}{1+x -\inf_{M} (u-\underline{u})},$$   
where  $C_{*}\geqslant 1$   %$N\in \mathbb{N}$ are constants 
is a large constant to be chosen later.
%\begin{equation}	\label{def1-K}	\begin{aligned} 		K=1+\sup_{M}  \left(|\nabla u|_\omega^{2}+ |\nabla (u-\underline{u})|_\omega^{2}\right).	\end{aligned}\end{equation} 
It is easy to verify that 
\begin{equation}
\label{Prop1-Phi-Psi}
\begin{aligned}
	\phi''=2\phi'^{2}, \,\, (4K)^{-1}<\phi' < (2K)^{-1} \mbox{ for } t\in [0,K],
\end{aligned}
\end{equation} 
\begin{equation}
\label{Prop2-Phi-Psi}
\begin{aligned}
\varphi'(x)=-\frac{ C_{*}}{(1+x -\inf_{M} (u-\underline{u}))^{2}}, \, 
 	\varphi''(x)= \frac{2 C_{*}}{(1+x -\inf_{M} (u-\underline{u}))^{3}}.
\end{aligned}
\end{equation} 

 Suppose $Q$ achieves its maximum at $z_0\in M$. 
It suffices to prove that at $z_0$
\begin{equation}
	\label{maingoal-2nd}
	\begin{aligned}
\lambda_1  \leqslant CK.
\end{aligned}
\end{equation} 
We choose  local coordinates $z=(z^{1},\cdots,z^{n})$ with origin at $z_0$, so that at the origin
$$g_{i\bar j}=\delta_{ij}, \, \mathfrak{g}_{i\bar j}=\delta_{ij}\lambda_{i} \mbox{ and } F^{i\bar j}=\delta_{ij}f_i.$$
Under the local coordinates, $(\lambda_1,\cdots,\lambda_n)$
are the eigenvalues of %the matrix
$A=\{A^i_{j}\}=\{g^{i\bar q }\mathfrak{g}_{j\bar q}\}$.
%where $\mathfrak{g}_{i\bar j}=u_{i\bar j}+\chi_{i\bar j}$. 

Since the eigenvalues  need to be distinct at $z_0$, the quantity
$Q$ may only
be continuous.
% To circumvent this difficulty
In order to use maximum principle, we employ the perturbation argument of \cite{Gabor}.
Let $B$ be a
diagonal matrix $B^{p}_{q}$ with real entries with $B^{1}_{1}=0$, $B^{n}_{ n}>2 B^{2}_{ 2}$ 
and
$B^n_n<B^{n-1}_{ n-1}<\cdots<B^{2}_{ 2}<0$  being small.
Define the matrix $ \tilde{A}=A+B$ with the eigenvalues $ \tilde{\lambda}=( \tilde{\lambda_{1}},\cdots, \tilde{\lambda_n})$.
At the origin, $ \tilde{\lambda_{1}}=\lambda_{1}=\mathfrak{g}_{1\bar 1}, \, \tilde{\lambda_{i}}=\lambda_{i}+B^{i}_{ i}\mbox{ for } i\geqslant2$
and the eigenvalues of $\tilde{A}$ define $C^{2}$-functions near the origin.

In what follows,
%we use derivatives with respect to the Chern connection of $g$ and
the discussion will be given at the origin. 
Notice that $\tilde{Q}=\tilde{\lambda_{1}}e^{H}$ also achieves its maximum at the same
point $z_{0}$ (we may assume $\lambda_{1}>K$).  
Then  
\begin{equation}
	\label{mp1} 
	\begin{aligned}
		\frac{\tilde{\lambda_{1}}_{,i}}{\lambda_{1}}+H_{i}=0, \, 	\frac{\tilde{\lambda_{1}}_{,\bar i}}{\lambda_{1}}+H_{\bar i}=0,
	\end{aligned} 
\end{equation}
\begin{equation}
	\label{mp4} 
	\begin{aligned} 
		\frac{\tilde{\lambda_{1}}_{,i\bar i}}{\lambda_{1}}
		-\frac{|\tilde{\lambda_{1}}_{,i}|^{2}}{\lambda_{1}^{2}}
		+H_{i\bar i}\leqslant 0. 
	\end{aligned} 
\end{equation}

By straightforward calculations, one obtains
\begin{equation}
	\label{mp2}
	\begin{aligned}
		\tilde{\lambda_{1}},_i\,& =\mathfrak{g}_{1\bar 1 i}+(B^{1}_{1})_{i}.   
	\end{aligned}
\end{equation}
Moreover
\begin{equation}
 \label{gqy-43}
	\begin{aligned}
		\tilde{\lambda_{1}},_{i\bar i}
		= 
		\,&  
	 \sum_{k>1} \frac{|\mathfrak{g}_{k \bar 1 i}|^{2}
			+|\mathfrak{g}_{1\bar k i}|^{2}}{\tilde{\lambda_{1}}
			-\tilde{\lambda_{k}}}
		+2\mathfrak{Re} \sum_{k>1} \frac{\mathfrak{g}_{k\bar 1 i}(B^{1}_{ k})_{\bar i}
			+ \mathfrak{g}_{1\bar k i}(B^{k}_{1})_{\bar i}}{\tilde{\lambda_{1}}-\tilde{\lambda_{k}}}
		\\ \,& 
		+ \mathfrak{g}_{1\bar 1 i\bar i }
		-(B^{1}_{1})_{i \bar i}   
		+\tilde{\lambda_{1}}^{pq,rs}(B^{p}_{ q})_{i}(B^{r}_{ s})_{\bar i},
	\end{aligned}
\end{equation}
where
$\tilde{\lambda}_{1}^{pq,rs}=(1-\delta_{1p})\frac{\delta_{1q}\delta_{1r}\delta_{ps}}{\tilde{\lambda}_{1}- \tilde{\lambda}_{p}}
+ (1-\delta_{1r})\frac{\delta_{1s}\delta_{1p}\delta_{rq}}{\tilde{\lambda}_{1}-\tilde{\lambda}_{r}}, $
see e.g. \cite{Spruck-book1}.

We need to estimate $\tilde{\lambda_{1}}-\tilde{\lambda_{p}}$ near the origin for $p>1$.
Since $\sum \tilde{\lambda_{i}} > \sum_{k=1}^{n} B^{k}_{k}$, and then
$|\tilde{\lambda_{i}}|\leqslant (n-1)\tilde{\lambda_{1}} +  \sum_k |B^{k}_{k}| \mbox{ for all } i$. We assume $\tilde{\lambda}_1$ is large such that
$\tilde{\lambda}_1\geqslant \sum |B^{k}_{k}|$, hence
%$\frac{1}{\tilde{\lambda_{1}}-\tilde{\lambda_{p}}}\geqslant \frac{1}{n\tilde{\lambda}_{1}}.$ 
$(\tilde{\lambda_{1}}-\tilde{\lambda_{k}})^{-1}\geqslant (n\tilde{\lambda}_1)^{-1}$ for $k>1$. 
On the other hand,  
$(\tilde{\lambda_{1}}-\tilde{\lambda_{k}})\leqslant 2(-B_{n}^n)^{-1}$ for $k>1$. 
Using Cauchy-Schwarz inequality,  as in \cite{Gabor} 
   from \eqref{gqy-43} we obtain
\begin{equation}
	\label{xp1}
	\begin{aligned}
		\tilde{\lambda_{1}}_{,i\bar i} \geqslant 
		 \,& 	\mathfrak{g}_{1\bar 1 i \bar i}	+\frac{1}{2n\lambda_{1}}\sum_{k>1} (|\mathfrak{g}_{k\bar 1 i}|^{2}+|\mathfrak{g}_{1\bar k i}|^{2})-1,
		 %\\ 	=\,& 	\mathfrak{g}_{1\bar 1 i \bar i} 	+\frac{1}{n\lambda_{1}}\sum_{k>1}  |\mathfrak{g}_{1\bar k i}|^{2} -C,
	\end{aligned}
\end{equation} 
when  $B$ is chosen sufficiently small. 
Moreover, we have the elementary inequality
\begin{equation}
	\label{inequ5-elementary}
	\begin{aligned}
		\frac{1}{n\lambda_1^2} \sum_{k>1} F^{i\bar i}
		|\mathfrak{g}_{1 \bar k i} |^2 
		+\frac{2}{\lambda_1}  \mathfrak{Re}(F^{i\bar i}\bar T^{j}_{1i}\mathfrak{g}_{1\bar j i}) \geqslant \frac{2}{\lambda_1}  \mathfrak{Re}(F^{i\bar i}\bar T^{1}_{1i}\mathfrak{g}_{1\bar 1 i}) -C_1\sum F^{i\bar i}.
	\end{aligned}
\end{equation}

Differentiating the equation \eqref{equ1-main}  twice (using covariant derivative), we  have
\begin{equation}
	\label{diff-equ1}
	\begin{aligned}
		F^{i\bar i}\mathfrak{g}_{i\bar i l}  =\psi_{l},   \,\,
		\nonumber
		%  \end{aligned}
		%\end{equation}
		%\begin{equation}
		%\begin{aligned}
		F^{i\bar i}\mathfrak{g}_{i\bar i 1\bar 1}
		=\psi_{1\bar 1} -F^{i\bar j,l\bar m }\mathfrak{g}_{i\bar j1}\mathfrak{g}_{l\bar m\bar 1}, \nonumber
	\end{aligned}
\end{equation}
and so
\begin{equation}
	\label{mp3}
	\begin{aligned}
		F^{i\bar i}\mathfrak{g}_{1\bar 1 i\bar i} 
		\geqslant  
	-F^{i\bar j,l\bar m }\mathfrak{g}_{i\bar j1}\mathfrak{g}_{l\bar m\bar 1} 
	+ 2\mathfrak{Re}(F^{i\bar i}\bar T^{j}_{1i}\mathfrak{g}_{1\bar j i}) 
	+\psi_{1\bar 1}   -C_2\lambda_1 \sum F^{i\bar i}.    
	\end{aligned}
\end{equation} 
Here we use   the following standard formula
\begin{equation}
	\label{deco1}
	\begin{aligned}
		u_{i\bar j k}-u_{k\bar j i}  = \,& T^l_{ik}u_{l\bar j},  \\  \nonumber
		%\end{aligned}
		%\end{equation}
		%\begin{equation}
		%\begin{aligned}
		u_{1\bar 1 i\bar i}-u_{i\bar i 1\bar 1} =R_{i\bar i 1\bar p}u_{p\bar 1}-
		R_{1\bar 1 i\bar p}u_{p\bar i} \,& +2\mathfrak{Re}\{\bar T^{j}_{1i}u_{i\bar j 1}\}+T^{p}_{i1}\bar T^{q}_{i1}u_{p\bar q},   \nonumber
	\end{aligned}
\end{equation}
where  
 $$
 T^k_{ij}=g^{k\bar l} (\partial_i g_{j\bar l}-\partial_j g_{i\bar l}), \, 
R_{i\bar j k\bar l}=-\partial_{\bar j}\partial_i g_{k\bar l}
+g^{p\bar q}\partial_i g_{k\bar q}\partial_{\bar j}g_{p\bar l}. 
$$

 From   \eqref{xp1}, \eqref{mp3}, \eqref{inequ5-elementary} and \eqref{mp2}, we conclude that 
 \begin{equation}
 	\label{gblq-C90}
 	\begin{aligned}
  \mathcal{L} (\log \tilde{\lambda_{1}}) 
  % \geqslant \,&   -F^{i\bar i}\frac{|\tilde{\lambda_{1}},_i|^2}{\lambda_1^2}  	+F^{i\bar i}   \frac{\mathfrak{g}_{1\bar{1} i\bar i}}{\lambda_1}	+ \frac{1}{n\lambda_1^2} \sum_{k>1} F^{i\bar i}	|\mathfrak{g}_{1 \bar k i} |^2  - C \sum F^{i\bar i}  \\
 		\geqslant  %\,&  	
 		-F^{i\bar i}\frac{|\tilde{\lambda_{1}},_i|^2}{\lambda_1^2}  -\frac{F^{i\bar j,l\bar m }\mathfrak{g}_{i\bar j1}\mathfrak{g}_{l\bar m\bar 1}}{\lambda_1} 
 		+2\mathfrak{Re} \left( F^{i\bar i}\bar T^{1}_{1i}\frac{\tilde{\lambda_{1}},_i}{\lambda_1} \right)
 		+\frac{\psi_{1\bar 1}}{\lambda_1}  -C_3\sum F^{i\bar i}.
 	\end{aligned}
 \end{equation}  
 
The straightforward computation gives that  
\begin{equation}	
	\label{L-gradient1}	
	\begin{aligned}
		\mathcal{L}(|\nabla u|_\omega^2) 	
	%	=\,& 	F^{i\bar j} (  u_{ki} u_{\bar k \bar j} + u_{ i\bar k} u_{ k \bar j})   -2\mathfrak{Re}\{ F^{i\bar j}T^{l}_{ik} u_{\bar k} u_{l\bar j}\} 	\\ \,& 	+ 2\mathfrak{Re}\{(\psi_{k}-F^{i\bar j}\chi_{i\bar j  k})u_{\bar k}\} 	+F^{i\bar j} R_{i\bar j k\bar l}u_{l}u_{\bar k},
	=\,& 	F^{i\bar i} (  |u_{ki} |^2  +|u_{ i\bar k}|^2 )   -2\mathfrak{Re}\{ F^{i\bar i}T^{l}_{ik} u_{\bar k} u_{l\bar i}\} 	
	\\ \,& 	+ 2\mathfrak{Re}\{(\psi_{k}-F^{i\bar i}\chi_{i\bar i  k})u_{\bar k}\} 	+F^{i\bar i} R_{i\bar i k\bar l}u_{l}u_{\bar k},
	\end{aligned}
\end{equation} 
and so 
\begin{equation}
	\label{L1-phi}	
	\begin{aligned}
		\mathcal{L}H 
		\geqslant \,&  
		\frac{1}{2}\phi' F^{i\bar i} \left(|u_{ki}|^{2}+|u_{k\bar i}|^{2}\right) 
		+\varphi'\mathcal{L}(u-\underline{u})
		+\phi'' F^{i\bar i}|(|\nabla u|_\omega^{2})_{i}|^{2}
		\\\,&
		+\varphi'' F^{i\bar i}|(u-\underline{u})_{i}|^{2} 
		+ 2\phi' \mathfrak{Re}\{ \psi_{ {k} } u_{\bar k} \}-C_4\sum F^{i\bar i}.
	\end{aligned}
\end{equation} 
 % Here we use  \eqref{Prop1-Phi-Psi} and Cauchy-Schwarz inequality.

Combining  \eqref{gblq-C90} and \eqref{L1-phi},   we derive that
\begin{equation}
	\label{key11}
	\begin{aligned}
	0\geqslant \,&
		\frac{1}{2}\phi'F^{i\bar i} \left(|u_{ki}|^{2}+|u_{k\bar i}|^{2}\right) 
		+\varphi'\mathcal{L}(u-\underline{u})
		+\phi'' F^{i\bar i}|(|\nabla u|_\omega^{2})_{i}|^{2}
		\\ \,& 
		+\varphi'' F^{i\bar i}|(u-\underline{u})_{i}|^{2}  
		-\frac{F^{i\bar j,l\bar m }\mathfrak{g}_{i\bar j1}\mathfrak{g}_{l\bar m\bar 1}}{\lambda_1}
		-F^{i\bar i}\frac{|\tilde{\lambda_{1}},_i|^2}{\lambda_1^2}  
		\\
		\,&
		+2\mathfrak{Re} \left( F^{i\bar i}\bar T^{1}_{1i}\frac{\tilde{\lambda_{1}},_i}{\lambda_1} \right) 
		+2\phi'\mathfrak{Re}\{\psi_{ {k} } u_{\bar k} \}
		+\frac{\psi_{1\bar 1}}{\lambda_1}  -C_5 \sum F^{i\bar i}.
	\end{aligned}
\end{equation}
%Here $C_5=C_3+C_4.$
 
% Note that in this computation $\tilde{\lambda_{1}}$ denotes the largest eigenvalue of the perturbed endomorphism $\tilde{A}=A+B$. At the origin $z_{0}$ where we carry out the computation,  $\tilde{\lambda_{1}}$ coincides the largest eigenvalue of $A$. However, at nearby points,  it is a small perturbation. We would take $B\rightarrow 0$, and obtain the above differential inequality \eqref{key11} as well. It only holds in a viscosity sense because the largest eigenvalue of $A$ may not be $C^{2}$ at the origin $z_{0}$, if some eigenvalues coincide.

%Below, we  adopt the casewise  argument in  \cite{HouMaWu2010}, motivated by the treatment of Chou-Wang \cite{Chou2001Wang} on the real $k$-Hessian equation.

\subsubsection*{\bf Case I}  Assume that 
$\delta\lambda_{1}\geqslant -\lambda_{n}$ 
($0<\delta\ll \frac{1}{2}$).
Set $$I= \{i:F^{i\bar i}>\delta^{-1}F^{1\bar 1}\} \mbox{ and } J= \{i:F^{i\bar i} \leqslant \delta^{-1}F^{1\bar 1}\}.$$
Clearly $1\in J$. 
Moreover,  the identity \eqref{mp1} implies that
for any   index $1\leqslant i\leqslant n$
\begin{equation}
	\label{zhuo22}
	\begin{aligned}
		-\frac{|\tilde{\lambda_{1}},_i|^2}{\lambda_1^2} 
		\geqslant \,&
		-2\varphi'^{2}  |(u-\underline{u})_{i}|^{2}-2\phi'^{2}  |(|\nabla u|_\omega^{2})_{i}|^{2}.
	\end{aligned}
\end{equation}
By an inequality from concavity and symmetry of $f$ 
%(see    \cite{Andrews1994}, \cite{Gerhardt1996} or \cite{Spruck-book1}),
(see    \cite{Andrews1994} or \cite{Gerhardt1996}),
 we   derive
\begin{align*} 
	-F^{i\bar j,l\bar m }\mathfrak{g}_{i\bar j1}\mathfrak{g}_{l\bar m\bar 1} \geqslant  	\sum_{i\in I}\frac{F^{i\bar i}
		-F^{1\bar 1}}{\lambda_{1}-\lambda_{i}} 
 |\mathfrak{g}_{i\bar 1 1}|^{2}
 \geqslant  	\sum_{i\in I}\frac{1-\delta}{\lambda_{1}-\lambda_{i}} 
 F^{i\bar i}|\mathfrak{g}_{i\bar 1 1}|^{2}.
\end{align*}
The assumption $\delta\lambda_{1}\geqslant -\lambda_{n}$ implies  
$\frac{1-\delta}{\lambda_{1}-\lambda_{i}}\geqslant \frac{1-2\delta}{\lambda_{1}}.$
Therefore, we can deduce that
\begin{equation}
	\begin{aligned}
		\label{key33}
 	-\frac{F^{i\bar j,l\bar m }\mathfrak{g}_{i\bar j1}\mathfrak{g}_{l\bar m\bar 1}}{\lambda_1}
		-F^{i\bar i}\frac{|\tilde{\lambda_{1}},_i|^2}{\lambda_1^2} 
	 	\geqslant  \,&	
	 	(1-2\delta)\sum_{i\in I} F^{i\bar i} \frac{|\mathfrak{g}_{i\bar 1 1}|^2}{\lambda_1^2}-F^{i\bar i}\frac{|\tilde{\lambda_{1}},_i|^2}{\lambda_1^2} \\
		\geqslant \,&
		(1-2\delta)\sum_{i\in I}F^{i\bar i}\frac{|\mathfrak{g}_{i\bar 1 1}|^2-|\tilde{\lambda_{1}},_i|^2}{\lambda_1^2} 
		\\
		\,&
		-4\delta \varphi'^2 \sum_{i\in I} F^{i\bar i}|(u-\underline{u})_{i}|^2 
		\\
		-2\varphi'^2\delta^{-1}  \,& F^{1\bar 1}K	-2\phi'^2 F^{i\bar i} |(|\nabla u|_\omega^2)_{i}|^2.
%		\\
%	\,&	-4\delta \phi'^2\sum_{i\in I}F^{i\bar i} |(|\nabla u|_\omega^2)_{i}|^2	-2\phi'^2 \sum_{i\in J} F^{i\bar i} |(|\nabla u|_\omega^2)_{i}|^2.
	\end{aligned}
\end{equation}
% Here we also use \eqref{zhuo22}.
%
%The estimates for the lower bound of the differences $|\mathfrak{g}_{i\bar 1 1}|^{2}-|\mathfrak{g}_{1\bar 1 i}|^{2}$
%for $i\in I$ are important. 
%It requires to give the lower bound of %the differences
% $|\mathfrak{g}_{i\bar 1 1}|^{2}-|\mathfrak{g}_{1\bar 1 i}|^{2}$ for $i\in I$.

Write
$	\mathfrak{g}_{i\bar 1 1} 
= 
\tilde{\lambda_{1}},_i +
\tau_{i},$
%\begin{equation}	\begin{aligned}		\mathfrak{g}_{i\bar 1 1} 		= 		\mathfrak{g}_{1\bar 1 i}+	\tau_{i},  \nonumber	\end{aligned}\end{equation}
where
$\tau_i=\chi_{i\bar 11}-\chi_{1\bar 1 i}
+T_{i1}^lu_{l\bar 1}-(B_1^1)_i.$   
A straightforward 
computation gives 
\begin{equation}
	\label{key119111}
	\begin{aligned}
 |\mathfrak{g}_{i\bar 1 1}|^2-|\tilde{\lambda_{1}},_i|^2 
		 =  |\tau_i|^2 +	2\mathfrak{Re}(\tilde{\lambda_{1}},_i\bar\tau_i)  
		\geqslant  
		-C_6(\lambda_{1}^2+|\tilde{\lambda_{1}},_i|)
		+
		2\lambda_{1}\mathfrak{Re}(\bar T_{i1}^1\tilde{\lambda_{1}},_i).
	\end{aligned}
\end{equation} 
 For any  $\epsilon>0$ and $1\leqslant i\leqslant n$, we have by \eqref{mp1} and \eqref{Prop1-Phi-Psi}
\begin{equation}
	\label{key119111-2}
	\begin{aligned}
		\frac{ |\tilde{\lambda_{1}},_i|}{\lambda_1}	 
		\leqslant  %\,& 
		\frac{1}{ 2\epsilon}  
		+
		\frac{\epsilon}{2} \phi' \sum_{k=1}^n (|(u_{k i}|^2 +|u_{\bar k i} |^2)
		+ |\varphi' (u-\underline{u})_i|.	 
	\end{aligned}
\end{equation} 
By \eqref{Prop1-Phi-Psi}, $\phi''=2\phi'^2$. 
Together with  \eqref{key33}, \eqref{key119111}  and \eqref{key11}, we obtain 
\begin{equation}
	\label{key11-3}
	\begin{aligned} 
		0
		\geqslant \,&	\frac{1}{2}\phi'F^{i\bar i} \left(|u_{ki}|^{2}+|u_{k\bar i}|^{2}\right) 		+\varphi'\mathcal{L}(u-\underline{u}) 		+(\varphi''-4\delta \varphi'^2)	F^{i\bar i}|(u-\underline{u})_{i}|^{2}  		\\ \,& 		+2\mathfrak{Re} \left( \sum_{i\in J}F^{i\bar i}\bar T^{1}_{1i}\frac{\tilde{\lambda_{1}},_i}{\lambda_1} \right) 		+4\delta \mathfrak{Re} \left(\sum_{i\in I} F^{i\bar i}\bar T^{1}_{1i}\frac{\tilde{\lambda_{1}},_i}{\lambda_1} \right) 		-C_6 \sum_{i\in I} F^{i\bar i}\frac{|\tilde{\lambda_{1}},_i| }{\lambda_1^2} 		\\ \,& 		-2\varphi'^2\delta^{-1}  F^{1\bar 1}K 		+2\phi'\mathfrak{Re}\{ \psi_{ k} u_{\bar k} \}		+\frac{\psi_{1\bar 1}}{\lambda_1}  -C_7  \sum F^{i\bar i}. 
		\end{aligned}
	\end{equation}
%\begin{equation}\label{key11-2}\begin{aligned} 	0\geqslant \,&	\frac{1}{2}\phi'F^{i\bar i} \left(|u_{ki}|^{2}+|u_{k\bar i}|^{2}\right) 	+\varphi'\mathcal{L}(u-\underline{u})  +(\varphi''-4\delta \varphi'^2)	F^{i\bar i}|(u-\underline{u})_{i}|^{2}  		\\ \,& 	-2\varphi'^2\delta^{-1}  F^{1\bar 1}K 	-C F^{i\bar i}\frac{|\tilde{\lambda_{1}},_i| }{\lambda_1} 	+2\phi'\mathfrak{Re}\{ \psi_{ k} u_{\bar k} \}	+\frac{\psi_{1\bar 1}}{\lambda_1}  -C \sum F^{i\bar i}  \\	\geqslant \,&	\frac{1}{4}\phi'F^{i\bar i} \left(|u_{ki}|^{2}+|u_{k\bar i}|^{2}\right) 	+\varphi'\mathcal{L}(u-\underline{u}) 	+(\varphi'' -4\delta \varphi'^2)	F^{i\bar i}|(u-\underline{u})_{i}|^{2}   \\ \,&  	-C F^{i\bar i}|\varphi' (u-\underline{u})_i|  	-2\varphi'^2\delta^{-1}  F^{1\bar 1}K   	+2\phi'\mathfrak{Re}\{ \psi_{k} u_{\bar k} \}+\frac{\psi_{1\bar 1}}{\lambda_1}  -C \sum F^{i\bar i}.\end{aligned}\end{equation}  
Note that 
$|u_{i\bar i}|^{2}= |\mathfrak{g}_{i\bar i}-\chi_{i\bar i}|^{2}	\geqslant \frac{1}{2}|\mathfrak{g}_{i\bar i}|^{2}-|\chi_{i\bar i}|^2$ and $2\delta<1$. 
 Combining   \eqref{key119111-2} and \eqref{Prop1-Phi-Psi},
\begin{equation}
	\label{key11-2}
	\begin{aligned} 
		0
	%	 \geqslant \,&	\frac{1}{2}\phi'F^{i\bar i} \left(|u_{ki}|^{2}+|u_{k\bar i}|^{2}\right) 		+\varphi'\mathcal{L}(u-\underline{u}) 		+(\varphi''-4\delta \varphi'^2)	F^{i\bar i}|(u-\underline{u})_{i}|^{2}  		\\ \,& 		+2\mathfrak{Re} \left( \sum_{i\in J}F^{i\bar i}\bar T^{1}_{1i}\frac{\tilde{\lambda_{1}},_i}{\lambda_1} \right) 		+4\delta \mathfrak{Re} \left(\sum_{i\in I} F^{i\bar i}\bar T^{1}_{1i}\frac{\tilde{\lambda_{1}},_i}{\lambda_1} \right) 		-C_6 \sum_{i\in I} F^{i\bar i}\frac{|\tilde{\lambda_{1}},_i| }{\lambda_1^2} 		\\ \,& 		-2\varphi'^2\delta^{-1}  F^{1\bar 1}K 		+2\phi'\mathfrak{Re}\{ \psi_{ k} u_{\bar k} \}		+\frac{\psi_{1\bar 1}}{\lambda_1}  -C_7  \sum F^{i\bar i}  \\
	%%	\geqslant \,&		\frac{1}{4}\phi'F^{i\bar i} \left(|u_{ki}|^{2}+|u_{k\bar i}|^{2}\right) 		+\varphi'\mathcal{L}(u-\underline{u}) 		+(\varphi'' -4\delta \varphi'^2)	F^{i\bar i}|(u-\underline{u})_{i}|^{2}   \\ \,&    -(4\delta T+C_6\lambda_1^{-1}) |\varphi'|  \sum_{i\in I} F^{i\bar i} |  (u-\underline{u})_i|	-2 \delta^{-1}T |\varphi'| F^{1\bar 1}\sqrt{K} 	\\\,&	-2\varphi'^2\delta^{-1}  F^{1\bar 1}K  	+2\phi'\mathfrak{Re}\{ \psi_{k} u_{\bar k} \}		+\frac{\psi_{1\bar 1}}{\lambda_1} -C_7 \sum F^{i\bar i},
		\geqslant   \,&
	\frac{1}{32K}F^{i\bar i} \mathfrak{g}_{i\bar i}^2
	+\varphi'\mathcal{L}(u-\underline{u})
	+(\frac{1}{2}\varphi'' -4\delta \varphi'^2)
	F^{i\bar i}|(u-\underline{u})_{i}|^{2} 
	\\\,&
	-2  \delta^{-1} T |\varphi'| F^{1\bar 1}\sqrt{K}  
	-2\delta^{-1}\varphi'^2F^{1\bar 1}K	 
	-\frac{C_{9}}{\sqrt{K}}
	+\frac{\psi_{1\bar 1}}{\lambda_1}
	\\
	\,&
	- \left(C_8+\frac{8\delta^2T^2 \varphi'^2}{\varphi''} +C_6\lambda_1^{-1}|\varphi'|\sqrt{K}\right) \sum F^{i\bar i}.
	\end{aligned}
\end{equation} 
where $T=\sup|T^{j}_{ji}|$.
Here we also use the elementary inequality 
 \begin{equation}	\label{key003}	\begin{aligned}\frac{1}{2}\varphi''F^{i\bar i}|(u-\underline{u})_{i}|^{2}-4\delta T  F^{i\bar i}|\varphi'(u-\underline{u})_{i}|\geqslant 
 		%-\frac{C^2\varphi'^2}{2\varphi''}\sum F^{i\bar i}=
 		-\frac{8\delta^2 T^2\varphi'^2}{\varphi''}\sum F^{i\bar i}. \end{aligned}\end{equation} 
% where $\delta_0=  2^{-1}(4\delta T+C_6\lambda_1^{-1})^2.$   (In fact $\delta_0$ is small when $\delta\ll1$ and $\lambda_1\gg1$).
%Note that $|u_{i\bar i}|^{2}= |\mathfrak{g}_{i\bar i}-\chi_{i\bar i}|^{2}		\geqslant \frac{1}{2}|\mathfrak{g}_{i\bar i}|^{2}-|\chi_{i\bar i}|^2. $ 
%From % \eqref{mp1},  
% \eqref{key11}, \eqref{key33},   \eqref{key003}, \eqref{key119111},  \eqref{key119111-2} and \eqref{Prop1-Phi-Psi},
%\eqref{key11-2} and \eqref{key003}
%In conclusion, we get
%\begin{equation}	\begin{aligned}	\label{key001-2} 0 \geqslant \,&	\frac{1}{32K}F^{i\bar i} \mathfrak{g}_{i\bar i}^2+\varphi'\mathcal{L}(u-\underline{u})+(\frac{1}{2}\varphi'' -4\delta \varphi'^2)	F^{i\bar i}|(u-\underline{u})_{i}|^{2} 	\\\,&	-2  \delta^{-1} T |\varphi'| F^{1\bar 1}\sqrt{K}  -2\delta^{-1}\varphi'^2F^{1\bar 1}K	-\frac{C_{9}}{\sqrt{K}}	+\frac{\psi_{1\bar 1}}{\lambda_1}	\\	\,&		- \left(C_8+\frac{8\delta^2T^2 \varphi'^2}{\varphi''} +C_6\lambda_1^{-1}|\varphi'|\sqrt{K}\right) \sum F^{i\bar i}.	\end{aligned}\end{equation} 

 Let  $\varepsilon$ be the constant as in Lemma  \ref{lemma3-key}.
 % We now choose $N$ such that $$  \frac{\delta_0}{N+1}\left(1+\underset{M}{\mathrm{osc \, }} (u-\underline{u})\right)\leqslant\frac{\varepsilon}{2},$$ and so $\frac{\delta_0|\varphi'|}{\varphi''} \leqslant\frac{\varepsilon}{2}$.
 % (In fact $\delta_0$ is small when $\delta\ll1$, $\lambda_1\gg1$).
Pick $C_{*}\gg 1$ such that
\begin{equation}
	\begin{aligned}
		-\frac{\varepsilon}{2}\varphi'\geqslant \frac{\varepsilon C_{*}}{2(1+\sup_{M}(u-\underline{u})-\inf_{ M}(u-\underline{u}))^{2}} > 1+ C_8+C_{9}.
	%	+\sup_M|\partial\overline{\partial}\psi|_\omega,  \nonumber
	\end{aligned}
\end{equation}
%where $C_1$ is as in \eqref{key001-2}.
Fix such %$N$ and
$C_*$ we  then find $\delta$ sufficiently small so that $4\delta \varphi'^{2} < \frac{1}{2}\varphi''$, $\delta T^2<\frac{1}{2}$.
Thus  
\begin{equation}
	\begin{aligned}
		\label{key001}
		0  \geqslant \,&
		\frac{1}{32K}F^{i\bar i} \mathfrak{g}_{i\bar i}^2
		+\varphi'\mathcal{L}(u-\underline{u})  
		- \left( C_8+ \delta T^2 +C_6\lambda_1^{-1}|\varphi'|\sqrt{K} \right) \sum F^{i\bar i} 
		\\\,&
		-2\delta^{-1} \varphi'^2 F^{1\bar 1}K
			-2 \delta^{-1}  T|\varphi'| F^{1\bar 1}\sqrt{K} 
		-\frac{C_{9}}{\sqrt{K}}
		+\frac{\psi_{1\bar 1}}{\lambda_1}.
	\end{aligned}
\end{equation}

 Below our discussion is based on  Lemma  \ref{lemma3-key}.
% Below the discussion will be divided into two  cases 
%according to Lemma  \ref{lemma3-key}.
According to Lemma  \ref{lemma3-key}, 
%the discussion can be divided into two  cases.
in the first case we may assume 
\begin{equation}
	\label{bbbbb3}
	\begin{aligned}
		\mathcal{L}(\underline{u}-u)\geqslant \varepsilon \left(1+ \sum F^{i\bar i}\right).  %\nonumber
	\end{aligned}
\end{equation}  
Then by \eqref{key001} we get  
\begin{equation}
	\label{wukong1}
	\begin{aligned}
	\lambda_1  \leqslant CK.    \nonumber
	\end{aligned}
\end{equation} 

In the second  case, we have
\begin{equation}
	\begin{aligned}
	f_{i} \geqslant  \varepsilon \left(1+ \sum_{j=1}^n f_{j} \right), \,\, \forall 1\leqslant i\leqslant n. %\nonumber
	\end{aligned}
\end{equation}
Together with \eqref{key001}, \eqref{L1-u-ubar} and \eqref{concave-1}, we thus obtain
\begin{equation}
	\label{inequality-426}
	\begin{aligned} 
		0\geqslant \,& 
		\frac{\varepsilon}{32K} |\mathfrak{g}|^2 
		\left(1+\sum F^{i\bar i}  \right)
		-  \left( C_8+\delta T^2 +C_6\lambda_1^{-1}|\varphi'|\sqrt{K}\right)\sum F^{i\bar i} 
		\\ \,& 
		+\varphi' (\psi-F(\mathfrak{\underline{g}})) 
			-2\varphi'^2\delta^{-1}F^{1\bar 1}K	
		-2 T\delta^{-1} |\varphi'| F^{1\bar 1}\sqrt{K} 
		-\frac{C_{9}}{\sqrt{K}}
		+\frac{\psi_{1\bar 1}}{\lambda_1}. %\nonumber
	\end{aligned}
\end{equation}
%Combining  %with   \eqref{key001} and  
%Lemma \ref{Guan-sum1},
Consequently,  %from \eqref{inequality-426}  
we can derive  that 
 %$\lambda_1\leqslant CK.$
 \begin{equation}	\label{wukong2}	\begin{aligned}	\lambda_1\leqslant CK.  \nonumber	\end{aligned}\end{equation}

\subsubsection*{\bf Case II}
We assume that $\delta\lambda_{1}<-\lambda_{n}$ with the constants $C_{*}, N$ and $\delta$ fixed as in the previous case.
Then  $|\lambda_{i}|\leqslant \frac{1}{\delta}|\lambda_{n}| \mbox{ for all } i$.
%$|\lambda_{1}|\leqslant \frac{1}{\delta}|\lambda_{n}|$. 
% Moreover\begin{equation}	\label{zhuo222}	\begin{aligned}		- F^{i\bar i}\frac{|\tilde{\lambda_{1}},_i|^{2}}{\lambda_1^{2}} 		\geqslant \,&		-2\varphi'^{2}  F^{i\bar i}|(u-\underline{u})_{i}|^{2}-2\phi'^{2}  F^{i\bar i}|(|\nabla u|_\omega^{2})_{i}|^{2}.  \nonumber	\end{aligned}\end{equation}
Note that $F^{n\bar n} \geqslant \frac{1}{n} \sum F^{i\bar i}$, $|u_{n\bar n}|^{2}=
\frac{1}{2}|\mathfrak{g}_{n\bar n}|^2- |\chi_{n\bar n}|^{2}\geqslant  \frac{\delta^2}{2} \lambda_1^{2}
-|\chi_{n\bar n}|^{2}$ and $-F^{i\bar j,l\bar m }\mathfrak{g}_{i\bar j1}\mathfrak{g}_{l\bar m\bar 1}\geq 0$.
From \eqref{L1-u-ubar} and \eqref{concave-1} we see 
$\mathcal{L}(\underline{u}-u)\geq F(\mathfrak{\underline{g}})-\psi.$
Combining % \eqref{mp1},
\eqref{key11}, \eqref{Prop1-Phi-Psi},  \eqref{zhuo22} and  \eqref{key119111-2}     
%\eqref{L1-u-ubar}, \eqref{concave-1}, 
we  
%use Cauchy-Schwarz inequality to 
obtain  
\begin{equation}	\label{inequality-427}
	\begin{aligned}
		0
   	\geqslant \,&	\frac{1}{2} \phi' F^{i\bar i}(|u_{ki}|^{2}+|u_{k\bar i}|^{2})+\varphi' \mathcal{L}(u-\underline{u}) 		+2\phi'\mathfrak{Re}\{ \psi_{k} u_{\bar k} \} 	\\		\,& +(\varphi''-2\varphi'^{2})F^{i\bar i}|(u-\underline{u})_{i}|^{2} 		+\frac{\psi_{1\bar 1} }{\lambda_1}	-C_5 \sum F^{i\bar i}		+2\mathfrak{Re} \left( F^{i\bar i}\bar T^{1}_{1i}\frac{\tilde{\lambda_{1}},_i}{\lambda_1} \right)	\\
\geqslant \,&
		\frac{\delta^2}{32nK} \lambda_1^2\sum F^{i\bar i} 
		+\varphi' (\psi-F(\mathfrak{\underline{g}}))	
		+(\varphi''-2\varphi'^{2})F^{i\bar i}|(u-\underline{u})_{i}|^{2} 
		\\\,&
		-( C_{10} +2T|\varphi'| \sqrt{K}) \sum F^{i\bar i} 
		-\frac{C_{9}}{\sqrt{K}} 
		+\frac{\psi_{1\bar 1}}{\lambda_1}.  
	\end{aligned}
\end{equation} 
 
By an observation of \cite{Guan-Dirichlet}, the concavity assumption implies that
 %\begin{equation}\label{Guan-sum1-inequality2}\begin{aligned}	|\lambda|\sum_{i=1}^n f_i(\lambda)  	\geqslant   \frac{1}{2} (f(|\lambda| \vec{\bf 1})-f(\lambda)).  \nonumber\end{aligned}\end{equation}
%Together with %Lemma \ref{Guan-sum1},
%\eqref{Guan-sum1-inequality}, 
% In particular,
%for any $\sigma<\sup_\Gamma f$  there exist   $R_\sigma$, $c_\sigma>0$ depending only on   $\sigma$ and $f$ such that 
%\begin{equation}\label{Guan-sum1-inequality}	\begin{aligned}		|\lambda|\sum_{i=1}^n f_i(\lambda)  \geqslant   c_\sigma,    \, \forall \lambda\in \partial\Gamma^\sigma \mbox{ with }	|\lambda|\geqslant R_\sigma.		\end{aligned}	\end{equation} 
%for any $\sigma<\sup_\Gamma f$  there exist   $R_\sigma$, $c_\sigma>0$ depending only on   $\sigma$ and $f$ such that 
 there exist 
  $R_1$, $c_1>0$ depending only on   $\sup_M\psi$ and $f$ such that 
\begin{equation}
	\label{Guan-sum1-inequality}
	\begin{aligned}		|\lambda|\sum_{i=1}^n f_i(\lambda)  \geqslant   c_1 \mbox{ in }  \{\lambda\in\Gamma:\inf_M \psi \leqslant f(\lambda) \leqslant \sup_M \psi\}  \mbox{ when }	|\lambda|\geqslant R_1.		\end{aligned}	\end{equation} 
Consequently, together with \eqref{inequality-427} we may conclude that 
 $\lambda_1\leqslant CK.$
 %\begin{equation}\label{wukong3}\begin{aligned}\lambda_1\leqslant CK.  \nonumber\end{aligned}\end{equation} 

%We complete the proof.
% The proof is complete.

% \end{proof}

 \medskip

\section{Blow-up argument and completion of proof of Theorem  \ref{thm1-main}}
\label{sec-gradient-estimate}

Under optimal structural conditions,  we derive gradient estimate.   
\begin{theorem}
	
	In the setting of Theorem \ref{thm1-main}, let $u$ be an admissible solution to 
	\eqref{equ1-main} with $\sup_M u=0$. Then
	there is a constant $C$  such that
	\begin{equation}
		\label{estimate-C1}
		\sup_{M}|\nabla u|_\omega \leqslant C.
	\end{equation}
\end{theorem}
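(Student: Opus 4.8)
The plan is to establish \eqref{estimate-C1} by a blow-up/contradiction argument in the spirit of Dinew--Ko\l odziej \cite{Dinew2017Kolo} and Sz\'ekelyhidi \cite{Gabor}, but using the cone $\Gamma_{\mathcal{G}}^{f}$ of \eqref{component1} in place of $\Gamma$ itself, so that the additional condition \eqref{addistruc} is not needed. Suppose \eqref{estimate-C1} fails: then there is a sequence of data (or simply a fixed equation with a putative sequence of bounds) along which $\sup_M|\nabla u|_\omega\to\infty$. Fix a point $x_j\in M$ and a small coordinate ball where $|\nabla u|_\omega$ is comparably large, set $L_j=\sup_M|\nabla u|_\omega\to\infty$, and rescale: working in a fixed holomorphic coordinate chart centered near the maximum point of $|\nabla u|$, define $v_j(z)=L_j\,(u(x_j+L_j^{-1}z)-u(x_j))$ on balls of radius $\sim L_j$. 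Then $|\nabla v_j|$ is uniformly bounded (equal to $1$ at the relevant point) and $v_j(0)=0$, so after passing to a subsequence $v_j\to v$ in $C^{0,\alpha}_{\mathrm{loc}}(\mathbb{C}^n)$ to a Lipschitz function $v:\mathbb{C}^n\to\mathbb{R}$ which is nonconstant (its gradient is $1$ at the origin, by a standard argument that the gradient does not all escape — here one uses the quantitative second-order bound \eqref{estimate-quantitative} from Theorem \ref{thm2-second-order} to control oscillation of $\nabla u$ at the scale $L_j^{-1}$), and bounded Lipschitz. The metric $\omega$ and the form $\chi$ become, after rescaling, the Euclidean metric and the zero form in the limit, since their coefficients vary on scale $1\gg L_j^{-1}$.

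The heart of the matter is then to show that $v$ is a $\mathring{\Gamma}_{\mathcal{G}}^{f}$-solution in the sense of Definition \ref{def1-Gammasolution}, which contradicts the Liouville theorem, Theorem \ref{thm-Liouville} (applicable since $\mathring{\Gamma}_{\mathcal{G}}^{f}$ is an open symmetric convex cone with vertex at the origin containing $\Gamma_n$, by Lemma \ref{lemma1-key}). For the subsolution property: if $h\in C^2$ and $v-h$ has a local max at $z_0$, then using that $\mathfrak g[u]=\chi+\sqrt{-1}\partial\bar\partial u$ has $\lambda\in\Gamma$ and translating this through the rescaling, one gets in the limit $\lambda(h_{i\bar j}(z_0))\in\bar\Gamma\subseteq\overline{\mathring{\Gamma}_{\mathcal{G}}^{f}}$; here I would approximate $v$ from above by smooth functions and use the fact that $\Gamma$ is closed under the operations appearing, together with the observation that adding the limiting (vanishing) background form does nothing. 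For the solution property: if $v-h$ has a local min at $z_0$, I need $\lambda(h_{i\bar j}(z_0))\notin\mathring{\Gamma}_{\mathcal{G}}^{f}$. Here is where the equation enters: at the corresponding points for $v_j$, the complex Hessian of $v_j$ is $\geq$ (the Hessian of the test function) $-$ (terms that vanish in the limit), and $\lambda[\omega^{-1}\mathfrak g[u]]$ satisfies $f(\lambda)=\psi$. Using Lemma \ref{lemma2-key} — namely $f(\lambda+\mu)\geq f(\lambda)$ for $\lambda\in\Gamma$, $\mu\in\Gamma_{\mathcal G}^f$ — if the limiting Hessian eigenvalues were in $\mathring{\Gamma}_{\mathcal{G}}^{f}$ one could add a small amount and push $f$ up without bound along the blow-up, contradicting $f(\lambda_j)=\psi_j$ bounded; more precisely one argues that $\lambda[\,\cdot\,]$ of $v_j$ near $z_0$ must, after rescaling, have $f$-value $\to\sup_\Gamma f$ or escape to $\partial\Gamma$, either way excluding membership in $\mathring{\Gamma}_{\mathcal{G}}^{f}$ where $f$ would stay finite and bounded.

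I would carry out the steps in this order: (1) set up the rescaled sequence $v_j$ and record the convergence $v_j\to v$ with $v$ Lipschitz, bounded, nonconstant, using \eqref{estimate-quantitative} to guarantee nonconstancy; (2) verify that $\mathring{\Gamma}_{\mathcal{G}}^{f}$ satisfies the hypotheses of Theorem \ref{thm-Liouville} (this is Lemma \ref{lemma1-key}); (3) prove $v$ is a $\mathring{\Gamma}_{\mathcal{G}}^{f}$-subsolution, using admissibility of $u$ and the vanishing of rescaled torsion/curvature/background terms; (4) prove $v$ is a $\mathring{\Gamma}_{\mathcal{G}}^{f}$-solution at local minima, using the equation $f(\lambda)=\psi$ together with Lemma \ref{lemma2-key} (and the characterizations in Lemma \ref{lemma3.4-strongerversion}); (5) invoke Theorem \ref{thm-Liouville} to conclude $v$ constant, contradicting (1). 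Combining with the $C^0$ bound of Sz\'ekelyhidi \cite{Gabor} and Theorem \ref{thm2-second-order}, this yields the full $C^2$ estimate, and then Evans--Krylov as in the sketch gives $C^{2,\alpha}$, completing Theorem \ref{thm1-main}.

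\medskip

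The step I expect to be the main obstacle is step (4): carefully showing at a local minimum of $v-h$ that the limiting Hessian lands in $\mathbb{R}^n\setminus\mathring{\Gamma}_{\mathcal{G}}^{f}$. The subtlety is that the viscosity touching is only at the limit level, so one must produce, for each $j$, nearby points where $\sqrt{-1}\partial\bar\partial v_j$ is controlled below by $h$'s Hessian minus errors, then translate the constraint $f(\lambda_j)=\psi_j$ — which lives on the unrescaled eigenvalues $\lambda[\omega^{-1}\mathfrak g[u]]=L_j^{-2}\lambda[\text{rescaled}] + (\text{small})$ — into a statement that survives the $j\to\infty$ limit. The homogeneity-type estimate \eqref{010} and Lemma \ref{lemma2-key}/\ref{lemma3.4-strongerversion} are exactly the tools that make this work without \eqref{addistruc}: membership in $\mathring{\Gamma}_{\mathcal{G}}^{f}$ would force $\limsup_{t\to\infty}f(t\mu)/t\geq 0$, hence an unbounded gain in $f$ along the blow-up, which is incompatible with the bounded right-hand side.
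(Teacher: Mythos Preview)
Your overall architecture matches the paper's: blow up, prove the limit $v$ is a $\mathring{\Gamma}_{\mathcal{G}}^{f}$-solution, and invoke Theorem~\ref{thm-Liouville}. Two issues, one minor and one genuine.

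\textbf{Minor (rescaling).} Your formula $v_j(z)=L_j\bigl(u(x_j+L_j^{-1}z)-u(x_j)\bigr)$ gives $|\nabla v_j(0)|=|\nabla u(x_j)|=L_j$, not $1$. The paper (and Dinew--Ko\l odziej, Sz\'ekelyhidi) take simply $\tilde u_j(z)=u_j(z_j+R_j^{-1}z)$ with no outer factor; then $|\nabla\tilde u_j(0)|=1$ and $\sup|\tilde u_j|\leqslant C$ from the $C^0$ estimate, while Theorem~\ref{thm2-second-order} gives $|\partial\bar\partial\tilde u_j|\leqslant C$. This yields $C^{1,\alpha}_{\mathrm{loc}}\cap W^{2,p}_{\mathrm{loc}}$ compactness and $|\nabla v(0)|=1$.

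\textbf{Genuine gap (your step (3), not (4)).} You argue the subsolution property by ``$\lambda\in\Gamma$ and translating through the rescaling, one gets $\lambda(h_{i\bar j}(z_0))\in\bar\Gamma\subseteq\overline{\mathring{\Gamma}_{\mathcal{G}}^{f}}$''. This inclusion is \emph{backwards}: by Lemma~\ref{lemma1-key} one has $\mathring{\Gamma}_{\mathcal{G}}^{f}\subseteq\Gamma$, hence $\overline{\mathring{\Gamma}_{\mathcal{G}}^{f}}\subseteq\bar\Gamma$. Equality $\Gamma_{\mathcal{G}}^{f}=\Gamma$ is precisely condition~\eqref{addistruc} (Lemma~\ref{lemma3.4}), which the whole point of the paper is to avoid. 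Admissibility alone cannot put you in $\bar\Gamma_{\mathcal{G}}^{f}$.

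The paper's fix is to use the \emph{equation}, not just admissibility, for the subsolution side. After rescaling one has $f\bigl(R_j^2\,\lambda[\beta_j^{-1}(\chi_j+\sqrt{-1}\partial\bar\partial\tilde u_j)]\bigr)=\tilde\psi_j$. Touching from above by $\phi$ and absorbing errors, one obtains
\[
f\bigl(R_j^2\,[\lambda(\phi_{p\bar q}(z_0))+(2\epsilon+\tilde\varepsilon)\vec{\bf 1}]\bigr)\ \geqslant\ \tilde\psi_j(z_0')\ \geqslant\ \inf_M\psi\ >\ -\infty.
\]
Since $R_j\to+\infty$, the \emph{definition} of $\Gamma_{\mathcal{G}}^{f}$ in \eqref{component1} (namely $\lim_{t\to+\infty}f(t\mu)>-\infty$) forces $\lambda(\phi_{p\bar q}(z_0))+(2\epsilon+\tilde\varepsilon)\vec{\bf 1}\in\Gamma_{\mathcal{G}}^{f}$; letting $\epsilon\to0$ gives $\lambda(\phi_{p\bar q}(z_0))\in\bar\Gamma_{\mathcal{G}}^{f}$. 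This is Remark~\ref{remark-52}: the cone $\Gamma_{\mathcal{G}}^{f}$ is tailored so that the rescaled equation directly yields membership.

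Ironically, your step~(4) --- which you flag as the obstacle --- is the one you have essentially right: if $\lambda(\phi_{p\bar q}(z_0'))-3\epsilon\vec{\bf 1}\in\mathring{\Gamma}_{\mathcal{G}}^{f}$, then the rescaled eigenvalues lie in $\mathring{\Gamma}_{\mathcal{G}}^{f}+\epsilon\vec{\bf 1}$, and Lemma~\ref{lemma2-key} gives $f(R_j^2\lambda[\cdots])\geqslant f(\epsilon R_j^2\vec{\bf 1})\to+\infty$, contradicting the bounded right-hand side.
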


\begin{proof}
%\subsection{Blow-up argument}
%\label{sec-gradient-estimate-sub2}
%We argue by contradiction.
Let  $\{\psi_j\}$ be a sequence of smooth functions such that
 \begin{equation} \label{bounds1-psi}	\begin{aligned}		|\psi_j|_{C^2(M)}\leqslant C, 
 		\,\, \sup_{\partial \Gamma}f <\inf_M\psi_j-\delta, \,\, 
 		\sup_M\psi_j+\delta<\sup_\Gamma f % \nonumber
 		\end{aligned}\end{equation} 
 % $|\psi_j|_{C^2(M)}\leqslant C$ and $\sup_{\partial \Gamma}f+\delta <\sup_M\psi_j<\sup_\Gamma f-\delta$
uniformly hold  for some  $\delta$, $C>0$, and 
the corresponding equations
\begin{equation}
	\label{equ2-rescaling}
	\begin{aligned}
		f(\lambda[\omega^{-1}(\chi+\sqrt{-1}\partial\overline{\partial}u_j)]) =\psi_j  
	\end{aligned}
\end{equation}
admit  $\mathcal{C}$-subsolutions $\underline{u}_j$ with   uniform bound $|\underline{u}_j|_{C^2(M)}\leqslant C$, $\forall j$.   

Assume by contradiction that \eqref{estimate-C1} does not hold. Then there exist   $\psi_j$ as above such that the corresponding
 equations %\eqref{equ2-rescaling} 
possess smooth admissible
solutions $u_j$ satisfying 
$$\sup_M u_j = 0, \,\, 
R_j\equiv\sup_M |\nabla u_j|_\omega \to +\infty \mbox{ as } j\to +\infty.$$ 
In addition, for any $j$  there exists some $x_j\in M$ such that 
 \begin{align*}
 	R_j = |\nabla u_j(x_j)|_\omega.
 \end{align*}
 %Then there exists a sequence $x_j\in M$ with 
%\begin{equation}	\lim_{j\to+\infty} |\nabla u (x_j)|_\omega =+\infty.\end{equation}
Up to passing to a subsequence, we may assume $x_j\to x\in M$ as $j\to+\infty$.

%	Choose a tiny neighborhood $B_\delta(x)$ of $x$ so that $g_{i\bar j}(x)=\delta_{ij}$.  Without loss of generality, we may assume $\delta=2$.
Using localization, without loss of generality, we may choose  a coordinate chart $\{U, (z^1,\cdots,z^n)\}$ 
centered at $x$, identifying with the ball $B_2(0)\subset \mathbb{C}^n$
of radius 2 centered at origin such that $\omega(x)=\beta$, where $\beta=\sqrt{-1}\sum dz^i\wedge d\bar z^i$. %the identity. 
We assume 
that
 $j$ is sufficiently large so that
the points $z_j=z(x_j)\in B_1(0)$.  
Define the maps 
\begin{align*}
	\Phi_j: \mathbb{C}^n \to \mathbb{C}^n, \,\, 
	\Phi_j (z)=z_j+R_j^{-1}z,
\end{align*} 
\begin{align*}
	\tilde{u}_j: B_{R_j}(0) \to \mathbb{R}, \,\,  	\tilde{u}_j(z)= u_j\circ \Phi_j(z) = u_j(z_j+R_j^{-1}z),
	\end{align*}
\begin{align*}
	\tilde{\psi}_j: B_{R_j}(0) \to \mathbb{R}, \,\,  	\tilde{\psi}_j(z)= \psi_j\circ \Phi_j(z) = \psi_j(z_j+R_j^{-1}z).
\end{align*}

Denote
\begin{align*}
	\beta_j=R_j^2 \Phi_j^*\omega, \,\, \chi_j =\Phi_j^* \chi.
	\end{align*}
Then the equation 	
$f(\lambda[\omega^{-1}(\chi+\sqrt{-1}\partial\overline{\partial}u_j)])=\psi_j$ implies that
\begin{equation}
	\label{equ1-rescaling}
	\begin{aligned}
		f(R_j^2\lambda [\beta_j^{-1}(\chi_j+\sqrt{-1}\partial\overline{\partial}\tilde{u}_j)])
		=\tilde{\psi}_j. % \mbox{ in } B_{R_j}(0).
	\end{aligned}
\end{equation}

On  the chart near $x$,    
$\beta_j \to \beta$  and $\chi_j(z)\to 0$, in $C_{loc}^\infty$, as $j\to+\infty$. Then we get
\begin{equation}
\label{approximate3-quantity}
\begin{aligned}
	\lambda [\beta_j^{-1}(\chi_j+\sqrt{-1}\partial\overline{\partial}\tilde{u}_j)]
%	= %\,&
%	\lambda( \sqrt{-1}\partial\overline{\partial}\tilde{u}_j) 	+O(R_j^{-2}|z|)
%	\\ 	
	= %\,&
	\lambda( \tilde{u}_{j,p\bar q}) 
	+O(R_j^{-2}|z|).
\end{aligned}
\end{equation}
%Here $\lambda( \sqrt{-1}\partial\overline{\partial}\tilde{u}_j)$ denote  $\lambda(\tilde{u}_{j,p\bar q}). $

From  the $C^0$-estimate in \cite{Gabor}, $\sup_M|u_j|\leqslant C.$ 
 %(Note $\sup_M u_j=0$).
By the construction 
\begin{equation}
	\begin{aligned}
		\sup_{B_{R_j}(0)} |\tilde{u}_j| + \sup_{B_{R_j}(0)} |\nabla \tilde{u}_j| \leqslant C'
	\end{aligned}
\end{equation}
for some    $C'>0$,
 where here the gradient is the Euclidean gradient.
 Moreover,
\begin{equation}
	\begin{aligned}
		|\nabla \tilde{u}_j(0)|=R_{j}^{-1} |\nabla u_j|_\omega(x_j)=1.
	\end{aligned}
\end{equation}
According to Theorem \ref{thm2-second-order}, we also have that 
 \begin{equation}
 	\begin{aligned}
\sup_{B_{R_j}(0)} |\sqrt{-1}\partial\overline{\partial} \tilde{u}_j|_{\beta} \leqslant CR_j^{-2}\sup_{M} |\partial\overline{\partial}u_j|_\omega \leqslant C''.
 	\end{aligned}
 \end{equation}
As the argument in \cite{Dinew2017Kolo,Tosatti2013Weinkove}, using the elliptic estimates for  $\Delta  $ and
the Sobolev embedding that for each given $K\subset \mathbb{C}^n$ compact,  each $0<\alpha<1$ and $p>1$, there
exists a constant $C$ such that
\begin{equation}
	\begin{aligned}
		\| \tilde{u}_j\|_{C^{1,\alpha}(K)} + \|\tilde{u}_j\|_{W^{2,p}(K)} \leqslant C. 
	\end{aligned}
\end{equation}
Therefore, a subsequence of $\tilde{u}_j$ converges uniformly in $C_{loc}^{1,\alpha}(\mathbb{C}^n)$ as well as weakly in $W_{loc}^{2,p}(\mathbb{C}^n)$
to a function 
\begin{equation}
	\label{limit-function1}
	v: \mathbb{C}^n \to \mathbb{R}.
\end{equation}
By the
construction we   have global bounds 
$\sup_{\mathbb{C}^n}(|v|+|\nabla v|)\leqslant C$ and $|\nabla v(0)|=1$.  
  In particular, $v$ is not constant.

%It suffices to 
 The rest of  proof is to 
show that $v$ is a 
$\mathring{\Gamma}_{\mathcal{G}}^{f}$-solution in the sense of \cite[Definition 15]{Gabor}.
%(also see Definition \ref{def1-Gammasolution}). (See \eqref{component1} for definitions of $\mathring{\Gamma}_{\mathcal{G}}^{f}$ and ${\Gamma}_{\mathcal{G}}^{f}$).
Here %$\mathring{\Gamma}_{\mathcal{G}}^{f}$
 ${\Gamma}_{\mathcal{G}}^{f}$  is the cone defined in \eqref{component1}.
%according to Sz\'ekelyhidi's Liouville type theorem. 
To do this,  suppose first that we have a $C^2$-function $\phi$, such that $\phi\geqslant v$, and
$\phi(z_0) = v(z_0)$ for some point $z_0$. We need to show that $\lambda[\phi_{p\bar q}(z_0)]\in \bar\Gamma_{\mathcal{G}}^{f}$. By the construction of $v$, 
similar to \cite{Gabor} 
for any $\epsilon>0$  we can find a 
$\tilde{u}_j$ as above,
corresponding to a sufficiently large $j$,
%with $ \epsilon  \tilde{\omega}\geqslant \chi$,
a number $c$ with $|c|<\epsilon $, and a
point  $z_0'$ with $|z_0'-z_0|<\epsilon,$  so that
\begin{equation}
	\begin{aligned}
		\phi+\epsilon |z-z_0|^2 +c \geqslant \tilde{u}_j \mbox{ on } B_1(z_0)  \nonumber
	\end{aligned}	 
\end{equation}
with equality at $z_0'$. Therefore, 
\begin{equation}
	\label{approximate4-quantity}
	\phi_{p\bar q}(z_0') + \epsilon \delta_{pq} \geqslant \tilde{u}_{j, p\bar q}(z_0'). 
\end{equation}  
Furthermore, note that for $j\gg1$ (and so $R_j$) we have  by \eqref{approximate3-quantity} that 
\begin{equation}
	\label{approximate5-quantity}
	\begin{aligned} 
%|\lambda[\tilde{u}_{j,p\bar q}(z)] -\lambda [\beta_j^{-1}(\chi_j+\sqrt{-1}\partial\overline{\partial}\tilde{u}_j)(z)]|<\epsilon, \, \forall z\in B_\epsilon(z_0).
\lambda[\tilde{u}_{j,p\bar q}(z_0')]+\epsilon \vec{\bf 1}	-\lambda [\beta_j^{-1}(\chi_j+\sqrt{-1}\partial\overline{\partial}\tilde{u}_j)(z_0')]\in \Gamma_n.
	\end{aligned}
\end{equation}   
Set $\tilde{\varepsilon}=\sup_{z\in B_\epsilon(z_0)}|\phi_{p\bar q}(z)-\phi_{p\bar q}(z_0)|.$   
Then  \begin{equation}
	\label{approximate7-quantity}\phi_{p\bar q}(z_0')\leqslant \phi_{p\bar q}(z_0)+\tilde{\varepsilon}\delta_{pq}.\end{equation}

Combining  \eqref{approximate7-quantity}, \eqref{approximate4-quantity},  \eqref{approximate5-quantity} and \eqref{equ1-rescaling}, we have 
\begin{equation}
	\label{inequ3-N}
	\begin{aligned}  
		f(R_j^2 [\lambda[\phi_{p\bar q}(z_0)]+(2\epsilon+\tilde{\varepsilon})\vec{\bf1}]) \geqslant \tilde{\psi}_j(z_0').
	\end{aligned}
\end{equation}
Together with the  choice of  $R_j$, $\phi$, $\tilde{\psi}_j$, $\epsilon$, $\tilde{\varepsilon}$, and the definition of $\Gamma_{\mathcal{G}}^{f}$, %in \eqref{component1}, 
we  
infer that
%we know that 
%$$\lambda[\phi_{p\bar q}(z_0')]+  2\epsilon\vec{\bf 1}\in \Gamma_{\mathcal{G}}^{f}.$$  Let $\epsilon\to 0$, we get $z_0' \to z_0$, and also $\lambda[\phi_{p\bar q} (z_0)]\in \bar\Gamma_{\mathcal{G}}^{f}.$
\begin{equation}
	\label{goodcone-advant1}
	\begin{aligned} 
		\lambda[\phi_{p\bar q}(z_0)]+  (2\epsilon+\tilde{\varepsilon})\vec{\bf 1}\in \Gamma_{\mathcal{G}}^{f}.	\end{aligned}
\end{equation} 
Let $\epsilon\to 0$, we get $\tilde{\varepsilon}\to 0$, and also $\lambda[\phi_{p\bar q} (z_0)]\in \bar\Gamma_{\mathcal{G}}^{f}.$  We remark that this conclusion %conclusion  \eqref{goodcone-advant1}
 is not that obvious, which is in contrast with the case  
 $\Gamma_{\mathcal{G}}^f=\Gamma$.  
 %considered by \cite{Gabor}.

Suppose now that we have a $C^2$-function $\phi$, such that $\phi\leqslant v$, and
$\phi(z_0) = v(z_0)$ for some point $z_0$. It requires to show that 
$\lambda[\phi_{p\bar q}(z_0)]\in \mathbb{R}^n\setminus \mathring{\Gamma}_{\mathcal{G}}^{f}$. As above, for any
$\epsilon$ we can find a $\tilde{u}_j$ corresponding to large $j$, and $c$, $z_0'$  with $|z_0'-z_0|<\epsilon,$ so that
\begin{equation}
	\begin{aligned}
		\phi-\epsilon |z-z_0|^2 +c \leqslant \tilde{u}_j \mbox{ on } B_1(z_0)
	\end{aligned}	 
\end{equation}
with equality at $z_0'$. This implies that 
\begin{equation}
	\phi_{p\bar q}(z_0') - \epsilon \delta_{pq} \leqslant \tilde{u}_{j, p\bar q}(z_0'). 
\end{equation}
Suppose by contradiction that 
$$\lambda(	\phi_{p\bar q}(z_0') -3 \epsilon \delta_{p\bar q})\in \mathring{\Gamma}_{\mathcal{G}}^{f}, \mbox{ i.e., } \lambda(\phi_{p\bar q}(z_0'))\in  \mathring{\Gamma}_{\mathcal{G}}^{f}+3\epsilon\vec{\bf 1}.$$ 
Then $\lambda[\tilde{u}_{j,p\bar q}(z_0') ]\in \mathring{\Gamma}_{\mathcal{G}}^{f} +2 \epsilon\vec{\bf 1}$. By \eqref{approximate3-quantity} and the openness of $\mathring{\Gamma}_{\mathcal{G}}^{f}$, if  $j$ is
sufficiently large (so does $R_j$), we have 
\begin{equation}
	\lambda[{\beta}_j^{-1} (\chi_j+\sqrt{-1}\partial\overline{\partial}\tilde{u}_j)](z_0') \in 
	\mathring{\Gamma}_{\mathcal{G}}^{f} + \epsilon\vec{\bf 1}.
\end{equation}
According to Lemma \ref{lemma2-key}, we conclude that
\begin{equation} 
	\begin{aligned}
		f(R_j^2 	\lambda[ {\beta}_j^{-1} (\chi_j+\sqrt{-1}\partial\overline{\partial}\tilde{u}_j)](z_0'))
		\geqslant f(\epsilon R_j^2\vec{\bf1}).
	\end{aligned}
\end{equation}
It contradicts to \eqref{equ1-rescaling} when $R_j$ is sufficiently large. Thus
\begin{align*}
	\lambda[\phi_{p\bar q}(z_0')-3 \epsilon \delta_{pq}]\in
	\mathbb{R}^n\setminus \mathring{\Gamma}_{\mathcal{G}}^{f}.
\end{align*}
Let $\epsilon\to 0$, we will have
$z_0' \to z_0$, and so 
$\lambda[\phi_{p\bar q}(z_0)]\in \mathbb{R}^n\setminus \mathring{\Gamma}_{\mathcal{G}}^{f}.$ 

Therefore, $v$ is a
$\mathring{\Gamma}_{\mathcal{G}}^{f}$-solution.
%
%We now complete the proof.
From  Lemma \ref{lemma1-key}, $\mathring{\Gamma}_{\mathcal{G}}^{f}$  is an open symmetric convex cone, containing
the positive cone $\Gamma_n$ and not equal to all of 
$\mathbb{R}^n$. 
According to Sz\'ekelyhidi's Liouville type theorem  \cite[Theorem 20]{Gabor}, 
%in analogy with Dinew-Ko{\l}odziej \cite[Theorem 3.2]{Dinew2017Kolo}, %(Theorem \ref{thm-Liouville}),
 $v$ is a constant. This is  a  contradiction.
%This contradicts to the fact $|\nabla v(0)|=1$.

\end{proof}

 \begin{remark}	\label{remark-52}
  
 The definition of $\Gamma_\mathcal{G}^{f}$  in
 \eqref{component1} is important, since it can   straightforwardly  utilize \eqref{inequ3-N}. 
 	\end{remark}

\medskip

\section{Fully nonlinear elliptic equations with type 2 cone} 
\label{sec1-type2}

%Let $(M,\omega)$ be a connected  closed  Hermitian manifold.

  %In this section we employ Morse function to construct admissible solutions. 

  For the equation \eqref{equ1-main}, we say that $\chi$ is {\em quasi-admissible}, if 
  \begin{equation}
  	\label{key4}
  	\begin{aligned}
  		\lambda(\omega^{-1}\chi)\in \bar\Gamma \mbox{ in } M  %\nonumber
  \end{aligned}\end{equation}	 
  and 
  \begin{equation} \label{key5}	\begin{aligned}
  		\lambda(\omega^{-1}\chi)(p_0)\in  \Gamma \mbox{ for some } p_0\in M. %\nonumber
  	\end{aligned}
  \end{equation}	 
Throughout this section, we assume that $(M,\omega)$ is a connected closed Hermitian manifold, and $\chi$ is a smooth quasi-admissible real $(1,1)$-form. 

\begin{proposition}
	\label{construc2-prop}
	Suppose  $\Gamma$ is of type 2 in the sense of \cite{CNS3}, i.e.,  $(0,\cdots,0,1)\in\Gamma$. 
	%and that $(M,\omega)$ is a connected  closed  Hermitian manifold with a smooth quasi-admissible real $(1,1)$-form $\chi$. 
	 Then there exists a  smooth admissible function $\underline{u}$, i.e.,  
 %$\lambda(\omega^{-1} \mathfrak{g}[\underline{u}])\in\Gamma$ in $M$. 
 $\lambda[\omega^{-1}(\chi+\sqrt{-1}\partial\overline{\partial}  \underline{u})]\in\Gamma$ in $M$. 
\end{proposition}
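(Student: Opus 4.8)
The plan is to use the type~2 hypothesis to replace the (impossible) requirement ``$\sqrt{-1}\partial\overline{\partial}\underline u$ positive on all of $M$'' by a one-sided dominant-eigenvalue condition, and to meet that condition by a function modelled on a Morse function, treating the critical points — where the Morse function contributes nothing — by hand. First I would extract the consequence of type~2: since $\Gamma$ is open and $(0,\dots,0,1)\in\Gamma$, there is $\theta\in(0,1)$ with $(-\theta,\dots,-\theta,1)\in\Gamma$, and together with the symmetry and homogeneity of $\Gamma$ and the standard fact $\overline{\Gamma}_n+\Gamma\subseteq\Gamma$ this yields the criterion: \emph{if $\mathfrak a\in\mathbb{R}^n$ satisfies $\max_i\mathfrak a_i>0$ and $\min_i\mathfrak a_i\geqslant-\theta\max_i\mathfrak a_i$, then $\mathfrak a\in\Gamma$} (write $\mathfrak a$ as $(\max_i\mathfrak a_i)$ times a suitable permutation of $(-\theta,\dots,-\theta,1)$ plus a vector of $\overline{\Gamma}_n$). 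Hence it suffices to produce $\underline u\in C^\infty(M)$ such that at every point either $\lambda(\omega^{-1}\mathfrak{g}[\underline u])$ lies comfortably inside $\Gamma$ (this will be the situation near $p_0$, where $\lambda(\omega^{-1}\chi)(p_0)\in\Gamma$ provides a fixed gap and $\sqrt{-1}\partial\overline{\partial}\underline u$ must be kept small), or the largest eigenvalue of $\omega^{-1}\mathfrak{g}[\underline u]$ is positive and dominates $\theta^{-1}$ times the absolute value of the smallest.

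Next I would set up the candidate. Take a Morse function $\rho$ on $M$ and put $\underline u=\varphi(\rho)$ with $\varphi$ smooth and convex; then
\[
\sqrt{-1}\partial\overline{\partial}\underline u=\varphi'(\rho)\,\sqrt{-1}\partial\overline{\partial}\rho+\varphi''(\rho)\,\sqrt{-1}\partial\rho\wedge\overline{\partial}\rho,
\]
the second summand being positive semidefinite of rank one with nonzero eigenvalue $\varphi''(\rho)|\partial\rho|_\omega^2$. Away from small coordinate balls about the critical points of $\rho$ one has $|\partial\rho|_\omega\geqslant c_0>0$, so choosing $\varphi$ with $\varphi''\geqslant\gamma(1+\varphi')$ pointwise, $\gamma$ large (in terms of $c_0$, $\theta$, and the $C^0$-norms of $\omega^{-1}\chi$ and $\omega^{-1}\sqrt{-1}\partial\overline{\partial}\rho$), the rank-one term dominates and the dominant-eigenvalue criterion holds there.

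It remains to arrange the critical points of $\rho$. At the minimum (index $0$), $\rho_{i\bar{j}}$ is positive definite, so $\mathfrak{g}[\underline u]=\chi+\varphi'(\rho)\sqrt{-1}\partial\overline{\partial}\rho$ has eigenvalue vector in $\overline{\Gamma}+\Gamma_n\subseteq\Gamma$ automatically. At each critical point of intermediate index, where $\rho_{i\bar{j}}$ may fail to be positive, one inserts via a cutoff and a coordinate function a term $N_j\eta_j|w_j|^2$ contributing a rank-one positive eigenvalue on a small ball; on the transition annulus $|\partial\rho|_\omega$ is bounded below by a multiple of the radius, so the $\varphi''$-term beats the $O(N_j)$ cutoff error once the radii and the $N_j$ are chosen in the right order. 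The maximum of $\rho$ must be handled using the hypothesis on $\chi$: one builds $\rho$ so that its maximum is at $p_0$ and so that $\underline u=\varphi(\rho)$ has a sufficiently ``flat'' maximum there — concretely, by flattening $\varphi$ (and/or $\rho$) near the top so that $\sqrt{-1}\partial\overline{\partial}\underline u$ stays smaller than the gap $\delta_0$ on a neighbourhood of $p_0$ lying inside the set where $\lambda(\omega^{-1}\chi)\in\Gamma$.

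The main obstacle is structural, not computational. On a closed manifold the maximum principle forbids $\sqrt{-1}\partial\overline{\partial}\underline u$ — hence, for $\chi$ only quasi-admissible, also $\omega^{-1}\mathfrak{g}[\underline u]$ at $\max\underline u$ — from having a positive eigenvalue everywhere; type~2 is precisely what lets us settle for the dominant-eigenvalue criterion away from $\max\underline u$, and the hypothesis that $\chi$ is admissible at $p_0$ is precisely what absorbs the necessarily non-positive behaviour of $\mathfrak{g}[\underline u]$ near its maximum. The technical heart is then the simultaneous, correctly ordered choice of the Morse function $\rho$ (with its maximum at $p_0$ and with prescribed local behaviour there), the convex profile $\varphi$ and the constant $\gamma$, the radii of the balls about the remaining critical points, and the constants $N_j$, so that the global dominant-eigenvalue estimate, the local corrections, and the interior estimate near $p_0$ all hold at once; reconciling the large $\varphi'$ needed for the dominant eigenvalue with the small $\sqrt{-1}\partial\overline{\partial}\underline u$ needed near $\max\underline u$ is where Morse theory must be used with care.
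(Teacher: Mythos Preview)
Your overall strategy matches the paper's --- Morse function, rank-one term $\sqrt{-1}\partial v\wedge\overline{\partial}v$, type~2 to land in $\Gamma$ --- but the treatment of the maximum has a real gap, and the tension you flag at the end is not resolvable within your framework. With $\varphi$ convex and $\varphi''\geqslant\gamma(1+\varphi')$, integration gives $\varphi'(\rho)\gtrsim e^{\gamma(\rho-a)}-1$, so $\varphi'$ is \emph{largest} at $\max\rho=p_0$, where $\rho_{i\bar j}$ is negative definite; $\varphi'\,\sqrt{-1}\partial\overline{\partial}\rho$ then contributes eigenvalues of order $-e^{\gamma(b-a)}$, which the fixed gap $\delta_0$ from $\chi$ cannot absorb. ``Flattening $\varphi$'' near the top forces $\varphi''<0$ on some interval, killing the rank-one term there; ``flattening $\rho$'' near $p_0$ creates an annulus where $|\partial\rho|$ is small but $\varphi'$ is still enormous, so neither your dominance criterion nor the $\chi$-gap applies.

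The paper avoids this with two devices you do not use. First, by the homogeneity lemma from differential topology it composes an arbitrary Morse function $w$ with a diffeomorphism isotopic to the identity so that \emph{all} critical points of the resulting $v=w\circ h^{-1}$ lie in $\overline{B_{r_0/2}(p_0)}$; then $|\nabla v|\geqslant m_0>0$ on $M\setminus\overline{B_{r_0}(p_0)}$ and no local bumps at intermediate-index points are needed. Second, it takes $\underline u=e^{tv}$ with $v\leqslant-1$ and, crucially, does \emph{not} apply a dominance criterion to $\mathfrak g[\underline u]$: it splits $\mathfrak g[\underline u]=\chi+A$ with $A=te^{tv}\bigl(\sqrt{-1}\partial\overline{\partial}v+t\,\sqrt{-1}\partial v\wedge\overline{\partial}v\bigr)$ and uses the full quasi-admissibility $\lambda(\omega^{-1}\chi)\in\overline{\Gamma}$ together with $\overline{\Gamma}+\Gamma\subseteq\Gamma$. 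Outside the ball one only needs $\lambda(\omega^{-1}A)\in\Gamma$, which holds because $\frac{1}{t}\bigl(\sqrt{-1}\partial\overline{\partial}v+t\,\sqrt{-1}\partial v\wedge\overline{\partial}v\bigr)$ has eigenvalues tending to $|\nabla v|^2(0,\dots,0,1)\in\Gamma$ and $\Gamma$ is a cone; inside the ball $te^{tv},\,t^2e^{tv}\leqslant t^2e^{-t}\to0$, so $A$ is small and $\chi\in\Gamma$ there absorbs it. The conceptual point you are missing: since $\Gamma$ is a cone, $A$ may lie in $\Gamma$ while being arbitrarily small, whereas your criterion asks the rank-one term to dominate $\chi$ itself, forcing the largeness that wrecks the behaviour at $p_0$.
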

 
 \begin{proof}
 	
We use an idea of \cite{yuan-PUE-conformal}, using some technique from Morse theory.
 By  \eqref{key5} 
 and the openness of $\Gamma$,  
 %there is a  positive constant $r_0$ such that  
 \begin{equation}
 	\label{key1-metric}
 	\begin{aligned}
 		\lambda(\omega^{-1}\chi)\in \Gamma \mbox{ in } \overline{B_{r_0}(p_0)} 
 	\end{aligned}
 \end{equation}
 for some positive constant $r_0$.
We take a smooth Morse function $w$ with
  critical set  
 $$\mathcal{C}(w)=\{p_1,\cdots, p_m, p_{m+1}\cdots p_{m+k}\}$$
 among which $p_1,\cdots, p_m$ are all the critical points  being in $M\setminus \overline{B_{r_0/2}(p_0)}$. 
 % ($k\geqslant 0$). 
 Pick $q_1, \cdots, q_m\in {B_{r_0/2}(p_0)}$ but not the critical point of $w$. By the homogeneity lemma 
  (see e.g. \cite{Milnor-1997}), 
 one can find a diffeomorphism
 $h: M\to M$, which is smoothly isotopic to the identity, such that  
 %$h(p_i)=q_i$, $1\leqslant i\leqslant  m$; $h(p_i)=p_i$, $m+1\leqslant  i\leqslant  m+k$. 
  \begin{itemize} 	\item $h(p_i)=q_i$, $1\leqslant i\leqslant  m$. 	\item $h(p_i)=p_i$, $m+1\leqslant  i\leqslant  m+k$. \end{itemize}
 Then we obtain a   Morse function $v=w\circ h^{-1}$
% \begin{equation} 	\label{Morse1-construction} 	\begin{aligned} 		v=w\circ h^{-1}. \nonumber	\end{aligned} \end{equation} 
 %One can check that  
 with the
 critical set  
 \begin{equation}
 	\label{key2}
 	\begin{aligned}
 		\mathcal{C}(v)=\{q_1,\cdots, q_m, p_{m+1}\cdots p_{m+k}\}\subset \overline{B_{r_0/2}(p_0)}.
 	\end{aligned}
 \end{equation}
 
 Next we complete the proof. Assume $v\leqslant  -1$.   	Take $\underline{u}=e^{tv}$, 
 then \begin{equation}
 	\label{key3-2}
 	\begin{aligned}
 	\chi+\sqrt{-1}\partial\overline{\partial}  \underline{u}=\chi+te^{tv}(\sqrt{-1}\partial\overline{\partial} v+t\sqrt{-1}\partial v\wedge \overline{\partial} v).  %\nonumber
 	\end{aligned}
 \end{equation}	 
 Notice that  
 \begin{equation}
 	\label{key3}
 	\begin{aligned}
  \lambda[\omega^{-1}(\sqrt{-1}\partial v\wedge \overline{\partial} v) ] =|\nabla v|^2(0,\cdots,0,1).
 	\end{aligned}
 \end{equation}   
From \eqref{key2} there is a  positive constant $m_0$ such that 
 $|\nabla v|^2\geqslant  m_0  \mbox{ in }  M\setminus\overline{B_{r_0}(p_0)}.$ 
   By \eqref{key4},  %\eqref{key5},  
    \eqref{key3-2}, \eqref{key3}  
 and the openness of $\Gamma$,
  there exists $t\gg1$ such that
 \begin{equation}
 	\begin{aligned}
 	 		\lambda[\omega^{-1}(\chi+\sqrt{-1}\partial\overline{\partial}  \underline{u})] \in\Gamma \mbox{ in } M\setminus\overline{B_{r_0}(p_0)}, \nonumber
 	\end{aligned}
 \end{equation}
and moreover by \eqref{key1-metric}
\begin{equation}
	\begin{aligned}
	\lambda[\omega^{-1}(	\chi+te^{tv}\sqrt{-1}\partial\overline{\partial} v)] \mbox{ in } \overline{B_{r_0}(p_0)}.\nonumber
	\end{aligned}
\end{equation} 
 In conclusion, we know $\lambda[\omega^{-1}(\chi+\sqrt{-1}\partial\overline{\partial}  \underline{u})] \in \Gamma$  in $M.$
%\begin{equation}	\begin{aligned}		\lambda[\omega^{-1}(\chi+\sqrt{-1}\partial\overline{\partial}  \underline{u})] \in \Gamma \mbox{ in } M. \nonumber	\end{aligned}\end{equation} 

 \end{proof}

 \begin{theorem}
 	\label{thm1-existence-2} 
 	% Let $(M, \omega)$ be a connected  closed Hermitian manifold.
 	In addition to \eqref{concave}, %\eqref{unbounded-1} 
  $\sup_{\partial \Gamma} f=-\infty$ and $\sup_\Gamma f=+\infty,$ we assume   $(0,\cdots,0,1)\in 	\mathring{\Gamma}_{\mathcal{G}}^{f}$.
  %and that $\chi$ is	a smooth quasi-admissible real $(1,1)$-form.
 	% in the sense of Definition \ref{def1-quasi-admissible}.
 	Then for any $\psi\in C^\infty(M)$,  
 there exists a constant $c$ and a   smooth 
 	admissible function $u$ with $\sup_M u=0$ such that
 	\begin{equation}
 	%	\label{equ2-solution}
 		\begin{aligned}
 	 	f(\lambda[\omega^{-1}(\chi+\sqrt{-1}\partial\overline{\partial}u)])=\psi+c.
 	%	\lambda(\omega^{-1} \mathfrak{g}[u])=\psi+c.
 			\nonumber
 		\end{aligned}
 	\end{equation} 
 \end{theorem}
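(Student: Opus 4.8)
The plan is to run a standard continuity/degree argument for the family
\begin{equation}
	f(\lambda[\omega^{-1}(\chi+\sqrt{-1}\partial\overline{\partial}u_t)])=t\psi+(1-t)\psi_0+c_t,\qquad t\in[0,1],\nonumber
\end{equation}
where $\psi_0$ is chosen so that $t=0$ is solvable (for instance one checks that the admissible function $\underline{u}$ supplied by Proposition~\ref{construc2-prop} solves the equation for a suitable $\psi_0$ and $c_0$, after possibly normalizing $\sup_M\underline u=0$). Because $\sup_{\partial\Gamma}f=-\infty$, the nondegeneracy condition \eqref{nondegenerate} holds automatically for \emph{every} right-hand side, and since $\sup_\Gamma f=+\infty$ the constant $c_t$ can always be adjusted so that the equation lies in the solvable range; moreover, as noted in the text, in this ``unbounded'' situation every admissible function is a $\mathcal{C}$-subsolution, so Theorem~\ref{thm1-main} applies with $\underline u$ itself as the $\mathcal{C}$-subsolution along the whole family. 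Thus the $C^{2,\alpha}$ a priori estimates hold uniformly in $t$, which is exactly what the continuity method needs.

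First I would set up the normalization and the openness step. Working in the space of admissible $u$ with $\sup_M u=0$ paired with the unknown constant $c$, the linearized operator at a solution is $\mathcal{L}$, which is elliptic by \eqref{elliptic} (valid here by Lemma~\ref{lemma1-unbound-yield-elliptic}); modding out constants and using the Fredholm alternative for $\mathcal{L}$ on the closed manifold gives that the set of solvable $t$ is open. For closedness, I would invoke the uniform $C^{2,\alpha}$ bound from Theorem~\ref{thm1-main} (whose hypotheses are met as explained above), supplemented by the $C^0$ bound of Sz\'ekelyhidi \cite{Gabor} and control of $c_t$: since $\inf_M(t\psi+(1-t)\psi_0)$ stays in a compact subinterval of $(\sup_{\partial\Gamma}f,\sup_\Gamma f)=(-\infty,+\infty)$, one gets a uniform two-sided bound on $c_t$ by evaluating the equation at the maximum and minimum points of $u_t$ and using the admissibility-forced inequalities $f(\lambda)\le f$ evaluated along the diagonal direction. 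Then Schauder bootstrapping upgrades $C^{2,\alpha}$ to $C^\infty$, and Arzel\`a--Ascoli lets one pass to the limit; hence $[0,1]$ is open, closed and nonempty, so $t=1$ is attained.

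The genuinely delicate point, and the reason this is stated as a separate theorem from Theorem~\ref{thm1-existence}, is verifying that the $\mathcal{C}$-subsolution (equivalently, admissible subsolution) hypothesis of Theorem~\ref{thm1-main} is actually satisfied, i.e.\ producing an admissible $\underline u$ on $M$ in the first place. This is where the type-2 assumption enters: Proposition~\ref{construc2-prop} constructs $\underline u=e^{tv}$ from a Morse function with critical set confined to a small ball where $\chi$ is already admissible, using that $\lambda[\omega^{-1}(\sqrt{-1}\partial v\wedge\overline\partial v)]=|\nabla v|^2(0,\dots,0,1)$ and $(0,\dots,0,1)\in\Gamma$. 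One must also check that this $\underline u$ is a $\mathcal C$-subsolution in the unbounded regime; but \eqref{unbounded-1} forces $\Gamma_\infty=\mathbb R^{n-1}$, which is exactly the condition making the set $\{\mu\in\Gamma: f(\mu)=\psi(z),\ \mu\in\underline\lambda+\Gamma_n\}$ bounded, so every admissible function qualifies. The hypothesis $(0,\dots,0,1)\in\mathring\Gamma_{\mathcal G}^f$ is what the blow-up/Liouville step in the proof of Theorem~\ref{thm1-main} needs (it guarantees the limiting cone $\mathring\Gamma_{\mathcal G}^f$ is genuinely a type-2 proper cone so the Liouville theorem bites), so it must be carried along; with it in hand, all the a priori estimates of Theorems~\ref{thm1-main} and \ref{thm2-second-order} are available uniformly along the continuity path, and the argument closes. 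I expect the main obstacle to be bookkeeping the uniform control of the constant $c_t$ together with the $C^0$ estimate so that the linearized problem stays uniformly invertible, rather than any single hard inequality.
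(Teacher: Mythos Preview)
Your continuity-method sketch is essentially how Theorem~\ref{thm1-existence} is proved, and once its hypotheses are in place the present theorem reduces to a one-line citation of that result. The genuine gap in your proposal is that you repeatedly invoke the unbounded condition~\eqref{unbounded-1} without ever deriving it: you use it to justify ellipticity via Lemma~\ref{lemma1-unbound-yield-elliptic}, and again to claim that every admissible function is a $\mathcal C$-subsolution. But~\eqref{unbounded-1} is \emph{not} among the hypotheses of Theorem~\ref{thm1-existence-2}; you only have concavity, $\sup_{\partial\Gamma}f=-\infty$, $\sup_\Gamma f=+\infty$, and $(0,\dots,0,1)\in\mathring{\Gamma}_{\mathcal G}^f$. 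The whole point of the paper's proof is to show that the last two of these yield~\eqref{unbounded-1}, via Lemma~\ref{lemma2-key}: given any $\mu\in\Gamma$, one has $\tfrac{1}{t}(\lambda-\mu)+e_n\to e_n\in\mathring{\Gamma}_{\mathcal G}^f$, so for $t$ large $\lambda+te_n-\mu\in\Gamma_{\mathcal G}^f$ and hence $f(\lambda+te_n)\geqslant f(\mu)$; since $\sup_\Gamma f=+\infty$ one lets $f(\mu)\to+\infty$ and~\eqref{unbounded-1} follows. Once this is established, Proposition~\ref{construc2-prop} supplies the admissible $\underline u$ (using only $(0,\dots,0,1)\in\Gamma$, which is implied by $\mathring{\Gamma}_{\mathcal G}^f\subseteq\Gamma$), and Theorem~\ref{thm1-existence} gives existence directly, with no need to re-run the continuity argument.

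Two specific misreadings should also be corrected. First, the hypothesis $(0,\dots,0,1)\in\mathring{\Gamma}_{\mathcal G}^f$ plays no role in the Liouville step of Theorem~\ref{thm1-main}: Theorem~\ref{thm-Liouville} only needs $\mathring{\Gamma}_{\mathcal G}^f$ to be a proper open symmetric convex cone containing $\Gamma_n$, and Lemma~\ref{lemma1-key} guarantees this unconditionally. Its actual role is exactly the derivation of~\eqref{unbounded-1} above (and, incidentally, forcing $\Gamma$ to be type~2 so that Proposition~\ref{construc2-prop} applies). Second, your assertion that ``\eqref{unbounded-1} forces $\Gamma_\infty=\mathbb R^{n-1}$'' is false --- take $f=\log\sigma_n$ on $\Gamma_n$. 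The reason every admissible function is a $\mathcal C$-subsolution under~\eqref{unbounded-1} is simply that, by the symmetry of $f$, one has $f(\lambda+te_i)\to+\infty$ in \emph{every} coordinate direction, which directly bounds the set in the definition of $\mathcal C$-subsolution.
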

\begin{proof}
	%[Sketch of the proof]
By Proposition \ref{construc2-prop}, we have a smooth admissible function $\underline{u}$. According to Lemma %\ref{lemma3.4-strongerversion} $\mathrm{(4)}$,
\ref{lemma2-key},
   \eqref{unbounded-1} automatically holds if $(0,\cdots,0,1)\in \mathring{\Gamma}_{\mathcal{G}}^{f}$ and $\sup_\Gamma f=+\infty.$ 
\end{proof}
 
 %\begin{remark} According to Lemma \ref{lemma3.4-strongerversion} $\mathrm{(4)}$, assumption \eqref{unbounded-1} automatically holds if $(0,\cdots,0,1)\in \mathring{\Gamma}_{\mathcal{G}}^{f}$ and $\sup_\Gamma f=+\infty.$  \end{remark}
 
  \medskip

 \section{Quantitative boundary estimate: Set-up}
 \label{sec1-Bdy-setup}

 Denote $\sigma(z)$  the distance  from $z$ to $\partial M$ with respect to $\omega$.  Let    \begin{equation} \label{M-delta}	\begin{aligned} 		M_\delta\equiv\{z\in M: \sigma(z)<\delta\}.	\end{aligned} \end{equation} 
 %$$	M_\delta\equiv\{z\in M: \sigma(z)<\delta\}.$$ Note that $\sigma$ is smooth in $M_{\delta_0}$ for some $\delta_0>0$.
Given a boundary point $z_0\in \partial M$.  
%Let $\rho(z)$  be the distance function from $z$ to $z_0$. 
 %and let $M_\delta$ be as in \eqref{M-delta}.
 We choose local %holomorphic
complex coordinates  with origin at $z_0$
 %\eqref{goodcoordinate1}
 \begin{equation}
 	\label{goodcoordinate1}
 	\begin{aligned}
 		(z^1,\cdots, z^n), \mbox{  } z^i=x^i+\sqrt{-1}y^i 
 	\end{aligned}
 \end{equation}
 in a neighborhood which we assume to be contained in
 $M_\delta$, %defined in \eqref{M-delta}
 such that at the origin $z_0$ $(z=0)$,
 $g_{i\bar j}(0)=\delta_{ij}$, $\frac{\partial}{\partial x^{n}} $ is the interior normal to $\partial M$. 
 (Here and in what follows we identify the boundary point  $z_0$ with $z=0$).
For convenience 
we set
 %As in \eqref{t-coordinate} we set
 \begin{equation}
 	\label{t-coordinate}
 	\begin{aligned}
 		t^{2\alpha-1}=x^{\alpha}, \ t^{2\alpha}=y^{\alpha},\ 1\leqslant \alpha\leqslant n-1;\ t^{2n-1}=y^{n},\ t^{2n}=x^{n}. % \nonumber
 	\end{aligned}
 \end{equation}
 In the computation, we use the notation  in  \eqref{formula-1}.
  Also we denote
  \begin{equation}	\begin{aligned}		\Omega_{\delta}\equiv\{z\in M: |z|<\delta\}.	\end{aligned} \end{equation} 
 Throughout this section and Section \ref{sec1-Bdy-estimate}, the Greek letters $\alpha, \beta$ run from $1$ to $n-1$.

 Since $u-\varphi=0$ on $\partial M$, we can derive  
 \begin{equation}	\label{ineq2-bdy}	\begin{aligned}		|u_{t^k t^l}(z_0)|\leqslant C  \mbox{ for } k, l<2n,	\end{aligned} \end{equation} 
 where $C$ depends on $\sup_{\partial M}|\nabla (u-\varphi)|$ and geometric quantities of $\partial M$.
 
To show Theorem \ref{thm1-bdy}, it suffices to prove
 %The proof of Theorem \ref{thm1-bdy} consists of
  the following two propositions:

 \begin{proposition}
 	\label{proposition-quar-yuan2}
 	Suppose %\eqref{elliptic}, 
 	$f$ satisfies	\eqref{concave},  \eqref{nondegenerate} and  \eqref{unbounded-1}.
 	Assume that Dirichlet problem \eqref{mainequ-Dirichlet} admits a $C^2$ {\em admissible} subsolution $\underline{u}$. 
 	Let $u\in  C^{2}(\bar M)$ be an \textit{admissible} solution  to the Dirichlet problem, then
 	\begin{equation}
 		%\label{yuan-prop1}
 		\begin{aligned}
 			\mathfrak{g}_{n\bar n}(z_0)
 			\leqslant  C\left(1 +  \sum_{\alpha=1}^{n-1} |\mathfrak{g}_{\alpha\bar n}(z_0)|^2\right),  \,\,\forall z_0\in\partial M.  \nonumber
 		\end{aligned}
 	\end{equation}
 	Here the constant $C$ depends on  $(\delta_{\psi,f})^{-1}$,  $\sup_M\psi$, $|u|_{C^0(\bar M)}$,    	$|\nabla u|_{C^0(\partial M)}$,
 	$|\underline{u}|_{C^{2}(\bar M)}$, $\partial M$ up to third order derivatives 
 	and other known data
 	(but not on $\sup_{M}|\nabla u|$). 
 	
 \end{proposition}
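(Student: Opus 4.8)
The plan is to construct a barrier of the form $v = \underline{u} + \tau\sigma - N\sigma^2 + (\text{tangential corrections})$ near $z_0$ and apply the maximum principle together with the quantitative Lemma \ref{yuan's-quantitative-lemma} to control the double normal derivative $\mathfrak{g}_{n\bar n}(z_0)$. First I would reduce the problem to a local one on $\Omega_\delta$, using the coordinates \eqref{goodcoordinate1}–\eqref{t-coordinate} adapted to $z_0$. The starting observation is the standard one from Caffarelli-Kohn-Nirenberg-Spruck and Guan: on the boundary, the tangential-tangential entries $\mathfrak{g}_{\alpha\bar\beta}(z_0)$ and the mixed entries $\mathfrak{g}_{\alpha\bar n}(z_0)$ are already controlled (up to the quantity $\sum_\alpha |\mathfrak{g}_{\alpha\bar n}(z_0)|^2$ we are allowed to keep on the right-hand side), by \eqref{ineq2-bdy} and the equation $u=\varphi$ on $\partial M$ combined with the extension of $\varphi$. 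So the full Hermitian matrix $(\mathfrak{g}_{i\bar j}(z_0))$ is, after a unitary rotation fixing the normal direction, of the shape treated in Lemma \ref{yuan's-quantitative-lemma}: fixed $d_1,\dots,d_{n-1}$ and fixed $a_1,\dots,a_{n-1}$ (with $\sum|a_i|^2 \lesssim 1 + \sum_\alpha|\mathfrak{g}_{\alpha\bar n}(z_0)|^2$), and a single large variable entry $\mathbf{a} = \mathfrak{g}_{n\bar n}(z_0)$.

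The key dichotomy: either $\mathfrak{g}_{n\bar n}(z_0)$ already satisfies the quadratic growth bound \eqref{guanjian1-yuan} with $\epsilon$ chosen suitably (in which case Lemma \ref{yuan's-quantitative-lemma} applies and we can extract precise information about $\lambda(\omega^{-1}\mathfrak{g})(z_0)$), or it does not — but then $\mathfrak{g}_{n\bar n}(z_0) \leqslant \frac{2n-3}{\epsilon}\sum|a_i|^2 + (n-1)\sum|d_i| + \frac{(n-2)\epsilon}{2n-3} \leqslant C(1 + \sum_\alpha|\mathfrak{g}_{\alpha\bar n}(z_0)|^2)$, which is exactly the desired conclusion and we are done. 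So the real content is the first branch: assuming \eqref{guanjian1-yuan} holds, I would derive a contradiction with the equation $f(\lambda(\omega^{-1}\mathfrak{g}))(z_0)=\psi(z_0)$. By Lemma \ref{yuan's-quantitative-lemma}, $n-1$ of the eigenvalues are pinned near the fixed values $d_\alpha$ and the last eigenvalue $\lambda_n \geqslant \mathbf{a} = \mathfrak{g}_{n\bar n}(z_0)$. If $\mathfrak{g}_{n\bar n}(z_0)$ were to exceed the bound we want to prove by an arbitrarily large amount, then by \eqref{unbounded-1} the value $f(\lambda_1,\dots,\lambda_{n-1},\lambda_n)$ would be forced to be large — but it equals $\psi(z_0) \leqslant \sup_M\psi$, a contradiction once $\mathfrak{g}_{n\bar n}(z_0)$ is large enough depending on $\sup_M\psi$, the pinned values $d_\alpha$, and the modulus of $f$ near the relevant part of $\Gamma$. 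One must also check admissibility: $\lambda(\omega^{-1}\mathfrak{g})(z_0)\in\Gamma$ forces $(d_1,\dots,d_{n-1})$ to lie in $\Gamma_\infty$, so the relevant limit in \eqref{unbounded-1} is taken along a ray that stays in $\Gamma$, and the hypothesis genuinely applies.

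The role of the subsolution $\underline{u}$ and the nondegeneracy \eqref{nondegenerate} enters precisely in making the above \emph{quantitative} and \emph{uniform in $z_0$}: the pinned values $d_\alpha$ are controlled in terms of $|\underline{u}|_{C^2(\bar M)}$, $|\varphi|_{C^3}$ and $\partial M$, and the gap $\delta_{\psi,f} = \inf_M\psi - \sup_{\partial\Gamma}f > 0$ guarantees that the relevant sublevel/superlevel geometry of $f$ stays in the interior of $\Gamma$ with uniform distance to $\partial\Gamma$, so the growth in \eqref{unbounded-1} kicks in at a rate depending only on the listed data. Concretely, I expect to fix $\epsilon$ depending on $\delta_{\psi,f}$ and $|\underline{u}|_{C^2}$, then choose the threshold $C$ in the statement larger than both $\frac{2n-3}{\epsilon}$ times the $a$-coefficient bound and the value of $\mathbf{a}$ at which \eqref{unbounded-1} forces $f$ above $\sup_M\psi$. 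The main obstacle, and the reason Lemma \ref{yuan's-quantitative-lemma} is needed rather than the qualitative \cite[Lemma 1.2]{CNS3}, is controlling the perturbation of the $n-1$ small eigenvalues: one needs them trapped in a window of size $O(\epsilon)$ around $d_\alpha$ that does \emph{not} shrink or move as $\mathbf{a}\to\infty$, since otherwise the value $f(\lambda_1,\dots,\lambda_n)$ could drift and the contradiction with \eqref{unbounded-1} would be lost. Establishing this uniform pinning is exactly what \eqref{guanjian1-yuan} buys us, and verifying that the hypotheses of that lemma are met with constants depending only on the allowed data is the technical heart of the argument.
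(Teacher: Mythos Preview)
Your overall shape---dichotomy on whether $\mathfrak{g}_{n\bar n}(z_0)$ satisfies the quadratic growth condition \eqref{guanjian1-yuan}, then in the nontrivial branch pin the eigenvalues via Lemma~\ref{yuan's-quantitative-lemma} and invoke the unbounded condition \eqref{unbounded-1}---is the right scaffold, and it is what the paper does. But there is a real gap in the step where you write ``the pinned values $d_\alpha$ are controlled in terms of $|\underline{u}|_{C^2(\bar M)}$, $|\varphi|_{C^3}$ and $\partial M$.'' Controlled here only means \emph{bounded}, and that is not enough: you need $(d_1,\dots,d_{n-1})$ to sit \emph{uniformly in the interior} of $\Gamma_\infty$, with a margin depending only on the allowed data, for \eqref{unbounded-1} to give a uniform threshold on $\lambda_n$. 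Take $f=\log\sigma_n$, $\Gamma=\Gamma_n$: if $d_\alpha\approx\delta$ for small $\delta>0$ then $f(\lambda)\approx (n-1)\log\delta+\log\lambda_n=\psi(z_0)$ forces $\lambda_n\approx e^{\psi}/\delta^{n-1}$, which is unbounded as $\delta\to0^+$. And nothing in your argument prevents the eigenvalues of $\underline{\mathfrak g}_{\alpha\bar\beta}+\eta\,\sigma_{\alpha\bar\beta}$ from drifting toward $\partial\Gamma_\infty$; the formula \eqref{yuan3-buchong5} only gives a two-sided bound on the $d_\alpha$, not a lower bound away from $\partial\Gamma_\infty$.

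The paper resolves this by an additional decomposition that you are missing. One introduces the parameter $t_0$ via \eqref{key0-yuan3}, so that $\mathfrak g_{\alpha\bar\beta}=(1-t_0)\underline{\mathfrak g}_{\alpha\bar\beta}+(A_{t_0})_{\alpha\bar\beta}$ with $\lambda_{\omega'}(A_{t_0})\in\partial\Gamma_\infty$. The barrier construction you mention in your first paragraph is in fact used---but to prove Lemma~\ref{keylemma1-yuan3}, which bounds $(1-t_0)^{-1}$ (and this is precisely where the dependence on $(\delta_{\psi,f})^{-1}$ enters). Once $(1-t_0)^{-1}\leqslant C$, one splits $A(R)=A'(R)+A''(R)$ as in Section~\ref{sec1-Bdy-normalnormal}: the tangential block of $A'(R)$ has eigenvalues $(1-t_0)(\underline\lambda'_\alpha-\varepsilon_0/4)$, which \emph{are} uniformly inside $\Gamma_\infty$ with margin $\geqslant C^{-1}\varepsilon_0$, so Lemma~\ref{yuan's-quantitative-lemma} plus \eqref{unbounded-1} applied to $A'(R_c)$ gives \eqref{key-03-yuan3}. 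The piece $A''(R)$ is handled separately by Lemma~\ref{lemma1-bdyestimate}, and concavity of $f$ recombines them in \eqref{yuan-k1}. Without this $t_0$-splitting your argument cannot be made uniform.
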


\begin{proposition}
	\label{mix-general}

	Assume  \eqref{elliptic}, \eqref{concave} and  \eqref{nondegenerate}  hold. 
	Suppose that Dirichlet problem \eqref{mainequ-Dirichlet} admits an {\em admissible} subsolution $\underline{u}\in C^2(\bar M)$. 
	Then 
	for any admissible solution $u\in C^3(M)\cap C^2(\bar M)$ to the  Dirichlet problem,  we have
	\begin{equation}
		\label{quanti-mix-derivative-00} 
		|\mathfrak{g}_{\alpha \bar n}(z_0)|\leqslant C\left(1+\sup_{M}|\nabla u|\right)\, \mbox{ for any } z_0\in\partial M, %\nonumber
	\end{equation}
	where $C$ a positive constant depending on $|\varphi|_{C^{3}(\bar M)}$, 
	$|\psi|_{C^1(M)}$, $|\nabla u|_{C^0(\partial M)}$, 
	$|\underline{u}|_{C^{2}(\bar M)}$,  
	$\partial M$ up to third derivatives
	and other known data (but neither on $(\delta_{\psi,f})^{-1}$ nor on $\sup_{M}|\nabla u|$).
\end{proposition}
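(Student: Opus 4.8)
The plan is to follow the classical Caffarelli--Kohn--Nirenberg--Spruck and Guan barrier scheme for tangential--normal second derivatives at the boundary, with the admissible subsolution $\underline{u}$ supplying the barrier and the nontrivial torsion of $\omega$ absorbed through Lemma~\ref{lemma2-guan}. Fix $z_0\in\partial M$ and work in the coordinates \eqref{goodcoordinate1}--\eqref{t-coordinate}, localizing to a half-ball $\Omega_\delta\cap M$ with $\delta$ small. For each $\alpha<n$ I would introduce a first-order real differential operator $\mathcal{T}$, with $C^1(\bar M)$ coefficients built from $\partial_{x^\alpha},\partial_{y^\alpha},\partial_{y^n}$ and the defining function of $\partial M$, chosen so that: $\mathcal{T}$ is tangent to $\partial M$ near $z_0$; $\mathcal{T}(u-\varphi)$ vanishes identically on $\partial M\cap\Omega_\delta$ (hence at $z_0$); and $\partial_{x^n}\big(\mathcal{T}(u-\varphi)\big)(z_0)$ recovers the real, resp. imaginary, part of $\mathfrak{g}_{\alpha\bar n}(z_0)$ up to quantities controlled by $|\varphi|_{C^2(\bar M)}$, $|\nabla u|_{C^0(\partial M)}$, $|\chi|_{C^1}$ and the tangential bounds \eqref{ineq2-bdy}. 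On $\partial\Omega_\delta\cap M$ one has $|\mathcal{T}(u-\varphi)|\leqslant C(1+\sup_M|\nabla u|)$ from the gradient of $u$ and the boundary data.

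Next I would differentiate \eqref{equ1-main} once in the $\mathcal{T}$-direction and commute covariant derivatives through the Chern connection. Using $F^{i\bar j}g_{i\bar j}=\sum_i f_i(\lambda)$ and $F^{i\bar j}\mathfrak{g}_{i\bar j}=\sum_i f_i(\lambda)\lambda_i$, this yields an estimate of the schematic form
\[
\big|\mathcal{L}\big(\mathcal{T}(u-\varphi)\big)\big|\leqslant C\big(1+\sup_M|\nabla u|\big)\Big(1+\sum_i f_i(\lambda)\Big)+C\sum_i f_i(\lambda)|\lambda_i|,
\]
where the last term collects the contributions of the torsion tensors of the (possibly non-Kähler) metric $\omega$. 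By Lemma~\ref{lemma2-guan} with $r=n$, this troublesome term is dominated by $\epsilon\sum_{i\neq n}f_i(\lambda)\lambda_i^2+C_\epsilon\sum_i f_i(\lambda)+\sum_i f_i(\lambda)(\underline{\lambda}_i-\lambda_i)$, and the quadratic piece $\sum_{i\neq n}f_i\lambda_i^2$ is to be controlled by a good negative term produced by the barrier.

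For the barrier I would use two structural facts: first, concavity together with \eqref{L1-u-ubar} gives $\mathcal{L}(\underline{u}-u)=F^{i\bar j}(\underline{\mathfrak{g}}_{i\bar j}-\mathfrak{g}_{i\bar j})\geqslant\sum_i f_i(\lambda)(\underline{\lambda}_i-\lambda_i)\geqslant F(\underline{\mathfrak{g}})-\psi\geqslant 0$; second, a direct computation of $\mathcal{L}\big(\sum_{\beta<n}|\partial_\beta(u-\varphi)|^2\big)$ produces a term $c\sum_{\beta<n}\sum_i f_i(\lambda)|\mathfrak{g}_{i\bar\beta}|^2$, which bounds $\sum_{i\neq n}f_i\lambda_i^2$ from below, at the price of cross terms of size $C(1+\sup_M|\nabla u|)\big(1+\sum_i f_i(\lambda)+\sum_i f_i(\lambda)|\lambda_i|\big)$. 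I would then build a barrier of the schematic form $\Phi=A_1(\underline{u}-u)+A_2|z|^2-A_3\sum_{\beta<n}|\partial_\beta(u-\varphi)|^2$ (up to a harmless overall rescaling by $1+\sup_M|\nabla u|$ to absorb the size of $\mathcal{T}(u-\varphi)$ and $\partial_\beta(u-\varphi)$ on $\partial\Omega_\delta\cap M$), fix $\epsilon$ small, and choose $A_1,A_2,A_3$ in a suitable hierarchy so that $\mathcal{L}\big(\Phi\pm\mathcal{T}(u-\varphi)\big)\leqslant 0$ in $\Omega_\delta\cap M$ while $\Phi\pm\mathcal{T}(u-\varphi)\geqslant 0$ on $\partial(\Omega_\delta\cap M)$. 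The maximum principle then gives $\Phi\pm\mathcal{T}(u-\varphi)\geqslant 0$ in $\Omega_\delta\cap M$; since this function vanishes at $z_0$, its interior-normal derivative there is nonnegative, and because $\partial_{x^n}(|z|^2)(z_0)=0$ and the remaining contributions to $\partial_{x^n}\Phi(z_0)$ are bounded by $C(1+\sup_M|\nabla u|)$ in terms of $|\nabla u|_{C^0(\partial M)}$, $|\underline{u}|_{C^2(\bar M)}$ and $|\varphi|_{C^3(\bar M)}$, we obtain $|\partial_{x^n}(\mathcal{T}(u-\varphi))(z_0)|\leqslant C(1+\sup_M|\nabla u|)$. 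Combined with \eqref{ineq2-bdy} this yields \eqref{quanti-mix-derivative-00}.

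The step I expect to be the main obstacle is the control of $\mathcal{L}(\mathcal{T}(u-\varphi))$ — precisely the torsion-generated $\sum_i f_i(\lambda)|\lambda_i|$ terms — with a constant that is simultaneously linear in $\sup_M|\nabla u|$ and independent of $(\delta_{\psi,f})^{-1}$. This is exactly where Lemma~\ref{lemma2-guan} together with the barrier term $-A_3\sum_{\beta<n}|\partial_\beta(u-\varphi)|^2$ is indispensable: it lets one absorb $\sum_i f_i|\lambda_i|$ first into $\sum_{i\neq n}f_i\lambda_i^2$ and then into $\mathcal{L}$ of the barrier, whereas invoking the $\mathcal{C}$-subsolution dichotomy of Lemma~\ref{lemma3-key} instead would reintroduce a dependence on $\delta_{\psi,f}$ through its constant $\varepsilon$. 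A closely related delicate point, which I expect to require the bulk of the careful work, is the calibration of the constant hierarchy $A_1,A_2,A_3$ (and the rescaling) so that the positive contributions — notably $A_2\,\mathcal{L}(|z|^2)=A_2\sum_i f_i(\lambda)$ and the cross terms coming from $-A_3\,\mathcal{L}\big(\sum_{\beta<n}|\partial_\beta(u-\varphi)|^2\big)$ — are all dominated without any constant secretly growing in $\sup_M|\nabla u|$ faster than the claimed linear rate, and without tacitly using a bound on $\sup_M|\mathfrak{g}|$, which at this stage is still unknown.
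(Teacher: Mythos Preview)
Your overall scheme---tangential operator $\mathcal{T}$, the term $\sum_{\tau<n}|(u-\varphi)_\tau|^2$ to generate $\sum_{i\neq n}f_i\lambda_i^2$, and Lemma~\ref{lemma2-guan} to convert $\sum_i f_i|\lambda_i|$ into that quadratic plus $\sum_i f_i(\underline\lambda_i-\lambda_i)$---matches the paper. The genuine gap is in your barrier: with only $A_1(\underline u-u)$, $A_2|z|^2$, and $A_3\sum_{\beta<n}|\partial_\beta(u-\varphi)|^2$ you have no mechanism to absorb the error terms of order $C\sqrt K\sum_i f_i(\lambda)$ that arise from the torsion, from $\mathcal L(|z|^2)$, and from the $(|\underline\lambda|+\epsilon^{-1})\sum_i f_i$ piece of Lemma~\ref{lemma2-guan}. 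The subsolution term only gives $\mathcal L(\underline u-u)\geqslant 0$, not $\geqslant c\sum_i f_i$; and the quadratic term controls $\sum_{i\neq n}f_i\lambda_i^2$, which says nothing about $\sum_i f_i$ when $|\lambda|$ stays bounded but $\sum_i f_i$ is large. The paper closes this by enlarging the barrier to $w=(\underline u-u)-t\sigma+N\sigma^2$, so that $\mathcal Lw$ contains the extra good term $2NF^{i\bar j}\sigma_i\sigma_{\bar j}$, and then invoking the dichotomy of Lemma~\ref{lemma3-key}: in Case~1 one has $\mathcal L(\underline u-u)\geqslant\varepsilon(1+\sum_i f_i)$ directly; in Case~2 one has $f_i\geqslant\varepsilon\sum_j f_j$ for all $i$, whence $F^{i\bar j}\sigma_i\sigma_{\bar j}\geqslant\tfrac{\varepsilon}{16}\sum_i f_i$ and the $N\sigma^2$ term supplies the needed $\tfrac{A_1N\varepsilon}{8}\sqrt K\sum_i f_i$, with Lemmas~\ref{lemma3-guan}, \ref{Guan-sum1}, \ref{lemma1-cvpde-yuan} finishing off the remaining constants.

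Your stated reason for avoiding Lemma~\ref{lemma3-key}---that its $\varepsilon$ would reintroduce a dependence on $(\delta_{\psi,f})^{-1}$---is not correct, and this is precisely why the paper can use the dichotomy here and still claim independence from $(\delta_{\psi,f})^{-1}$. Tracing the proof of Lemma~\ref{lemma5-key} with $\mu=\underline\lambda$ and $\sigma=\psi(z)$: since $\underline u$ is an \emph{admissible} subsolution, $\underline\lambda$ lies in a fixed compact subset of $\Gamma$ determined by $|\underline u|_{C^2}$, and the set $(\underline\lambda+\Gamma_n)\cap\partial\Gamma^{\psi}$ is empty. The constants $\delta$, $R_0$, $\delta_0$ appearing there are then controlled by this compact set and by $\sup_M\psi$ only, never by $\inf_M\psi-\sup_{\partial\Gamma}f$. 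So the right fix to your outline is not to work harder to avoid Lemma~\ref{lemma3-key}, but to add the $-t\sigma+N\sigma^2$ piece to the barrier and run the two-case argument exactly as the paper does.
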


 \subsection{$C^0$-estimate and boundary gradient estimate}

 Now we derive  $C^0$-estimate, and  boundary gradient  estimates.
 % and the boundary estimates for pure tangential derivatives.
 Let $\check{u}$ be the solution to 
 \begin{equation}
 	\label{supersolution-1}
 	\begin{aligned}
 		\,& \Delta \check{u} + \mathrm{tr}_\omega(\chi)=0 \mbox{ in } M,   \,& \check{u}=\varphi \mbox{ on } \partial M.
 	\end{aligned}
 \end{equation}
 % When $\partial M\in C^{2,\alpha}$, 
 %$\partial M$ is sufficiently smooth 
 The existence of $\check{u}$ follows from standard theory of elliptic equations of second order.
 % In particular, if $\partial M\in C^{2,\alpha}$ $(0<\alpha<1)$, $\check{u}\in C^\infty(M)\cap C^{2,\alpha}(\bar M)$. 
 Such $\check{u}$ is a supersolution of  \eqref{mainequ-Dirichlet}.
 By the maximum principle %and boundary value condition,  
 one derives
 \begin{equation}
 	%\label{key-14-yuan3}
 	\begin{aligned}
 		% 0\leqslant (u-\underline{u})_{x^n}(0) \leqslant (\check{u}-\underline{u})_{x^n}(0), \mbox{ }  u_\alpha(0)=\underline{u}_\alpha(0), \mbox{  } \underline{u}\leqslant u\leqslant \check{u} \mbox{ in } M.
 		%C_0 \mbox{ holds for a uniform positive constant } C_0.
 		\underline{u}\leqslant u\leqslant \check{u} \mbox{ in } M, 	
 	\end{aligned}
 \end{equation}	
 \begin{equation}
 	\label{key-14-yuan3}
 	\begin{aligned}
 		\underline{u}_{x^n}(0) \leqslant u_{x^n}(0) \leqslant \check{u}_{x^n}(0), \mbox{ }  u_\beta(0)=\underline{u}_\beta(0), \mbox{ } 1\leqslant\beta\leqslant n-1.
 	\end{aligned}
 \end{equation}
 This simply gives the following lemma.
 \begin{lemma}
 	\label{lemma-c0-bc1}
 	There is a uniform positive constant $C$ such that 
 	\begin{equation}
 		% \label{key-14-yuan3}
 		\begin{aligned}
 			\sup_M |u|+\sup_{\partial M} |\nabla u| \leqslant C.
 		\end{aligned}
 	\end{equation}
 \end{lemma}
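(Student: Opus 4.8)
The plan is to deduce both bounds directly from the two-sided comparison $\underline{u}\leqslant u\leqslant\check{u}$ in $M$ and from the boundary relations \eqref{key-14-yuan3}, both of which are already in place by the discussion preceding the statement. For the $C^0$-estimate, note that $\underline{u}$ and $\check{u}$ are fixed functions in $C^0(\bar M)$ independent of $u$ — the former being the given admissible subsolution, the latter the (smooth up to the boundary, by standard elliptic theory) solution of the linear Dirichlet problem \eqref{supersolution-1} — so the sandwich $\underline{u}\leqslant u\leqslant\check{u}$ at once yields $\sup_M|u|\leqslant\max\{\sup_M|\underline{u}|,\sup_M|\check{u}|\}$.

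For the boundary gradient I would split $\nabla u$ at a point $z_0\in\partial M$ into its components tangent and normal to $\partial M$. In the coordinates \eqref{goodcoordinate1} the tangential derivatives $u_\beta(z_0)$, $1\leqslant\beta\leqslant n-1$, are determined by the boundary condition $u|_{\partial M}=\varphi$ alone — indeed $u_\beta(z_0)=\underline{u}_\beta(z_0)$ as recorded in \eqref{key-14-yuan3} — hence are controlled by $|\varphi|_{C^1(\partial M)}$ and the geometry of $\partial M$. For the normal derivative $u_{x^n}(z_0)$ I would compare inward normal derivatives: $u-\underline{u}$ and $\check{u}-u$ are nonnegative in $M$ and vanish on $\partial M$, so their inward normal derivatives are nonnegative on $\partial M$, which is exactly the chain $\underline{u}_{x^n}(z_0)\leqslant u_{x^n}(z_0)\leqslant\check{u}_{x^n}(z_0)$. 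Since the two outer quantities are bounded by $|\underline{u}|_{C^1(\bar M)}$ and $|\check{u}|_{C^1(\bar M)}$, this pins down $|u_{x^n}(z_0)|$, and combining with the tangential estimate gives $\sup_{\partial M}|\nabla u|\leqslant C$.

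There is no genuinely hard step; the content is really the two comparison inequalities, which are themselves routine: $u\leqslant\check{u}$ holds because every admissible function satisfies $\mathrm{tr}_\omega\mathfrak{g}[u]>0$ (as $\Gamma$ is a proper symmetric convex cone containing $\Gamma_n$, it is contained in the half-space $\{\sum\lambda_i>0\}$), so $\Delta(u-\check{u})>0$ in $M$ with $u-\check{u}=0$ on $\partial M$ and the maximum principle applies; and $\underline{u}\leqslant u$ is the comparison principle for the concave elliptic operator $F$ applied to the admissible functions $u$ and $\underline{u}$, which agree on $\partial M$ and satisfy $F(\mathfrak{g}[\underline{u}])\geqslant\psi=F(\mathfrak{g}[u])$. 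I would finally record that the constant $C$ so obtained depends only on $|\underline{u}|_{C^1(\bar M)}$, $|\varphi|_{C^1(\bar M)}$, $|\chi|_{C^0(\bar M)}$ and the data of $(\bar M,\omega)$, and in particular not on $\sup_M|\nabla u|$ — a point that matters for the quantitative boundary estimates in Propositions \ref{proposition-quar-yuan2} and \ref{mix-general}.
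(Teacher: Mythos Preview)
Your proposal is correct and is exactly the argument the paper has in mind: the lemma is stated immediately after the comparison $\underline{u}\leqslant u\leqslant\check{u}$ and the boundary relations \eqref{key-14-yuan3}, with the paper simply remarking ``This simply gives the following lemma.'' Your write-up merely spells out what that one line is pointing to, including the routine maximum/comparison-principle justifications of the two-sided bound.
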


 \subsection{A key lemma} 
 The boundary value condition yields 
 \begin{equation}
 	\label{yuan3-buchong5}
 	\begin{aligned}
 		% \,&  u_\alpha(0)=\underline{u}_\alpha(0), \,&
 		u_{\alpha\bar\beta}(0)=\underline{u}_{\alpha\bar\beta}(0)+(u-\underline{u})_{x^n}(0)\sigma_{\alpha\bar\beta}(0).
 	\end{aligned}
 \end{equation} 
 %This gives the bound of second estimates for pure tangential derivatives\begin{equation}	\label{ineq2-bdy}	\begin{aligned}		|u_{\alpha\bar\beta}(0)|\leqslant C.	\end{aligned} \end{equation}
 At  $z_0\in\partial M$ ($z=0$),  by \eqref{yuan3-buchong5} we have  
 \begin{equation}
 	\label{410-buchong}
 	\begin{aligned}
 		\mathfrak{g}_{\alpha\bar\beta}= (1-t)\underline{\mathfrak{g}}_{\alpha\bar\beta}
 		+\{t\underline{\mathfrak{g}}_{\alpha\bar\beta}+ (u-\underline{u})_{x^n}\sigma_{\alpha\bar\beta}\}.
 	\end{aligned}
 \end{equation}
 For simplicity, %as in \cite{CNS3} also as in \cite{LiSY2004}, 
 denote
 \begin{equation}
 	\label{A_t}
 	\begin{aligned}
 		A_t=\sqrt{-1} \left[t\underline{\mathfrak{g}}_{\alpha\bar\beta}+ (u-\underline{u})_{x^n}\sigma_{\alpha\bar\beta}\right]dz^\alpha\wedge d\bar z^\beta.
 	\end{aligned}
 \end{equation}
 Clearly, $(A_{1})_{\alpha\bar\beta}=\mathfrak{g}_{\alpha\bar\beta}$. 
 %so  $\lambda_{\omega'}(A_1)\in\Gamma_\infty$. 
 %for any $t>1$ and
 Let $t_0$ be the first  
 $t$ as we decrease $t$ from $1$  
 such that
 \begin{equation}
 	\label{key0-yuan3}
 	\begin{aligned}
 		% \mathfrak{g}_{\alpha\bar\beta}=(1-t_0) \underline{\mathfrak{g}}_{\alpha\bar\beta}+(A_{t_0})_{\alpha\bar\beta} \mbox{ and }
 		\lambda_{\omega'}(A_{t_0})\in\partial\Gamma_\infty 
 	\end{aligned}
 \end{equation}
at $z_0$,
where $\partial\Gamma_\infty$ is the boundary of $\Gamma_\infty$, as above $$\Gamma_\infty=\{(\lambda_{1}, \cdots, \lambda_{n-1}): (\lambda_{1}, \cdots, \lambda_{n-1},R)\in\Gamma \mbox{ for some } R>0\}.$$
 Henceforth, $\lambda_{\omega'}(\chi')$ denotes the eigenvalues of $\chi'$ with respect to $\omega'=\sqrt{-1}g_{\alpha\bar\beta} dz^\alpha\wedge d\bar z^\beta.$
 (Note that $g_{\alpha\bar \beta}(0)=\delta_{\alpha\beta}$ under the coordinate \eqref{goodcoordinate1}).
 Such $t_0$ exists, since %Clearly, %$(A_{1})_{\alpha\bar\beta}={u}_{\alpha\bar\beta}+\chi_{\alpha\bar\beta}$ so 
 $\lambda_{\omega'}(A_1)\in\Gamma_\infty$ and
 $\lambda_{\omega'}(A_t)\in \mathbb{R}^{n-1}\setminus\Gamma_\infty$ for $t\ll -1$. Furthermore, 
 \begin{equation}
 	\label{1-yuan3}
 	\begin{aligned}
 		-T_0< t_0<1 
 	\end{aligned}
 \end{equation}
  for some uniform positive constant $T_0$ under control.
 Denote  
 \begin{equation} 	\label{yuan3-buchong6} 
 	\begin{aligned} 	
 	\underline{\lambda}'=(\underline{\lambda}'_1,\cdots,\underline{\lambda}'_{n-1})	\equiv  \lambda_{\omega'}( \underline{\mathfrak{g}}_{\alpha\bar\beta}),
 	\end{aligned}
 \end{equation} 
 \begin{equation}
 	\label{yuan3-buchong3} 	\begin{aligned}  
 	\tilde{\lambda}'=(\tilde{\lambda}_1',\cdots,\tilde{\lambda}_{n-1}') \equiv 	\lambda_{\omega'} ( A_{t_0}). 	\end{aligned}
 \end{equation}
 
 To complete the proof of Proposition \ref{proposition-quar-yuan2}, it   requires to prove the following lemma.
 
 \begin{lemma}
 	\label{keylemma1-yuan3}
 	Let $t_0$ be as defined in \eqref{key0-yuan3}.  
 	There  is a uniform positive constant $C$ depending on 
 	$|u|_{C^0(M)}$, $|\nabla u|_{C^0(\partial M)}$, $|\underline{u}|_{C^2(M)}$, $\sup_M\psi$,
 	%$\sup_{\bar M} (\mathrm{disc}(\lambda(\mathfrak{\underline{g}}),\partial\Gamma))^{-1}$,  
 	$(\delta_{\psi,f})^{-1}$, $\partial M$ up to third derivatives and other known data, such that
 	\begin{equation}
 		%\label{bound-t0}
 		\begin{aligned}
 			(1-t_0)^{-1}\leqslant C.  \nonumber
 		\end{aligned}
 	\end{equation}

 \end{lemma}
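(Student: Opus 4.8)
The strategy is to argue by contradiction-style control: we will show that if $1-t_0$ were too small, the admissibility of the solution together with the nondegeneracy condition \eqref{nondegenerate} would force $\mathfrak{g}_{n\bar n}(z_0)$ to be arbitrarily large in a way that is incompatible with the equation $f(\lambda)=\psi(z_0)$. Concretely, the quantity $\delta_{\psi,f}=\inf_M\psi-\sup_{\partial\Gamma}f>0$ gives us a uniform ``gap'' from the boundary of the cone, and this gap must be reflected in a quantitative statement about how far the eigenvalue tuple $\lambda(\omega^{-1}\mathfrak{g})(z_0)$ sits inside $\Gamma$. The role of $t_0$ is precisely that at $t=t_0$ the reduced $(n-1)\times(n-1)$ block $A_{t_0}$ has eigenvalues landing on $\partial\Gamma_\infty$, i.e. the ``tangential'' part of the Hessian is on the verge of leaving the relevant projected cone; the distance $1-t_0$ measures how much room the actual solution has above this critical configuration.

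First I would use the decomposition \eqref{410-buchong}, namely $\mathfrak{g}_{\alpha\bar\beta}=(1-t_0)\underline{\mathfrak{g}}_{\alpha\bar\beta}+(A_{t_0})_{\alpha\bar\beta}$, to write the full $n\times n$ Hermitian matrix $(\mathfrak{g}_{i\bar j}(z_0))$ in block form with tangential block $(1-t_0)\underline{\mathfrak{g}}_{\alpha\bar\beta}+(A_{t_0})_{\alpha\bar\beta}$, mixed terms $\mathfrak{g}_{\alpha\bar n}$, and corner entry $\mathfrak{g}_{n\bar n}$. Since $\tilde\lambda'=\lambda_{\omega'}(A_{t_0})\in\partial\Gamma_\infty$, after an orthogonal change of tangential coordinates we may assume $A_{t_0}$ is diagonal with entries $\tilde\lambda_1',\dots,\tilde\lambda_{n-1}'$, and these are uniformly bounded (by \eqref{1-yuan3} and $|\underline{u}|_{C^2}$, $|\nabla u|_{C^0(\partial M)}$). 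Then the key structural input is the characterization of $\partial\Gamma_\infty$: a point $\mu'\in\partial\Gamma_\infty$ means that $(\mu',R)\in\bar\Gamma$ for large $R$ but $(\mu',R)\notin\Gamma$, equivalently $f(\mu',R)\to\sup_{\partial\Gamma}f$ (or worse) as $R\to+\infty$ along the admissible directions — more precisely, because the tangential eigenvalues of $\mathfrak{g}$ differ from $\tilde\lambda'$ only by the small perturbation $(1-t_0)\underline{\lambda}'$ up to commutator errors controlled by the off-diagonal terms, the eigenvalues $\lambda(\omega^{-1}\mathfrak{g})(z_0)$ must lie within a controlled neighborhood of the ray $\{(\tilde\lambda_1'+O(1-t_0),\dots,\tilde\lambda_{n-1}'+O(1-t_0),R):R>0\}$.

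Next I would invoke concavity and the unboundedness hypothesis \eqref{unbounded-1}. By Lemma \ref{lemma2-key} (or directly by concavity \eqref{concave-1}) and the fact that $\tilde\lambda'\in\partial\Gamma_\infty$, one shows $\limsup_{R\to+\infty} f(\tilde\lambda'+\epsilon\vec{\bf1}_{n-1},R)$ is bounded above by a quantity that degenerates to $\sup_{\partial\Gamma}f$ as $\epsilon\to 0$; here $\epsilon$ plays the role of $C(1-t_0)$. Quantitatively: there is a modulus of continuity, depending only on $f$, such that for $\lambda\in\Gamma$ whose first $n-1$ components are within $C(1-t_0)$ of $\tilde\lambda'$ and whose last component is arbitrary, $f(\lambda)\le\sup_{\partial\Gamma}f+\omega_f(C(1-t_0))$. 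On the other hand $f(\lambda(\omega^{-1}\mathfrak{g})(z_0))=\psi(z_0)\ge\inf_M\psi=\sup_{\partial\Gamma}f+\delta_{\psi,f}$. Combining these gives $\omega_f(C(1-t_0))\ge\delta_{\psi,f}>0$, which forces $1-t_0\ge c$ for a uniform constant $c>0$ depending only on $f$, $\delta_{\psi,f}$, and the bounds on $\tilde\lambda'$ — i.e. on the listed data.

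The main obstacle I anticipate is making the perturbation argument genuinely quantitative: the eigenvalues of the full matrix $(\mathfrak{g}_{i\bar j}(z_0))$ are not simply $(\tilde\lambda_1'+(1-t_0)\underline{\lambda}_1',\dots,\mathfrak{g}_{n\bar n})$ because of the off-diagonal coupling through $\mathfrak{g}_{\alpha\bar n}$, and $\mathfrak{g}_{n\bar n}$ itself is a priori uncontrolled (that is the very quantity we are trying to bound in Proposition \ref{proposition-quar-yuan2}). This is exactly where Lemma \ref{yuan's-quantitative-lemma} is designed to help: when $\mathfrak{g}_{n\bar n}$ dominates the $\mathfrak{g}_{\alpha\bar n}$ quadratically the eigenvalues split cleanly, but in the regime where $\mathfrak{g}_{n\bar n}\le C(1+\sum|\mathfrak{g}_{\alpha\bar n}|^2)$ there is nothing to prove for Proposition \ref{proposition-quar-yuan2}. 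So the logic must be run carefully: either we are already in the good regime and done, or $\mathfrak{g}_{n\bar n}$ is large relative to the mixed terms, Lemma \ref{yuan's-quantitative-lemma} applies, the tangential eigenvalues of $\mathfrak{g}$ are genuinely close (within $O(1-t_0)$ plus a small $\epsilon$) to $\tilde\lambda'\in\partial\Gamma_\infty$, and then the concavity/nondegeneracy contradiction above yields the lower bound on $1-t_0$. Threading the dependence of constants so that none of them secretly depends on $\sup_M|\nabla u|$ — only on $|\nabla u|_{C^0(\partial M)}$, $|\underline{u}|_{C^2}$, $\sup_M\psi$, $(\delta_{\psi,f})^{-1}$, and $\partial M$ — is the delicate bookkeeping, and I expect that is where most of the real work lies.
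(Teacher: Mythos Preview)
Your approach has a genuine gap at the perturbation step. You correctly observe that $(\tilde\lambda',R)\in\partial\Gamma$ for large $R$, so $f$ there is at most $\sup_{\partial\Gamma}f$. But the claim that this persists under an $O(1-t_0)$ perturbation of the first $n-1$ components, \emph{uniformly in the last component}, is false under \eqref{unbounded-1}. Take $f=\sum_i\log\lambda_i$ on $\Gamma_n$: then $\Gamma_\infty$ is the positive orthant in $\mathbb R^{n-1}$, and for $\tilde\lambda'=(0,1,\dots,1)\in\partial\Gamma_\infty$ one has $f(\epsilon,1,\dots,1,R)=\log\epsilon+\log R$, which is unbounded above in $R$ for every fixed $\epsilon>0$. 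So there is no modulus $\omega_f$ of the kind you describe. Your dichotomy does not rescue this: in the regime where Lemma \ref{yuan's-quantitative-lemma} applies and $\mathfrak g_{n\bar n}$ is large, the eigenvalues do split approximately as $(\tilde\lambda'+O(1-t_0),\,\mathfrak g_{n\bar n})$, but then $f$ of this tuple can be arbitrarily large and there is no contradiction with $f(\lambda)=\psi(z_0)$. The obstacle you anticipate is not a bookkeeping nuisance; it is fatal to the pointwise strategy.

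The paper proceeds by a barrier and comparison argument in a neighborhood $\Omega_\delta$ of $z_0$, not by examining the equation at $z_0$ alone. One takes the supporting hyperplane $\{\sum_\alpha\mu_\alpha\lambda_\alpha'=0\}$ of $\Gamma_\infty$ at $\tilde\lambda'$ (\cite[Lemma 6.1]{CNS3}); the strict admissibility of $\underline u$ then gives $\sum_\alpha\mu_\alpha\underline\lambda_\alpha'\ge\varepsilon_0>0$, and combining this with $\sum_\alpha\mu_\alpha(A_{t_0})_{\alpha\bar\alpha}=0$ and $\eta=(u-\underline u)_{x^n}(0)\ge 0$ forces the weighted Levi form to satisfy $-\sum_\alpha\mu_\alpha\sigma_{\alpha\bar\alpha}(0)\ge a_1>0$ (inequality \eqref{key-1-yuan3}). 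This sign, coming from the geometry of $\partial M$, is the key ingredient your approach never invokes. With it one builds $w=\underline u+(\eta/t_0)\sigma+l(z)\sigma+Ad(z)^2$ on $\Omega_\delta$ with $\Lambda_\mu(\mathfrak g[w])\le0$ (Lemma \ref{lemma-key2-yuan3}), hence $\lambda(\omega^{-1}\mathfrak g[w])\notin\Gamma$ throughout $\Omega_\delta$. A small perturbation $h=w+\epsilon(|z|^2-x^n/C_2)$ keeps $\lambda(\omega^{-1}\mathfrak g[h])$ outside $\bar\Gamma^{\inf_M\psi}$, and the comparison principle \cite[Lemma B]{CNS3} yields $u\le h$ in $\Omega_\delta$ with equality at $0$, so $u_{x^n}(0)\le h_{x^n}(0)$. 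Unwinding gives $(1-t_0)^{-1}\le 1+\eta C_2/\epsilon$. The bound thus comes from an improved upper barrier on the normal derivative, and the dependence on $(\delta_{\psi,f})^{-1}$ enters only through the choice of $\epsilon$ ensuring $\lambda(\omega^{-1}\mathfrak g[h])\notin\bar\Gamma^{\inf_M\psi}$.
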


 \subsection{Proof of  Lemma \ref{keylemma1-yuan3}}
 \label{proof-keylemma1}
  %We leave the proof of Lemma \ref{keylemma1-yuan3} to Subsection \ref{proof-keylemma1}.   
 % In the proof we shall make use of
 The proof is based on some idea of Caffarelli-Nirenberg-Spruck \cite{CNS3}. %(and refined by  Li   \cite{LiSY2004} in complex variables).
 % in which they studied the Dirichlet problem in $\mathbb{R}^n$.  
 % Following closely \cite{LiSY2004}, in \cite{yuan-regular-DP} the author gave a proof of  Lemma \ref{keylemma1-yuan3}. 
 %Below we present the detailed proof for convenience.
 %We leave the detailed proof to the end of the section.
 %
 We assume that $\Gamma$ is of type 1. Then $\Gamma_\infty$ is a symmetric convex cone as noted in \cite{CNS3}. 
 (The  type 2 cone case is much more simpler since $\Gamma_\infty=\mathbb{R}^{n-1}$).    
 
 We follow closely  \cite{LiSY2004}. 
 For simplicity, we set 
 \begin{equation}
 	\label{def1-eta}
 	\begin{aligned}
 	\eta=(u-\underline{u})_{x^n}(0).
 	\end{aligned}
 \end{equation}
From  \eqref{key-14-yuan3}, we know $\eta\geqslant0$.
 %Assume $\eta>0$ (otherwise we are done).
 Without loss of generality, we assume $$t_0>\frac{1}{2} \mbox{ and } \tilde{\lambda}_1'\leqslant \cdots \leqslant \tilde{\lambda}_{n-1}'.$$ 
 %where 
 %$\tilde{\lambda}'$ is as we denoted in \eqref{yuan3-buchong3}.   

 It was proved in \cite[Lemma 6.1]{CNS3} that for $\tilde{\lambda}'\in\partial\Gamma_\infty$
 %with $\tilde{\lambda}_1'\leqslant \cdots \leqslant \tilde{\lambda}_{n-1}'$,
 there is a supporting plane for
 $\Gamma_\infty$ and one can choose $\mu_j$ with 
 $\mu_1\geqslant \cdots\geqslant \mu_{n-1}\geqslant0$ so that
 \begin{equation}
 	\label{key-18-yuan3}
 	\begin{aligned}
 		\Gamma_\infty\subset \left\{\lambda'\in\mathbb{R}^{n-1}: \sum_{\alpha=1}^{n-1}\mu_\alpha\lambda'_\alpha>0 \right\}, \mbox{  }
 		\mbox{  } \sum_{\alpha=1}^{n-1} \mu_\alpha=1, \mbox{  } \sum_{\alpha=1}^{n-1}\mu_\alpha \tilde{\lambda}_\alpha'=0.
 	\end{aligned}
 \end{equation} 
 %By a result of \cite{Marcus1956} (see also  \cite[Lemma 6.2]{CNS3})
 % Assume $\underline{\lambda}_1'\leqslant \cdots\leqslant\underline{\lambda}_{n-1}'$.
 
 Since $\underline{u}$ is an admissible function, there is $\varepsilon_0>0$ small such that
 \begin{equation} 
 	\label{varepsilon-0-a0}
 	\begin{aligned}
 		\underline{\lambda}-\varepsilon_0\vec{\bf 1}\in\Gamma.
 	\end{aligned}
 \end{equation}
 %Let us take a uniform positive constant $a_0$ with $\underline{\lambda}-a_0\vec{\bf1}\in\Gamma.$  
 Then we know that
 \begin{equation}
 	\label{adf}
 	\begin{aligned}
 		\sum_{\alpha=1}^{n-1}\mu_\alpha \underline{\lambda}'_\alpha
 		\geqslant \varepsilon_0>0. 
 	\end{aligned}
 \end{equation} 
 %According to \eqref{adf} and
 Combining \cite[Lemma 6.2]{CNS3}
 (without loss of generality,  assume $\underline{\lambda}_1'\leqslant \cdots\leqslant\underline{\lambda}_{n-1}'$), 
 \begin{equation}
 	\begin{aligned}
 		\sum_{\alpha=1}^{n-1} \mu_\alpha \underline{\mathfrak{g}}_{\alpha\bar\alpha}\geqslant \sum_{\alpha=1}^{n-1}\mu_\alpha \underline{\lambda}'_\alpha  
 		\geqslant \varepsilon_0. 
 	\end{aligned}
 \end{equation}
 %Here we use \eqref{yuan3-31}, \eqref{yuan3-buchong4} and \eqref{key-18-yuan3}. We shall mention that $\varepsilon_0$ depends on $\mathrm{disc}(\underline{\lambda},\partial \Gamma)$.
 %where 
 %\[(\underline{\lambda}'_1-\varepsilon_0,\cdots,\underline{\lambda}'_{n-1}-\varepsilon_0)\in\Gamma_\infty.\] 
 %\[\underline{\lambda}-\varepsilon_0\vec{\bf1}\in\Gamma.\]
 Without loss of generality,  we may assume $({A_{t_0}})_{\alpha\bar\beta}=t_0\underline{\mathfrak{g}}_{\alpha\bar\beta}+\eta\sigma_{\alpha\bar\beta}$ is diagonal at  the origin $z_0$ $(z=0)$. From \eqref{key-18-yuan3} one has at the origin
 \begin{equation}
 	\begin{aligned}
 		0=  t_0 \sum_{\alpha=1}^{n-1}\mu_\alpha\underline{\mathfrak{g}}_{\alpha\bar\alpha}+\eta\sum_{\alpha=1}^{n-1}\mu_\alpha  {\sigma}_{\alpha\bar\alpha} 
 		\geqslant \varepsilon_0 t_0 +\eta \sum_{\alpha=1}^{n-1} \mu_\alpha \sigma_{\alpha\bar\alpha}
 		>\frac{\varepsilon_0}{2} +\eta \sum_{\alpha=1}^{n-1} \mu_\alpha \sigma_{\alpha\bar\alpha}.
 	\end{aligned}
 \end{equation}
 Together with \eqref{key-14-yuan3},
 %and $\eta=(u-\underline{u})_{x^n}(0)$, 
 we see that %at the origin, %$(z=0)$,
    %$\eta=(u-\underline{u})_{x^n}(0)$
    $\eta>0$  and 
 \begin{equation}
 	\label{key-1-yuan3}
 	\begin{aligned}
 		-\sum_{\alpha=1}^{n-1} \mu_\alpha \sigma_{\alpha\bar\alpha}\geqslant
 		% \frac{\varepsilon_0 t_0}{\sup_{\partial M}|\nabla (\check{u}-\underline{u})|}>
 		\frac{\varepsilon_0}{2\sup_{\partial M}|\nabla (\check{u}-\underline{u})|}=:a_1>0,
 	\end{aligned}
 \end{equation}
 where $\check{u}$ and $\underline{u}$ are respectively supersolution and subsolution.
 
 On $\Omega_\delta=M\cap B_{\delta}(0)$, 
 %where $\Omega$ is defined  in \eqref{def1-Omega}, 
 we take
 \begin{equation}
 	\begin{aligned}
 		d(z)=\sigma(z)+\tau |z|^2
 	\end{aligned}
 \end{equation}
 where $\tau$ is a positive constant %with $\tau<a_1/4$ and
 to be determined. Let 
 \begin{equation}
 	\label{w-buchong1}
 	\begin{aligned}
 		w(z)=\underline{u}(z)+({\eta}/{t_0})\sigma(z)+l(z)\sigma(z)+Ad(z)^2,
 	\end{aligned}
 \end{equation}
 where $l(z)=\sum_{i=1}^n (l_iz^i+\bar l_{i}\bar z^{i})$, %is a real-valued linear function
 $l_i\in \mathbb{C}$, $\bar l_i=l_{\bar i}$,
 to be chosen as in \eqref{chosen-1}, and $A$ is a positive constant to be determined. 
 As above $\eta=(u-\underline{u})_{x^n}(0)$.
 Furthermore, on $\partial M\cap\bar\Omega_\delta$, $u(z)-w(z)=-A\tau^2|z|^4$.
 %\begin{equation}   \begin{aligned}
 %   u(z)-w(z)=-A\tau^2|z|^4,
 %   \end{aligned}    \end{equation}
 On $M\cap\partial B_{\delta}(0)$,
 \begin{equation}
 	\begin{aligned}
 		u(z)-w(z) %=\,& u(z)-\underline{u}(z)-({\eta}/{t_0})\sigma(z)-l(z)\sigma(z)-Ad(z)^2 \\
 		\leqslant %\,& |u-\underline{u}|_{C^0(\Omega_\delta)} 
 		-(2A\tau \delta^2+\frac{\eta}{t_0}-2n \sup_{i}|l_i| \delta)\sigma(z)-A\tau^2\delta^4 %\\
 		\leqslant %\,&
 		-\frac{A\tau^2 \delta^4}{2} \nonumber
 	\end{aligned}
 \end{equation}
 provided $A\gg1$. %depending on $|u|_{C^0(M)}$.

 Let $T_1(z),\cdots, T_{n-1}(z)$ be an orthonormal basis for holomorphic tangent space 
 of level hypersurface
 $ \{w: d(w)=d(z)\}$ at $z$, so that  for each 
 $1\leqslant\alpha\leqslant n-1$, $T_\alpha$ is of $ C^1$ class and  %at the origin 
 $T_\alpha(0)= \frac{\partial }{\partial z^\alpha}$. 
 %As for $T^{1,0}_{\partial M}$  
 Such a basis exists and
 the holomorphic tangent space 
 %of level hypersurface
 can be characterized as  $\left\{\xi=\xi^i\frac{\partial}{\partial z^i}: (\sigma_i+\tau \bar z^i ) \xi^i=0\right\}$,
 % when $\tau=0$ this coincides with the characterization of $T^{1,0}_{\partial M}$,
 see e.g. \cite{Demailly-CADG-book}.

 By \cite[Lemma 6.2]{CNS3}, we have the following lemma.
 \begin{lemma}
 	\label{lemma-yuan3-buchong1}
 	Let $T_1(z),\cdots, T_{n-1}(z)$ be as above, and  let  $T_n=\frac{\partial d}{|\partial d|}$.
 	%(i.e. $T_i(z)\in T_{z}^{1,0}\bar M$ such that $\omega(T_i, J\bar T_{j})=\delta_{ij}$).
 	For a real $(1,1)$-form $\Theta=\sqrt{-1}\Theta_{i\bar j}dz^i\wedge d\bar z^j$,
 	we denote by $\lambda(\omega^{-1}\Theta)=(\lambda_1(\Theta),\cdots,\lambda_n(\Theta))$  
 	%the eigenvalues of $\Theta$ (with respect to $\omega$) 
 	with $\lambda_1(\Theta)\leqslant \cdots\leqslant \lambda_n(\Theta)$. Then for any %$\mu_1,\cdots,\mu_n$ with 
 	$\mu_1\geqslant\cdots\geqslant\mu_n$,
 	\[\sum_{i=1}^n \mu_i \lambda_{i}(\Theta)\leqslant \sum_{i=1}^n\mu_i\Theta(T_i,J\bar T_i).\]
 	Here   $J$ is the complex structure.
 \end{lemma}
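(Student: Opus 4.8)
The statement is a Hermitian, frame-adapted version of \cite[Lemma 6.2]{CNS3}, so the plan is to reduce it to the classical linear-algebra inequality: for any two $n$-tuples $\mu_1\geqslant\cdots\geqslant\mu_n$ and $\nu_1\leqslant\cdots\leqslant\nu_n$, and any Hermitian form on $\mathbb{C}^n$ whose eigenvalues (with respect to $\omega$) are the $\lambda_i(\Theta)$ arranged increasingly, one has $\sum_i \mu_i\lambda_i(\Theta)\leqslant \sum_i \mu_i\,\Theta(e_i,J\bar e_i)$ for \emph{any} orthonormal frame $e_1,\dots,e_n$ of the holomorphic tangent space with respect to the Hermitian inner product induced by $\omega$. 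The key point is that $\big(\Theta(e_i,J\bar e_i)\big)_{i=1}^n$ are the diagonal entries of the matrix representing $\omega^{-1}\Theta$ in the orthonormal frame $\{e_i\}$, whereas $\lambda_i(\Theta)$ are its eigenvalues; so this is exactly the Schur–Horn majorization statement that the diagonal of a Hermitian matrix is majorized by its spectrum, paired with the rearrangement inequality.

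First I would fix the point $z$ and pass to $\omega$-orthonormal coordinates so that $g_{i\bar j}(z)=\delta_{ij}$ and the frame $T_1,\dots,T_n$ is unitary; this is possible because $T_1,\dots,T_{n-1}$ was chosen orthonormal for $\omega'$ and $T_n=\partial d/|\partial d|$ completes it to an $\omega$-orthonormal frame of $T^{1,0}$. In this frame $\Theta(T_i,J\bar T_i)$ equals the $i$-th diagonal entry $H_{i\bar i}$ of the Hermitian matrix $H=(H_{i\bar j})$ representing $\Theta$ relative to $\{T_i\}$, and the $\lambda_i(\Theta)$ are precisely the eigenvalues of $H$. Next I would invoke the standard fact (see, e.g., \cite{Spruck-book1} or the original \cite[Lemma 6.2]{CNS3}) that for a Hermitian matrix $H$ with eigenvalues $\lambda_1\leqslant\cdots\leqslant\lambda_n$ and diagonal entries $H_{1\bar 1},\dots,H_{n\bar n}$ (in any order), and for any nonincreasing $\mu_1\geqslant\cdots\geqslant\mu_n$, one has
\[
\sum_{i=1}^n \mu_i\,\lambda_i \;\leqslant\; \sum_{i=1}^n \mu_i\,H_{i\bar i}.
\]
This is where the work sits: it follows from the Schur–Horn majorization (the vector of diagonal entries is majorized by the vector of eigenvalues) together with Abel summation / the rearrangement inequality, exploiting that $\mu$ is nonincreasing while the eigenvalues are listed in nondecreasing order. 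I would either cite this directly from \cite{CNS3} or give the two-line Abel-summation argument: writing $S_k=\sum_{i\leqslant k}(\text{smallest }k\text{ diagonal entries})$ and using $S_k\geqslant \sum_{i\leqslant k}\lambda_i$ with $\mu_k-\mu_{k+1}\geqslant 0$.

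Combining these two observations gives the claimed inequality at $z$, and since $z$ was arbitrary the lemma follows. The \emph{main obstacle} — if one insists on a self-contained treatment — is verifying the majorization inequality in the non-diagonal case: one must be careful that the $T_\alpha$ need not diagonalize $\omega^{-1}\Theta$, so the relation between $\Theta(T_i,J\bar T_i)$ and the eigenvalues is majorization, not equality. Once that is granted (or cited from \cite[Lemma 6.2]{CNS3}), the rest is bookkeeping about the $\omega$-orthonormal frame. I would therefore keep the proof short: reduce to an orthonormal frame, identify $\Theta(T_i,J\bar T_i)$ with the diagonal of $H$, and quote the Schur–Horn/rearrangement inequality.
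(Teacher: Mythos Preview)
Your proposal is correct and matches the paper, which does not give an independent proof but simply cites \cite[Lemma 6.2]{CNS3}. Your unpacking via the Ky Fan/Schur--Horn partial-trace inequality and Abel summation is exactly the content of that citation; one minor wording fix is that in the Abel step you need $\sum_{i\leqslant k} H_{i\bar i}\geqslant \sum_{i\leqslant k}\lambda_i$ for the \emph{given} ordering of the $T_i$ (which follows from the Ky Fan minimum-trace characterization), not merely for the smallest $k$ diagonal entries.
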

 
 Let $\mu_1,\cdots,\mu_{n-1}$ be as in \eqref{key-18-yuan3}, and we set $\mu_n=0$. Let's denote $T_\alpha=\sum_{k=1}^nT_\alpha^k\frac{\partial }{\partial z^k}$.  
 For $\Theta=\sqrt{-1}\Theta_{i\bar j}dz^i\wedge d\bar z^j$,  define
 \begin{equation}
 	\begin{aligned}
 		\Lambda_\mu(\Theta) \equiv \sum_{\alpha=1}^{n-1}\mu_{\alpha} T_{\alpha}^i \bar T_{\alpha}^j \Theta_{i\bar j}.  \nonumber
 	\end{aligned}
 \end{equation}
 %  So $ \Lambda_\mu(\Theta)=\sum_{\alpha=1}^{n-1}\mu_{\alpha} T_{\alpha}^i \bar T_{\alpha}^j \Theta_{i\bar j}.$
 
 % It requires to prove
 \begin{lemma}
 	\label{lemma-key2-yuan3}
 	Let $w$ be as in \eqref{w-buchong1}.
 	There are parameters $\tau$, $A$, $l_i$, $\delta$ depending only on $|u|_{C^0(M)}$, 
 	$|\nabla u|_{C^0(\partial M)}$,  
 	$|\underline{u}|_{C^2(M)}$, %$\sup_{\bar M} (\mathrm{disc}(\lambda(\mathfrak{\underline{g}}),\partial\Gamma))^{-1}$, 
 	% $\mathrm{{dist}}(\lambda(\mathfrak{\underline{g}}),\partial \Gamma)^{-1}$,
 	%$(\delta_{\psi,f})^{-1}$, 
 	$\partial M$ up to third derivatives and other known data, such that 
 	\begin{equation}
 		\begin{aligned}
 		    \Lambda_\mu (\mathfrak{g}[w])   \leqslant0  \mbox{ in } \Omega_\delta, \,\, u\leqslant w \mbox{ on } \partial \Omega_\delta. \nonumber
 		\end{aligned}
 	\end{equation}
 \end{lemma}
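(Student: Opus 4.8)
The plan is to verify the two assertions of Lemma \ref{lemma-key2-yuan3} separately. The boundary inequality $u\le w$ on $\partial\Omega_\delta$ is elementary and is essentially the computation already indicated above: on $\partial M\cap\bar\Omega_\delta$ we have $\sigma\equiv 0$, so $d=\tau|z|^2$ and $w=\underline u+A\tau^2|z|^4$, whence $u-w=\varphi-\underline u-A\tau^2|z|^4=-A\tau^2|z|^4\le 0$; on $M\cap\partial B_\delta(0)$ the positive term $Ad^2\ge A\tau^2\delta^4$ dominates the remaining bounded terms once $A\gg 1$, giving $u-w\le-\tfrac12A\tau^2\delta^4\le 0$. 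So the real point is the differential inequality $\Lambda_\mu(\mathfrak{g}[w])\le 0$ in $\Omega_\delta$, which I would establish in the spirit of \cite{CNS3,LiSY2004}.

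First I would expand $\mathfrak{g}[w]=\chi+\sqrt{-1}\partial\overline{\partial}w$ for $w$ as in \eqref{w-buchong1}. Using $\chi+\sqrt{-1}\partial\overline{\partial}\underline u=\underline{\mathfrak{g}}$, that $l$ is $\mathbb R$-linear in $(z,\bar z)$, and $d_{i\bar j}=\sigma_{i\bar j}+\tau\delta_{ij}$, one gets
\[
\mathfrak{g}[w]_{i\bar j}=\underline{\mathfrak{g}}_{i\bar j}+\Big(\tfrac{\eta}{t_0}+l\Big)\sigma_{i\bar j}+\big(l_i\sigma_{\bar j}+\bar l_j\sigma_i\big)+2Ad\,\big(\sigma_{i\bar j}+\tau\delta_{ij}\big)+2Ad_id_{\bar j}.
\]
The key simplification is that $T_1(z),\dots,T_{n-1}(z)$ span the holomorphic tangent space of $\{d=d(z)\}$, i.e. $T_\alpha^id_i\equiv 0$; hence the rank-one term $2Ad_id_{\bar j}$ disappears under $\Lambda_\mu$, and, using $\sum_\alpha\mu_\alpha=1$ and the $g$-orthonormality of the $T_\alpha$,
\[
\begin{aligned}
\Lambda_\mu(\mathfrak{g}[w])=\,&\Lambda_\mu(\underline{\mathfrak{g}})+\Big(\tfrac{\eta}{t_0}+l\Big)\Lambda_\mu\big(\sqrt{-1}\partial\overline{\partial}\sigma\big)+2\,\mathfrak{Re}\sum_\alpha\mu_\alpha\,(T_\alpha^il_i)\overline{(T_\alpha^j\sigma_j)}\\
&+2Ad\Big(\Lambda_\mu\big(\sqrt{-1}\partial\overline{\partial}\sigma\big)+\tau\big(1+O(\delta)\big)\Big).
\end{aligned}
\]
Next I would use the choice of $t_0$. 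At $z_0$, where $(A_{t_0})_{\alpha\bar\beta}=t_0\underline{\mathfrak{g}}_{\alpha\bar\beta}+\eta\sigma_{\alpha\bar\beta}$ is diagonal with eigenvalues $\tilde{\lambda}_\alpha'$ and $T_\alpha(z_0)=\frac{\partial}{\partial z^\alpha}$, the identity $\sum_\alpha\mu_\alpha\tilde{\lambda}_\alpha'=0$ from \eqref{key-18-yuan3}, together with $l(z_0)=0$ and $\sigma_\alpha(z_0)=0$ for $\alpha<n$, forces $\Lambda_\mu(\mathfrak{g}[w])(z_0)=\tfrac{1}{t_0}\sum_\alpha\mu_\alpha\tilde{\lambda}_\alpha'=0$.

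Thus the barrier is borderline: $\Lambda_\mu(\mathfrak{g}[w])$ merely vanishes at $z_0$, and the whole task is to arrange nonpositivity on the one-sided neighborhood $\Omega_\delta$. For this I would combine three ingredients. First, the strict negativity $\Lambda_\mu(\sqrt{-1}\partial\overline{\partial}\sigma)(z_0)=\sum_\alpha\mu_\alpha\sigma_{\alpha\bar\alpha}(z_0)\le-a_1<0$ from \eqref{key-1-yuan3}; by continuity $\Lambda_\mu(\sqrt{-1}\partial\overline{\partial}\sigma)\le-\tfrac{a_1}{2}$ on $\Omega_\delta$ for $\delta$ small, and since $\Lambda_\mu(\underline{\mathfrak{g}})(z_0)\ge\varepsilon_0>0$ (by \eqref{adf}) and $t_0>\tfrac12$, one also has $\eta/t_0\ge c_0>0$, so this term supplies a definite negative amount near $z_0$ and, via the exact cancellation at $z_0$, makes $\Lambda_\mu(\underline{\mathfrak{g}})(z)+\tfrac{\eta}{t_0}\Lambda_\mu(\sqrt{-1}\partial\overline{\partial}\sigma)(z)=o(1)$ as $z\to z_0$. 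Second, in the good coordinates the functions $T_\alpha^j\sigma_j$ vanish to second order at $z_0$ (since $\nabla\sigma(z_0)$ is normal and the second fundamental form is diagonalized), so the linear-in-$l$ part of the first-order Taylor expansion of $\Lambda_\mu(\mathfrak{g}[w])$ at $z_0$ reduces to $l_i\cdot\Lambda_\mu(\sqrt{-1}\partial\overline{\partial}\sigma)(z_0)$; as $\Lambda_\mu(\sqrt{-1}\partial\overline{\partial}\sigma)(z_0)\neq 0$, the $l_i$ can be chosen—this is precisely the prescription \eqref{chosen-1}—to annihilate all first-order terms coming from the $C^1$ part of $\Lambda_\mu(\mathfrak{g}[w])$. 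Third, the term $2Ad\big(\Lambda_\mu(\sqrt{-1}\partial\overline{\partial}\sigma)+\tau(1+O(\delta))\big)\le-Aa_1 d\le-Aa_1\tau|z|^2$ is the quantitative engine that, for $A$ large, dominates the residual second-order and cross terms.

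Finally I would fix the parameters in the order: $\tau$ small (so $d$ is a small perturbation of $\sigma$ and $\Lambda_\mu(\sqrt{-1}\partial\overline{\partial}d)<0$ on $\Omega_\delta$), then $\delta$ small (absorbing the $O(\delta)$ geometric errors and the modulus of continuity of $\underline{\mathfrak{g}}$ at $z_0$, and keeping $\eta/t_0\ge c_0$), then $l_i$ via \eqref{chosen-1}, then $A\gg 1$; this yields $\Lambda_\mu(\mathfrak{g}[w])\le 0$ in $\Omega_\delta$, and with the boundary inequality the lemma follows. The step I expect to be the main obstacle is exactly this borderline cancellation at $z_0$: because $\lambda_{\omega'}(A_{t_0})$ lies on $\partial\Gamma_\infty$, the quantity $\Lambda_\mu(\mathfrak{g}[w])$ is forced to vanish at $z_0$, so nonpositivity nearby cannot come from a single dominant term but only from the delicate interplay of the strictly negative $\sqrt{-1}\partial\overline{\partial}\sigma$-contribution (quantitative thanks to the strict admissibility of $\underline u$ and to the geometry of $\partial M$, which enters through $\check{u}$ in \eqref{key-1-yuan3}), the linear correction $l$ killing first-order terms, and the large constant $A$ together with small $\tau$ controlling the rest, all while preserving $u\le w$ on $\partial\Omega_\delta$. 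The only regularity needed along the way is $\sigma\in C^3$ (i.e. $\partial M\in C^3$), so that $\sqrt{-1}\partial\overline{\partial}\sigma$ is $C^1$; the merely continuous term $\Lambda_\mu(\underline{\mathfrak{g}})$ enters only through its value and its modulus of continuity at $z_0$.
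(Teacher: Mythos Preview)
Your approach is essentially the same as the paper's: expand $\Lambda_\mu(\mathfrak{g}[w])$, use $T_\alpha d=0$ to kill the rank-one term, note the exact vanishing at $z_0$, choose $l_i$ to annihilate the first-order Taylor terms, and let the negative term $-\tfrac{Aa_1}{2}d(z)\le -\tfrac{Aa_1\tau}{2}|z|^2$ absorb the $O(|z|^2)$ remainder for $A\gg1$.

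There is, however, one inaccuracy in your justification of the choice of $l_i$. You assert that $T_\alpha^j\sigma_j$ vanishes to \emph{second} order at $z_0$; in fact it vanishes only to first order. From $T_\alpha d=0$ one has $T_\alpha^j\sigma_j=-\tau\,T_\alpha^j\bar z^j=-\tau\bar z^\alpha+O(|z|^2)$, so the cross term
\[
2\,\mathfrak{Re}\sum_\alpha\mu_\alpha(T_\alpha^i l_i)\overline{(T_\alpha^j\sigma_j)}=-\tau\sum_\alpha\mu_\alpha(l_\alpha z^\alpha+\bar l_\alpha\bar z^\alpha)+O(|z|^2)
\]
does contribute to the linear part. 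This is precisely why \eqref{chosen-1} carries the extra $-\tau\mu_\alpha$ in the denominator for $\alpha<n$ (and not for $l_n$). Your description ``the linear-in-$l$ part reduces to $l\cdot\Lambda_\mu(\sqrt{-1}\partial\bar\partial\sigma)(z_0)$'' would yield $l_\alpha=-k_\alpha/\sum_\beta\mu_\beta\sigma_{\beta\bar\beta}(0)$, which is not \eqref{chosen-1}. Since you ultimately invoke \eqref{chosen-1} anyway, the conclusion is unaffected, but the intermediate reasoning should be corrected: the three linear contributions to match are $k_\alpha$ (from the Taylor expansion of $\Lambda_\mu(\underline{\mathfrak{g}}+\tfrac{\eta}{t_0}\sqrt{-1}\partial\bar\partial\sigma)$), $l_\alpha\sum_\beta\mu_\beta\sigma_{\beta\bar\beta}(0)$ (from $l\cdot\Lambda_\mu(\sqrt{-1}\partial\bar\partial\sigma)$), and $-\tau\mu_\alpha l_\alpha$ (from the cross term above), exactly as in the paper's ``together'' display.
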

 
 \begin{proof}
 	By direct computation
 	%Note that $\mathfrak{g}[\underline{u}+(\eta/t_0)\sigma]$
 	\begin{equation}
 		\begin{aligned}
 			\Lambda_\mu (\mathfrak{g}[w])
 			=\,&\sum_{\alpha=1}^{n-1} \mu_\alpha T_{\alpha}^i \bar T_{\alpha}^j
 			(\chi_{i\bar j}+\underline{u}_{i\bar j}+\frac{\eta}{t_0}\sigma_{i\bar j}) 
 			+ 2Ad(z)\sum_{\alpha=1}^{n-1} \mu_\alpha T_{\alpha}^i \bar T_{\alpha}^j d_{i\bar j}
 			\\\,&
 			+ \sum_{\alpha=1}^{n-1} \mu_\alpha T_{\alpha}^i \bar T_{\alpha}^j (l(z)\sigma_{i\bar j}
 			+l_i\sigma_{\bar j}+\sigma_i l_{\bar j}).  \nonumber
 		\end{aligned}
 	\end{equation}
 	Here we use $T_\alpha d=0$  for $1\leqslant\alpha\leqslant n-1.$
 	
 	Next, we plan to estimate the quantity $\Lambda_\mu (\mathfrak{g}[w])$ in $\Omega_\delta$.
 	\begin{itemize}
 		\item
 		At the origin $(z=0)$, $T_{\alpha}^i=\delta_{\alpha i}$, 
 		\begin{equation}
 			\begin{aligned}
 				\sum_{\alpha=1}^{n-1} \mu_\alpha T_{\alpha}^i \bar T_{\alpha}^j
 				(\chi_{i\bar j}+\underline{u}_{i\bar j}+\frac{\eta}{t_0}\sigma_{i\bar j}) (0)
 				=\frac{1}{t_0}\sum_{\alpha=1}^{n-1}\mu_\alpha (A_{t_0})_{\alpha\bar\alpha}=0.  \nonumber
 			\end{aligned}
 		\end{equation}
 		So there are complex constants $k_i$ %with $\bar k_i =k_{\bar i}$ 
 		such that
 		\begin{equation}
 			\begin{aligned}
 				\sum_{\alpha=1}^{n-1} \mu_\alpha T_{\alpha}^i \bar T_{\alpha}^j
 				(\chi_{i\bar j}+\underline{u}_{i\bar j}+\frac{\eta}{t_0}\sigma_{i\bar j}) (z)=\sum_{i=1}^n (k_i z^i+ \bar k_{i} \bar z^i)+O(|z|^2). \nonumber
 			\end{aligned}
 		\end{equation}

 		\item    
 		Note that	\begin{equation}
 			\label{yuan3-buchong2}
 			\begin{aligned}
 				\sum_{\alpha=1}^{n-1} \mu_\alpha T_{\alpha}^i \bar T_{\alpha}^j(z)
 				=\sum_{\alpha=1}^{n-1} \mu_\alpha T_{\alpha}^i \bar T_{\alpha}^j(0)+O(|z|) =\sum_{\alpha=1}^{n-1} \mu_\alpha \delta_{\alpha i} \delta_{\alpha j}+O(|z|),
 			\end{aligned}
 		\end{equation}
 		%and
 		\begin{equation}
 			\label{c3-yuan3}
 			\begin{aligned}
 				\sum_{\alpha=1}^{n-1}\mu_\alpha\sigma_{\alpha\bar\alpha}(z)=\sum_{\alpha=1}^{n-1}\mu_\alpha\sigma_{\alpha\bar\alpha}(0)+O(|z|).
 			\end{aligned}
 		\end{equation}  
 		Combining with \eqref{key-1-yuan3}, one can pick $\delta, \tau$ sufficiently small such that 
 		\begin{equation}
 			\begin{aligned}
 				\sum_{\alpha=1}^{n-1}\mu_\alpha T_{\alpha}^i \bar T_{\alpha}^j d_{i\bar j}
 				=\,&   \sum_{\alpha=1}^{n-1} \mu_\alpha(\sigma_{\alpha\bar\alpha}(z)+\tau)
 				%\tau\sum_{\alpha=1}^{n-1}\mu_\alpha
 				%\\\,&
 				+\sum_{\alpha=1}^{n-1}\mu_\alpha \left(T_{\alpha}^i \bar T_{\alpha}^j (z)- T_{\alpha}^i \bar T_{\alpha}^j (0)\right)d_{i\bar j} \\
 				%\leqslant\,& -a_1+\tau+C_1 \sigma(z) \leqslant -\frac{a_1}{4}  \nonumber
 				\leqslant\,& -a_1+\tau+O(|z|) \leqslant -\frac{a_1}{4}.  \nonumber
 			\end{aligned}
 		\end{equation}
 		%provided  $0<\delta, \tau\ll1.$
 		%Here we also use \eqref{key-1-yuan3},
 		Consequently, we get
 		\begin{equation}
 			\begin{aligned}
 				% \Lambda_\mu (\sqrt{-1}\partial\overline{\partial}(d(z)^2))   =\,& 2d(z)\Lambda_\mu (\sqrt{-1}\partial\overline{\partial}d(z))
 				% +2\sum_{\alpha}\mu_\alpha T_{\alpha}^i \bar T_{\alpha}^j \partial_i d \partial_{\bar j}d \\
 				%=\,& 2d(z)\Lambda_\mu (\sqrt{-1}\partial\overline{\partial}d(z))
 				2Ad(z)\sum_{\alpha=1}^{n-1} \mu_\alpha T_{\alpha}^i \bar T_{\alpha}^j d_{i\bar j} \leqslant -\frac{a_1A}{2}d(z). \nonumber
 			\end{aligned}
 		\end{equation}

 		\item  From %\eqref{yuan3-buchong2},  
 		$ \sum_{i=1}^n T_\alpha^i \sigma_i=-\tau\sum_{i=1}^n T_\alpha^i
 		\bar z^i $
 		we have
 		\begin{equation}
 			\begin{aligned}
 				\sum_{\alpha=1}^{n-1}\mu_\alpha T^i_\alpha \bar T^j_\alpha(l_i \sigma_{\bar j} + \sigma_i l_{\bar j})
 				%  =\tau  \sum_{\alpha=1}^{n-1}\mu_\alpha T^i_\alpha \bar T^j_\alpha(l_{\bar j} z^{\bar i} + z^j l_{i})
 				=-\tau \sum_{\alpha=1}^{n-1} \mu_\alpha(\bar l_{\alpha} \bar z^\alpha+l_\alpha z^\alpha)
 				+O(|z|^2).   \nonumber
 			\end{aligned}
 		\end{equation}
 		On the other hand, by \eqref{yuan3-buchong2}, 
 		\begin{equation}	\begin{aligned}	l(z) \sum_{\alpha=1}^{n-1}\mu_\alpha T^i_\alpha \bar T^j_\alpha \sigma_{i\bar j} 	= l(z) \sum_{\alpha=1}^{n-1}\mu_\alpha\sigma_{\alpha\bar\alpha}(0)	+O(|z|^2).   \nonumber	\end{aligned}  	\end{equation}
 		Thus %we obtain
 		\begin{equation}
 			\begin{aligned}
 				\,& 
 				%  \sum_{\alpha=1}^{n-1}\mu_\alpha T^i_\alpha \bar T^j_\alpha (-l(z)\sigma)_{i\bar j} \\   =\,&
 				l(z)\sum_{\alpha=1}^{n-1}\mu_\alpha T^i_\alpha \bar T^j_\alpha
 				\sigma_{i\bar j}+\sum_{\alpha=1}^{n-1}\mu_{\alpha}T^i_\alpha \bar T^j_\alpha(l_i\sigma_{\bar j}
 				+\sigma_i l_{\bar j}) \\
 				% =\,& -(l_kz^k+\bar l_k\bar z^k) \Lambda_\mu (\sqrt{-1}\partial \overline{\partial}{\sigma})
 				% +\tau\sum_{\alpha=1}^{n-1}\mu_{\alpha}T^i_\alpha \bar T^j_\alpha(l_i z^{j} +z^{\bar i}l_{\bar j})  \\
 				= \,&
 				l(z) \sum_{\alpha=1}^{n-1}\mu_\alpha\sigma_{\alpha\bar\alpha}(0)
 				- \tau\sum_{\alpha=1}^{n-1} \mu_\alpha (z^\alpha l_\alpha+\bar z^\alpha \bar l_{\alpha})
 				+O(|z|^2). \nonumber
 			\end{aligned}
 		\end{equation}
 		
 	\end{itemize}
 	
 	Putting these together, 
 	\begin{equation}
 		\label{together1}
 		\begin{aligned}
 			\Lambda_\mu(\mathfrak{g}[w])\leqslant\,&
 			\sum_{\alpha=1}^{n-1} 2\mathfrak{Re}
 			\left\{z^\alpha \left(k_\alpha 
 			-\tau\mu_\alpha l_\alpha
 			+l_\alpha\sum_{\beta=1}^{n-1}\mu_\beta \sigma_{\beta\bar\beta}(0)\right)\right\}
 			\\ \,& + 2\mathfrak{Re}\left\{z^n
 			\left(k_n+l_n\sum_{\beta=1}^{n-1}\mu_\beta \sigma_{\beta\bar\beta}(0)\right)\right\} 
 			-\frac{Aa_1}{2}d(z) + O(|z|^2).   \nonumber
 		\end{aligned}
 	\end{equation}
 	% For simplicity we set $\mu_n=0$.
 	Let $l_n=-\frac{k_n}{\sum_{\beta=1}^{n-1} \mu_\beta \sigma_{\beta\bar\beta}(0)}$.
 	For $1\leqslant \alpha\leqslant n-1$, we set
 	\begin{equation}
 		\label{chosen-1}
 		\begin{aligned}
 			l_\alpha=-\frac{k_\alpha}{\sum_{\beta=1}^{n-1}\mu_\beta \sigma_{\beta\bar\beta}(0)-\tau \mu_\alpha}.
 		\end{aligned}
 	\end{equation}
 	From $\mu_\alpha\geqslant 0$ and \eqref{key-1-yuan3}, we see
 	% $\sum_{\beta=1}^{n-1}\mu_\beta \sigma_{\beta\bar\beta}(0)-\tau \mu_i<0$    and so 
 	such $l_i$ (or equivalently the  $l(z)$) are all well-defined and uniformly bounded. 
 	We thus complete the proof if $0<\tau, \delta\ll1$, $A\gg1$.
 \end{proof}

 \subsubsection*{\bf Completion of the proof of Lemma \ref{keylemma1-yuan3}}
 %\begin{proof}
 %  [Proof of Lemma \ref{keylemma1-yuan3}]
 Let $w$ be as in Lemma \ref{lemma-key2-yuan3}. From the construction above, we know that there is a uniform positive constant $C_1'$ such that
 $$ |\mathfrak{g}[w]|_{C^0(\Omega_\delta)}\leqslant C_1'.$$
 Denote $\lambda[w]=\lambda (\omega^{-1}\mathfrak{g}[w])$ and assume 
 $\lambda_1[w]\leqslant \cdots\leqslant \lambda_n[w]$.
 Together with Lemma \ref{lemma-yuan3-buchong1}, Lemma \ref{lemma-key2-yuan3} implies
 %together with \cite[Lemma 6.2]{CNS3}, implies
 %a result in \cite{Marcus1956} (see also \cite[Proposition 2.4]{LiSY2004}), implies
 \begin{equation}
 	\begin{aligned}
 		\sum_{\alpha=1}^{n-1} \mu_\alpha \lambda_\alpha[w]\leqslant 0 \mbox{ in } \Omega_\delta.  \nonumber
 		%\Omega_\delta=M\cap B_{\delta}(0).
 	\end{aligned}
 \end{equation}
 So  by \eqref{key-18-yuan3} $(\lambda_1[w],\cdots,\lambda_{n-1}[w])\notin\Gamma_\infty$. In other words, $\lambda[w]\in X$, where
 %  \[X:=\{\lambda\in\mathbb{R}^n: \lambda'\in \mathbb{R}^{n-1}\setminus \Gamma_\infty\}\cap \{\lambda\in\mathbb{R}^n:   |\lambda|\leqslant C_1'\}.\]
 \[X=\{\lambda\in\mathbb{R}^{n}\setminus \Gamma:   |\lambda|\leqslant C_1'\}.\]
 
% Let  $\bar\Gamma^{\inf_M\psi}=\{\lambda\in\Gamma: f(\lambda)\geqslant \inf_M\psi\}$ be the closure of   $\Gamma^{\inf_{M}\psi}$.
 Notice that  % $\Gamma_\infty$ is open and
 $X$ is a compact subset; furthermore $X\cap \bar\Gamma^{\inf_M\psi}=\emptyset$. (Note $\bar\Gamma^{\inf_M\psi}=\{\lambda\in\Gamma: f(\lambda)\geqslant \inf_M\psi\}$).
 So we can deduce that the distance between $\bar\Gamma^{\inf_M\psi}$ and $X$ 
 is greater than some positive constant depending on  
 $\delta_{\psi,f}$ and other known data.
 Therefore, there exists a positive constant $\epsilon_0$  such that for any $z\in\Omega_\delta$
 \begin{equation}
 	\begin{aligned}
 		\epsilon_0\vec{\bf 1} +\lambda[w]\notin \bar\Gamma^{\inf_M\psi}.  \nonumber
 	\end{aligned}
 \end{equation}
 
 Near the origin $z_0$ $(z=0)$, %(when $\partial M\in C^3$), 
 under the coordinates
 \eqref{goodcoordinate1} 
 % {t-coordinate}
 the distance can be expressed as 
 %there are complex valued constants $a_{i\bar j}$ with $\bar a_{i\bar j}=a_{j\bar i}$ such that 
 \begin{equation}
 	\label{asym-sigma1}
 	\begin{aligned}
 		\sigma(z)=x^n+\sum_{ i,j=1}^{2n} a_{ij} t^it^j+O(|t|^3). %\nonumber
 	\end{aligned}
 \end{equation}
 %See also \cite{CKNS2,LiSY2004,Guan1998The}.
 %By \eqref{asym-sigma1} 
 Thus one can choose a  positive constant $C'$ such that $ x^n\leqslant C'|z|^2$ on 
 $\partial M\cap \bar{\Omega}_\delta$. As a result,  
 there is a positive constant $C_2$ depending only on 
 % $M$, 
 $\partial M$ 
 and $\delta$ so that  
 $$x^n\leqslant C_2 |z|^2 \mbox{ on }\partial\Omega_\delta.$$

 Let %$\epsilon_0$ and let
 $C_2$ be as above,  define ${h}(z)=w(z)+\epsilon (|z|^2-\frac{x^n}{C_2})$. %where $0<\epsilon\leqslant\epsilon_0$ to be determined. 
 Thus
 \begin{equation}
 	\begin{aligned}
 		u\leqslant {h} \mbox{ on } \partial \Omega_\delta.  \nonumber
 	\end{aligned}
 \end{equation}
 % Moreover, $\chi_{i\bar j}+{h}_{i\bar j}=(\chi_{i\bar j}+w_{i\bar j})+\epsilon\delta_{ij}$. 
 From \eqref{asym-sigma1} we can choose %$0<\epsilon\leqslant\epsilon_0$ 
 $0<\epsilon\ll1$ 
 %sufficiently small
 so that $\lambda[{h}]\notin \bar\Gamma^{\inf_M\psi}$ in $\bar \Omega_\delta.$
 By \cite[Lemma B]{CNS3}, we have $$u\leqslant {h} \mbox{ in }\Omega_\delta.$$
 Notice 
 $u(0)=\varphi(0)={h}(0)$, we have $u_{x^n}(0)\leqslant h_{x^n}(0)$. Thus
 \begin{equation}
 	\begin{aligned}
 		t_0\leqslant \frac{1}{1+\epsilon/(\eta C_2)}, \mbox{ i.e., }  (1-t_0)^{-1}\leqslant 1+\frac{\eta C_2}{\epsilon}.  \nonumber
 	\end{aligned}
 \end{equation}

  \medskip
  
  \section{Quantitative boundary estimate and proof of Theorem \ref{thm1-bdy}}
   \label{sec1-Bdy-estimate}

 \subsection{Double normal derivative case}
 \label{sec1-Bdy-normalnormal}

 %\subsection{Proof of Proposition \ref{proposition-quar-yuan2}}

 %The proof is based on Lemma  \ref{yuan's-quantitative-lemma}. 
  Fix $z_0\in\partial M$.
We use local coordinate \eqref{goodcoordinate1} with origin at $z_0 $ $(z=0)$. 
 %We modify the argument of \cite{yuan-regular-DP}. 
 Let $t_0$ be as defined in \eqref{key0-yuan3}.
 Fix $\varepsilon_0$ as in \eqref{varepsilon-0-a0}, i.e.,  
 \begin{equation} 
 	%\label{varepsilon-0-a0}
 	\begin{aligned}
 		\underline{\lambda}-\varepsilon_0\vec{\bf 1}\in\Gamma. \nonumber
 	\end{aligned}
 \end{equation}
In what follows the discussion will be given at $z_0$, where $g_{i\bar j}(0)=\delta_{ij}$, $\frac{\partial}{\partial x^n}\big|_{z=0}$ is the interior normal to $\partial M$.
For $R>0$,  define the matrix  
\begin{equation}
	\label{def1-AR}
	\begin{aligned}
		{A}(R) =
		\left( \begin{matrix}
			\mathfrak{{g}}_{\alpha\bar \beta} &\mathfrak{g}_{\alpha\bar n}\\
			\mathfrak{g}_{n\bar \beta}& R  
		\end{matrix}\right).
		\end{aligned}
\end{equation}
 By %\eqref{A_t} 
 \eqref{410-buchong} we can decompose $A(R)$ into
 \begin{equation}
 	\begin{aligned}
 		{A}(R) = A'(R)+A''(R) 
 	\end{aligned}
 \end{equation}
 where % $\varepsilon_0$ is the constant from \eqref{key-002-yuan3},
 \[A'(R)=\left(\begin{matrix}
 	(1-t_0)(\mathfrak{\underline{g}}_{\alpha\bar \beta}-\frac{\varepsilon_0}{4}\delta_{\alpha\beta}) &\mathfrak{g}_{\alpha\bar n}\\
 	\mathfrak{g}_{n\bar \beta}& R/2  \nonumber
 \end{matrix}\right), \mbox{  }
 %\]\[
 A''(R)=\left(\begin{matrix} (A_{t_0})_{\alpha\bar \beta}+\frac{(1-t_0)\varepsilon_0}{4} \delta_{\alpha\beta} &0\\ 0& R/2  \end{matrix}\right).\]
 
 In order to complete the proof, we 
 %replace \eqref{addistruc3} by 
 need to prove that
 \begin{lemma}
 	\label{lemma1-bdyestimate}
 	There exist uniform constants $R_3$ and $C_{A''}$ depending on
 	$(1-t_0)^{-1}$, $\varepsilon_0^{-1}$,   $f$ and other known data under control, such that 
 	\begin{equation}
 		\label{at0-2}
 		\begin{aligned}
 			f(2\lambda[A''(R_3)] )\geqslant -C_{A''}.
 		\end{aligned}
 	\end{equation}

 \end{lemma}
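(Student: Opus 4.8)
The plan is to exploit the cushion $\tfrac{(1-t_0)\varepsilon_0}{4}\delta_{\alpha\beta}$ that was deliberately built into the $(n-1)\times(n-1)$ block of $A''(R)$. Since $A''(R)$ is block diagonal, its eigenvalues are $\tilde\lambda'_1+c_0,\dots,\tilde\lambda'_{n-1}+c_0$ together with $R/2$, where $c_0:=\tfrac{(1-t_0)\varepsilon_0}{4}$ and $\tilde\lambda'=\lambda_{\omega'}(A_{t_0})$ is the vector from \eqref{yuan3-buchong3}; hence $2\lambda[A''(R)]=(2\tilde\lambda'_1+2c_0,\dots,2\tilde\lambda'_{n-1}+2c_0,\,R)$. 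First I would record that every quantity here is under control: $\tilde\lambda'\in\partial\Gamma_\infty$ by \eqref{key0-yuan3}, $|\tilde\lambda'|$ is bounded in terms of $|\underline u|_{C^2(\bar M)}$, $|\nabla u|_{C^0(\partial M)}$ (which bounds $\eta=(u-\underline u)_{x^n}(0)$ through \eqref{key-14-yuan3}), the bound $T_0$ on $|t_0|$, and $\partial M$; and Lemma \ref{keylemma1-yuan3} (whose quantities are among the permitted data anyway) gives $(1-t_0)^{-1}\leqslant C$, so $c_0\geqslant c_0'>0$ with $c_0'$ depending only on $(1-t_0)^{-1}$ and $\varepsilon_0^{-1}$. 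I will carry out the argument for $\Gamma$ of type $1$; the type $2$ case, where $\Gamma_\infty=\mathbb{R}^{n-1}$, is easier.

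Next I would push $\tilde\lambda'$ off the boundary of $\Gamma_\infty$ by a controlled amount. Since $\Gamma_n\subseteq\Gamma$, the cone $\Gamma_\infty$ is an open convex cone containing the positive cone $\Gamma_{n-1}$, and the open unit ball $B(\vec{\bf 1},1)$ lies in $\Gamma_{n-1}\subseteq\Gamma_\infty$. For any $\lambda'\in\partial\Gamma_\infty$ and $c>0$ this forces $B(\lambda'+c\vec{\bf 1},c)=\lambda'+cB(\vec{\bf 1},1)\subseteq\overline{\Gamma_\infty}+\Gamma_\infty\subseteq\Gamma_\infty$, hence $\mathrm{dist}(\lambda'+c\vec{\bf 1},\partial\Gamma_\infty)\geqslant c$. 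Applying this with $\lambda'=\tilde\lambda'$ and $c=c_0$, the vector $y:=2(\tilde\lambda'+c_0\vec{\bf 1})$ lies in $\Gamma_\infty$ at distance $\geqslant 2c_0\geqslant 2c_0'$ from $\partial\Gamma_\infty$, with $|y|\leqslant M_1$ for a constant $M_1$ depending only on the data listed above.

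Then I would raise the last entry $R$ until the whole vector sits at a controlled positive distance inside $\Gamma$. For $z\in\Gamma_\infty$ set $R_*(z)=\inf\{R:(z,R)\in\Gamma\}$; this is finite by definition of $\Gamma_\infty$ and, since $\Gamma$ is open and convex, the slice $\{R:(z,R)\in\Gamma\}$ is the half-line $(R_*(z),\infty)$, so $R_*$ is upper semicontinuous. Hence $R_*$ is bounded, say by $R_{\max}$, on the compact set $\{z\in\overline{\Gamma_\infty}:|z|\leqslant M_1+c_0',\ \mathrm{dist}(z,\partial\Gamma_\infty)\geqslant c_0'\}\subset\Gamma_\infty$, with $R_{\max}$ having the allowed dependence. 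Put $R_3:=R_{\max}+1$. A short neighbourhood check — any point within distance $\min(c_0',1)$ of $(y,R_3)$ still has its first $n-1$ coordinates in that compact set and its last coordinate $\geqslant R_{\max}\geqslant R_*$ of those coordinates — shows $2\lambda[A''(R_3)]=(y,R_3)\in\Gamma$, with $\mathrm{dist}((y,R_3),\partial\Gamma)\geqslant d_1:=\min(c_0',1)>0$ and $|(y,R_3)|\leqslant M_2$. Since $f\in C^\infty(\Gamma)$ it is bounded below on the compact set $\{\lambda\in\Gamma:|\lambda|\leqslant M_2,\ \mathrm{dist}(\lambda,\partial\Gamma)\geqslant d_1\}$, which contains $2\lambda[A''(R_3)]$; setting $-C_{A''}:=\inf$ of $f$ over that set yields \eqref{at0-2} with $R_3$, $C_{A''}$ depending only on $(1-t_0)^{-1}$, $\varepsilon_0^{-1}$, $f$ and known data.

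The main obstacle is exactly the passage from "$\tilde\lambda'$ is merely on $\partial\Gamma_\infty$" to "$(y,R_3)$ sits at a quantitatively controlled positive distance inside $\Gamma$", since this is what lets $f$ be bounded below by a constant that does not see the solution $u$. This is resolved by the cushion $c_0\geqslant c_0'>0$ (whose positivity is precisely Lemma \ref{keylemma1-yuan3}) together with the openness and convexity of $\Gamma_\infty$ and $\Gamma$ and the upper-semicontinuity/compactness argument for $R_*$; note the unbounded condition \eqref{unbounded-1} also guarantees, if one prefers, that increasing the last entry only increases $f$, so the value at $R_3$ is no worse than at any admissible threshold. Everything else — tracking that $M_1,M_2,d_1,R_{\max}$ depend only on the permitted data once $(1-t_0)^{-1}$ and $|\tilde\lambda'|$ are controlled — is routine.
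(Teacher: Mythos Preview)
Your argument is correct and follows essentially the same route as the paper: identify the eigenvalues of the block-diagonal matrix $A''(R)$, use the cushion $c_0=\tfrac{(1-t_0)\varepsilon_0}{4}$ to push the bounded boundary point $\tilde\lambda'\in\partial\Gamma_\infty$ a definite distance into $\Gamma_\infty$, raise the last entry to land in a compact subset of $\Gamma$, and then bound $f$ from below by smoothness on that compact set. The paper packages the same idea slightly differently---it splits the cushion as $\tfrac{(1-t_0)\varepsilon_0}{8}+\tfrac{(1-t_0)\varepsilon_0}{8}$, using the first half to choose $R_2$ with $\lambda[\text{shifted block}]\in\Gamma$ and the second to obtain $\lambda[A''(R_3)]\in\Gamma+\tfrac{(1-t_0)\varepsilon_0}{8}\vec{\bf 1}$---but this is only a cosmetic difference from your explicit $R_*$/compactness argument.
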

 
 \begin{proof}

 	Let     	
 		$\underline{\lambda}'$ and 
 	$\tilde{\lambda}'$ be as in \eqref{yuan3-buchong6} and \eqref{yuan3-buchong3}, respectively.
 	One can see that there is a uniform constant $C_0>0$ depending on $|t_0|$,
 	$\sup_{\partial M}|\nabla u|$ and other known data, such that $|\tilde{\lambda}'|\leqslant C_0$, that is
 	$\tilde{\lambda'}$ is contained in a compact subset of $\overline{\Gamma}_\infty$, i.e.,
 	\begin{equation}
 		\label{yuan3-buchong4v}
 		\begin{aligned}
 			\tilde{\lambda}'\in {K}\equiv \{\lambda'\in \overline{\Gamma}_\infty: |\lambda'|\leqslant C_0\}.  \nonumber
 		\end{aligned}
 	\end{equation}
 	Thus  there is a 
 positive constant $R_2$ depending on $((1-t_0)\varepsilon_0)^{-1}$, $K$ and other known data, 
 	%$|t_0|$ and other known data
 	such that  
 	\begin{equation}
 		\label{key-74}
 		\begin{aligned}
 			\lambda	\left[\begin{pmatrix} (A_{t_0})_{\alpha\bar \beta}+\frac{(1-t_0)\varepsilon_0}{8} \delta_{\alpha\beta} &0\\ 0& R_2/2  \end{pmatrix}\right] \in \Gamma.
 		\end{aligned}
 	\end{equation}
 	Let $R_3=R_2+ \frac{(1-t_0)\varepsilon_0}{4}$, we have
 	\begin{equation}
 		\label{at0-1}
 		\begin{aligned}
 			\lambda[A''(R_3)] \in \Gamma+\frac{(1-t_0)\varepsilon_0}{8} \vec{\bf1}.
 		\end{aligned}
 	\end{equation} 
 	Notice that $A''(R_3)$ is bounded.
 	% Combining \eqref{at0-1}
There exists a uniform constant $C_{A''}$ depending on  $(1-t_0)^{-1}$, $\varepsilon_0^{-1}$, $R_3$  and $f$, such that
 	\eqref{at0-2} holds.
 	%This inequality allows us to remove the assumption \eqref{addistruc}.
 	
 \end{proof}
 
\begin{proof}
	[Proof of Proposition \ref{proposition-quar-yuan2}]
% The proof is based on Lemma \ref{yuan's-quantitative-lemma}. 
 By the unbounded condition \eqref{unbounded-1},
 there is a  positive constant $R_1$ depending on $(1-t_0)^{-1}$,  %$(\delta_{\psi,f})^{-1}$, 
 $\varepsilon_0$, $\underline{\lambda}'$ and 
 $\sup_{\partial M}\psi$, $C_{A''}$
 such that  	 
 \begin{equation}
 	\label{key-03-yuan3}
 	\begin{aligned} 	\frac{1}{2}f\left(2(1-t_0)(\underline{\lambda}'_{1}-{\varepsilon_0}/2),\cdots, 2(1-t_0)(\underline{\lambda}'_{n-1}
 		-{\varepsilon_0}/2), 2R_1\right) \geqslant  f(\underline{\lambda})+\frac{C_{A''}}{2}, 
 	\end{aligned}
 \end{equation}
 and  $(\underline{\lambda}'_1-\varepsilon_{0}/2, \cdots, \underline{\lambda}'_{n-1}-\varepsilon_0/2, {(1-t_0)^{-1}}{R_1})\in \Gamma.$
% \begin{equation} 	\label{key-002-yuan3}	\begin{aligned}	(\underline{\lambda}'_1-\varepsilon_{0}, \cdots, \underline{\lambda}'_{n-1}-\varepsilon_0, {(1-t_0)^{-1}}{R_1})\in \Gamma. 	\end{aligned} \end{equation}

 Let's pick  $\epsilon=\frac{(1-t_0)\varepsilon_0}{4}$ in 
 Lemma  \ref{yuan's-quantitative-lemma}. And then 
 %as in \cite{yuan-regular-DP} 
 we set
 \begin{equation}
 	\label{def1-R-c}
 	\begin{aligned}
 		R_c\equiv \,& \frac{8(2n-3)}{(1-t_0)\varepsilon_0}\sum_{\alpha=1}^{n-1} | \mathfrak{g}_{\alpha\bar n}|^2 	
 		%+\frac{ (2n^2-4n+1)(1-t_0)\varepsilon_0}{2(2n-3)}	
 		% +\frac{(2n^3-7n^2+9n-5)(1-t_0)\varepsilon_0}{2(2n-3)}		  
 		+ 2(n-1) (1-t_0) \sum_{\alpha=1}^{n-1}|{\underline{\lambda}_\alpha'}|
 		\\\,&	
 		+\frac{n(n-1)(1-t_0)\varepsilon_0}{2}
 		+2R_1+2R_2,  %\nonumber
 		%	R_c	= \,& \frac{8(2n-3)}{(1-t_0)\varepsilon_0}\sum_{\alpha=1}^{n-1} | \mathfrak{g}_{\alpha\bar n}|^2 		+\frac{(2n^3-7n^2+9n-5)(1-t_0)\varepsilon_0}{2(2n-3)}		\\ \,&	+ 2(n-1) (1-t_0) |{\underline{\lambda}'}|	+2R_1+2R_2,  \nonumber
 	\end{aligned}
 \end{equation}
 where $\varepsilon_0$, $R_1$ and $R_2$ are the constants as we %have chosen 
 fixed in \eqref{key-03-yuan3} and \eqref{at0-1}.
 
 According to Lemma \ref{yuan's-quantitative-lemma},
 the eigenvalues $\lambda[{A}'(R_c)]$ of ${A}'(R_c)$
 %(possibly with an order)    (possibly with a proper permutation)
 (possibly with an appropriate order)
 shall behave like
 \begin{equation}
 	\label{lemma12-yuan}
 	\begin{aligned}
  \lambda_{\alpha}[{A}'(R_c)]\geqslant \,& (1-t_0)  (\underline{\lambda}'_{\alpha}-{\varepsilon_0}/{2}), \mbox{  } 1\leqslant \alpha\leqslant n-1,  	
\end{aligned}
\end{equation} 
\begin{equation}
\label{lemma12-yuan-5}
\begin{aligned}
 		\lambda_{n}[{A}'(R_c)]\geqslant R_c/2.
 		%-(n-1)(1-t_0)\varepsilon_0/4.
 	\end{aligned}
 \end{equation}
 In particular, $\lambda[{A}'(R_c)]\in \Gamma$. So $\lambda[A(R_c)]\in \Gamma$.  
 By the concavity, Lemma  \ref{lemma1-bdyestimate}
 and \eqref{key-03-yuan3},  
 \eqref{lemma12-yuan}, \eqref{lemma12-yuan-5}, we deduce that
 \begin{equation}
 	\label{yuan-k1}
 	\begin{aligned}
 		f(\lambda[A(R_c)])\geqslant %\,&
 		 \frac{1}{2}f(2\lambda[A'(R_c)])+\frac{1}{2}f(2\lambda[A''(R_c)]) 
 		%\\
 		%\geqslant \,& \frac{1}{2}f(2\lambda(A'(R_c)))+\frac{1}{2}f(2\lambda(A''(R_3))) \\
 	%	\geqslant \,& \frac{1}{2}f(2\lambda(A'(R_c)))-\frac{C_{A''}}{2}
 	\geqslant f(\underline{\lambda})\geqslant \psi.
 	\end{aligned}
 \end{equation}
 %From \eqref{key-03-yuan3}, \eqref{lemma12-yuan} and \eqref{yuan-k1}, we deduce that
Thus $\mathfrak{g}_{n\bar n} \leqslant R_c.$  
 %	\begin{equation}	\label{yuan-k2}	\begin{aligned}		\mathfrak{g}_{n\bar n} \leqslant R_c.   \nonumber\end{aligned}	\end{equation}
By Lemma \ref{keylemma1-yuan3}, 
 $(1-t_0)^{-1}$ is bounded. We complete the proof.

 \end{proof}

When $f$ satisfies \eqref{addistruc},
applying Lemma \ref{lemma3.4}  we have  immediately 
\begin{lemma} 
	\label{lemma3.4-2} 
	Let $R_c$ be as in \eqref{def1-R-c}. Assume in addition that $f$ satisfies \eqref{addistruc}. Then
	\begin{equation}
		\label{yuan-k2}
		\begin{aligned}
			f(\lambda[A(R_c)])\geqslant f(\lambda[A'(R_c)]). 
		\end{aligned}
	\end{equation}
	
\end{lemma}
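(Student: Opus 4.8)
The plan is to obtain \eqref{yuan-k2} as a consequence of the concavity of the operator $F(\cdot)=f(\lambda(\cdot))$ on admissible Hermitian matrices, combined with the fact—supplied by Lemma \ref{lemma3.4}—that assumption \eqref{addistruc} forces $\Gamma_{\mathcal G}^{f}=\Gamma$; in spirit, \eqref{yuan-k2} is just the matrix analogue of Lemma \ref{lemma2-key}, saying that adding an admissible matrix to an admissible matrix cannot lower the value of $F$ once $\Gamma_{\mathcal G}^{f}=\Gamma$. First I would record, straight from the proof of Proposition \ref{proposition-quar-yuan2}, that $A'(R_c)$, $A''(R_c)$ and $A(R_c)=A'(R_c)+A''(R_c)$ are all admissible: $\lambda[A'(R_c)]\in\Gamma$ and $\lambda[A(R_c)]\in\Gamma$ are established there, while $\lambda[A''(R_c)]\in\Gamma$ follows from \eqref{at0-1} (since $\Gamma+c\vec{\bf 1}\subseteq\Gamma$ for $c>0$ and $R_c\geqslant R_3$, enlarging the corner entry of $A''$ is a positive semidefinite perturbation that keeps the eigenvalues in $\Gamma$) — this is exactly what makes the term $f(2\lambda[A''(R_c)])$ in \eqref{yuan-k1} meaningful. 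Then, because $f$ satisfies \eqref{addistruc}, Lemma \ref{lemma3.4} gives $\lambda[A''(R_c)]\in\Gamma=\Gamma_{\mathcal G}^{f}$.

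The main step is to apply concavity of $F$ through its supporting hyperplane at the point $A(R_c)$ (rather than through a midpoint splitting as in \eqref{yuan-k1}): evaluating the linear support at $A'(R_c)=A(R_c)-A''(R_c)$ gives
\begin{equation}
f(\lambda[A(R_c)])\;\geqslant\;f(\lambda[A'(R_c)])\;+\;F^{i\bar j}\big(A(R_c)\big)\,\big(A''(R_c)\big)_{i\bar j}, \nonumber
\end{equation}
where $F^{i\bar j}(A(R_c))$ denotes the derivative of $F$ at $A(R_c)$ (which exists since $\lambda[A(R_c)]\in\Gamma$ and $f$ is smooth). Choosing a unitary frame that diagonalizes $A(R_c)$, the last term equals $\sum_i f_i(\lambda[A(R_c)])\,(A''(R_c))_{i\bar i}$; by \cite[Lemma 6.2]{CNS3}, applied with the eigenvalues of $A''(R_c)$ listed in the order dual to $f_1\geqslant\cdots\geqslant f_n$ (the same bookkeeping already used for \eqref{L1-u-ubar}), this is bounded below by $\sum_i f_i(\lambda[A(R_c)])\,\lambda_i[A''(R_c)]$. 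Finally, since $\lambda[A''(R_c)]\in\Gamma_{\mathcal G}^{f}$ and $\lambda[A(R_c)]\in\Gamma$, inequality \eqref{010} — equivalently statement (3) of Lemma \ref{lemma3.4-strongerversion} — shows that this sum is $\geqslant 0$. Chaining the three inequalities yields $f(\lambda[A(R_c)])\geqslant f(\lambda[A'(R_c)])$, which is \eqref{yuan-k2}.

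I expect the only genuinely delicate point to be the passage from the diagonal entries $(A''(R_c))_{i\bar i}$, read off in the eigenframe of $A(R_c)$, to the eigenvalues $\lambda_i[A''(R_c)]$; this is precisely \cite[Lemma 6.2]{CNS3} and needs the correct matching of the two orderings, exactly as in the derivation of \eqref{L1-u-ubar}. The other two ingredients — admissibility of the three matrices and the nonnegativity of $\sum_i f_i(\lambda[A(R_c)])\lambda_i[A''(R_c)]$ — are immediate from the proof of Proposition \ref{proposition-quar-yuan2} and from $\Gamma_{\mathcal G}^{f}=\Gamma$ respectively, and the matrix concavity of $F$ is standard (cf.\ \cite{Andrews1994,Gerhardt1996}); hence the whole argument is short once \eqref{addistruc} is fed into Lemma \ref{lemma3.4}.
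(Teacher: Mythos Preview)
Your proof is correct and is essentially what the paper intends: the paper's own ``proof'' is the single line ``Applying Lemma \ref{lemma3.4}, immediately we have,'' which leaves implicit exactly the bridge you spell out --- passing from the scalar monotonicity $f(\lambda+\mu)\geqslant f(\lambda)$ (equivalently $\sum f_i(\lambda)\mu_i\geqslant 0$) furnished by Lemma \ref{lemma3.4} to the matrix inequality via concavity of $F$ and \cite[Lemma 6.2]{CNS3}. Your treatment of the ordering issue in that last step is the right point to flag; everything else is routine.
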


\begin{corollary}
	\label{rmk-10}
	When $f$ further satisfies \eqref{addistruc}, 
	the unbounded condition \eqref{unbounded-1} imposed in Theorems \ref{thm1-1} and \ref{thm1-bdy} can be replaced by a slightly broader condition  
	\begin{equation}
		\label{unbounded-2}
		\begin{aligned}
			\lim_{t\rightarrow+\infty}  f(\lambda_1,\cdots,\lambda_{n-1},\lambda_n+t)=\sup_\Gamma f, \,\, \forall \lambda=(\lambda_1,\cdots,\lambda_n)\in\Gamma.
		\end{aligned}
	\end{equation}  
\end{corollary}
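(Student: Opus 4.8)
The plan is to pin down exactly where the unbounded hypothesis \eqref{unbounded-1} is used in proving Theorems~\ref{thm1-1} and \ref{thm1-bdy}, and to re-run only those steps under the weaker pair \eqref{addistruc}$+$\eqref{unbounded-2}. Scanning the arguments, \eqref{unbounded-1} enters in exactly two places: (i) in Lemma~\ref{lemma1-unbound-yield-elliptic}, to deduce the monotonicity \eqref{elliptic} (which is then used throughout --- interior $C^{2,\alpha}$, $C^0$ and boundary gradient estimates, the continuity method); and (ii) in the proof of the double normal second order boundary estimate, Proposition~\ref{proposition-quar-yuan2}, concretely in producing a radius $R_1$ for which \eqref{key-03-yuan3} holds. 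All remaining ingredients (the $C^0$/boundary gradient estimates, the tangential estimates, the mixed normal--tangential estimate of Proposition~\ref{mix-general}, the global and interior second order and $C^{2,\alpha}$ bounds, and the continuity argument) use only \eqref{concave}, \eqref{elliptic}, \eqref{nondegenerate} and the subsolution, so they are untouched. Thus the corollary reduces to re-establishing (i) and (ii).

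For (i) I would check that \eqref{concave} together with \eqref{addistruc} and \eqref{unbounded-2} still imply \eqref{elliptic}. By symmetry it suffices to get $f_n>0$ on $\Gamma$. Writing $\lambda_1\leqslant\cdots\leqslant\lambda_n$, concavity gives $f_{nn}\leqslant0$; if $f_n(\lambda)<0$ then $f_n(\lambda_1,\dots,\lambda_{n-1},\lambda_n+t)\leqslant f_n(\lambda)<0$ for all $t\geqslant0$, so $f(\lambda_1,\dots,\lambda_{n-1},\lambda_n+t)\to-\infty$, contradicting that by \eqref{unbounded-2} this limit equals $\sup_\Gamma f\geqslant f(\lambda)$; hence $f_n\geqslant0$. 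If $f_n(\lambda_0)=0$ for some $\lambda_0\in\Gamma$, then $f_{nn}\leqslant0$ and $f_n\geqslant0$ force $f$ to be constant in its last variable past $\lambda_0^n$, so by \eqref{unbounded-2} $f(\lambda_0)=\sup_\Gamma f$; but \eqref{addistruc}, in the equivalent form \eqref{addistruc-10} (Lemma~\ref{lemma3.4}), gives $f(\lambda_0+\mu)>f(\lambda_0)$ for $\mu\in\Gamma$, contradicting that $f(\lambda_0)$ is the supremum. The same remark shows $f$ does not attain $\sup_\Gamma f$ on $\Gamma$; since an admissible subsolution $\underline u$ satisfies $\psi\leqslant f(\underline{\lambda})$ by \eqref{subsolution1} and $f(\underline{\lambda}(\cdot))$ attains its maximum on $\bar M$, I record the strict inequality $\sup_M\psi<\sup_\Gamma f$ for later use.

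For (ii) I would re-prove Proposition~\ref{proposition-quar-yuan2}, keeping its notation: $\varepsilon_0$ as in \eqref{varepsilon-0-a0}, the splitting $A(R_c)=A'(R_c)+A''(R_c)$ with $R_c$ as in \eqref{def1-R-c} (choosing $\epsilon=(1-t_0)\varepsilon_0/4$ in Lemma~\ref{yuan's-quantitative-lemma}), and the eigenvalue control \eqref{lemma12-yuan}--\eqref{lemma12-yuan-5} for $A'(R_c)$. The key point is that with \eqref{addistruc} in force, Lemma~\ref{lemma3.4-2} applies verbatim and yields $f(\lambda[A(R_c)])\geqslant f(\lambda[A'(R_c)])$ directly, so the auxiliary constant $C_{A''}$ of Lemma~\ref{lemma1-bdyestimate} and the factor-of-$2$ concavity splitting of the original proof are bypassed entirely. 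Then monotonicity \eqref{elliptic} together with \eqref{lemma12-yuan}--\eqref{lemma12-yuan-5} gives
\[f(\lambda[A'(R_c)])\ \geqslant\ f\!\left((1-t_0)\big(\underline{\lambda}'_1-\tfrac{\varepsilon_0}{2}\big),\ \dots,\ (1-t_0)\big(\underline{\lambda}'_{n-1}-\tfrac{\varepsilon_0}{2}\big),\ \tfrac{R_c}{2}\right),\]
the argument lying in $\Gamma$ once $R_c$ is large, by \eqref{varepsilon-0-a0}. Since the data $t_0,\underline{\lambda}',\varepsilon_0$ attached to points $z_0\in\partial M$ range over a fixed compact set (with $(1-t_0)^{-1}$ bounded by Lemma~\ref{keylemma1-yuan3}), and since by \eqref{elliptic} the convergence in \eqref{unbounded-2} is monotone in the last slot --- hence uniform on that compact set by Dini's theorem --- and since $\sup_M\psi<\sup_\Gamma f$ from step (i), I can fix $R_1$ uniformly in $z_0$ so that the right-hand side is $\geqslant\sup_M\psi\geqslant\psi(z_0)$. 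This gives $f(\lambda[A(R_c)])\geqslant\psi(z_0)=f(\lambda(\omega^{-1}\mathfrak{g})(z_0))$, whence $\mathfrak{g}_{n\bar n}(z_0)\leqslant R_c$ by strict monotonicity of $F$ in the $(n,\bar n)$ entry; reading off \eqref{def1-R-c} and using $(1-t_0)^{-1}\leqslant C$ yields $\mathfrak{g}_{n\bar n}(z_0)\leqslant C(1+\sum_\alpha|\mathfrak{g}_{\alpha\bar n}(z_0)|^2)$. Combined with Proposition~\ref{mix-general} this gives \eqref{bdy-sec-estimate-quar1} and hence Theorem~\ref{thm1-bdy}, and Theorem~\ref{thm1-1} then follows as before.

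I expect the crux to be precisely the switch just described: once $\sup_\Gamma f$ is allowed to be finite, the general-case device of absorbing the point-dependent constant $C_{A''}$ by enlarging $R_1$ is no longer available, so one must eliminate $C_{A''}$ altogether, and \eqref{addistruc} is exactly what makes this possible via Lemma~\ref{lemma3.4-2}, reducing the whole requirement to $\sup_M\psi<\sup_\Gamma f$, which is automatic for admissible data. Everything else is bookkeeping.
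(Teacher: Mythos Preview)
Your proposal is correct and follows essentially the same route as the paper: use Lemma~\ref{lemma3.4-2} (valid under \eqref{addistruc}) to replace the concavity splitting \eqref{yuan-k1} by \eqref{yuan-k2}, and use \eqref{unbounded-2} to manufacture the analogue of \eqref{key-03-yuan3} (the paper's \eqref{key-03-yuan5}, with target $f(\underline\lambda)$ in place of your $\sup_M\psi$). Your treatment is in fact more careful than the paper's: you explicitly re-derive \eqref{elliptic} from \eqref{concave}$+$\eqref{addistruc}$+$\eqref{unbounded-2}, which the paper's short proof does not address; one small point is that your appeal to Lemma~\ref{lemma3.4} there presupposes \eqref{elliptic}, but this is harmless since \eqref{addistruc} alone already forbids $f$ from attaining $\sup_\Gamma f$ on $\Gamma$ (take $\mu=\lambda_0$ in \eqref{addistruc}).
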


 \begin{proof}
%	[Sketch the proof of Corollary \ref{rmk-10}]  
	%When $f$ satisfies \eqref{addistruc}, 
	%by Lemma  \ref{lemma3.4-2} 
	%we can replace \eqref{yuan-k1} by \eqref{yuan-k2}. 
	%Moreover,
	The unbounded condition \eqref{unbounded-2} implies that 
	\begin{equation}
		\label{key-03-yuan5}
		\begin{aligned}
			f\left((1- t_0)(\underline{\lambda}'_{1}-{\varepsilon_0}),\cdots, (1-t_0)(\underline{\lambda}'_{n-1}
			-{\varepsilon_0}), R_1\right)\geqslant  f(\underline{\lambda}).
	\end{aligned} 	\end{equation}
	So when $f$ satisfies \eqref{addistruc},  we can use \eqref{key-03-yuan5} and \eqref{yuan-k2}, in place of  \eqref{key-03-yuan3} and  \eqref{yuan-k1} respectively.
	% in the proof of boundary estimate for double derivative. 
 	Consequently, in this case
 	%in Proposition \ref{proposition-quar-yuan2} when  \eqref{addistruc} holds 
	\eqref{unbounded-1} can be replaced by  
	\eqref{unbounded-2}.  
\end{proof}

% \medskip
 
 \subsection{Tangential-Normal derivatives case}
 \label{sec-bdyestimate-mixed-derivatives}
 
% As usual,  the  subsolution technique %used in
 %the subsolution method
 %local barrier technique 
% of \cite{Guan1993Spruck,Guan1998The} %(further  refinedby \cite{Guan12a}) 
% is a key ingredient.
 
  % In establishing Proposition \ref{mix-general},   
We use the usual
 subsolution method
 % local barrier technique 
 %a method with historical roots
 %which can be traced back to
 of \cite{Guan1993Spruck,Guan1998The}  (further refined by  \cite{Guan12a,Guan-Dirichlet}). 
 In order to derive \eqref{quanti-mix-derivative-00}, we shall construct more delicate local barriers near boundary.  
 The  specific instance of such local barriers was investigated  by
 %Such local barriers were investigated  by 
 \cite{Boucksom2012,Chen,Phong-Sturm2010} for   complex Monge-Ampère equation, and further by \cite{Collins2019Picard}
 for more general  complex $k$-Hessian equations.  
 The utilization of local barriers for general equations   
  was introduced by
 %was also considered by
  \cite{yuan2020PAMQ,yuan2017}  in some case. 
 % \cite{yuan2020PAMQ,yuan-regular-DP}. 
 
 %We shall mention that the term 
  %$\big|F^{i\bar i}\mathfrak{g}_{i\bar j}|$
 % $\big|\sum f_i\lambda_i\big|$
% in Lemma \ref{lemma5-key} could provide a simple approach towards controlling the bad terms involving $\sum f_i|\lambda_i|$. 
%We will note that the expression   $\big|\sum f_i\lambda_i\big|$ in Lemma \ref{lemma5-key} offers a straightforward method for managing the problematic terms that involve $\sum f_i|\lambda_i|$.  This enhances some of the techniques mentioned earlier.
 
We will highlight that the expression  
 $\big|\sum f_i\lambda_i\big|$ in Lemma \ref{lemma5-key} offers a simplified method for managing the problematic terms that incorporate $\sum f_i|\lambda_i|$. 
This enhances some of the techniques mentioned earlier.

 %For convenience, we present the details as follows. 

% Fix $z_0\in\partial M$.
 Under the local coordinate \eqref{goodcoordinate1} with origin at $z_0 \in\partial M$. %$(z=0)$.
 %for convenience  we set
  As in \eqref{t-coordinate} we set
 \begin{equation}
 % \label{t-coordinate}
  \begin{aligned}
 	t^{2\alpha-1}=x^{\alpha}, \ t^{2\alpha}=y^{\alpha},\ 1\leqslant \alpha\leqslant n-1;\ t^{2n-1}=y^{n},\ t^{2n}=x^n.  \nonumber
 	\end{aligned}
 \end{equation}
As we denoted in \eqref{def1-K},  
 $$K=1+\sup_{M}  (|\nabla u|_\omega^{2}+ |\nabla (u-\underline{u})|_\omega^{2}).$$ 
 
  We use the notation  in  \eqref{formula-1}.
 By direct calculations, one derives  
 \begin{equation}
 	\begin{aligned}
 		(u_{x^k})_{\bar j} %=(u_k+u_{\bar k})_{\bar j}
 		%=\frac{\partial^2 u}{\partial \bar z^j \partial z^k}+\frac{\partial^2 u}{\partial \bar z^j \partial \bar z^k}
 		=u_{\bar j x^k}+\overline{\Gamma_{kj}^l} u_{\bar l},  \,\,
 		(u_{y^k})_{\bar j}
 		%=\sqrt{-1} \left(\frac{\partial^2 u}{\partial \bar z^j \partial z^k}-\frac{\partial^2 u}{\partial \bar z^j \partial \bar z^k} \right)
 		=u_{\bar j y^k}-{\sqrt{-1}}\overline{\Gamma_{kj}^l} u_{\bar l}, \nonumber
 	\end{aligned}
 \end{equation}
 \begin{equation}
 	\begin{aligned}
 		(u_{x^k})_{i\bar j}  
 		%=(u_k+u_{\bar k})_{i\bar j}
 		%= \frac{\partial^3 u}{\partial z^i\partial \bar z^{j} \partial z^k}	+\frac{\partial^3 u}{\partial z^i\partial \bar z^{j} \partial \bar z^k}
 		%=	u_{x^k i\bar j}+\Gamma_{ik}^lu_{l\bar j}+\overline{\Gamma_{jk}^l} u_{i\bar l}-g^{l\bar m}R_{i\bar j k\bar m}u_l 
 		=	 	u_{i\bar j x^k}+\Gamma_{ki}^lu_{l\bar j}+\overline{\Gamma_{kj}^l} u_{i\bar l}, 
 		\,\, \nonumber
 		%	\end{aligned} \end{equation}
 		% \begin{equation}	\begin{aligned}		F^{i\bar j}u_{y^{k}i\bar j}=F^{i\bar j}u_{i\bar j y^{k}}+\sqrt{-1} F^{i\bar j}g^{l\bar m}R_{i\bar j k\bar m}u_{l}+		 2\mathfrak{Im}\left(F^{i\bar j}T^{l}_{ik}u_{l\bar j}\right), \nonumber	\end{aligned}  \end{equation}
 		%Similarly,
 		%  \begin{equation} 	\begin{aligned}  	
 		(u_{y^k})_{i\bar j} 
 		%=	u_{y^ki\bar j}+\sqrt{-1}\left(\Gamma_{ik}^l u_{l\bar j}-\overline{\Gamma_{jk}^l} u_{i\bar l}\right)-\sqrt{-1}g^{l\bar m}R_{i\bar j k\bar m}u_l	
 		=u_{i\bar j y^k}
 		+\sqrt{-1}		\left(\Gamma_{ki}^l u_{l\bar j}-\overline{\Gamma_{kj}^l} u_{i\bar l}\right).	\nonumber  	\end{aligned} \end{equation}

 We set
 \begin{equation}
 	\label{barrier1}
 	\begin{aligned}
 		w= (\underline{u}-u)
 		- t\sigma
 		+N\sigma^{2}   \mbox{  in  } \Omega_{\delta}.  \nonumber
 	\end{aligned}
 \end{equation}
 Here $N$ is a positive constant to be determined, $\delta$ and $t$ are small enough such that   
 %and,  %in $\Omega_{\delta}$,
 % $\sigma$ is smooth  and
 \begin{equation}
 	\label{bdy1}
 	\begin{aligned}
 		\mbox{  $\sigma$ is smooth, }
 		\frac{1}{4} \leqslant |\nabla\sigma|\leqslant 2,  \,\,
 		|\mathcal{L}\sigma | \leqslant   C_\sigma\sum_{i=1}^n f_i 
 		\mbox{ in }  \Omega_{\delta},
 	\end{aligned}
 \end{equation} 
 \begin{equation}
 	\label{yuanbd-11}
 	\begin{aligned}
 		N\delta-t\leqslant 0, \mbox{ }	 \max\{|2N\delta-t|, t\}
 		%	|2N\delta-t|
 		\leqslant \frac{\varepsilon}{32C_\sigma},
 	\end{aligned}
 \end{equation}
 for some  constant $C_\sigma>0,$
 where 
 %$\vartheta:= \frac{1}{2}\min_{\bar M} dist(\nu_{\underline{\lambda} }, \partial \Gamma_n)$,
 $\varepsilon$ is the constant asserted in Lemma \ref{lemma3-key}.
 %Lemma \ref{guan2014}. 
 %and $C_\sigma$ is the constant in \eqref{bdy1}.
 Clearly,
 %Let $\delta>0$ and $t>0$ be sufficiently small with $N\delta-t\leqslant 0,$ so that
 % in $\Omega_{\delta}$,    
 \begin{equation}
 	\label{yiqi-1}
 	\begin{aligned}
 		w\leqslant 0    \mbox{ in } \Omega_{\delta},  %\nonumber
 \end{aligned}  \end{equation}
 \begin{equation}	\begin{aligned}
 		\mathcal{L}w
 		= F^{i\bar j}(\mathfrak{\underline{g}}_{i\bar j}-\mathfrak{g}_{i\bar j}) 	+2NF^{i\bar j}\sigma_i\sigma_{\bar j}	+(2N\sigma-t)\mathcal{L}\sigma
 		\mbox{ in } \Omega_{\delta}.  
 		%\nonumber
 		%\geqslant \sum_{i=1}^n	f_i(\underline{\lambda}_i-\lambda_i)	+2NF^{i\bar j}\sigma_i\sigma_{\bar j}-|2N\sigma-t|C_\sigma \sum_{i=1}^n f_i,
 	\end{aligned}
 \end{equation}

Define the tangential operator on the boundary
 \begin{equation}
 	\label{tangential-oper-general1}
 	\begin{aligned} 
 		\mathcal{T}=\nabla_{\frac{\partial}{\partial t^{\alpha}}}- \widetilde{\eta}\nabla_{\frac{\partial}{\partial x^n}}, 
 		\mbox{ for each fixed }
 		1\leqslant \alpha< 2n,  \nonumber
 	\end{aligned}
 \end{equation}
 where $\widetilde{\eta}= {\sigma_{t^{\alpha}}}\big/{\sigma_{x^n}}$. %$\sigma_{x^n}(0)=1,$ $\sigma_{t^\alpha}(0)=0$.
 One has $\mathcal{T}(u-\varphi)=0$ on $\partial M\cap \bar\Omega_\delta$. 
 By % \eqref{asym-sigma1} 
 % $\sigma_{t^\alpha}(0)=0$ and $\sigma_{x^n}(0)=1$,
 $\widetilde{\eta}(0)=0$ one derives $|\widetilde{\eta}|\leqslant C'|z|$ on $\bar\Omega_\delta$. 
 %On % $\partial M\cap \bar\Omega_\delta$, 
 Since  $(u-\varphi)\big|_{\partial M}=0$  we obtain
 $\mathcal{T}(u-\varphi)\big|_{\partial M}=0$. Together with the boundary gradient estimate contained in Lemma \ref{lemma-c0-bc1},
 one has
 \begin{equation}\label{bdr-t}\begin{aligned}
 		|(u-\varphi)_{t^{\alpha}}|\leqslant C|z| \mbox{ on } \partial M\cap\bar \Omega_\delta,
 		%\sup_{\partial M}|  (u-\varphi)|,
 		\mbox{  } \forall 1\leqslant \alpha<2n. %\nonumber
 	\end{aligned}
 \end{equation}

 Take   
 \begin{equation}
 	\label{Phi-def1}
 	\begin{aligned}
 		\Phi=\pm \mathcal{T}(u-\varphi)+\frac{1}{\sqrt{K}}(u_{y^{n}}-\varphi_{y^{n}})^2 \,\, \mbox{ in } \Omega_\delta.  \nonumber
 	\end{aligned}
 \end{equation}
 %Then there is a positive constant $C_{\Phi}$ depending on 	$|\varphi|_{C^{3}(\bar M)}$, $|\chi|_{C^{1}(\bar M)}$, 	$|\nabla_z \psi|_{C^{0}(\bar M)}$, $\sup|\psi_u|$, and other known data  	such that  for some small positive constant $\delta$,
 %Similar to \cite[Lemma 5.4]{yuan-regular-DP} we can prove 
 Note that %$2u_n=u_{x_n}-\sqrt{-1}u_{y_n}$
 $u_{x_n}=2u_n+\sqrt{-1}u_{y_n}
 $ and $u_{x_n}=2u_{\bar n}-\sqrt{-1}u_{y_n}$. 
 Combining Cauchy-Schwarz inequality, we can prove 
 \begin{equation}
 	\label{yuan-1}
 	\begin{aligned}
 		\mathcal{L}\Phi \geqslant 
 		-C_{\Phi}  \sqrt{K}  \sum_{i=1}^n f_i  - C_{\Phi}  \sum_{i=1}^n f_i|\lambda_i|
 		-C_\Phi  \mbox{ on } \Omega_{\delta}. %\nonumber
 	\end{aligned}
 \end{equation}
In the proof of \eqref{yuan-1} we use 
 \begin{equation}	\begin{aligned}		2|\mathfrak{Re}(F^{i\bar j}(\widetilde{\eta})_i (u_{x^n})_{\bar j})|	\leqslant  \,&  \frac{1}{\sqrt{K}}F^{i\bar j}(u_{y^n})_{i} (u_{y^n})_{\bar j} +C\sum_{i=1}^nf_i|\lambda_i|+C\sqrt{K}\sum_{i=1}^n f_i.		\nonumber	\end{aligned}\end{equation}

 By straightforward computations and \cite[Proposition 2.19]{Guan12a}, there is an index $r$  with $1\leqslant r\leqslant n$ such that
 \begin{equation}
 	\label{inequ-buchong1}
 	\begin{aligned}
 		\mathcal{L} \left(\sum_{\tau<n}|(u-\varphi)_{\tau}|^2 \right)  
 		\geqslant \,& 	\frac{1}{2}\sum_{\tau<n}F^{i\bar j} \mathfrak{g}_{\bar \tau i} \mathfrak{g}_{\tau \bar j} 	-C_1\sqrt{K}  \left(\sum_{i=1}^n f_{i}|\lambda_{i}|+\sum_{i=1}^n f_{i}	\right)  +   2\sum_{\tau<n}\mathfrak{Re} (u_\tau \psi_{\bar \tau}) \\  
 		\geqslant  \,&
 		\frac{1}{4}\sum_{i\neq r} f_{i}\lambda_{i}^{2}
 		-C_1'\sqrt{K} \left( 1+\sum_{i=1}^n f_{i} +\sum_{i=1}^n f_{i}|\lambda_{i}| \right). 
 		\nonumber
 	\end{aligned}
 \end{equation}

In order to treat the terms involving $\sum f_i|\lambda_i|$,
% applying a trick used in
%\cite[Page 26]{Guan-Dirichlet}
%\cite{Guan-Dirichlet} and
applying Cauchy-Schwarz inequality,  we can derive  
%Lemma \ref{lemma2-guan}. It states that 
\iffalse
 	\begin{equation}
 	 %\label{inequality1-bdy-esti}
 	\begin{aligned} 
 		\sum f_i |\lambda_i| \leqslant \epsilon \sum_{i\neq r} f_i \lambda_i^2+  (|\underline{\lambda}|+ \epsilon^{-1} )\sum f_i  + \sum f_i (\underline{\lambda}_i-\lambda_i), \, \forall \epsilon>0. %\nonumber
 	\end{aligned}
 \end{equation}
\fi  
	\begin{equation}
 \label{inequality1-bdy-esti}
	\begin{aligned} 
		\sum f_i |\lambda_i|
	 =\,& \sum_{i=1}^n f_i\lambda_i-2\sum_{\lambda_i<0} f_i\lambda_i	=  2\sum_{\lambda_i\geq 0} f_i\lambda_i-\sum_{i=1}^n f_i\lambda_i	\\ 
		\leqslant \,&
		\epsilon \sum_{i\neq r} f_i \lambda_i^2+   \epsilon^{-1}  \sum f_i  + \big|\sum_{i=1}^n f_i \lambda_i\big|, \, \forall \epsilon>0. %\nonumber
	\end{aligned}
\end{equation} 
 
The local barrier function in $\Omega_\delta$ is as follows:
\begin{equation}
 	\label{Psi}
 	\begin{aligned} 
 		\widetilde{\Psi} =A_1 \sqrt{K}w -A_2 \sqrt{K} |z|^2 + A_3 \Phi+ \frac{1}{\sqrt{K}} \sum_{\tau<n}|(u-\varphi)_{\tau}|^2.   \nonumber
 		%\mbox{ in } \Omega_\delta.
 	\end{aligned}
 \end{equation}
 Putting the above inequalities and \eqref{L1-u-ubar} together,  
 we obtain in $\Omega_\delta$
 %\begin{equation}	\label{bdy-main-inequality}	\begin{aligned}		\mathcal{L}\widetilde{\Psi} \geqslant \,&	(A_1 \sqrt{K}-A_3C_\Phi-C_1') F^{i\bar j}(\mathfrak{\underline{g}}_{i\bar j}-\mathfrak{g}_{i\bar j}) +\frac{1}{8\sqrt{K}} \sum_{i\neq r} f_i\lambda_i^2  \\\,& + 2A_1N \sqrt{K} F^{i\bar j}\sigma_i \sigma_{\bar j} 	-(C_1'+A_3 C_\Phi ) -\left(C_1'+(C_1'+A_3C_\Phi)  |\underline{\lambda}| \right) \sum_{i=1}^n f_i 	\\ \,&	-  \left( A_2+A_3C_\Phi  +A_1C_\sigma |2N\sigma-t|	+8(C_1'+A_3C_\Phi)^2 \right) \sqrt{K}\sum_{i=1}^n f_i.	\end{aligned} \end{equation}  Here we set $\epsilon=\frac{1}{8(A_3C_2+C_1')\sqrt{K}}$ in \eqref{inequality1-bdy-esti}. 
 \iffalse
 \begin{equation}
 	%\label{bdy-main-inequality}
 	\begin{aligned}
 		\mathcal{L}\widetilde{\Psi} \geqslant \,&
 		(A_1 \sqrt{K}-A_3C_\Phi-C_1') F^{i\bar j}(\mathfrak{\underline{g}}_{i\bar j}-\mathfrak{g}_{i\bar j})  
 	%	+\frac{1}{8\sqrt{K}} \sum_{i\neq r} f_i\lambda_i^2  		
 	%	\\\,& 	 
 		+ 2A_1N \sqrt{K} F^{i\bar j}\sigma_i \sigma_{\bar j} 
 		\\ \,&
 		-  \left( A_2+A_3C_\Phi  +A_1C_\sigma |2N\sigma-t|
 		+4(C_1'+A_3C_\Phi)^2 \right) \sqrt{K}\sum_{i=1}^n f_i 
 		\\ \,&
 		-(C_1'+A_3 C_\Phi ) 
 		-\left(C_1'+(C_1'+A_3C_\Phi)  |\underline{\lambda}| \right) \sum_{i=1}^n f_i.
 	\end{aligned}
 \end{equation}
Here we set $\epsilon=\frac{1}{4\sqrt{K}(A_3C_
 	\Phi+C_1')}$ in \eqref{inequality1-bdy-esti}. 
 \fi
  \begin{equation}
 	\label{bdy-main-inequality}
 	\begin{aligned}
 		\mathcal{L}\widetilde{\Psi} \geqslant \,&
 		 A_1 \sqrt{K}  F^{i\bar j}(\mathfrak{\underline{g}}_{i\bar j}-\mathfrak{g}_{i\bar j})  
 		%	+\frac{1}{8\sqrt{K}} \sum_{i\neq r} f_i\lambda_i^2  		
 		%	\\\,& 	 
 		+ 2A_1N \sqrt{K} F^{i\bar j}\sigma_i \sigma_{\bar j} 
 		\\ \,&
 		-  \left( A_2+A_3C_\Phi  +A_1C_\sigma |2N\sigma-t|
 		+4(C_1'+A_3C_\Phi)^2 \right) \sqrt{K}\sum_{i=1}^n f_i 
 		\\ \,&
 		-(C_1'+A_3 C_\Phi ) 
 		\left(1+\big|\sum f_i\lambda_i\big| \right)
 		- C_1'   \sum_{i=1}^n f_i.
 	\end{aligned}
 \end{equation}
 Here we set $\epsilon=\frac{1}{4\sqrt{K}(A_3C_
 	\Phi+C_1')}$ in \eqref{inequality1-bdy-esti}.

 Proposition \ref{mix-general} then follows from
 % Hopf's lemma and
 the following lemma.
 \begin{lemma}
 	There are constants $A_1\gg A_2\gg A_3^2\gg1$,  $0<\delta\ll1$, $N\gg1$ such that  $\widetilde{\Psi}(0)=0$,
 	$\widetilde{\Psi}\big|_{\partial {\Omega_\delta}}\leqslant 0$, and 
 	\begin{equation}
 		\label{mainiequality-1} \mathcal{L}\widetilde{\Psi}> 0 \mbox{ on } \Omega_\delta.
 	\end{equation}
 	
 \end{lemma}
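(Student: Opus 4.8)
The plan is to choose the four constants $A_1, A_2, A_3, N$ and the radius $\delta$ in the correct hierarchy so that the right-hand side of \eqref{bdy-main-inequality} is manifestly positive on $\Omega_\delta$, while simultaneously arranging the boundary behavior. First I would dispose of the boundary and normalization conditions, which are purely algebraic: $w(0)=0$, $|z|^2=0$ at the origin, $\Phi(0)=0$ (since $\mathcal T(u-\varphi)(0)=0$ by $\widetilde\eta(0)=0$, and the gradient term vanishes on $\partial M$ after subtracting $\varphi$), and $(u-\varphi)_\tau(0)=0$; hence $\widetilde\Psi(0)=0$. For $\widetilde\Psi|_{\partial\Omega_\delta}\leqslant 0$ I split $\partial\Omega_\delta = (\partial M\cap\bar\Omega_\delta)\cup(M\cap\partial B_\delta(0))$: on the flat piece $\partial M\cap\bar\Omega_\delta$ one has $\sigma=0$, $w=\underline u-u=0$, $|\mathcal T(u-\varphi)|\leqslant C|z|$ and $\sum_{\tau<n}|(u-\varphi)_\tau|^2\leqslant C|z|^2$, so $\widetilde\Psi\leqslant (A_3 C - A_2\sqrt K)|z|^2 + C|z|^2/\sqrt K\leqslant 0$ once $A_2\gg A_3$; on the spherical cap $|z|=\delta$ the term $A_1\sqrt K\, w$ is bounded above by $A_1\sqrt K(N\delta - t)\sigma \leqslant 0$ by the first inequality in \eqref{yuanbd-11} together with the negativity of $w$ contributed by $-A_2\sqrt K|z|^2 = -A_2\sqrt K\delta^2$, which dominates the bounded terms $A_3\Phi$ and $K^{-1/2}\sum|(u-\varphi)_\tau|^2$ once $A_2\gg A_3^2$ and $\delta$ is fixed.

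Next comes the interior differential inequality \eqref{mainiequality-1}, which is the heart of the matter. Here I would invoke Lemma \ref{lemma3-key}: at each point of $\Omega_\delta$ we are in one of two cases. In the first case, $F^{i\bar j}(\underline{\mathfrak g}_{i\bar j}-\mathfrak g_{i\bar j})\geqslant \varepsilon F^{i\bar j}g_{i\bar j}+\varepsilon = \varepsilon\sum f_i + \varepsilon$. Choosing $A_1\sqrt K$ large enough that $A_1\sqrt K - A_3 C_\Phi - C_1' \geqslant \tfrac12 A_1\sqrt K$, the first term of \eqref{bdy-main-inequality} contributes at least $\tfrac{\varepsilon}{2}A_1\sqrt K(\sum f_i + 1)$, and this beats all the remaining terms carrying factors $\sqrt K \sum f_i$, a bare constant, or $|\underline\lambda|\sum f_i$, provided $A_1 \gg A_2, A_3^2, N C_\sigma$ and $A_1 \gg (C_1')^2, (A_3 C_\Phi)^2$ — note the $A_1C_\sigma|2N\sigma - t|$ term is controlled using the bound $|2N\sigma-t|\leqslant |2N\delta-t|$ from \eqref{yuanbd-11} so it is $\leqslant A_1 \varepsilon/(32 C_\sigma)\cdot C_\sigma$, genuinely small relative to $\tfrac{\varepsilon}{2}A_1$. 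In the second case of Lemma \ref{lemma3-key}, $F^{i\bar j}\geqslant \varepsilon(F^{p\bar q}g_{p\bar q})g^{i\bar j}$, so $F^{i\bar j}\sigma_i\sigma_{\bar j}\geqslant \varepsilon|\partial\sigma|^2\sum f_i \geqslant \tfrac{\varepsilon}{16}\sum f_i$ by \eqref{bdy1}; then the positive term $2A_1 N\sqrt K F^{i\bar j}\sigma_i\sigma_{\bar j}\geqslant \tfrac{\varepsilon}{8}A_1 N\sqrt K\sum f_i$ does the job of dominating the $\sqrt K\sum f_i$ bad terms once $N$ is taken large (after $A_2, A_3$ are fixed) and then $A_1$ large; and the term $\tfrac{1}{8\sqrt K}\sum_{i\neq r}f_i\lambda_i^2\geqslant 0$ absorbs nothing but costs nothing. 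One also must keep the first term of \eqref{bdy-main-inequality} non-negative, which holds since $F^{i\bar j}(\underline{\mathfrak g}_{i\bar j}-\mathfrak g_{i\bar j})\geqslant \sum f_i(\underline\lambda_i - \lambda_i)$ by \eqref{L1-u-ubar} and this is controlled below using Lemma \ref{lemma2-guan} exactly as indicated before \eqref{Psi}, feeding into the $\sum_{i\neq r}f_i\lambda_i^2$ reservoir.

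The order of choices is therefore: fix $\varepsilon$ from Lemma \ref{lemma3-key} and $C_\Phi, C_\sigma, C_1'$ from the structural inequalities; choose $A_3$ first (any fixed value $\geqslant 1$ works, it only scales $\Phi$); then $A_2 \gg A_3^2$ to kill the boundary cap and flat-boundary terms; then $N\gg 1$ to make $2A_1 N\sqrt K F^{i\bar j}\sigma_i\sigma_{\bar j}$ effective in Case 2 and to satisfy $N\delta\leqslant t$ after shrinking $\delta$; then $\delta\ll 1$ small enough for \eqref{bdy1}, \eqref{yuanbd-11}, and the smoothness of $\sigma$; and finally $A_1 \gg \max\{A_2, N C_\sigma, (A_3 C_\Phi)^2, (C_1')^2, 1\}$ — crucially $A_1$ is chosen last so it can be made as large as needed relative to everything else. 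I expect the main obstacle to be bookkeeping the interplay between the two alternatives of Lemma \ref{lemma3-key} simultaneously with a single set of constants: in Case 1 the decisive positive quantity is $A_1\sqrt K\cdot\varepsilon\sum f_i$ coming from the subsolution gap, whereas in Case 2 it is $A_1 N\sqrt K\cdot\varepsilon\sum f_i$ coming from the gradient-of-distance term, and one must verify that the same $A_1$ (chosen large last) handles both while the auxiliary terms $A_1 C_\sigma|2N\sigma-t|\sqrt K\sum f_i$ and the $\Phi$-generated $A_3 C_\Phi\sqrt K(\sum f_i|\lambda_i| + \sum f_i)$ — the latter reabsorbed via Lemma \ref{lemma2-guan} into $\tfrac{1}{8\sqrt K}\sum_{i\neq r}f_i\lambda_i^2$ at the cost of an $8(C_1'+A_3C_\Phi)^2\sqrt K\sum f_i$ term already displayed in \eqref{bdy-main-inequality} — do not overwhelm the gain. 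Once the constants are pinned down in this order, \eqref{mainiequality-1} follows and, by the maximum principle applied to $u_{x^n} - $ (the normal derivative of the barrier) together with $\mathcal T(u-\varphi)=0$ on $\partial M$, we obtain the bound $|\mathfrak g_{\alpha\bar n}(z_0)|\leqslant C(1+\sup_M|\nabla u|)$ claimed in Proposition \ref{mix-general}.
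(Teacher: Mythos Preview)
Your handling of the boundary conditions and of Case~1 is essentially correct and matches the paper's argument. The gap is in Case~2. You write that the term $\tfrac{1}{8\sqrt K}\sum_{i\neq r}f_i\lambda_i^2\geqslant 0$ ``absorbs nothing but costs nothing'' and rely solely on
\[
2A_1N\sqrt K\,F^{i\bar j}\sigma_i\sigma_{\bar j}\ \geqslant\ \tfrac{\varepsilon}{8}A_1N\sqrt K\sum_i f_i
\]
to dominate every negative contribution. But look again at \eqref{bdy-main-inequality}: it contains the \emph{bare} negative constant $-(C_1'+A_3C_\Phi)$, carrying no factor of $\sum_i f_i$. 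A positive multiple of $\sqrt K\sum_i f_i$ can absorb this only if $\sum_i f_i$ is bounded below by a fixed positive constant. That uniform lower bound is precisely what the extra structural condition \eqref{addistruc} would supply, and the entire point of the paper is to work \emph{without} \eqref{addistruc}; in general $\sum_i f_i$ may tend to zero along $\partial\Gamma^{\psi}$ as $|\lambda|\to\infty$.

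The paper closes this gap by \emph{using} the quadratic term you discard. In Case~2 every $f_i\geqslant\varepsilon\sum_j f_j$, so Lemma~\ref{lemma3-guan} applies and yields
\[
\sum_{i\neq r}f_i\lambda_i^2\ \geqslant\ c_0|\lambda|^2\sum_i f_i - C_0\sum_i f_i.
\]
Half of the resulting $\tfrac{c_0}{8\sqrt K}|\lambda|^2\sum_i f_i$ is paired with half of $\tfrac{\varepsilon}{8}A_1N\sqrt K\sum_i f_i$ via Cauchy--Schwarz to produce a term of size $\sqrt{c_0A_1N\varepsilon}\,|\lambda|\sum_i f_i$. Then Lemma~\ref{Guan-sum1} gives $|\lambda|\sum_i f_i\geqslant c_\sigma$ once $|\lambda|$ is large, while Lemma~\ref{lemma1-cvpde-yuan} gives $\sum_i f_i\geqslant c'$ when $|\lambda|$ stays bounded; in either regime one obtains a genuine positive quantity, independent of $\sum_i f_i$ alone, that kills $-(C_1'+A_3C_\Phi)$ after $A_1$ is chosen large. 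Without this use of Lemma~\ref{lemma3-guan} together with Lemmas~\ref{Guan-sum1} and~\ref{lemma1-cvpde-yuan}, your Case~2 does not close.
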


 \begin{proof}
 	Obviously $\widetilde{\Psi}(0)=0$.
 Using \eqref{yiqi-1}, \eqref{bdr-t} and  $\mathcal{T}(u-\varphi)\big|_{\partial M}=0$ we can infer that 
 $\widetilde{\Psi}\big|_{\partial {\Omega_\delta}}\leqslant 0$.
 
Next we will prove \eqref{mainiequality-1}.
 	The discussion can be divided into two cases according to Lemma \ref{lemma3-key}.  
 	%By \cite[Lemma 6.2]{CNS3}, we know

 	\noindent
 	{\bf Case 1}:  In the first case we have
 	\begin{equation}
 		%\label{inequ-case1}
 		\begin{aligned}
 			F^{i\bar j}(\underline{\mathfrak{g}}_{i\bar j}-\mathfrak{g}_{i\bar j})
 			\geqslant  \varepsilon+\varepsilon F^{i\bar j}g_{i\bar j}+\varepsilon 
 			\big|F^{i\bar j}\mathfrak{g}_{i\bar j}\big|.  \nonumber
 		\end{aligned}
 	\end{equation}
Furthermore, by \eqref{yuanbd-11} we see that $C_\sigma |2N\sigma-t| \leqslant \frac{\varepsilon}{32}.$

 	From \eqref{bdy-main-inequality} we know that \eqref{mainiequality-1} holds if $A_1\gg A_3^2\gg1$ and $A_1\gg A_2\gg1$.

 	\noindent
 	{\bf Case 2}:  
 	In the rest case, by Lemma \ref{lemma3-key}
 	 we have
 	\begin{equation}
 		%	\label{inequ-case2}
 		\begin{aligned}
 			F^{i\bar j}\geqslant \varepsilon (1+F^{p\bar q}g_{p\bar q}+	\big|F^{p\bar q}\mathfrak{g}_{p\bar q}\big|)g^{i\bar j}.
 			\nonumber
 		\end{aligned}
 	\end{equation} 
  From \eqref{bdy1}, we have $|\nabla \sigma|\geqslant\frac{1}{4}$ in $\Omega_\delta$, and then 	\begin{equation}		\label{bbvvv}		\begin{aligned}			F^{i\bar j}\sigma_i \sigma_{\bar j} \geqslant \frac{ \varepsilon }{16} \left(1+\sum_{i=1}^n f_i +\big|\sum_{i=1}^nf_i \lambda_i\big| \right) \mbox{ on } \Omega_\delta  \nonumber		\end{aligned} 	\end{equation}
  for sufficiently small $\delta>0$.
 %Using Cauchy-Schwarz inequality, we get
% \begin{equation}	\begin{aligned}  		\frac{c_0}{16\sqrt{K}} |\lambda|^2\sum_{i=1}^n f_i +\frac{A_1N \varepsilon }{16} \sqrt{K}\sum_{i=1}^n f_i 	\geqslant  	 \frac{\sqrt{c_0A_1N\varepsilon}}{8}|\lambda|\sum_{i=1}^n f_i. \nonumber		\end{aligned} \end{equation}
 	Note that
 	$	F^{i\bar j}(\underline{\mathfrak{g}}_{i\bar j}-\mathfrak{g}_{i\bar j})\geqslant 0$.
 	From \eqref{bdy-main-inequality} and  \eqref{yuanbd-11},  
 	%and Cauchy-Schwarz inequality
 	 we can conclude that
 	%\begin{equation}	\label{bdy-main-inequality-2}	\begin{aligned}		\mathcal{L}\widetilde{\Psi} 
 		%%	\geqslant \,& 
 		%%	\frac{c_0}{8\sqrt{K}} |\lambda|^2\sum_{i=1}^n f_i +\frac{A_1N \varepsilon }{8} \sqrt{K}\sum_{i=1}^n f_i 	\\\,& 	-\left(C_1'+(C_1'+A_3C_\Phi)  |\underline{\lambda}| +\frac{C_0}{8\sqrt{K}}\right) \sum_{i=1}^n f_i  
 		%%	%	\\ \,& 
 		%%	-(C_1'+A_3 C_\Phi ) \\ \,& 	- \left( A_2+A_3C_\Phi  +A_1C_\sigma |2N\sigma-t|	+8(C_1'+A_3C_\Phi)^2 \right) \sqrt{K}\sum_{i=1}^n f_i  \\
 		%	\geqslant \,& 	\frac{c_0}{16\sqrt{K}} |\lambda|^2\sum_{i=1}^n f_i +\frac{A_1N \varepsilon }{16} \sqrt{K}\sum_{i=1}^n f_i 	+\frac{\sqrt{c_0A_1N\varepsilon}}{8}|\lambda|\sum_{i=1}^n f_i	\\\,&	-\left(C_1'+(C_1'+A_3C_\Phi)  |\underline{\lambda}| +\frac{C_0}{8\sqrt{K}}\right) \sum_{i=1}^n f_i  		-(C_1'+A_3 C_\Phi )		\\ \,&	-  \left( A_2+A_3C_\Phi  +A_1C_\sigma |2N\sigma-t|			+8(C_1'+A_3C_\Phi)^2 \right) \sqrt{K}\sum_{i=1}^n f_i.  \nonumber 		\end{aligned}	\end{equation}
  \begin{equation}  		
  	\label{bdy-main-inequality-2}
 	\begin{aligned}
 		\mathcal{L}\widetilde{\Psi} 
 		%\geqslant \,& 	 \frac{A_1N \varepsilon   }{8} \sqrt{K}  \left(1+\sum_{i=1}^n f_i \right)-(C_1'+A_3 C_\Phi) 	-\left(C_1'+(C_1'+A_3C_\Phi)  |\underline{\lambda}| \right) \sum_{i=1}^n f_i 	\\ \,&	-  \left( A_2+A_3C_\Phi  +A_1C_\sigma |2N\sigma-t|	+4(C_1'+A_3C_\Phi)^2 \right) \sqrt{K}\sum_{i=1}^n f_i 	\\ 
 		\geqslant \,& 
 		\frac{A_1N \varepsilon}{8}  
 		\sqrt{K}  \left(1+\sum_{i=1}^n f_i+\big|\sum_{i=1}^nf_i \lambda_i\big| \right)
 		-(C_1'+A_3 C_\Phi ) (1+\big|\sum_{i=1}^nf_i \lambda_i\big|)
 		\\\,&
 		- C_1' \sum_{i=1}^n f_i  
 		-  \left( A_2+A_3C_\Phi  + \frac{A_1 \varepsilon }{32}
 		+4(C_1'+A_3C_\Phi)^2 \right) \sqrt{K}\sum_{i=1}^n f_i 
 	 >0 \nonumber
 	\end{aligned}
 \end{equation}
 %	Combining 
 	%%Lemmas \ref{Guan-sum1} and \ref{lemma1-cvpde-yuan},    
 	%\eqref{Guan-sum1-inequality} and the following inequality 	(see {\cite[Lemma 2]{yuan2021cvpde}}) %one has
  	%\begin{equation}		\begin{aligned}			\sum_{i=1}^n f_i(\lambda) \geqslant \frac{1}{2R+1} 			f((R+1)\vec{\bf1}-f(R\vec{\bf1})), \, \forall |\lambda|\leqslant R,  \nonumber 		\end{aligned} 	\end{equation}  
% we know	\eqref{mainiequality-1} holds
%provided $A_3^2\gg1$, $A_1\gg A_2\gg1$. 
  on $\Omega_\delta$,
   provided $A_1\gg A_3^2\gg1$,  $A_1\gg A_2\gg1$,    $0<\delta\ll1$ and $N\geqslant 1$.

 \end{proof}

\subsection{Tangential-Normal derivatives case revisited}
\label{sec-bdyestimate-mixed-derivatives-revisited}

 We say that $\partial M$ is \textit{holomorphically flat},  
%in the sense that 
if  for any $z_0\in \partial M$  one can pick local %holomorphic
complex coordinates  
\begin{equation}
	\begin{aligned}
		\label{holomorphic-coordinate-flat}
		(z^1,\cdots, z^n), \mbox{  } z^i=x^i+\sqrt{-1}y^i, 
	\end{aligned}
\end{equation}
centered at $z_0$ such that
$\partial M$ is locally of the form $\mathfrak{Re}(z^n)=0$. 

 In this case,  we have a delicate result:  
 The estimate depends on $\partial M$ up to second derivatives, 
 rather than third derivatives.
%in Proposition \ref{mix-Leviflat} below, 

\begin{proposition}
	\label{mix-Leviflat}
	In addition to  \eqref{elliptic}, \eqref{concave}, 
	\eqref{nondegenerate} and \eqref{subsolution1},  
	%$\varphi\in C^3(\partial M)$ 
	% 	$\varphi\in C^3(\partial M)$, 	and $\psi\in C^1(\bar M)$, 
	we assume that $\partial M$ is holomorphically flat.
	% and the boundary data $\varphi\in C^3(\partial M)$.
	%is  a constant.
	Then  for any  admissible solution $u\in C^3(M)\cap C^2(\bar M)$ to the Dirichlet problem  \eqref{mainequ-Dirichlet} satisfies
	\begin{equation}
		\label{quanti-mix-derivative-1}
		|\mathfrak{g}_{\alpha \bar n}(z_0)|\leqslant C(1+\sup_{M}|\nabla u|), \, \forall z_0\in\partial M. %\nonumber
	\end{equation}
	where $C$ depends on %$|\varphi|_{C^{3}(\partial M)}$, 
	$|\psi|_{C^{1}(\bar M)}$, $|\underline{u}|_{C^{2}(\bar M)}$, $|\varphi|_{C^{3}(\bar M)}$, 
	$\partial M$ 
	up to second derivatives
	and other known data (but neither on $(\delta_{\psi,f})^{-1}$ nor on $\sup_{M}|\nabla u|$). % (but not on $\sup_{  M}|\nabla u|$).
	%Moreover, when the boundary data $\varphi$ is exactly a constant then the $C$ depends only on $\partial M$  up to second derivatives, $|\underline{u}|_{C^{2}(\bar M)}$,  $\sup_{M}|\nabla\psi|$ and other known data. 
\end{proposition}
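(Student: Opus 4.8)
The plan is to re-run the barrier construction of Proposition~\ref{mix-general}, but in a chart adapted to the holomorphically flat boundary, with the tangential operator $\mathcal{T}$ simplified so that it no longer carries the variable coefficient $\widetilde{\eta}$ — which is the sole place where the third derivatives of $\partial M$ entered in the general case. Concretely, I would fix $z_0\in\partial M$, choose holomorphically flat coordinates \eqref{holomorphic-coordinate-flat} centered at $z_0$ so that $\partial M=\{\mathfrak{Re}(z^n)=0\}=\{x^n=0\}$ locally with $M\subset\{x^n>0\}$ and $g_{i\bar j}(0)=\delta_{ij}$, and replace the distance function by $\sigma:=x^n$ everywhere in the barrier. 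This is legitimate: $\sigma>0$ in $M\cap B_\delta(0)$, $\tfrac14\leqslant|\nabla\sigma|\leqslant 2$ near $z_0$, and $\partial_i\overline\partial_j x^n\equiv 0$ so $|\mathcal{L}\sigma|\leqslant C_\sigma\sum_i f_i$ holds trivially; moreover, for every fixed $\alpha<2n$ the vector field $\tfrac{\partial}{\partial t^\alpha}$ is now genuinely tangent to $\partial M$, so the operator $\mathcal{T}=\nabla_{\partial/\partial t^\alpha}$ (with \emph{no} $\widetilde{\eta}\,\nabla_{\partial/\partial x^n}$ correction) satisfies $\mathcal{T}(u-\varphi)=0$ on $\partial M\cap\bar\Omega_\delta$.

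Next I would set up the barrier exactly as in \eqref{Psi}:
\[
\widetilde{\Psi}=A_1\sqrt{K}\,w-A_2\sqrt{K}\,|z|^2+A_3\Phi+\tfrac{1}{\sqrt{K}}\sum_{\tau<n}|(u-\varphi)_\tau|^2,\qquad
w=(\underline{u}-u)-t\sigma+N\sigma^2,
\]
with $\Phi=\pm\,(u-\varphi)_{t^\alpha}+\tfrac{1}{\sqrt{K}}(u_{y^n}-\varphi_{y^n})^2$, and recompute $\mathcal{L}w$, $\mathcal{L}\Phi$ and $\mathcal{L}\big(\sum_{\tau<n}|(u-\varphi)_\tau|^2\big)$. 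Since $\mathcal{T}$ is now differentiation along a coordinate vector field of the chart (constant in the holomorphic frame) rather than one weighted by $\widetilde{\eta}=\sigma_{t^\alpha}/\sigma_{x^n}$, applying $\mathcal{L}=F^{i\bar j}\nabla_i\nabla_{\bar j}$ to $\Phi$ no longer produces a term $\sim\nabla^2\widetilde{\eta}$, which in the general case carried the third derivatives of the distance function, i.e.\ of $\partial M$. Commuting $\nabla_i\nabla_{\bar j}$ past $\nabla_{t^\alpha}$ brings only curvature terms of the Chern connection of $\omega$, third derivatives of $u$ controlled by differentiating \eqref{equ1-main} in the $t^\alpha$-direction, and $\varphi\in C^3$; all remaining geometric error terms involve $\sigma=x^n$ and the chart, hence depend on $\partial M$ through at most its second derivatives. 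The analogues of \eqref{inequ-buchong1} and \eqref{yuan-1} thus hold with constants depending only on $|\psi|_{C^1}$, $|\underline{u}|_{C^2}$, $|\varphi|_{C^3}$ and $\partial M$ up to second derivatives.

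Finally I would close the argument as before. Feeding the recomputed inequalities into $\mathcal{L}\widetilde{\Psi}$ gives the analogue of \eqref{bdy-main-inequality} — with the good terms $\tfrac18\sum_{i\neq r}f_i\lambda_i^2$ and $2A_1N\sqrt{K}\,F^{i\bar j}\sigma_i\sigma_{\bar j}$ intact — and then the two-case split of Lemma~\ref{lemma3-key} (invoking Lemma~\ref{lemma3-guan}, Lemma~\ref{Guan-sum1}, Lemma~\ref{lemma2-guan} and Lemma~\ref{lemma1-cvpde-yuan} in Case~2, exactly as in Proposition~\ref{mix-general}) yields, for $A_1\gg A_2\gg A_3^2\gg1$, $N\gg1$, $0<\delta\ll1$, a function $\widetilde{\Psi}$ with $\widetilde{\Psi}(0)=0$, $\widetilde{\Psi}\big|_{\partial\Omega_\delta}\leqslant0$ and $\mathcal{L}\widetilde{\Psi}>0$ on $\Omega_\delta$. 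Applying the maximum principle to $\pm(u-\varphi)_{t^\alpha}$ against $\widetilde{\Psi}$ bounds $|(u-\varphi)_{t^\alpha x^n}(0)|$ by $C(1+\sup_M|\nabla u|)$, and combining with \eqref{ineq2-bdy} and the prescribed data gives \eqref{quanti-mix-derivative-1}.

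The main obstacle I anticipate is bookkeeping rather than conceptual: one must re-examine every error term in the original computation of $\mathcal{L}\Phi$ and $\mathcal{L}\big(\sum_{\tau<n}|(u-\varphi)_\tau|^2\big)$ to verify that dropping the coefficient $\widetilde{\eta}$ together with the flat defining function $\sigma=x^n$ genuinely removes \emph{every} occurrence of a third derivative of $\partial M$, while confirming that this replacement does not degrade the structural inequalities — in particular that the dominant positive terms are untouched and still absorb the (now lower-order-in-$\partial M$) remainders. I would also double-check that the passage to holomorphically flat coordinates itself costs only second derivatives of $\partial M$ in the various $C^2$-norms of the chart data.
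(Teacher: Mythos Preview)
Your proposal is correct and follows essentially the same route as the paper: choose holomorphically flat coordinates so the tangential operator reduces to a coordinate derivative $D=\nabla_{\partial/\partial t^\alpha}$ with no $\widetilde{\eta}$ correction, then rerun the barrier argument of Proposition~\ref{mix-general}. The paper's barrier is
\[
\widetilde{\Psi}=A_1\sqrt{K}\,w-A_2\sqrt{K}\,|z|^2+\tfrac{1}{\sqrt{K}}\sum_{\tau<n}|(u-\varphi)_\tau|^2\pm A_3\,D(u-\varphi),
\]
which differs from yours only in that it drops the extra $\tfrac{1}{\sqrt{K}}(u_{y^n}-\varphi_{y^n})^2$ from $\Phi$; that term was there in the general case solely to absorb the cross term $2\mathfrak{Re}\big(F^{i\bar j}(\widetilde{\eta})_i(u_{x^n})_{\bar j}\big)$, and since $\widetilde{\eta}\equiv 0$ here it is no longer needed (but keeping it, as you do, is harmless).
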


\begin{proof}[Sketch of the proof]

	Under the %holomorphic
 	complex coordinates \eqref{holomorphic-coordinate-flat}, we can take
	\begin{equation}
		\label{tangential-oper-Leviflat1}
		\begin{aligned}
			%\mathcal{T}=
			D=   \frac{\partial}{\partial x^\alpha}, \mbox{ } \frac{\partial}{\partial y^\alpha},  
			\,\, 1\leqslant  \alpha\leqslant  n-1.  \nonumber
		\end{aligned}
	\end{equation}
	(Furthermore, we may assume $g_{i\bar j}=\delta_{ij}$ at $z_0$).
%	In addition,  when $M=X\times S$, $D= \frac{\partial}{\partial x^{\alpha}}, \mbox{ } \frac{\partial}{\partial y^{\alpha}},$ where  $z'=(z^1,\cdots, z^{n-1})$ is local holomorphic coordinate of $X$.
	%It would be worthwhile to note that the local holomorphic coordinate system \eqref{holomorphic-coordinate-flat} is only needed in the proof  of Proposition \ref{mix-Leviflat}. 
	%
	%  When $\partial M$ is  {holomorphically flat}, %and $\varphi\in C^3(\partial M)$,
	%is a constant,
	%   we derive a more delicate result. Take
	%In  \eqref{Phi-def1} we take	$\Phi= \pm{D}(u-\varphi)$. 
	%The local barrier function  in \eqref{Psi} reads as follows
We use the local barriers  
	$$\widetilde{\Psi} =A_1 \sqrt{K}w -A_2 \sqrt{K} |z|^2 + \frac{1}{\sqrt{K}} \sum_{\tau<n}|({u}-\varphi)_{\tau}|^2\pm A_3 {D}(u-\varphi).$$  
%	This is analogous to \eqref{Psi}.
	The rest proof is similar to that 
	of %Proposition \ref{mix-general} presented in
	 Subsection \ref{sec-bdyestimate-mixed-derivatives}.
\end{proof}

\medskip
\section{The Dirichlet problem for degenerate equations}
\label{sec1-degenerate} 

 In this section, we consider the Dirichlet problem for degenerate equations. 
 %First we need the notion of \textit{strict  subsolution}.
 \begin{definition}
 	 We call $\underline{u}$ a \textit{strict  subsolution} to   \eqref{mainequ-Dirichlet}, if for some $\delta>0$
 	 \begin{equation} 
 	 	\label{existenceofsubsolution-2}
 	 	\begin{aligned} 
 	 		%F(\chi+\sqrt{-1}\partial\overline{\partial}\underline{u})
 	 		F(\mathfrak{g}[\underline{u}])
 	 		%=f(\lambda(\omega^{-1}\mathfrak{g}[\underline{u}]))
 	 		\geqslant \psi+\delta \text { in } \bar{M}, \,\, \underline{u}=\varphi \text { on } \partial M. 
 	 	\end{aligned}
 	 \end{equation}
 \end{definition}

%As defined by Definition \ref{def1-goodcomponent},  
%As denoted in Introduction, $\mathring{\Gamma}_{\mathcal{G}}^{f}$ stands for  the interior  of 
Let $\Gamma_{\mathcal{G}}^{f}$ be as in \eqref{component1}. 
As denoted in Introduction, %\ref{sec1-introduction},  $\mathring{\Gamma}_{\mathcal{G}}^{f}$ stands  for  the interior.
Let
 $\Gamma_{\mathcal{G},\infty}^{f}$ be the projection of $\mathring{\Gamma}_{\mathcal{G}}^{f}$ to the subspace of former $n-1$ subscripts, that is
\begin{equation} 
	\begin{aligned}
		\Gamma_{\mathcal{G},\infty}^{f}=\left\{\left(\lambda_{1}, \cdots, \lambda_{n-1}\right): \left(\lambda_{1}, \cdots, \lambda_{n-1}, R\right) \in \mathring{\Gamma}_{\mathcal{G}}^{f} \mbox{ for some } R>0
\right\}.  \nonumber
	\end{aligned}
\end{equation}  
 Also,  $\overline{\Gamma}_{\mathcal{G},\infty}^{f}$ denotes the closure of $\Gamma_{\mathcal{G},\infty}^{f}$.   
 
Throughout this section we assume that the boundary   satisfies
 \begin{equation}\label{bdry-assumption1}
 	\begin{aligned}
 		\left(-\kappa_{1}, \cdots,-\kappa_{n-1}\right) \in \overline{\Gamma}_{\mathcal{G},\infty}^{f} \mbox{ on } \partial M,
 	\end{aligned}
 \end{equation} 
where $\kappa_{1}, \cdots, \kappa_{n-1}$ denote the eigenvalues of Levi form $L_{\partial M}$ of $\partial M$ with respect to $\omega^{\prime}=\left.\omega\right|_{T_{\partial M} \cap J T_{\partial M}}$.  As above, $J$ denotes the underlying complex structure.

 We first drop the unbounded condition imposed in Theorem~\ref{thm1-1}.

\begin{theorem}
	\label{thm1-2}
	In Theorem~\ref{thm1-1} the hypothesis~\eqref{unbounded-1} on $f$ can be dropped if
	the Levi form $L_{\partial M}$ %of $\partial M$ 
	satisfies 
	\eqref{bdry-assumption1} and $f$ obeys \eqref{elliptic}.
\end{theorem}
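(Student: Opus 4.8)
The plan is to run the proof of Theorem~\ref{thm1-1} essentially verbatim, isolating the single place where the unbounded condition~\eqref{unbounded-1} is genuinely used and replacing it by an argument built on~\eqref{bdry-assumption1}. As for Theorem~\ref{thm1-1} one sets up the continuity method, so it suffices to prove the a priori bound $\|u\|_{C^{2,\alpha}(\bar M)}\leqslant C$ for admissible solutions, after which Evans--Krylov and Schauder theory finish the argument. Almost every ingredient survives the removal of~\eqref{unbounded-1} once~\eqref{elliptic} is imposed in its place: the $C^0$ estimate comes from comparison with $\underline u$ and the supersolution $\check u$; since $f$ is elliptic and $F(\underline{\mathfrak{g}})\geqslant\psi$, the admissible subsolution is automatically a $\mathcal{C}$-subsolution (the defining set is empty because $f$ is strictly increasing in each variable), so the interior second-order estimate (proved as in Theorem~\ref{thm2-second-order}, reducing the complex Hessian bound to its boundary values modulo $\sup_M|\nabla u|_\omega^2$) and the gradient estimate of Section~\ref{sec-gradient-estimate} both apply --- for the latter the blow-up is localized at an \emph{interior} maximum of $|\nabla u|_\omega$ since the boundary gradient is already bounded, and the resulting limit is a $\mathring{\Gamma}_{\mathcal{G}}^{f}$-solution, hence constant by Theorem~\ref{thm-Liouville}; and the tangential--tangential and tangential--normal boundary second-derivative bounds come from the boundary data and from Proposition~\ref{mix-general}, none of which invokes~\eqref{unbounded-1}. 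The only remaining task is the double-normal estimate $\mathfrak{g}_{n\bar n}(z_0)\leqslant C(1+\sum_{\alpha<n}|\mathfrak{g}_{\alpha\bar n}(z_0)|^2)$ of Proposition~\ref{proposition-quar-yuan2}, whose proof in Sections~\ref{sec1-Bdy-setup}--\ref{sec1-Bdy-estimate} uses~\eqref{unbounded-1} only through~\eqref{key-03-yuan3}.

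The guiding principle for the replacement is that, once~\eqref{unbounded-1} is dropped, the cone $\Gamma$ and its projection $\Gamma_\infty$ which govern the Caffarelli--Nirenberg--Spruck barrier construction of Sections~\ref{sec1-Bdy-setup}--\ref{sec1-Bdy-estimate} must be replaced throughout by the good cone $\mathring{\Gamma}_{\mathcal{G}}^{f}$ of~\eqref{component1} and its projection $\Gamma_{\mathcal{G},\infty}^{f}$. By Lemma~\ref{lemma1-key}, $\mathring{\Gamma}_{\mathcal{G}}^{f}$ is again an open symmetric convex cone with $\Gamma_n\subseteq\mathring{\Gamma}_{\mathcal{G}}^{f}\subseteq\Gamma$, so Lemmas~1.2, 6.1, 6.2 and~B of~\cite{CNS3} apply to it word for word. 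The hypothesis~\eqref{bdry-assumption1}, $(-\kappa_1,\cdots,-\kappa_{n-1})\in\overline{\Gamma}_{\mathcal{G},\infty}^{f}$, is exactly the statement that $\partial M$ is admissible relative to this smaller cone: it forces the path $A_t$ of tangential restrictions from~\eqref{A_t} to cross $\partial\Gamma_{\mathcal{G},\infty}^{f}$ at a first time $t_0$ with $(1-t_0)^{-1}$ bounded --- the analogue of Lemma~\ref{keylemma1-yuan3}, obtained from the same local barrier $w$ after substituting $\Gamma_{\mathcal{G},\infty}^{f}$ for $\Gamma_\infty$ in the definition~\eqref{key0-yuan3} of $t_0$, in the supporting-plane construction~\eqref{key-18-yuan3}, and in the compact set $X$ of that proof --- and it likewise ensures, as in Lemma~\ref{lemma1-bdyestimate}, that the auxiliary matrices $A'(R)$, $A''(R)$ have eigenvalues in $\mathring{\Gamma}_{\mathcal{G}}^{f}$ once $R$ is large.

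At the step~\eqref{key-03-yuan3} where the old argument exploited~\eqref{unbounded-1} to drive the last eigenvalue to $+\infty$ and so make $f$ exceed $f(\underline\lambda)$, I would instead invoke the monotonicity Lemma~\ref{lemma2-key}. Since the shifted tangential vector $2(1-t_0)(\underline{\lambda}'-\frac{\varepsilon_0}{2}\vec{\bf 1})$ lies in $\Gamma_{\mathcal{G},\infty}^{f}$, the augmented vector $\big(2(1-t_0)(\underline{\lambda}'_1-\frac{\varepsilon_0}{2}),\cdots,2(1-t_0)(\underline{\lambda}'_{n-1}-\frac{\varepsilon_0}{2}),R\big)$ lies in $\mathring{\Gamma}_{\mathcal{G}}^{f}$ for a suitable large $R=R_c$, chosen quadratically in $\sum_{\alpha<n}|\mathfrak{g}_{\alpha\bar n}|^2$ so that the quantitative Lemma~\ref{yuan's-quantitative-lemma} absorbs the mixed entries $\mathfrak{g}_{\alpha\bar n}$ and yields $\lambda_\alpha[A'(R_c)]\geqslant(1-t_0)(\underline{\lambda}'_\alpha-\frac{\varepsilon_0}{2})$ together with $\lambda_n[A'(R_c)]\geqslant R_c/2$. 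Writing $\lambda[A(R_c)]$, up to eigenvalue majorization, as $\underline\lambda$ plus a vector in $\Gamma_{\mathcal{G}}^{f}$, and combining the Ky Fan majorization form of concavity already used in Section~\ref{sec1-Bdy-estimate} with Lemma~\ref{lemma2-key}, one obtains $f(\lambda[A(R_c)])\geqslant f(\underline\lambda)\geqslant\psi$, hence $\mathfrak{g}_{n\bar n}\leqslant R_c$ by ellipticity; together with the bound on $(1-t_0)^{-1}$ this is the claimed estimate. The hard part will be making this last step quantitative and uniform in $z_0\in\partial M$ and in the continuity parameter: unlike $\Gamma$, the cone $\mathring{\Gamma}_{\mathcal{G}}^{f}$ is only known abstractly, so one must extract from~\eqref{bdry-assumption1} and the subsolution a uniform thickening $\delta>0$ with $(-\kappa-\delta\vec{\bf 1})\in\Gamma_{\mathcal{G},\infty}^{f}$ on $\partial M$, together with uniform barrier constants $R_0$, $R_c$ --- in effect a quantitative version of the qualitative membership and monotonicity properties of $\mathring{\Gamma}_{\mathcal{G}}^{f}$. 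A secondary point is to confirm that the Section~\ref{sec-gradient-estimate} blow-up transplants to the boundary-value setting, which reduces, via the boundary gradient bound, to verifying that the blow-up centers remain interior.
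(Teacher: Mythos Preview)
Your plan to replace $\Gamma_\infty$ by $\Gamma_{\mathcal{G},\infty}^{f}$ throughout the barrier argument of Sections~\ref{sec1-Bdy-setup}--\ref{sec1-Bdy-estimate} has a genuine gap. Redefining $t_0$ via~\eqref{key0-yuan3} with $\Gamma_{\mathcal{G},\infty}^{f}$ in place of $\Gamma_\infty$ only makes sense if the path starts inside the smaller cone, i.e.\ $\lambda_{\omega'}(A_1)=\lambda_{\omega'}(\mathfrak{g}_{\alpha\bar\beta})\in\Gamma_{\mathcal{G},\infty}^{f}$. But the solution is merely admissible, so $\lambda(\omega^{-1}\mathfrak{g})\in\Gamma$, which by interlacing gives only $\lambda_{\omega'}(A_1)\in\Gamma_\infty$; precisely when $\mathring{\Gamma}_{\mathcal{G}}^{f}\subsetneq\Gamma$ (the case where~\eqref{unbounded-1} fails and the theorem has content) the projection $\Gamma_{\mathcal{G},\infty}^{f}$ is strictly smaller and there is no reason for the solution's tangential eigenvalues to lie in it. The same objection applies to your claim that $2(1-t_0)(\underline{\lambda}'-\tfrac{\varepsilon_0}{2}\vec{\bf 1})\in\Gamma_{\mathcal{G},\infty}^{f}$: the subsolution is only admissible, giving $\underline{\lambda}'\in\Gamma_\infty$, not the good-cone projection. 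The hypothesis~\eqref{bdry-assumption1} controls the Levi form $(-\kappa_\alpha)$, not the tangential Hessians of $u$ or $\underline{u}$, and your proposal never bridges that gap.

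The paper's argument (Proposition~\ref{proposition-quar-yuan1}) sidesteps the barrier construction and $t_0$ entirely. From $\mathfrak{g}_{\alpha\bar\beta}=\underline{\mathfrak{g}}_{\alpha\bar\beta}+\eta\,\sigma_{\alpha\bar\beta}$ with $\eta=(u-\underline{u})_{x^n}(0)\geqslant 0$, one splits $A(R)=B'(R-\eta R_4)+\eta B''(R_4)$ where $B''$ carries only the Levi-form piece $\sigma_{\alpha\bar\beta}+\epsilon_1\delta_{\alpha\beta}$. Assumption~\eqref{bdry-assumption1} places $\lambda[B''(R_4)]\in\mathring{\Gamma}_{\mathcal{G}}^{f}$ directly, so Lemma~\ref{lemma2-key} (in matrix form, via concavity and \cite[Lemma~6.2]{CNS3}) gives $f(\lambda[A(R)])\geqslant f(\lambda[B'(R-\eta R_4)])$. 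The tangential part of $B'$ is close to $\underline{\mathfrak{g}}_{\alpha\bar\beta}$, and since Cauchy interlacing yields $\underline{\lambda}'_\alpha\geqslant\underline{\lambda}_\alpha$, \emph{ellipticity alone} already gives $f(\underline{\lambda}'_1-\varepsilon_1,\ldots,\underline{\lambda}'_{n-1}-\varepsilon_1,R_1)\geqslant f(\underline{\lambda})$ for suitable uniform $\varepsilon_1,R_1$; combined with Lemma~\ref{yuan's-quantitative-lemma} this bounds $\mathfrak{g}_{n\bar n}$ quadratically in the mixed terms. No $t_0$, no barrier, and nothing is required to lie in $\Gamma_{\mathcal{G},\infty}^{f}$ except the one object~\eqref{bdry-assumption1} actually addresses.
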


It is a remarkable fact that the boundary estimate %is delicate and
is independent of $(\delta_{\psi,f})^{-1}$ when $\partial M$ satisfies~\eqref{bdry-assumption1}, as shown in Theorem \ref{thm2-bdy}.
As a result,
%together with Lemma \ref{lemma1-con-addi} below,
we can treat the Dirichlet problem for 
degenerate equations.  
%Here we also use Lemma \ref{lemma1-con-addi}.

\begin{theorem} \label{thm2-diri-de}
	Let $(M,\omega)$ be a compact Hermitian manifold with smooth boundary subject to \eqref{bdry-assumption1}, and let $f$ satisfy \eqref{elliptic},  \eqref{concave}   and~\eqref{continuity1}.
	Assume  $\varphi \in C^{2,1}(\partial M)$ and $\psi \in C^{1,1}(\bar{M})$ satisfy \eqref{degenerate-RHS} and support a strict,  admissible subsolution $\underline{u}\in C^{2,1}(\bar{M})$. Then there exists a (weak) solution  $u \in C^{1, \alpha}(\bar{M})$ to the Dirichlet problem \eqref{mainequ-Dirichlet} with $\forall 0<\alpha<1$ such that
	\[
	\lambda(\omega^{-1}\mathfrak{g}[u]) \in \bar{\Gamma} \mbox{ in }  \bar{M}, \,\,    \Delta u \in L^{\infty}(\bar{M}).
	\]
	 
\end{theorem}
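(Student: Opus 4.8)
The plan is to approximate the degenerate equation by a family of non‑degenerate Dirichlet problems that are already solved in Theorem~\ref{thm1-2}, and then pass to the limit using a priori estimates that are \emph{uniform in the degeneracy parameter}. The crucial point is that, under the Levi‑form hypothesis \eqref{bdry-assumption1}, both the interior and the (delicate) boundary second order estimates are independent of $(\delta_{\psi,f})^{-1}$, so nothing blows up as the right‑hand side approaches the critical value $\sup_{\partial\Gamma}f$.

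\textbf{Regularization.} First I would mollify the data: choose $\psi_\varepsilon\in C^\infty(\bar M)$ with $\psi_\varepsilon\to\psi$ in $C^{1,\beta}(\bar M)$ and $\varphi_\varepsilon\in C^\infty(\partial M)$ with $\varphi_\varepsilon\to\varphi$ in $C^{2,\beta}(\partial M)$ ($0<\beta<1$), arranged so that $\psi+\tfrac\varepsilon2\leqslant\psi_\varepsilon\leqslant\psi+\varepsilon$; then $\inf_M\psi_\varepsilon=\sup_{\partial\Gamma}f+O(\varepsilon)>\sup_{\partial\Gamma}f$, i.e.\ \eqref{nondegenerate} holds for the $\varepsilon$‑problem. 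Mollifying $\underline u$ to $\underline u_\varepsilon\in C^\infty(\bar M)$ (and correcting its boundary values to $\varphi_\varepsilon$ by a smooth term, which is harmless thanks to the strict margin in \eqref{existenceofsubsolution-2}), the strictness gives $F(\mathfrak g[\underline u_\varepsilon])\geqslant\psi_\varepsilon+\tfrac\delta2$ for all small $\varepsilon$, so $\underline u_\varepsilon$ is a strict admissible subsolution of the $\varepsilon$‑problem, uniformly. Since $f$ satisfies \eqref{elliptic}, \eqref{concave} and $f\in C^\infty(\Gamma)$ by \eqref{continuity1}, and $\partial M$ obeys \eqref{bdry-assumption1}, Theorem~\ref{thm1-2} provides a smooth admissible solution $u_\varepsilon\in C^\infty(\bar M)$ of $F(\mathfrak g[u_\varepsilon])=\psi_\varepsilon$ in $\bar M$, $u_\varepsilon=\varphi_\varepsilon$ on $\partial M$.

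\textbf{Uniform estimates.} Next I would bound $u_\varepsilon$ independently of $\varepsilon$. Comparison with $\underline u_\varepsilon$ and the linear supersolution $\check u_\varepsilon$ gives $\|u_\varepsilon\|_{C^0(\bar M)}\leqslant C$, as in Lemma~\ref{lemma-c0-bc1}. For the complex Hessian, the interior estimate of the form \eqref{estimate-quantitative} (the proof of Theorem~\ref{thm2-second-order} is local and its constant does not involve $(\delta_{\psi,f})^{-1}$) combined with the quantitative boundary estimate of Theorem~\ref{thm2-bdy} — which is independent of $(\delta_{\psi,f})^{-1}$ precisely because of \eqref{bdry-assumption1} — yields $\sup_{\bar M}|\partial\overline{\partial}u_\varepsilon|_\omega\leqslant C(1+\sup_M|\nabla u_\varepsilon|^2)$ with $C$ uniform in $\varepsilon$. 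Feeding this into the gradient estimate for the Dirichlet problem (the boundary gradient bound from the barriers built on $\underline u_\varepsilon,\check u_\varepsilon$, together with the interior bound obtained by the blow‑up argument of Section~\ref{sec-gradient-estimate}, now also producing a half‑space blow‑up limit whose boundary trace is affine) gives $\sup_M|\nabla u_\varepsilon|\leqslant C$, hence $|\partial\overline{\partial}u_\varepsilon|_\omega\leqslant C$ and $\|\Delta u_\varepsilon\|_{L^\infty(\bar M)}\leqslant C$, all uniform in $\varepsilon$. Since $\Delta u_\varepsilon$ is bounded in every $L^p$, the $W^{2,p}$‑estimate for $\Delta$ and Sobolev embedding give $\|u_\varepsilon\|_{W^{2,p}(\bar M)}+\|u_\varepsilon\|_{C^{1,\alpha}(\bar M)}\leqslant C$ for all $p<\infty$, $\alpha\in(0,1)$.

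\textbf{Passage to the limit and the main obstacle.} A subsequence $u_\varepsilon\to u$ in $C^{1,\alpha'}(\bar M)$ ($\alpha'<\alpha$) and weakly‑$*$ in $W^{2,p}(\bar M)$, so $u\in C^{1,\alpha}(\bar M)$, $\Delta u\in L^\infty(\bar M)$ and $u=\varphi$ on $\partial M$. The eigenvalues $\lambda(\omega^{-1}\mathfrak g[u_\varepsilon])\in\Gamma$ are uniformly bounded, so the limit lies in $\bar\Gamma$ a.e.; uniform $C^{1,\alpha'}$‑convergence lets one pass to the viscosity limit in $F(\mathfrak g[u_\varepsilon])=\psi_\varepsilon$, and the continuity of $f$ on $\bar\Gamma$ (assumption \eqref{continuity1}) identifies the limit equation $F(\mathfrak g[u])=\psi$; since $\Delta u\in L^\infty$ forces $u\in W^{2,p}$, this viscosity solution is in fact a solution a.e., and the stated regularity follows. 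I expect the main difficulty to be the \emph{uniformity of the boundary second order estimate} as the right‑hand side degenerates — namely that under \eqref{bdry-assumption1} the double‑normal boundary bound stays controlled while $\delta_{\psi,f}\to0$; this is exactly where the quantitative boundary analysis (Lemma~\ref{yuan's-quantitative-lemma} and Section~\ref{sec1-Bdy-estimate}) and the Levi‑form condition are indispensable, with the interior gradient estimate via a blow‑up admitting a boundary (half‑space) limit being the other delicate ingredient.
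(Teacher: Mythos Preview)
Your proposal is correct and follows essentially the same approximation strategy as the paper: perturb to nondegenerate problems, solve each via Theorem~\ref{thm1-2}, extract uniform $C^2$ estimates using the crucial $(\delta_{\psi,f})^{-1}$--independence of the boundary estimate in Theorem~\ref{thm2-bdy} together with the interior second--order estimate and the blow--up gradient bound, and pass to the limit via $W^{2,p}$ compactness. The paper carries this out implicitly here (and explicitly for the product case in Section~\ref{sec6}); the half--space blow--up subtlety you flag---where the rescaled boundary trace is in fact constant rather than affine---is treated at the same level of detail in the paper as in your sketch.
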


 %The assumption~\eqref{bdry-assumption1} can not be removed in general. Notice that the constant in \eqref{quantitative-boundary-estimate} relies on $\partial M$ up to third derivatives and other known data.  This shows that one may solve the Dirichlet problem for degenerate equations on manifolds with $C^{2,1}$ boundary. This is in contrast with the case $M=\Omega\subset\mathbb{R}^n$, in which \eqref{bdry-assum1} is not satisfied. For homogeneous Monge-Amp\`ere equation on certain bounded domains in $\mathbb{R}^n$, as shown by Caffarelli-Nirenberg-Spruck \cite{CNS-deg}, the $C^{3,1}$-regularity assumptions on boundary and boundary value are optimal for $C^{1,1}$ global regularity of weak solution to homogeneous real Monge-Amp\`ere equation on $\Omega\subsetneq\mathbb{R}^n$. 
 
This shows that  one may solve the Dirichlet problem for degenerate equations on manifolds with $C^{2,1}$ boundary data. 
%Our method   applies with almost no change to Dirichlet problem for real Hessian equation \eqref{equation2-realhessian}.
 Moreover, the weak solution to Dirichlet problem of degenerate equations on products was also obtained in Theorem \ref{mainthm-10-degenerate} below, where both boundary and boundary data are in class of $C^{2,1}$.
 This is in contrast with bounded domain $\Omega\subsetneq\mathbb{R}^n$ case, 
in which  
%\eqref{bdry-assumption1} is not satisfied and
 Caffarelli-Nirenberg-Spruck  \cite{CNS-deg} showed that  $C^{3,1}$-regularity assumptions on boundary and boundary value are optimal for $C^{1,1}$ global regularity of weak solution to homogeneous real Monge-Amp\`ere equation.  
Note also that the principal curvatures of bounded domain do not satisfy one similar to \eqref{bdry-assumption1}.
Hence we believe that  the assumption 
%similar to 
\eqref{bdry-assumption1} 
%is appropriate. 
is reasonable.

\subsection{Quantitative boundary estimate revisited}
\label{sec1-Bdy-estimate-revisited}

%\subsection{Double normal derivative case revisited}
%\label{sec1-Bdy-normalnormal-revisited}

The boundary estimate for double normal derivative is fairly delicate when the boundary satisfies   \eqref{bdry-assumption1}.

\begin{proposition}
	\label{proposition-quar-yuan1} 
	Let $(M, \omega)$ be a compact Hermitian manifold  with $C^2$ 
	boundary satisfying \eqref{bdry-assumption1}.
	Suppose, in addition to \eqref{elliptic}, 
	\eqref{concave} and  \eqref{nondegenerate}, that the Dirichlet problem \eqref{mainequ-Dirichlet} admits a $C^2$ {\em admissible} subsolution $\underline{u}$. 
	Let $u\in  C^{2}(\bar M)$ be an \textit{admissible} solution  to the Dirichlet problem, then
	\begin{equation}
		%\label{yuan-prop1}
		\begin{aligned}
			\mathfrak{g}_{n\bar n}(z_0)
			\leqslant  C\left(1 +  \sum_{\alpha=1}^{n-1} |\mathfrak{g}_{\alpha\bar n}(z_0)|^2\right),  \,\,\forall z_0\in\partial M.  \nonumber
		\end{aligned}
	\end{equation}
	where $C$  depends only on   $|u|_{C^0(\bar M)}$, 	$|\nabla u|_{C^0(\partial M)}$,	$|\underline{u}|_{C^{2}(\bar M)}$, $\partial M$ up to second order derivatives 	and other known data. Moreover, the constant is independent of $(\delta_{\psi,f})^{-1}$.
	%but neither on $(\delta_{\psi,f})^{-1}$ nor on $\sup_{M}|\nabla u|$. 
	
\end{proposition}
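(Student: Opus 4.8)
The plan is to sidestep the construction of $t_0$ and Lemma~\ref{keylemma1-yuan3} --- which are precisely what introduce the $(\delta_{\psi,f})^{-1}$-dependence and the third-order dependence on $\partial M$ in Proposition~\ref{proposition-quar-yuan2} --- and instead feed the boundary hypothesis \eqref{bdry-assumption1} directly into the cone $\Gamma_{\mathcal{G}}^{f}$ via Lemma~\ref{lemma2-key}. Fix $z_0\in\partial M$ and use coordinates \eqref{goodcoordinate1}. By Lemma~\ref{lemma-c0-bc1} the number $\eta:=(u-\underline{u})_{x^n}(0)\geqslant0$ is bounded by $|\nabla u|_{C^0(\partial M)}$ and $|\underline{u}|_{C^1}$, and $\mathfrak{g}_{\alpha\bar\beta}(z_0)=\underline{\mathfrak{g}}_{\alpha\bar\beta}(z_0)+\eta\,\sigma_{\alpha\bar\beta}(z_0)$, where $(\sigma_{\alpha\bar\beta}(z_0))$ has eigenvalues $(-\kappa_1,\dots,-\kappa_{n-1})\in\overline{\Gamma}_{\mathcal{G},\infty}^{f}$ by \eqref{bdry-assumption1}. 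Fix $\varepsilon_0$ as in \eqref{varepsilon-0-a0}; the standard principal-submatrix property of $\Gamma$ then gives $\underline{\lambda}'-\varepsilon_0\vec{\bf 1}\in\Gamma_\infty$. It suffices to exhibit an admissible matrix $A(R_c)$ (the matrix of \eqref{def1-AR}) with $f(\lambda[A(R_c)])\geqslant\psi(z_0)$ and $R_c=C(1+\sum_\alpha|\mathfrak{g}_{\alpha\bar n}(z_0)|^2)$, $C$ independent of $(\delta_{\psi,f})^{-1}$ and $\sup_M|\nabla u|$: since $f$ is increasing in each eigenvalue and $\mathrm{tr}\,A(R)$ is increasing in $R$, this forces $\mathfrak{g}_{n\bar n}(z_0)\leqslant R_c$.

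\emph{Decomposition.} With small fixed parameters $\theta,\epsilon>0$, $\theta+\epsilon<\varepsilon_0$ (to be chosen below), and a large parameter $R'$, split $A(R_c)=B'+B''$,
\[
B'=\begin{pmatrix}\underline{\mathfrak{g}}_{\alpha\bar\beta}-\theta\,\delta_{\alpha\beta}&\mathfrak{g}_{\alpha\bar n}\\ \mathfrak{g}_{n\bar\beta}&R'\end{pmatrix},\qquad B''=\begin{pmatrix}\eta\,\sigma_{\alpha\bar\beta}+\theta\,\delta_{\alpha\beta}&0\\ 0&R_c-R'\end{pmatrix}.
\]
The key point is that $A(R_c)$ and $B'$ share the off-diagonal block $\mathfrak{g}_{\alpha\bar n}$, so $B''$ is block-diagonal, with eigenvalues exactly $(\eta(-\kappa_\alpha)+\theta)$ and $R_c-R'$. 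Since $\eta\geqslant0$, $(-\kappa_\alpha)\in\overline{\Gamma}_{\mathcal{G},\infty}^{f}$, and $\Gamma_{\mathcal{G},\infty}^{f}$ (the projection of the open convex cone $\mathring{\Gamma}_{\mathcal{G}}^{f}$ of Lemma~\ref{lemma1-key}) is an open convex cone containing the positive cone, the buffer $\theta$ pushes $(\eta(-\kappa_\alpha)+\theta)$ into $\Gamma_{\mathcal{G},\infty}^{f}$, whence $\lambda[B'']\in\mathring{\Gamma}_{\mathcal{G}}^{f}$ once $R_c-R'$ exceeds a bound depending only on $\eta,(\kappa_\alpha),\theta$ and $\Gamma_{\mathcal{G}}^{f}$. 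For $B'$, after a unitary rotation diagonalizing the top-left block (leaving $\sum_\alpha|\mathfrak{g}_{\alpha\bar n}|^2$ unchanged), the quantitative Lemma~\ref{yuan's-quantitative-lemma} --- applied with this $\epsilon$ and $R'$ meeting the quadratic growth condition \eqref{guanjian1-yuan} --- gives $\lambda_\alpha[B']>\underline{\lambda}'_\alpha-\theta-\epsilon$ and $\lambda_n[B']\geqslant R'$; hence, as $\theta+\epsilon<\varepsilon_0$ and $R'$ is large, $\lambda[B']\in\Gamma$, and so $\lambda[A(R_c)]=\lambda[B'+B'']\in\Gamma$.

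\emph{The $f$-comparison.} By the matrix form of Lemma~\ref{lemma2-key} (a consequence of \eqref{L1-u-ubar} and \eqref{concave-1}), applied with $\lambda[B'']\in\Gamma_{\mathcal{G}}^{f}$ and $\lambda[B'],\lambda[A(R_c)]\in\Gamma$, one gets $f(\lambda[A(R_c)])\geqslant f(\lambda[B'])$. By monotonicity $f(\lambda[B'])\geqslant f(\underline{\lambda}'_1-s_0,\dots,\underline{\lambda}'_{n-1}-s_0,R')$ with $s_0=\theta+\epsilon$. Let $L(s)=\lim_{R\to+\infty}f(\underline{\lambda}'_1-s,\dots,\underline{\lambda}'_{n-1}-s,R)$ for $0\leqslant s<\varepsilon_0$: the limit exists in $(-\infty,\sup_\Gamma f]$, is finite since $\underline{\lambda}'-s\vec{\bf 1}\in\Gamma_\infty$, and $L$ is concave as a monotone limit of concave functions. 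Strict ellipticity \eqref{elliptic} gives $L(0)\geqslant f(\underline{\lambda}'_1,\dots,\underline{\lambda}'_{n-1},\underline{\lambda}_n+1)>f(\underline{\lambda})$, while concavity yields $\liminf_{s\to0^+}L(s)\geqslant L(0)$; hence there is $s_1>0$ depending only on $f,\varepsilon_0,|\underline{u}|_{C^2}$ with $L(s)>f(\underline{\lambda})$ for $s\leqslant s_1$. Fixing $\theta,\epsilon$ with $s_0\leqslant s_1$ and then taking $R'$ large enough (depending on $s_0,f,\underline{\lambda}'$) that $f(\underline{\lambda}'_1-s_0,\dots,\underline{\lambda}'_{n-1}-s_0,R')\geqslant f(\underline{\lambda})$, we conclude $f(\lambda[A(R_c)])\geqslant f(\underline{\lambda})\geqslant\psi(z_0)$ (the last by \eqref{subsolution1}). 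Thus $\mathfrak{g}_{n\bar n}(z_0)\leqslant R_c$, and tracking the constants shows $C$ depends only on $|u|_{C^0},|\nabla u|_{C^0(\partial M)},|\underline{u}|_{C^2}$, $f$, and $\partial M$ up to second order --- not on $(\delta_{\psi,f})^{-1}$ or $\sup_M|\nabla u|$.

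The step I expect to be the main obstacle is the $f$-comparison: without the growth hypothesis \eqref{unbounded-1} --- which in Proposition~\ref{proposition-quar-yuan2} let the large corner overpower the possibly very negative $A''$-term --- one must absorb the $\epsilon$-loss of Lemma~\ref{yuan's-quantitative-lemma} while keeping $s_0=\theta+\epsilon$ a genuine \emph{fixed} constant, so that $R_c$ stays $O(1+\sum_\alpha|\mathfrak{g}_{\alpha\bar n}|^2)$ and the estimate does not degenerate as $\delta_{\psi,f}\to0$. This is possible only because strict ellipticity makes $L(0)$ strictly exceed $f(\underline{\lambda})$ and concavity prevents $L$ from jumping down at $s=0$. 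Additional care is needed to verify $\lambda[B'']\in\Gamma_{\mathcal{G}}^{f}$ from the closure condition \eqref{bdry-assumption1} together with Lemma~\ref{lemma1-key}, and to establish the matrix version of Lemma~\ref{lemma2-key}.
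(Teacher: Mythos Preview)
Your proposal is correct and follows essentially the same route as the paper: split $A(R_c)$ into a block $B'$ carrying the off-diagonal terms $\mathfrak{g}_{\alpha\bar n}$ (handled by Lemma~\ref{yuan's-quantitative-lemma}) and a block-diagonal $B''$ whose eigenvalues land in $\Gamma_{\mathcal{G}}^{f}$ thanks to \eqref{bdry-assumption1}, then invoke Lemma~\ref{lemma2-key} in matrix form to get $f(\lambda[A(R_c)])\geqslant f(\lambda[B'])\geqslant f(\underline{\lambda})\geqslant\psi$.

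The only notable difference is in the buffer: the paper subtracts $\epsilon_1\eta\,\delta_{\alpha\beta}$ (scaling with $\eta$) from the top-left of $B'$, so that $B''$ becomes $\eta$ times a \emph{fixed} matrix whose membership in $\mathring{\Gamma}_{\mathcal{G}}^{f}$ is a single check; you subtract a fixed $\theta\,\delta_{\alpha\beta}$, so your $B''$ has eigenvalues $\eta(-\kappa_\alpha)+\theta$ and you need a short compactness argument over the bounded range of $\eta$ to get a uniform $R_c-R'$. Both are fine. Your $L(s)$-concavity argument is a careful verification of what the paper simply asserts as \eqref{key-03-yuan6}; since $\underline{\lambda}$ ranges over a compact subset of $\Gamma$ with uniform margin \eqref{varepsilon-0-a0}, the paper's direct compactness claim is legitimate, but your explicit argument via strict ellipticity and concavity of $L$ is a clean way to make the $(\delta_{\psi,f})^{-1}$-independence transparent.
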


\begin{proof}
	%[Sketch of the proof]
	The proof is based on Lemma \ref{lemma2-key}.
	Near $z_0\in\partial M$, we choose the local complex coordinate \eqref{goodcoordinate1} with origin at $z_0$ ($z=0$). In what follows, the discussion will be given at $z_0$. 
	As in \eqref{def1-eta}, we denote $\eta=(u-\underline{u})_{x^n}(0)$. The maximum principle tells  us that $\eta\geqslant 0.$ 
	
First, there exist  uniform positive constants $\varepsilon_1$ and $R_1$
such that 	
\begin{equation}
	\label{key-03-yuan6}
	\begin{aligned}
f(\underline{\lambda}'_{1}-{\varepsilon_1},\cdots,  \underline{\lambda}'_{n-1}
		-{\varepsilon_1}, R_1)\geqslant  f(\underline{\lambda}),
\end{aligned} 	\end{equation}
and $( \underline{\lambda}'_{1}-{\varepsilon_1},\cdots,  \underline{\lambda}'_{n-1}
-{\varepsilon_1}, R_1 )\in\Gamma.$  
%Fix such $\varepsilon_1$. 
Take $\epsilon_1>0$ sufficiently small such that 
\begin{equation}
	\begin{aligned}
		\epsilon_1 \sup_{\partial M}|\nabla(u-\underline{u})| \leqslant \frac{\varepsilon_1}{4}. 
		\end{aligned}
\end{equation}
Such an $\epsilon_1$ exists according to Lemma \ref{lemma-c0-bc1}. 
Fix such constants above.
 Denote
\begin{equation}
	\begin{aligned}
	B'(R)=	\begin{pmatrix}
			\mathfrak{\underline{g}}_{\alpha\bar \beta}-\epsilon_1\eta \delta_{\alpha\beta}  &\mathfrak{g}_{\alpha\bar n}\\
			\mathfrak{g}_{n\bar \beta}& R 
		\end{pmatrix}, \,\,
		B''(R)=	\begin{pmatrix}
	\sigma_{\alpha\bar\beta}+\epsilon_1\delta_{\alpha\beta} &0 \\
	0& R  
	\end{pmatrix}.
		\end{aligned}
\end{equation}
Under the assumption \eqref{bdry-assumption1}, %similar to \eqref{key-74}   
  there exists a uniform   constant $R_4$ such that
\begin{equation}
	\label{in1-goodcone}
	\lambda [B''(R_4)]\in \mathring{\Gamma}_{\mathcal{G}}^{f}. 
	%\nonumber
\end{equation}

Let $A(R)$ be as in \eqref{def1-AR}, i.e., 
$$	{A}(R) =
\begin{pmatrix}
	\mathfrak{{g}}_{\alpha\bar \beta} &\mathfrak{g}_{\alpha\bar n}\\
	\mathfrak{g}_{n\bar \beta}& R  \nonumber
\end{pmatrix}.$$ 
%At $z_0 $   ($z=0$),  
By %\eqref{yuan3-buchong5} 
the boundary value condition we have  
\eqref{yuan3-buchong5}, i.e., $	\mathfrak{g}_{\alpha\bar\beta}=  
\underline{\mathfrak{g}}_{\alpha\bar\beta} + \eta \sigma_{\alpha\bar\beta}.$
Obviously, %we can check that
\begin{equation}
	\label{identity2}
	\begin{aligned}
		A(R) = B'(R-\eta R_4)+\eta B''(R_4). %\nonumber
		\end{aligned}
\end{equation}

Let's pick  $\epsilon=\frac{ \varepsilon_1}{4}$ in 
Lemma  \ref{yuan's-quantitative-lemma}. 
%as in \cite{yuan-regular-DP} 
In analogy with \eqref{def1-R-c}, we take
\begin{equation}
	\label{def1-R-c-2}
	\begin{aligned}
		R_c' \equiv %\,& 
		\frac{4(2n-3)}{ \varepsilon_1}\sum_{\alpha=1}^{n-1} | \mathfrak{g}_{\alpha\bar n}|^2 	 	  
		+ (n-1)  \sum_{\alpha=1}^{n-1}|{\underline{\lambda}_\alpha'}| 
%		\\\,&
		+\frac{n(n-1) \varepsilon_1}{2}+ R_1
			+ \eta R_4. 
	%	+ R_4\sup_{\partial M} |\nabla(u-\underline{u})|+ R_1.  \nonumber
	\end{aligned}
\end{equation}  
According to Lemma \ref{yuan's-quantitative-lemma},
the eigenvalues  %$\lambda({B}'(R_c-\eta R_4))$ 
of ${B}'(R_c-\eta R_4)$
%(possibly with an order)    (possibly with a proper permutation)
%(possibly with an appropriate order)
 behave like
\begin{equation}
	\label{lemma12-yuan-2}
	\begin{aligned}
	  \lambda_{\alpha}[{B}'(R_c-\eta R_4)]\geqslant  (\underline{\lambda}'_{\alpha}-{\varepsilon_1}/{2}), \mbox{  } 1\leqslant \alpha\leqslant n-1, 	%\nonumber
	\end{aligned}
	\end{equation}
\begin{equation}
	\label{lemma12-yuan-3}
	\begin{aligned}
		\lambda_{n}[{B}'(R_c-\eta R_4)]\geqslant R_c'-\eta R_4.
		%-(n-1) \varepsilon_1/4. 
		%\nonumber
	\end{aligned}
\end{equation}
In particular, $\lambda[{B}'(R_c'-\eta R_4)]\in \Gamma$ and so  $\lambda[A(R_c')]\in \Gamma$.  Combining with \eqref{key-03-yuan6}, we 
conclude that $$f(\lambda[{B}'(R_c'-\eta R_4)]) \geqslant f(\underline{\lambda})\geqslant\psi.$$ 
Together with \eqref{in1-goodcone}, \eqref{identity2} and $\eta\geqslant0$, Lemma \ref{lemma2-key} implies that
\begin{equation}
	f(\lambda[A(R_c')]) \geqslant f(\lambda[{B}'(R_c'-\eta R_4)]).
	\nonumber
\end{equation}
This gives that
$\mathfrak{g}_{n\bar n}\leqslant R_c'.$
	
	\end{proof}

 Combining %the boundary estimate for double normal derivatives in
 Propositions   \ref{proposition-quar-yuan1} and \ref{mix-general}, we 
 conclude that
 \begin{theorem}
 	\label{thm2-bdy}
 	In Theorem \ref{thm1-bdy}
 	the constant $C$ is independent of $(\delta_{\psi, f})^{-1}$ and 	the hypothesis~\eqref{unbounded-1} can be dropped, provided that 
 	%\eqref{bdry-assumption1} and \eqref{elliptic} hold.
 	  $\partial M$ satisfies~\eqref{bdry-assumption1}  and $f$ obeys \eqref{elliptic}.
 	%  where	\[	\delta_{\psi,f}=\inf_{M}\psi-\sup_{\partial \Gamma} f. 	\]
 \end{theorem}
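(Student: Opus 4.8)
The plan is simply to assemble ingredients already prepared. Once $\partial M$ satisfies \eqref{bdry-assumption1} and $f$ obeys \eqref{elliptic} (the latter now assumed directly, since we have dropped \eqref{unbounded-1}, which previously forced it via Lemma~\ref{lemma1-unbound-yield-elliptic}), the hypotheses of both Proposition~\ref{proposition-quar-yuan1} and Proposition~\ref{mix-general} are met, and neither of these invokes \eqref{unbounded-1} or $(\delta_{\psi,f})^{-1}$. So the theorem should follow by combining them with the elementary control of the tangential second derivatives.

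Concretely, first I would fix $z_0\in\partial M$ and use the local complex coordinates \eqref{goodcoordinate1} centered there. Differentiating $u-\varphi=0$ twice along $\partial M$ gives \eqref{yuan3-buchong5}, expressing the purely tangential entries $\mathfrak{g}_{\alpha\bar\beta}(z_0)$ through $\underline{\mathfrak{g}}_{\alpha\bar\beta}(z_0)$, the curvature $\sigma_{\alpha\bar\beta}(z_0)$ of $\partial M$, and the scalar $(u-\underline u)_{x^n}(0)$, which is bounded by the boundary gradient estimate in Lemma~\ref{lemma-c0-bc1}; hence $|\mathfrak{g}_{\alpha\bar\beta}(z_0)|\le C$ with $C$ independent of $(\delta_{\psi,f})^{-1}$. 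Next I would apply Proposition~\ref{mix-general} to bound the mixed derivatives, $|\mathfrak{g}_{\alpha\bar n}(z_0)|\le C(1+\sup_M|\nabla u|)$, with $C$ depending on neither $(\delta_{\psi,f})^{-1}$ nor $\sup_M|\nabla u|$. Feeding this into Proposition~\ref{proposition-quar-yuan1} --- whose proof replaces the use of \eqref{unbounded-1} by the Levi-form condition \eqref{bdry-assumption1} (so that $\lambda[B''(R_4)]\in\mathring{\Gamma}_{\mathcal{G}}^{f}$ and Lemma~\ref{lemma2-key} applies) while keeping all constants effective via the quantitative splitting of Lemma~\ref{yuan's-quantitative-lemma} --- I would obtain $\mathfrak{g}_{n\bar n}(z_0)\le C(1+\sum_\alpha|\mathfrak{g}_{\alpha\bar n}(z_0)|^2)\le C(1+\sup_M|\nabla u|^2)$, again with $C$ independent of $(\delta_{\psi,f})^{-1}$. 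Since $\chi$ is fixed and $u_{i\bar j}=\mathfrak{g}_{i\bar j}-\chi_{i\bar j}$, all entries of $(u_{i\bar j}(z_0))$ are then controlled by $C(1+\sup_M|\nabla u|_\omega^2)$, which yields $\sup_{\partial M}|\partial\overline{\partial}u|_\omega\le C(1+\sup_M|\nabla u|_\omega^2)$ with $C$ free of $(\delta_{\psi,f})^{-1}$ and with no appeal to \eqref{unbounded-1}, as claimed.

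The main obstacle is not in this assembly but upstream, in Proposition~\ref{proposition-quar-yuan1}: the genuinely delicate point is that the double normal derivative bound --- which in Proposition~\ref{proposition-quar-yuan2} and in Theorem~\ref{thm1-bdy} rests both on \eqref{unbounded-1} and, through Lemma~\ref{keylemma1-yuan3}, on $(\delta_{\psi,f})^{-1}$ --- can be re-derived under \eqref{bdry-assumption1} using only the monotonicity Lemma~\ref{lemma2-key} for the cone $\mathring{\Gamma}_{\mathcal{G}}^{f}$ and the quantitative Caffarelli--Nirenberg--Spruck lemma. Granting Propositions~\ref{proposition-quar-yuan1} and \ref{mix-general}, the present statement is then pure bookkeeping.
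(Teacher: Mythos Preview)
Your proposal is correct and follows essentially the same approach as the paper: the paper's proof of Theorem~\ref{thm2-bdy} is simply the one-line observation that it follows by combining Propositions~\ref{proposition-quar-yuan1} and~\ref{mix-general}, exactly as you describe. Your additional remarks on the tangential bound and on why neither proposition invokes \eqref{unbounded-1} or $(\delta_{\psi,f})^{-1}$ are accurate and just spell out what the paper leaves implicit.
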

 
When $\partial M$ is  \textit{holomorphically flat},
combining Propositions \ref{proposition-quar-yuan1} and \ref{mix-Leviflat},
 we obtain some more delicate result.  
 %This is a consequence of 
 %Propositions \ref{proposition-quar-yuan1} and \ref{mix-Leviflat}.
 %Together with Proposition \ref{proposition-quar-yuan1}, we obtain the following theorem.
 \begin{theorem}
 	\label{thm2-bdy-leviflat}
In addition to \eqref{elliptic}, \eqref{concave},  
 	\eqref{nondegenerate},	\eqref{subsolution1}, 
 	  $\varphi\in C^3(\partial M)$
 and $\psi\in C^1(\bar M)$, we assume
  that $\partial M$ is \textit{holomorphically flat}.
  %and the boundary data $\varphi\in C^3(\partial M)$. 
 	%is  a constant.
 	Then for any admissible solution $u\in C^3(M)\cap C^2(\bar M)$ to the Dirichlet problem \eqref{mainequ-Dirichlet}, we have
 	%the quantitative boundary estimate \eqref{bdy-sec-estimate-quar1}
 	\[\sup _{\partial M}  |\partial\overline{\partial} u|_\omega \leqslant  C\left(1+\sup _{M}|\nabla u|_\omega^{2}\right).\] 
 	Here $C$ is a uniform positive constant depending only on %$|\varphi|_{C^{3}(\bar M)}$,
 	$|\psi|_{C^{1}(\bar M)}$, $|\nabla u|_{C^0(\partial M)}$, 
 	$|\underline{u}|_{C^{2}(\bar M)}$, $|\varphi|_{C^{3}(\bar M)}$, 
 	$\partial M$
 	up to second derivatives 
 	and other known data. In addition, the constant is independent of $(\delta_{\psi,f})^{-1}$.
 	% (but neither on $\sup_{M}|\nabla u|$ nor on $(\delta_{\psi,f})^{-1}$).
 	%\begin{equation} \label{good-quard} \begin{aligned}
 	% \sup_{\partial M} \Delta u \leqslant C(1+\sup_M |\nabla u|^2).  \nonumber
 	% \end{aligned} \end{equation}
 	%Moreover, if the boundary data $\varphi$ is exactly a constant then the constant $C$ depends only on $\partial M$ up to second order derivatives, $|\underline{u}|_{C^{2}(\bar M)}$, $\sup_{M}|\nabla \psi|$ and other known data. 
 	
 \end{theorem}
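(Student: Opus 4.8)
The plan is to reduce the boundary bound on $|\partial\overline{\partial}u|_\omega$ to the two boundary propositions already established, after first noting that holomorphic flatness supplies the boundary hypothesis \eqref{bdry-assumption1} for free. Concretely, I would fix an arbitrary $z_0\in\partial M$ and take holomorphically flat coordinates \eqref{holomorphic-coordinate-flat} centered at $z_0$, normalized so that $g_{i\bar j}(z_0)=\delta_{ij}$ and $\partial M=\{\mathfrak{Re}(z^n)=0\}$ locally. Since $\partial M$ is then a piece of a complex hyperplane, its Levi form $L_{\partial M}$ vanishes, so $\kappa_1=\cdots=\kappa_{n-1}=0$ and $(-\kappa_1,\cdots,-\kappa_{n-1})=\vec{\bf 0}$; because $\Gamma_{\mathcal{G},\infty}^{f}$ is a nonempty cone with vertex at the origin (Lemma \ref{lemma1-key}), $\vec{\bf 0}\in\overline{\Gamma}_{\mathcal{G},\infty}^{f}$, i.e.\ \eqref{bdry-assumption1} holds. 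This is precisely what will allow me to invoke the sharp Propositions \ref{proposition-quar-yuan1} and \ref{mix-Leviflat} in place of their general versions \ref{proposition-quar-yuan2} and \ref{mix-general}.

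Next I would collect the three standard pieces of a boundary second-order estimate, all evaluated at $z_0$. (i) Tangential-tangential: differentiating $u=\varphi$ twice along directions tangent to $\{\mathfrak{Re}(z^n)=0\}$ gives $|u_{t^kt^l}(z_0)|\leqslant C$ for $k,l<2n$ as in \eqref{ineq2-bdy}, hence $|\mathfrak{g}_{\alpha\bar\beta}(z_0)|\leqslant C$ for $1\leqslant\alpha,\beta\leqslant n-1$; in these flat coordinates this costs only $|\varphi|_{C^2(\bar M)}$ together with second-order data of the metric and of $\partial M$. (ii) Tangential-normal: Proposition \ref{mix-Leviflat} yields $|\mathfrak{g}_{\alpha\bar n}(z_0)|\leqslant C(1+\sup_M|\nabla u|)$, with $C$ depending on $|\psi|_{C^1(\bar M)}$, $|\underline{u}|_{C^2(\bar M)}$, $|\varphi|_{C^3(\bar M)}$ and $\partial M$ up to second derivatives, but not on $(\delta_{\psi,f})^{-1}$. (iii) Normal-normal: since \eqref{bdry-assumption1} holds, Proposition \ref{proposition-quar-yuan1} applies and gives $\mathfrak{g}_{n\bar n}(z_0)\leqslant C(1+\sum_{\alpha<n}|\mathfrak{g}_{\alpha\bar n}(z_0)|^2)$, again with a constant independent of $(\delta_{\psi,f})^{-1}$ and depending on $\partial M$ only up to second derivatives.

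Finally I would assemble the pieces: substituting (ii) into (iii) gives $\mathfrak{g}_{n\bar n}(z_0)\leqslant C(1+\sup_M|\nabla u|_\omega^2)$, and together with (i) this bounds every entry of the Hermitian matrix $(\mathfrak{g}_{i\bar j}(z_0))$ by $C(1+\sup_M|\nabla u|_\omega^2)$. Since $\mathfrak{g}=\chi+\sqrt{-1}\partial\overline{\partial}u$ with $\chi$ of bounded $C^0$ norm and $g_{i\bar j}(z_0)=\delta_{ij}$, this translates into $|\partial\overline{\partial}u|_\omega(z_0)\leqslant C(1+\sup_M|\nabla u|_\omega^2)$, and taking the supremum over $z_0\in\partial M$ gives the asserted estimate, with $C$ of the stated dependence.

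The substantive work is entirely internal to Propositions \ref{mix-Leviflat} and \ref{proposition-quar-yuan1}; the argument above is bookkeeping. The point I expect to need care with is checking that the constants produced by those two propositions genuinely avoid both $(\delta_{\psi,f})^{-1}$ and third-order boundary data — this is exactly the improvement over Theorem \ref{thm1-bdy}, and it is also what lets the unbounded condition \eqref{unbounded-1} be dropped here — together with the trivial but essential observation that $\vec{\bf 0}\in\overline{\Gamma}_{\mathcal{G},\infty}^{f}$, which is immediate from the cone structure in Lemma \ref{lemma1-key}.
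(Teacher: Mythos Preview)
Your proposal is correct and matches the paper's approach exactly: the paper obtains Theorem \ref{thm2-bdy-leviflat} simply by combining Propositions \ref{proposition-quar-yuan1} and \ref{mix-Leviflat}. You have correctly made explicit the one step the paper leaves implicit, namely that a holomorphically flat boundary has vanishing Levi form, so that $(-\kappa_1,\dots,-\kappa_{n-1})=\vec{\bf 0}\in\overline{\Gamma}_{\mathcal{G},\infty}^{f}$ and hence \eqref{bdry-assumption1} is satisfied, which is what licenses the use of Proposition \ref{proposition-quar-yuan1}.
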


\subsection{Uniqueness of weak solution} \label{uniqueness-weak-solution}
%\noindent {\bf Uniqueness of weak solution}. %\vspace{1mm}
%We prove the weak solutions %what we obtain in this paper
%by using continuity method 
%obtained in this paper are weak $C^0$-solutions to the Dirichlet problem %correspondingly
%in the sense of Definition \ref{def-c0-weak}. 
%Moreover, the weak solution is unique.
Following Chen \cite{Chen}, we define %Definition 2 of \cite{Chen} the weak $C^0$ solution as follows: 
\begin{definition}
	\label{def-c0-weak}
	A continuous function $u\in C(\bar M)$ is   a weak $C^0$-solution to the degenerate equation \eqref{mainequ-Dirichlet}  with prescribed boundary data $\varphi$, if
	%the following statement is true: 
	for any sufficiently small positive constant $\epsilon$ there is a $C^2$-{admissible} function $\widetilde{u}$ such that $$\left|u-\widetilde{u}\right|<C(\epsilon),
	%\rho_{\epsilon},
	%\epsilon,
	$$ 
	where $\widetilde{u}$ solves
	\begin{equation}
		\begin{aligned}
			\,&	F(\mathfrak{g}[\widetilde{u}])= 
			\psi+\rho_{\epsilon} 
			\mbox{ in } M, \,& \widetilde{u}=\varphi \mbox{ on } \partial M. \nonumber
		\end{aligned}
	\end{equation}
	Here $\rho_{\epsilon}$ is a function satisfying $0<\rho_{\epsilon}<\epsilon$, and $C(\epsilon)\rightarrow 0$ as $\epsilon\rightarrow 0$.
\end{definition}

We first obtain the stability of weak $C^0$-solutions.   The proof is almost parallel to that of   \cite[Theorem 4]{Chen}.
%It is based on comparison principle. 
We  omit the proof here.
\begin{theorem}
	\label{weakc0comparison}
	Suppose $u^1$, $u^2$ are two  weak $C^0$-solutions to the degenerate equation  
 \eqref{mainequ-Dirichlet}
	%\eqref{equ1-main} 
	with 
	boundary data $\varphi^1$, $\varphi^2$. 
	Then $$\sup_{M}|u^1-u^2|\leqslant \sup_{\partial M}|\varphi^1-\varphi^2|.$$
\end{theorem}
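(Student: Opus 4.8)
The plan is to reduce Theorem~\ref{weakc0comparison} to a strict comparison principle for smooth \emph{admissible} functions and then to exploit the $\epsilon$-approximation built into Definition~\ref{def-c0-weak} together with the strict, admissible subsolutions that underlie the solvability of the degenerate problem. First I would record the following elementary comparison lemma: if $v,w\in C^{2}(\bar M)$ are admissible and $F(\mathfrak{g}[v])>F(\mathfrak{g}[w])$ pointwise in $M$, then $\max_{\bar M}(v-w)=\max_{\partial M}(v-w)$. Indeed, at an interior maximum point $p$ of $v-w$ one has $\sqrt{-1}\partial\overline{\partial}(v-w)(p)\leqslant 0$, hence $\mathfrak{g}[v](p)\leqslant\mathfrak{g}[w](p)$ as Hermitian forms; since $v$ is admissible, $\lambda[\omega^{-1}\mathfrak{g}[v](p)]\in\Gamma$, and by the monotonicity of symmetric functions of the eigenvalues under \eqref{elliptic} (Löwner monotonicity) this forces $F(\mathfrak{g}[v])(p)\leqslant F(\mathfrak{g}[w])(p)$, contradicting the strict inequality.

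Next, let $\varphi^{1},\varphi^{2}$ be the two boundary data and let $\underline u^{1},\underline u^{2}$ be the strict admissible subsolutions attached to $u^{1},u^{2}$, so that $F(\mathfrak{g}[\underline u^{i}])\geqslant\psi+\delta_{i}$ on $\bar M$ with $\delta_{i}>0$ and $\underline u^{i}=\varphi^{i}$ on $\partial M$. Fix a small $\epsilon>0$. Definition~\ref{def-c0-weak} provides $C^{2}$-admissible $\widetilde u^{i}_{\epsilon}$ with $|u^{i}-\widetilde u^{i}_{\epsilon}|<C(\epsilon)$, $\widetilde u^{i}_{\epsilon}=\varphi^{i}$ on $\partial M$, and $F(\mathfrak{g}[\widetilde u^{i}_{\epsilon}])=\psi+\rho^{i}_{\epsilon}$ with $0<\rho^{i}_{\epsilon}<\epsilon$. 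Set $s=2\epsilon/\delta_{1}$ (so $s<1$ once $\epsilon$ is small) and put $v_{s}=(1-s)\widetilde u^{1}_{\epsilon}+s\,\underline u^{1}$. Then $\mathfrak{g}[v_{s}]=(1-s)\mathfrak{g}[\widetilde u^{1}_{\epsilon}]+s\,\mathfrak{g}[\underline u^{1}]$; convexity of $\Gamma$ makes $v_{s}$ admissible, and the concavity of $F$ afforded by \eqref{concave} gives $F(\mathfrak{g}[v_{s}])\geqslant(1-s)\rho^{1}_{\epsilon}+s\delta_{1}+\psi\geqslant\psi+2\epsilon>\psi+\rho^{2}_{\epsilon}=F(\mathfrak{g}[\widetilde u^{2}_{\epsilon}])$ throughout $M$. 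The comparison lemma then yields $v_{s}-\widetilde u^{2}_{\epsilon}\leqslant\max_{\partial M}(v_{s}-\widetilde u^{2}_{\epsilon})=\max_{\partial M}(\varphi^{1}-\varphi^{2})$ in $M$.

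To finish, write $v_{s}=\widetilde u^{1}_{\epsilon}-s(\widetilde u^{1}_{\epsilon}-\underline u^{1})$ and note that $\|\widetilde u^{1}_{\epsilon}-\underline u^{1}\|_{C^{0}(\bar M)}\leqslant C$ uniformly in $\epsilon$ (since $\|\widetilde u^{1}_{\epsilon}-u^{1}\|_{C^{0}}<C(\epsilon)$ is bounded and $u^{1},\underline u^{1}$ are fixed). Hence $\widetilde u^{1}_{\epsilon}\leqslant\widetilde u^{2}_{\epsilon}+\max_{\partial M}(\varphi^{1}-\varphi^{2})+Cs$, and therefore $u^{1}-u^{2}\leqslant\max_{\partial M}(\varphi^{1}-\varphi^{2})+2C(\epsilon)+Cs$. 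Letting $\epsilon\to 0$ gives $u^{1}-u^{2}\leqslant\max_{\partial M}(\varphi^{1}-\varphi^{2})\leqslant\sup_{\partial M}|\varphi^{1}-\varphi^{2}|$. Interchanging the roles of $1$ and $2$ (now perturbing with $\underline u^{2}$) gives the reverse bound, and the theorem follows.

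The one genuinely delicate point is the interaction of the two uncontrolled approximation errors $\rho^{1}_{\epsilon},\rho^{2}_{\epsilon}$: nothing is known about their relative size, so a direct comparison of $\widetilde u^{1}_{\epsilon}$ and $\widetilde u^{2}_{\epsilon}$ fails. The device above is to trade a vanishing amount $s=O(\epsilon)$ of $\widetilde u^{1}_{\epsilon}$ against the strict subsolution $\underline u^{1}$, which preserves admissibility (via convexity of $\Gamma$) and the boundary values, and—using concavity of $F$—opens a definite gap $s\delta_{1}=2\epsilon>\rho^{2}_{\epsilon}$ in the equation, reinstating a strict comparison; the perturbation is then removed in the limit. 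The remaining verifications (the uniform $C^{0}$-bound on $\widetilde u^{i}_{\epsilon}-\underline u^{i}$, and that the boundary values are unaffected by the perturbation) are routine.
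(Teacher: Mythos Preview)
Your argument is correct, and since the paper omits its own proof (deferring to \cite[Theorem 4]{Chen}), your write-up in fact supplies what the paper leaves out. The comparison lemma, the use of concavity on the convex combination $v_s=(1-s)\widetilde u^{1}_{\epsilon}+s\,\underline u^{1}$, and the passage to the limit are all sound.

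One remark on the route you took versus the minimal one. The convex-combination device with the strict subsolution $\underline u^{1}$ imports an assumption (existence of strict subsolutions with the prescribed boundary data) that, while natural in the context of Section~\ref{sec1-degenerate} and \eqref{existenceofsubsolution-2}, is not literally part of Definition~\ref{def-c0-weak} or the statement of the theorem. A lighter variant, closer to Chen's original, avoids this entirely: fix $\epsilon$ and the approximation $\widetilde u^{1}_{\epsilon}$; since $\rho^{1}_{\epsilon}=F(\mathfrak{g}[\widetilde u^{1}_{\epsilon}])-\psi$ is continuous and strictly positive on the compact $\bar M$, it has a positive infimum $c=c(\epsilon)>0$. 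Now choose the second approximation parameter $\epsilon'<c$, so that $\rho^{2}_{\epsilon'}<\epsilon'<c\leqslant\rho^{1}_{\epsilon}$ pointwise, whence $F(\mathfrak{g}[\widetilde u^{1}_{\epsilon}])>F(\mathfrak{g}[\widetilde u^{2}_{\epsilon'}])$ directly and your comparison lemma applies with no subsolution in sight. Since $c(\epsilon)<\epsilon$, sending $\epsilon\to 0$ forces $\epsilon'\to 0$ as well, and the conclusion follows as in your last paragraph. Your route has the merit of making the mechanism (concavity opening a gap) very explicit; the sequential choice of parameters is the minimal version and matches the stated hypotheses exactly.
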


%\vspace{1mm}
As a corollary, we derive the uniqueness of weak $C^0$-solution.
\begin{corollary}
	\label{unique-weak-solution}
	The weak $C^0$-solution to the Dirichlet problem \eqref{mainequ-Dirichlet} for degenerate equation 
	%obtained by using continuity method
	% in Theorems \ref{thm3-diri-estimate-de} and \ref{thm1-existence-generalboundary-degerate}
	is unique, provided the boundary data is fixed.
\end{corollary}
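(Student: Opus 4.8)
The plan is to deduce the corollary directly from the stability estimate in Theorem \ref{weakc0comparison}. Suppose $u^1$ and $u^2$ are both weak $C^0$-solutions to the degenerate Dirichlet problem \eqref{mainequ-Dirichlet} with the \emph{same} prescribed boundary data, say $\varphi^1=\varphi^2=\varphi$. Then Theorem \ref{weakc0comparison} applies with these choices, and it yields
\begin{equation}
	\sup_{M}|u^1-u^2|\leqslant \sup_{\partial M}|\varphi^1-\varphi^2|=0. \nonumber
\end{equation}
Hence $u^1\equiv u^2$ on $\bar M$, which is exactly the asserted uniqueness.

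The only point requiring a word of care is that Theorem \ref{weakc0comparison} is genuinely symmetric in its two arguments, so no separate ``reverse'' comparison is needed; the single application above already produces the two-sided bound $|u^1-u^2|\leqslant 0$. Consequently there is essentially no obstacle here: all of the analytic content—namely approximating each weak $C^0$-solution by $C^2$-admissible solutions of the nondegenerate equations $F(\mathfrak{g}[\widetilde u])=\psi+\rho_\epsilon$ and passing to the limit via the comparison principle for the approximate problems (\cite{Chen}, Theorem 4)—has been absorbed into Theorem \ref{weakc0comparison}. The corollary is thus an immediate formal consequence.
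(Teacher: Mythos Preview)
Your proposal is correct and matches the paper's approach exactly: the corollary is stated immediately after Theorem~\ref{weakc0comparison} as a direct consequence, obtained by setting $\varphi^1=\varphi^2$ so that the right-hand side vanishes.
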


\medskip
\section{Dirichlet problem on products}
\label{sec6}

The Dirichlet problem for degenerate  
%fully nonlinear elliptic
	equations have important applications in complex geometry and analysis.	There are many challenging open problems. 
	 
%	In this section we formulate some result for a special	Dirichlet problem on some products.
In this section we study a special problem on certain product of the form  
\begin{equation}
	\label{const1-product}
	\begin{aligned} 
		%(M,J,\omega)=(X\times S,J,\omega),
		(M,\omega)=(X\times S,\omega),
	\end{aligned}
\end{equation} 
%Without specific clarification, we assume that 
where and hereafter, %$(X, J_X, \omega_X)$
$(X,  \omega_X)$
 is a closed 
Hermitian manifold of complex dimension $n-1$, and
%$(S, J_S, \omega_S)$
$(S,   \omega_S)$
 is a compact Riemann surface with sufficiently smooth boundary $\partial S$, and  $\omega$ is a Hermitian metric being compatible with  the induced complex structure  $J$.  (We remark that $\omega$ is not necessarily the product metric). 
 %Here  $J$ is the induced complex structure and  $\omega$ is a Hermitian metric compatible with $J$  (but not necessarily the product metric). 
 %$\omega=\pi_1^*\omega_X+\pi_2^*\omega_S$). 
%let $\pi_1: X\times S\rightarrow X$ and $\pi_2: X\times S\rightarrow S$ denote the natural projections.

Let $\vec{\bf \nu}$ be  the unit inner normal vector along the boundary. 

\subsection{Construction of subsolutions}
\label{sec7}

%We emphasize in Theorem \ref{thm1-1} the importance of allowing manifolds with boundary of arbitrary geometry, assuming the existence of a subsolution. 
%achieving the boundary data.
%It would be worthwhile to note that 
%As is well known, %the subsolution assumption is  necessary for the solvability of Dirichlet problem.
%In addition, %it is a remarkable fact that 
The Dirichlet problem is 
%not always solvable 
insolvable 
without subsolution assumption. 
Therefore, one natural %problem 
question is to construct subsolutions.
%seek for the desired subsolutions.
Unfortunately, 
%except on %admissible
beyond on certain domains 
in Euclidean spaces 
\cite{CKNS2,CNS3} (see also \cite{LiSY2004}), few progress has been made on curved manifolds. 
%Below, we will construct subsolutions on some products in relation to various geometric problems. 
%In this subsection we confirm the subsolution assumption  %achieve the goal 
%construct the subsolutions 
%when the manifold is a product.
%on the product $X\times S$.

In this section,  on the product $M=X\times S$ we construct strict admissible subsolutions with 
$\frac{\partial\underline{u}}{\partial \vec{\bf \nu}} \big|_{\partial M}< 0$,
 provided that
\begin{equation}
	\label{cone-condition0} \begin{aligned}	\lambda[\omega^{-1}(\mathfrak{g}[\varphi]+ t\pi_2^*\omega_S)]\in \Gamma \mbox{ in } \bar M,  %\mbox{ for some } t>0,
	\end{aligned}
\end{equation}
 for some $t>0$, and
\begin{equation}
	 \label{cone-condition1}
	\begin{aligned}
		\lim_{t\rightarrow +\infty} f (\lambda[\omega^{-1}(\mathfrak{g}[\varphi]+ t\pi_2^*\omega_S)] )>\psi 
		%\mbox{ } \lambda[\omega^{-1}(\mathfrak{g}[\varphi]+ t\pi_2^*\omega_S)]\in \Gamma
	 \mbox{ in } \bar M.
	\end{aligned}
\end{equation}
Henceforth $\pi_1: X\times S\rightarrow X$ and $\pi_2: X\times S\rightarrow S$ stand for the natural projections,
%$\nabla_{\vec{\bf \nu}}{}$
% by considering the special case that $\mathrm{rank}(\sqrt{-1}\partial\overline{\partial}h)=1$.
%This leads naturally to an interesting case that the background space is 
%on the product. %\renewcommand{\thefootnote}{\fnsymbol{footnote}}\footnote{The construction of subsolutions also applies to certain fibered manifolds.}
%(Assume that the boundary data $\varphi$ can be extended to a $C^{2}$ function on $\bar{M}$. For simplicity, we still denote it by $\varphi$).
(Here $\varphi$ is an appropriate extension of boundary data to $\bar{M}$).

To do this, we begin with the solution $h$ to 
\begin{equation}  
	\label{possion-def}
	\begin{aligned}
		\Delta_S h =1 \mbox{ in } S, \,\,h=0 \mbox{ on } \partial S,
	\end{aligned}
\end{equation} 
where $\Delta_S$ is the Laplacian operator of $(S,J_S,\omega_S)$.
The existence and regularity can be found in  standard monographs; 
%and if $\partial S\in C^{2,\beta}$ $0<\beta<1$, then $h\in C^\infty(S)\cap C^{2,\beta}(\bar S)$, 
  see e.g. \cite{GT1983}. 
 %see e.g. \cite[Theorems 6.14 and 6.17]{GT1983}. 
More precisely, when $\partial S\in C^\infty$,   $h\in C^{\infty}(\bar S)$; 
while  $h\in C^\infty(S)\cap C^{2,\beta}(\bar S)$ if $\partial S\in C^{2,\beta}$ $(0<\beta<1)$. Moreover,    $\left.\frac{\partial h}{\partial\vec{\bf \nu}}\right|_{\partial S}<0$.
% where  $\vec{\bf \nu}$ denotes the unit inner normal vector along the boundary.
%
%We use such $h$ to construct subsolutions.
The subsolution is precisely given by 
% Let
\begin{equation} 
	\label{construct-subsolution}  \begin{aligned}
		\underline{u}=\varphi+t\pi_2^* h
\end{aligned}\end{equation}
for  $t\gg1$
($\pi_2^* h=h\circ\pi_2$, still denoted by $h$  
for simplicity),  
then
\begin{equation}  
	\begin{aligned}
		\mathfrak{g}[\underline{u}]=\mathfrak{g}[\varphi]+t\pi_2^*\omega_S, \,\,
		\frac{\partial \underline{u}}{\partial \vec{\bf \nu}}\Big|_{\partial M}<0 \mbox{ for } t\gg1. \nonumber
	\end{aligned}
\end{equation}
% Such a condition  $\frac{\partial \underline{u}}{\partial \nu} |_{\partial M}< 0$ is important for Theorems  \ref{dege-thm-c2alpha}, \ref{thm4-diri-de-c2alpha} and \ref{mainthm-09}. 
Therefore,   $\underline{u}$ is the subsolution if  \eqref{cone-condition0} and \eqref{cone-condition1} hold.
Furthermore, 
%since the boundary data $\varphi$ is admissible,   
the condition \eqref{cone-condition1} always holds when $f$ satisfies
\eqref{unbounded-1} and  $\varphi$ is admissible.  

Significantly, if $\omega=\pi_1^*\omega_X+\pi_2^*\omega_S$ and $\chi$ splits by $\chi=\pi_1^*\chi_1+ \pi_2^*\chi_2,$   then 
according to  
Lemma \ref{yuan's-quantitative-lemma}, 
conditions \eqref{cone-condition0} and  \eqref{cone-condition1} reduce  to 
\begin{equation}
	\label{cone-condition1-1}
	\begin{aligned}
		\lim_{t\rightarrow +\infty} f(\lambda (\omega_X^{-1}\chi_1), t)>\psi 
		\mbox{ and  } \lambda (\omega_X^{-1}\chi_1)\in \Gamma_\infty  \mbox{ in } \bar M,   \nonumber
	\end{aligned}
\end{equation}
where    
$\chi_1$ is a real $(1,1)$-form on $X$, 
$\chi_2$ is a real $(1,1)$-form on $S$.
%and  $\lambda'(\chi_1)$ are the eigenvalues of $\chi_1$ with respect to $\omega_X$.
% as in Trudinger \cite{Trudinger95} $\Gamma_\infty=\{(\lambda_1,\cdots,\lambda_{n-1}) \in \mathbb{R}^{n-1}: (\lambda_1,\cdots,\lambda_{n-1},c)\in \Gamma \mbox{ for some } c>0\}$ denotes the  projection of $\Gamma$ onto $\mathbb{R}^{n-1}$. 
This indicates that, %in the case
for $\chi=\pi_1^*\chi_1+ \pi_2^*\chi_2$ the solvability of Dirichlet problem is  determined by 
$\chi_1$. %rather than by $\chi_2$.  

\begin{remark}
	
	Let % $(Y,J_Y,\omega_Y)$ 
 $(Y, \omega_Y)$ 	be a closed Hermitian manifold of complex dimension $n-m$, let $\Omega$ be a smooth bounded strictly pseudoconvex domain in $\mathbb{C}^m$, $2\leqslant  m\leqslant  n-1$. Under the assumption that \begin{equation}\label{unbounded-buchong-m}\begin{aligned}
			\lim _{t \rightarrow+\infty} f(\lambda_{1}, \cdots, \lambda_{n-m},\lambda_{n-m+1}+t,  \cdots \lambda_{n}+t)=\sup_{\Gamma} f, \mbox{ } \forall    \left(\lambda_{1}, \cdots, \lambda_{n}\right) \in \Gamma,  \nonumber
		\end{aligned}
	\end{equation}
we can construct subsolutions on more general product 
$(M,\omega)=(Y\times \Omega,\omega).$
%	(not necessarily to \omega=\pi_1^*\omega_Y+\pi_2^*\omega_\Omega$) 

	\end{remark}

\subsection{The Dirichlet problem with less regular boundary assumption} 
%\subsection{Proof of Theorem \ref{mainthm-09}}
\label{sec8}

%As above, we can construct subsolution on the product $X\times S$. On the other hand,   
%when $\partial M$ is {holomorphically flat}, 
%and the boundary value is  a constant, 
%the constant asserted in 
  Theorem \ref{thm2-bdy-leviflat} implies that in the product case, 
the quantitative boundary estimate \eqref{bdy-sec-estimate-quar1}  
depends only on $\partial M$ up to 
\textit{second derivatives} 
and other known data. 
With the bound  of
complex Hessian of $u$ 
at hand,  the equation becomes uniformly elliptic. And then as in \cite{Guan2010Li} we can
  bound the real Hessian %(depending on $|\psi|_{C^2(\bar M)}$) 
by   maximum principle.   (Also, 
%with the estimate of complex Hessian, %using a trick extending that from \cite{WangYu12},
as shown in \cite{TWWYEvansKrylov2015} using a trick from \cite{WangYu12}, 
one can  
%Tosatti-Wang-Weinkove-Yang \cite{TWWYEvansKrylov2015} applied a trick extending that from \cite{WangYu12} to 
convert  \eqref{mainequ-Dirichlet} to a real uniformly elliptic concave equation).
Thus a result due to Silvestre-Sirakov \cite{Silvestre2014Sirakov} allows one to derive
$C^{2,\alpha}$ boundary regularity with only assuming $C^{2,\beta}$ boundary. 
Consequently, we can study the Dirichlet problem on the products
with less regular boundary\renewcommand{\thefootnote}{\fnsymbol{footnote}}\footnote{We emphasize that the geometric quantities of $(M,\omega)$ (curvature $R_{i\bar j k\bar l}$ and the torsion $T^k_{ij}$) keep bounded as approximating to $\partial M$, and all derivatives of ${\chi}_{i\bar j}$ has continues extensions to $\bar M$, whenever $M$ has less regularity boundary. 
	Typical examples are as follows: $M\subset M'$, 	$\mathrm{dim}_{\mathbb{C}}M'=n$, 	$\omega=\omega_{M'}|_{M}$ and the given data ${\chi}$ can be smoothly defined on $M'$.}
and obtain some delicate result in analogy with a theorem of Savin \cite{Savin2013} on real non-degenerate Monge-Amp\`ere equation with homogeneous boundary value condition on bounded $C^{2,\alpha}$ strictly convex domains. 

\begin{theorem}
	\label{mainthm-09}
	%Let  $(M,\omega)=(X\times S, \pi_1^* \omega_X+\pi_2^*\omega_S)$, 
	Let  $(M,  \omega)=(X\times S,  \omega)$ be as in \eqref{const1-product} with
	$\partial S\in C^{2,\beta}$ for some $0<\beta<1$. 
 Let $\varphi\equiv0$.
	Suppose, in addition to    \eqref{elliptic} and \eqref{concave}, that 
	\begin{equation}
		\label{cone-condition1-0}
		\begin{aligned}
			f(\lambda[\omega^{-1}(\chi+ t\pi_2^*\omega_S)])>\psi,
			\mbox{ } \lambda[\omega^{-1}(\chi+ t\pi_2^*\omega_S)]\in \Gamma
			\mbox{ in } \bar M, \mbox{ } t\gg1. \nonumber
		\end{aligned}
	\end{equation}
	Then we have two conclusions:
	\begin{itemize}
		\item  The 
		Dirichlet problem \eqref{mainequ-Dirichlet} has a 
		unique $C^{2,\alpha}$ admissible solution
		%with $u\big|_{\partial M}=0$
		for some 
		$0<\alpha\leqslant\beta$, provided that 
		$\psi\in C^2(\bar M)$, $\inf_{M} \psi> \sup_{\partial \Gamma}f$.  %and $f$ satisfies \eqref{addistruc}.
		\item Suppose in addition that
		$f\in C^\infty(\Gamma)\cap C(\bar\Gamma)$,   $\psi\in C^{1,1}(\bar M)$  and   
		$\inf_{M} \psi=sup_{\partial \Gamma}f$. Then the Dirichlet problem  \eqref{mainequ-Dirichlet}   has a weak solution   with %$u\big|_{\partial M}=0$,
		$u\in C^{1,\alpha}(\bar M)$, $\forall 0<\alpha<1$  $\lambda(\omega^{-1}\mathfrak{g}[u])\in \bar \Gamma$ and $\Delta u \in L^{\infty}(\bar M)$.
	\end{itemize}
\end{theorem}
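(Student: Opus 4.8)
The strategy is the standard continuity-method-plus-\textit{a priori}-estimates scheme, with the product structure and the subsolution constructed in \eqref{construct-subsolution} doing all the work. First I would treat the nondegenerate case $\inf_M\psi>\sup_{\partial\Gamma}f$. The given data $\varphi\equiv 0$ together with the hypothesis $f(\lambda[\omega^{-1}(\chi+t\pi_2^*\omega_S)])>\psi$ and $\lambda[\omega^{-1}(\chi+t\pi_2^*\omega_S)]\in\Gamma$ in $\bar M$ for $t\gg 1$ means precisely that $\underline{u}=t\pi_2^*h$, with $h$ the solution of \eqref{possion-def}, is a strict admissible subsolution with $\partial\underline{u}/\partial\vec{\bf\nu}<0$ on $\partial M$; this was established in Subsection \ref{sec7}. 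Note that by Theorem \ref{thm2-bdy-leviflat} (applicable because $\partial M=X\times\partial S$ is holomorphically flat: locally $\partial S=\{\mathfrak{Re}(z^n)=0\}$ in a suitable coordinate on the Riemann surface factor) the quantitative boundary estimate \eqref{bdy-sec-estimate-quar1} holds depending only on $\partial M$ up to \emph{second} derivatives and data under control.

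\textbf{A priori estimates and solvability in the nondegenerate case.} With $\psi\in C^2(\bar M)$ and $\inf_M\psi>\sup_{\partial\Gamma}f$, I would assemble: (i) the $C^0$-estimate from the barrier argument of Subsection \ref{sec1-Bdy-setup} (Lemma \ref{lemma-c0-bc1}), using $\underline{u}\le u\le\check u$ where $\check u$ solves the linearized Poisson problem \eqref{supersolution-1}; (ii) the boundary gradient estimate, also from Lemma \ref{lemma-c0-bc1}; (iii) the global gradient estimate — here, in the \emph{bounded-boundary} Dirichlet setting, one combines the second-order estimate \eqref{estimate-quantitative} (whose proof via Lemma \ref{lemma3-key} goes through verbatim on $\bar M$ because the casewise argument is local and uses only the $\mathcal C$-subsolution/admissible-subsolution structure) with the boundary gradient bound; strictly, the interior gradient estimate in the general non-concave-in-$\psi$ regime would come from a blow-up argument as in Section \ref{sec-gradient-estimate}, and since any admissible function is a $\mathcal C$-subsolution in the unbounded case \eqref{unbounded-1}, the hypotheses are met once we observe \eqref{cone-condition1-0} forces the relevant cone condition; (iv) the second-order boundary estimate \eqref{bdy-sec-estimate-quar1} from Theorem \ref{thm2-bdy-leviflat}, combined with Theorem \ref{thm2-second-order} this yields $\sup_M|\partial\overline\partial u|_\omega\le C$; (v) upgrading to real Hessian bounds by the maximum-principle trick of Guan \cite{Guan2010Li} (or the Wang--Yu/TWWY device converting to a real uniformly elliptic concave equation), whence the equation is uniformly elliptic and concave; (vi) by Silvestre--Sirakov \cite{Silvestre2014Sirakov}, $C^{2,\alpha}$ boundary regularity for $0<\alpha\le\beta$ holds assuming only $C^{2,\beta}$ boundary, and Evans--Krylov \cite{Evans82,Krylov83} plus Schauder give interior $C^{2,\alpha}$. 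The continuity method along $F(\mathfrak g[u_s])=s\psi+(1-s)F(\mathfrak g[\underline u])$ then opens and closes the interval $s\in[0,1]$; openness is the implicit function theorem (invertibility of the linearized operator $\mathcal L$, which has positive definite symbol by \eqref{elliptic} and strict maximum principle on $\bar M$), closedness is the uniform estimates just listed. Uniqueness follows from the comparison/maximum principle for $\mathcal L$.

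\textbf{The degenerate case.} When $\inf_M\psi=\sup_{\partial\Gamma}f$, $f\in C^\infty(\Gamma)\cap C(\bar\Gamma)$ and $\psi\in C^{1,1}(\bar M)$, the plan is the approximation scheme of Section \ref{sec1-degenerate}: replace $\psi$ by $\psi+\rho_\epsilon$ with $0<\rho_\epsilon<\epsilon$ (chosen so $\inf_M(\psi+\rho_\epsilon)>\sup_{\partial\Gamma}f$ and the $C^{1,1}$ bounds persist), keep $\underline u=t\pi_2^*h$ which is a \emph{strict} admissible subsolution for all these approximating equations simultaneously, and invoke the nondegenerate conclusion to get solutions $u_\epsilon\in C^{2,\alpha}(\bar M)$. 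The decisive point is that the estimates $\sup_M|u_\epsilon|\le C$, $\sup_M|\nabla u_\epsilon|\le C$ and $\sup_M|\Delta u_\epsilon|\le C$ are \emph{uniform in $\epsilon$}: the $C^0$ and $C^1$ bounds are by construction, and the crucial $\Delta u_\epsilon\in L^\infty$ bound uniform in $\epsilon$ comes from Theorem \ref{thm2-bdy-leviflat}/Theorem \ref{thm2-bdy}, whose constant is explicitly \emph{independent of $(\delta_{\psi,f})^{-1}$} thanks to the holomorphically-flat boundary — this is exactly the delicate feature that Section \ref{sec1-Bdy-estimate-revisited} was built to provide. Then $u_\epsilon$ converges (subsequentially) in $C^{1,\alpha'}(\bar M)$ for every $\alpha'<1$ and weakly-$*$ in $W^{2,\infty}$ to a limit $u\in C^{1,\alpha}(\bar M)$ with $\Delta u\in L^\infty(\bar M)$ and $\lambda(\omega^{-1}\mathfrak g[u])\in\bar\Gamma$ (closedness of $\bar\Gamma$ under the $W^{2,\infty}$-weak limit, and $f\in C(\bar\Gamma)$ to pass to the limit in the equation in the viscosity/a.e. sense); uniqueness of this weak solution is Corollary \ref{unique-weak-solution}.

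\textbf{Main obstacle.} Everything except one ingredient is assembled from results already in the paper. The genuine difficulty — and the place where the product structure is essential — is obtaining the second-order boundary estimate with a constant \emph{independent of $(\delta_{\psi,f})^{-1}$}, i.e.\ reducing to Theorems \ref{thm2-bdy} and \ref{thm2-bdy-leviflat}. For that one must verify that $\partial M=X\times\partial S$ is holomorphically flat (immediate, since a Riemann surface boundary is locally $\{\mathfrak{Re}(z)=0\}$) and, more substantively, that the boundary hypothesis \eqref{bdry-assumption1} on the Levi form is automatically satisfied here: the Levi form of $X\times\partial S$ in the directions tangent to $X$ vanishes, so its eigenvalues are $(\kappa_1,\dots,\kappa_{n-1})=(0,\dots,0,*)$ and one needs $(0,\dots,0)\in\overline{\Gamma}^f_{\mathcal G,\infty}$, which holds because $\Gamma_n\subseteq\mathring\Gamma^f_{\mathcal G}$ by Lemma \ref{lemma1-key}. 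Once \eqref{bdry-assumption1} is in hand, Theorem \ref{thm2-bdy-leviflat} applies and the $(\delta_{\psi,f})^{-1}$-independent bound follows; the rest of the degenerate argument is then a routine compactness passage.
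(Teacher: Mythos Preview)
Your overall strategy is sound and you correctly identify the key ingredients: the subsolution $\underline{u}=t\pi_2^*h$ from \eqref{construct-subsolution}, the holomorphic flatness of $\partial M=X\times\partial S$, and the crucial $(\delta_{\psi,f})^{-1}$-independence in Theorem~\ref{thm2-bdy-leviflat} for the degenerate limit. However, your route differs from the paper's in one structural respect, and this difference is not cosmetic.

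You propose to run the continuity method \emph{directly on} $M$ with $\partial S\in C^{2,\beta}$, assembling the estimate chain and invoking Silvestre--Sirakov at the end. The paper instead \emph{approximates the domain}: it takes the level sets $\{h=-\alpha_k\}$ of the Poisson solution $h$ to produce smooth Riemann surfaces $S_k\nearrow S$ with $\partial S_k\to\partial S$ in $C^{2,\beta}$, solves the Dirichlet problem on each $M_k=X\times S_k$ with constant boundary value $-t\alpha_k$ by invoking the already-established Theorem~\ref{thm1-2} (which requires smooth boundary), and then passes to the limit. The point of this detour is twofold. First, it sidesteps having to set up the continuity method in $C^{2,\alpha}(\bar M)$ with merely $C^{2,\beta}$ boundary---not impossible, but not established in the paper. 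Second, and more substantively, on the approximating domains $M_k$ one must show the $C^0$ and boundary-$C^1$ bounds are \emph{uniform in $k$}; the paper does this by an explicit barrier, comparing $u^{(k)}$ against $-\underline{u}-2t\alpha_k$ (which is a supersolution of the linear problem \eqref{supersolution-k} for $t\gg1$). Your direct approach avoids this uniformity issue entirely, which is a genuine simplification, but at the cost of needing the continuity method on the non-smooth domain.

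One small correction in your ``Main obstacle'' paragraph: for $\partial M=X\times\partial S$ the complex tangent space $T_{\partial M}\cap JT_{\partial M}$ is exactly $T_X$ (the real tangent line to $\partial S$ has trivial $J$-invariant part), so the Levi-form eigenvalues are $(\kappa_1,\dots,\kappa_{n-1})=(0,\dots,0)$, not $(0,\dots,0,*)$. Your verification that $(0,\dots,0)\in\overline{\Gamma}^f_{\mathcal G,\infty}$ via $\Gamma_n\subset\mathring\Gamma^f_{\mathcal G}$ then goes through as written; in fact this is automatic for any holomorphically flat boundary, which is why Theorem~\ref{thm2-bdy-leviflat} does not list \eqref{bdry-assumption1} among its hypotheses.
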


\begin{proof}
	% [Proof of Theorem \ref{mainthm-09}]
	It  suffices to consider the nondegenerate case: There is some  $\delta_0>0$ so that
	\begin{equation}
		\begin{aligned}
			\psi\geqslant\sup_{\partial\Gamma}f+\delta_0 \mbox{ in } \bar M.
			%\mbox{ for some } \delta_0>0.
		\end{aligned}
	\end{equation}
%for some $\delta_0>0$.

	The first step is to construct  approximate Dirichlet problems with constant boundary  data.
	Let $h$ be the solution to \eqref{possion-def}. For $t\gg1$, $\underline{u}=th$ satisfies
	\begin{equation}
		\begin{aligned}
		 F(\mathfrak{g}[\underline{u}])=
			f(\lambda(\omega^{-1}\mathfrak{g}[\underline{u}]))\geqslant \psi+\delta_1 \mbox{ in }\bar M 
		\end{aligned}
	\end{equation}
	for some $\delta_1>0$, and $-t\mathrm{tr}_\omega(\pi_2^*\omega_S)+\mathrm{tr}_\omega\chi\leqslant 0   \mbox{ in } \bar M.$
	% There exist a sequence of smooth approximate functions $\{h^{(k)}\}$ on $S$ such that  $h^{(k)}\rightarrow h$ in $C^{2,\beta}$-norm
	% $C^{2,\beta}(\bar S)$  and
	% \begin{equation}  \label{supersolution-k-product1} \begin{aligned}
	%   \omega_S/2\leqslant \sqrt{-1}\partial_S\overline{\partial}_S h^{(k)}\leqslant  3\omega_S/2  \mbox{ in } S,  %\nonumber
	% \end{aligned}   \end{equation}
	% (here we use \eqref{possion-def});
	Note that $$h\in C^\infty(S)\cap C^{2,\beta}(\bar S) \mbox{ and } \left.\frac{\partial h}{\partial\vec{\bf \nu}}\right|_{\partial S}<0,$$
	%\begin{equation}\begin{aligned}
	%\,& h\in C^\infty(S),  \,& \frac{\partial h}{\partial\nu}|_{\partial S}<0, 
	% \end{aligned} \end{equation}
	we get a sequence of level sets of $h$, say $\{h=-\alpha_k\}$,
	and  a family of smooth Riemann surfaces $S_{k}$ enclosed by  $\{h=-\alpha_k\}$,
	%complex submanifold of complex dimension one, 
	such that 
	$\cup S_{k}=S$ and $\partial S_{k}$ converge to $\partial S$ in the norm of $C^{2,\beta}$. 
	Denote $M_{k}=X\times S_{k}$. 
	
	For any $k\geqslant 1$, there exists a $\psi^{(k)}\in C^\infty(\bar M_k)$ 
	%and a $\beta_k>0$ 
	%with $\lim_{k\rightarrow+\infty}\beta_k=0^+$ 
	such that
	\begin{equation}
		\begin{aligned}
			|\psi-\psi^{(k)}|_{C^2(\bar M_k)}\leqslant 1/k. %\beta_k\rightarrow 0^+ \mbox{ as } k\rightarrow +\infty.
		\end{aligned}
	\end{equation}
	For $k\gg1$ %so that $\beta_k<\frac{1}{2}\min\{\delta_0,\delta_1\}$,
	we have
	\begin{equation}
		\label{App-Diri1-subsolution}
		\begin{aligned}
  F(\mathfrak{g}[\underline{u}])
  %= f(\lambda(\omega^{-1}\mathfrak{g}[\underline{u}]))
			\geqslant \psi^{(k)} +{\delta_1}/{2} \mbox{ in } M_{k},  \,\,
			\underline{u}=-t\alpha_k \mbox{ on }   \partial M_{k}
	\end{aligned}\end{equation} 
	which is a strict, admissible subsolution to approximate Dirichlet problem
	\begin{equation}
		\label{App-Diri1}
		\begin{aligned}
	  F(\mathfrak{g}[u])
	  %=f(\lambda(\omega^{-1}\mathfrak{g}[u]))
			= \psi^{(k)} \mbox{ in } M_{k},  \,\,
			u =-t\alpha_k \mbox{ on }   \partial M_{k}.
	\end{aligned}\end{equation} 
	%which admit strictly subsolutions $\underline{u}$.
	According to %Theorem \ref{thm1-diri},
	Theorem  %\ref{thm1-1},
	 	\ref{thm1-2}, 
	 the Dirichlet problem \eqref{App-Diri1} admits a unique  smooth  admissible solution $u^{(k)}\in C^{\infty}(\bar M_k)$. 
	Notice that $M_k$ is a product  and the boundary data in \eqref{App-Diri1} is a constant, Theorem \ref{thm2-bdy-leviflat} applies. Therefore,
	\begin{equation}
		\label{uniform-00}
		\begin{aligned}
			\sup_{M_k}|\partial\overline{\partial} u^{(k)}|_\omega \leqslant C_k \left(1+\sup_{M_{k}}|\nabla u^{(k)}|_\omega^2\right)
		\end{aligned}
	\end{equation}
	where $C_k$ is a constant depending on $|u^{(k)}|_{C^0(M_{k})}$, $|\nabla u^{(k)}|_{C^0(\partial M_{k})}$, $|\underline{u}|_{C^{2}(M_{k})}$,   $|\psi^{(k)}|_{C^{2}(M_{k})}$,
	$\partial S_{k}$ up to second order derivatives  and other known data (but not on $(\delta_{\psi^{(k)},f})^{-1}$).

	It requires to prove  %there is a uniform constant $C$ depending not on $k$, such that
	\begin{equation}
		\label{uniform-c0-c1}
		\begin{aligned}
			\sup_{M_{k}}|u^{(k)}| +\sup_{\partial M_{k}}|\nabla u^{(k)}|\leqslant C, \mbox{ independent of } k.
		\end{aligned}
	\end{equation}
	Let $w^{(k)}$ be the solution of
	\begin{equation}
		\label{supersolution-k}
		\begin{aligned}
		 \Delta w^{(k)} +\mathrm{tr}_\omega{\chi} =0 \mbox{ in } M_{k},  \,\,
			 w^{(k)}= -t\alpha_k \mbox{ on } \partial M_{k}. \nonumber
		\end{aligned}
	\end{equation}
	By the maximum principle and the boundary value condition, 
	%$\underline{u}^{(k)}= u^{(k)}=w^{(k)} =-\alpha_k \mbox{ on } \partial M_{k}$ 
	one has
	\begin{equation}
		\label{approxi-boundary1}
		\begin{aligned}
		  \underline{u} \leqslant u^{(k)}\leqslant w^{(k)} \mbox{ in } M_{k},  \,\,
			\frac{\partial \underline{u}}{\partial \vec{\bf \nu}} \leqslant  \frac{\partial u^{(k)}}{\partial \vec{\bf \nu}} \leqslant \frac{\partial w^{(k)}}{\partial \vec{\bf \nu}} 
			\mbox{ on } \partial M_{k}.  \nonumber
		\end{aligned}
	\end{equation}
Here $\vec{\bf \nu}$ denotes  the unit inner normal vector along the boundary $\partial M_k$.

	%Let $v^{(k)}=-Nh^{(k)} -2\alpha_k$. 
	On the other hand,  %for $t\gg1$ 
	 we have
	%(at least $t\geqslant \sup_M (\mathrm{tr}_\omega \chi/  \mathrm{tr}_\omega (\pi_2^*\omega_S))$),
	% \begin{equation} \label{N-sub1} \begin{aligned}
	% N\geqslant \sup_M \mathrm{tr}_\omega \chi/\inf_M  \mathrm{tr}_\omega (\pi_2^*\omega_S),
	% \end{aligned}
	% \end{equation}
	%we have for $t\geqslant \sup_M (\mathrm{tr}_\omega \chi/  \mathrm{tr}_\omega (\pi_2^*\omega_S))$
	\begin{equation}
		\begin{aligned}
			\Delta (-\underline{u}-2t\alpha_k)+\mathrm{tr}_\omega \chi=\,& -t\mathrm{tr}_\omega(\pi_2^*\omega_S)+\mathrm{tr}_\omega\chi\leqslant 0 \,& \mbox{ in } M_k,\\
			-\underline{u}-2t\alpha_k=\,& -t\alpha_k \,& \mbox{ on } \partial M_k.  \nonumber
		\end{aligned}
	\end{equation}
	As a result, we have
	\begin{equation}
		\begin{aligned}
		 w^{(k)}  \leqslant -\underline{u} -2t\alpha_k \mbox{ in } M_{k},   
		 \,\, w^{(k)}  =-\underline{u}-2t\alpha_k   \mbox{ on } \partial M_{k}, \nonumber
		\end{aligned}
	\end{equation}
	%which is exactly \eqref{key-proof-newregularity1}. 
	which further implies
	\[\frac{\partial w^{(k)}}{\partial\vec{\bf \nu}} \leqslant -\frac{\partial \underline{u}}{\partial \vec{\bf \nu}} \mbox{ on } \partial M_k\] 
	as required. 
	Consequently, 
	\eqref{uniform-00} holds for a uniform constant $C'$ which does not depend on $k$. 
	Thus $$|u|_{C^2(M_{k})}\leqslant C, \mbox{ independent of } k.$$ 
	%depending not on $k$ (here we  use blow up argument to derive gradient estimate).  
	%Thus  \eqref{App-Diri1} are all uniformly elliptic.
	
	To complete the proof, we apply Silvestre-Sirakov's \cite{Silvestre2014Sirakov} result to 
	derive $C^{2,\alpha'}$ estimates on the boundary, while
	the convergence of  $\partial S_{k}$ in the norm $C^{2,\beta}$ 
	allows  us to take a limit ($\alpha'$ can be uniformly chosen).

\end{proof}

	In the proof of Theorem \ref{mainthm-09}, we use the domains enclosed by level sets %of $h$
	to approximate the background space. Such an approximation gives %an explicit 
	a bound of \eqref{uniform-c0-c1} explicitly.  
Similarly, we can prove 
 that 
 %(here the boundary data $\varphi$ can be extended to a $C^{2,1}$  function %on $\bar{M}$.
 %on certain neighborhood of $\partial M$). 
 %(For simplicity, we still denote it by $\varphi$).)

 \begin{theorem}
 	\label{mainthm-10-degenerate}  
 	Let $(M,  \omega)=(X\times S,  \omega)$ be as above 
 	with $\partial S\in C^{2,1}$, and let
 	$f$ satisfy \eqref{elliptic}, 
 	\eqref{concave} 
 	%\eqref{unbounded-1} 
 	and \eqref{continuity1}.  
 	%Assume in addition that 	$\partial S\in C^\infty$.
 	Given the data $\varphi\in C^{2,1}(\bar M)$, $\psi\in C^{1,1}(\bar M)$   satisfying 
 	\eqref{cone-condition0}, \eqref{cone-condition1} and \eqref{degenerate-RHS}, the Dirichlet problem \eqref{mainequ-Dirichlet}  %carries
 	has a weak solution $u$ with   
 	\begin{equation}
 		\begin{aligned}
 			u\in C^{1,\alpha}(\bar M), 
 			\mbox{  } \forall 0<\alpha<1, \mbox{ } \Delta u \in L^{\infty}(\bar M),
 			\mbox{  }
 			\lambda[\omega^{-1}(\chi+\sqrt{-1}\partial\overline{\partial}u)]\in \bar \Gamma \mbox{ in } \bar M. \nonumber
 		\end{aligned}
 	\end{equation} 
 	%Here the boundary data $\varphi$ can be extended to a $C^{2,1}$  function %on $\bar{M}$.
 	%on certain neighborhood of $\partial M$.

 \end{theorem}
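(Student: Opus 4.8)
The plan is to run the boundary--exhaustion scheme used in the proof of Theorem~\ref{mainthm-09} for the \emph{nondegenerate} right--hand sides $\psi+\epsilon$, feeding in the subsolution constructed in Section~\ref{sec7}, and then to let $\epsilon\to0$; what makes the second limit possible is that the quantitative boundary estimate of Theorem~\ref{thm2-bdy-leviflat} is independent of $(\delta_{\psi,f})^{-1}$ and involves $\partial M$ only to second order.

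\textbf{Reduction and subsolution.} For small $\epsilon>0$ put $\psi_\epsilon=\psi+\epsilon$, so $\inf_M\psi_\epsilon>\sup_{\partial\Gamma}f$; since \eqref{cone-condition0} is independent of $\psi$ and \eqref{cone-condition1} is strict, both persist for $\psi_\epsilon$ when $\epsilon$ is small. By \eqref{cone-condition1} and compactness there is $\delta>0$ such that, with $h$ the solution of \eqref{possion-def} and $t\gg1$, the function $\underline{u}=\varphi+t\pi_2^*h$ of \eqref{construct-subsolution} satisfies $F(\mathfrak{g}[\underline{u}])\geqslant\psi+\delta$ on $\bar M$; for $\epsilon<\delta$ it is therefore a strict admissible subsolution of \eqref{mainequ-Dirichlet} with right--hand side $\psi_\epsilon$, with $\underline{u}=\varphi$ and $\partial_{\vec{\bf \nu}}\underline{u}<0$ on $\partial M$.

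\textbf{Exhaustion and uniform estimates.} Exhaust $S$ by the sublevel sets $S_k=\{h\leqslant-\alpha_k\}$, $\alpha_k\downarrow0$; since $h\in C^{2,1}(\bar S)$ with $\partial_{\vec{\bf \nu}}h<0$ near $\partial S$, the curves $\{h=-\alpha_k\}$ are smooth and tend to $\partial S$ in $C^{2,1}$, and $\bigcup_kS_k=S$. Set $M_k=X\times S_k$; each $\partial M_k=X\times\{h=-\alpha_k\}$ is holomorphically flat, because in one complex variable a smooth curve is straightened to $\{\mathfrak{Re}\,w=0\}$ by a holomorphic change of coordinate. Mollify $\psi_\epsilon$ to $\psi_\epsilon^{(k)}\in C^\infty(\bar M_k)$ with uniformly bounded $C^{1,1}$--norms. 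On $M_k$ the restriction of $\underline{u}$ is a strict admissible subsolution with data $\varphi-t\alpha_k$, so Theorem~\ref{thm1-2} (its hypothesis \eqref{bdry-assumption1} being automatic for a flat boundary) produces a smooth admissible $u^{(k)}$ solving $F(\mathfrak{g}[u^{(k)}])=\psi_\epsilon^{(k)}$ in $M_k$ with $u^{(k)}=\varphi-t\alpha_k$ on $\partial M_k$. Inspecting the proofs, the attendant a priori bounds are uniform in $k$ and $\epsilon$: the $C^0$--bound from $\underline{u}\leqslant u^{(k)}\leqslant w^{(k)}\leqslant-\underline{u}-2t\alpha_k$, with $w^{(k)}$ the $\omega$--harmonic supersolution; the boundary--gradient bound from these barriers and $\partial_{\vec{\bf \nu}}h<0$; the global bound $\sup_{M_k}|\partial\overline{\partial}u^{(k)}|_\omega\leqslant C(1+\sup_{M_k}|\nabla u^{(k)}|_\omega^2)$ of Theorem~\ref{thm2-second-order}; the boundary bound of Theorem~\ref{thm2-bdy-leviflat}, whose constant depends on $\partial S_k$ only to second order and not on $(\delta_{\psi_\epsilon,f})^{-1}$; and the gradient bound from the blow--up argument of Section~\ref{sec-gradient-estimate}, an interior blow--up contradicting Theorem~\ref{thm-Liouville} and a boundary blow--up reducing to a flat half--space model, its constant depending only on the $C^0$--bound and \eqref{estimate-quantitative}.

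\textbf{Passing to the limit.} With these bounds \eqref{mainequ-Dirichlet} is uniformly elliptic along the $u^{(k)}$; the real Hessian is bounded by the maximum principle as in \cite{Guan2010Li}, and Silvestre--Sirakov's boundary regularity \cite{Silvestre2014Sirakov} gives a $C^{2,\alpha'}(\bar M_k)$--bound with $\alpha'\in(0,1)$ independent of $k$, using only $C^{2,1}$ boundaries; interior Evans--Krylov and Schauder theory complete the higher regularity. Letting $k\to\infty$---the $M_k$ increase to $M$, the data $\varphi-t\alpha_k\to\varphi$, the bounds being uniform---a diagonal argument yields $u_\epsilon\in C^{2,\alpha'}(\bar M)$ solving $F(\mathfrak{g}[u_\epsilon])=\psi_\epsilon$, $u_\epsilon=\varphi$ on $\partial M$. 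Letting $\epsilon\to0$, the uniform bounds $|u_\epsilon|_{C^0(\bar M)}+|\nabla u_\epsilon|_{C^0(\bar M)}+|\Delta u_\epsilon|_{L^\infty(\bar M)}\leqslant C$ give, along a subsequence, $u_\epsilon\to u$ in $C^{1,\alpha}(\bar M)$ for every $\alpha\in(0,1)$, with $\Delta u\in L^\infty(\bar M)$ and $\lambda(\omega^{-1}\mathfrak{g}[u])\in\bar\Gamma$ in $\bar M$; taking $\widetilde u=u_\epsilon$ and $\rho_\epsilon=\epsilon$ in Definition~\ref{def-c0-weak} exhibits $u$ as the required weak solution. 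The main obstacle is keeping every constant simultaneously free of $k$ and of $\epsilon$; the decisive inputs are the delicate estimate of Theorem~\ref{thm2-bdy-leviflat} and the uniformity of the blow--up gradient bound, which only registers the $C^0$--bound and \eqref{estimate-quantitative}.
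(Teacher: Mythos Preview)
Your sketch is correct and follows exactly the route the paper intends (the paper's own proof is just the sentence ``Similarly, we can prove that,'' referring back to the proof of Theorem~\ref{mainthm-09}): regularize by $\psi+\epsilon$, use the subsolution $\underline{u}=\varphi+t\pi_2^*h$ of Section~\ref{sec7}, exhaust by the level sets $\{h=-\alpha_k\}$, solve on each $M_k$ via Theorem~\ref{thm1-2}, and pass to the double limit using that the constant in Theorem~\ref{thm2-bdy-leviflat} depends on $\partial M$ only to second order and not on $(\delta_{\psi,f})^{-1}$. One small caveat: your ``boundary blow-up reducing to a flat half-space model'' goes slightly beyond what the paper actually proves (Theorem~\ref{thm-Liouville} is stated only on $\mathbb{C}^n$), but the paper itself silently skips this same step in the proof of Theorem~\ref{mainthm-09}, so your argument is no less complete than the paper's.
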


 \begin{remark}

 	The Dirichlet problem for homogeneous complex Monge-Amp\`ere equation %($f=\sigma_n^{1/n}$) 
 	on $X\times S$ is closely connected with the geodesics in the space of K\"ahler potentials $\mathcal{H}_{\omega_X}$ \cite{Donaldson99,Semmes92,Mabuchi87}; see e.g.  \cite{Blocki09geodesic,Chen,Phong-Sturm2010}.
 	We also refer to the monograph \cite{Guedj2017Zeriahi}
 	 %and references therein 
 	 for more results regarding to degenerated   Monge-Amp\`ere equation. 
 \end{remark}

\medskip
 
 \section{Proof of  Lemma \ref{yuan's-quantitative-lemma}}
 \label{sec-proofofquantitativelemma1}
 
 % In the rest of this subsection, we complete the proof of Lemma \ref{yuan's-quantitative-lemma}.
 In this section we  complete the proof of Lemma \ref{yuan's-quantitative-lemma}.
 We start with the case $n=2$. 
 %In this case, we will prove that if $\mathrm{{\bf a}} \geqslant \frac{|a_1|^2}{ \epsilon}+ d_1$ then
 %$$0\leqslant d_1- \lambda_1=\lambda_2-\mathrm{{\bf a}} <\epsilon.$$
 %Let's briefly  present the discussion as follows:
 For $n=2$, the eigenvalues of $A$ are precisely as follows
 \begin{equation}
 	\begin{aligned}
 		\lambda_{1}=\frac{\mathrm{{\bf a}}+d_1- \sqrt{(\mathrm{{\bf a}}-d_1)^2+4|a_1|^2}}{2},
 		\,\,
 		\lambda_2=\frac{\mathrm{{\bf a}}+d_1+\sqrt{(\mathrm{{\bf a}}-d_1)^2+4|a_1|^2}}{2}. \nonumber
 	\end{aligned}
 \end{equation}
 We assume $a_1\neq 0$; otherwise we are done.
 If $\mathrm{{\bf a}} \geqslant \frac{|a_1|^2}{ \epsilon}+ d_1$ then one has
 \begin{equation}
 	\begin{aligned}
 		0\leqslant d_1- \lambda_1 =\lambda_2-\mathrm{{\bf a}}
 		= \frac{2|a_1|^2}{\sqrt{ (\mathrm{{\bf a}}-d_1)^2+4|a_1|^2 } +(\mathrm{{\bf a}}-d_1)}
 		< \frac{|a_1|^2}{\mathrm{{\bf a}}-d_1 } \leqslant \epsilon.   \nonumber
 	\end{aligned}
 \end{equation}
 Here we use $a_1\neq 0$ to confirm the strictly inequality in the above formula.
 %We hence obtain Lemma \ref{yuan's-quantitative-lemma} for $n=2$.

 The following lemma enables us to count  the eigenvalues near the diagonal elements
 via a deformation argument. It is an essential  ingredient in the proof of
 Lemma \ref{yuan's-quantitative-lemma} for general $n$.
 \begin{lemma}
 	\label{refinement}
 	Let $A$ be %as in Lemma \ref{yuan's-quantitative-lemma} 
 	a Hermitian $n$ by  $n$  matrix
 	\begin{equation}\label{matrix2}\left(\begin{matrix}d_1&&  &&a_{1}\\&d_2&& &a_2\\&&\ddots&&\vdots \\&& &  
 			d_{n-1}& a_{n-1}\\ \bar a_1&\bar a_2&\cdots& \bar a_{n-1}& \mathrm{{\bf a}} \nonumber
 		\end{matrix}\right)\end{equation} with $d_1,\cdots, d_{n-1}, a_1,\cdots, a_{n-1}$ fixed, and with $\mathrm{{\bf a}}$ variable.
 	Denote
 	$\lambda=(\lambda_1,\cdots, \lambda_n)$ the  eigenvalues of $A$ with the order
 	$\lambda_1\leqslant \lambda_2 \leqslant \cdots \leqslant \lambda_n$.
 	Fix  $\epsilon$ a positive constant.
 	Suppose that the parameter $\mathrm{{\bf a}}$
 	% in the matrix $A$ 
 	satisfies  the following quadratic growth condition
 	%$$a>\frac{2}{\epsilon}\sum_{i=1}^{n-1} a_i^2+ (n-1)\sum_{i=1}^{n-1} |d_i|+ (n-2)\epsilon.$$
 	\begin{equation}
 		\label{guanjian2}
 		\begin{aligned}
 			\mathrm{{\bf a}} \geqslant \frac{1}{\epsilon}\sum_{i=1}^{n-1} |a_i|^2+\sum_{i=1}^{n-1}  \left(d_i+ (n-2) |d_i|\right)+ (n-2)\epsilon.
 		\end{aligned}
 	\end{equation}
 	Then for any $\lambda_{\alpha}$ $(1\leqslant \alpha\leqslant n-1)$ there exists a 
 	$d_{i_{\alpha}}$
 	with 
 	%	lower index 
 	$1\leqslant i_{\alpha}\leqslant n-1$ such that
 	\begin{equation}
 		\label{meishi}
 		\begin{aligned}
 			|\lambda_{\alpha}-d_{i_{\alpha}}|<\epsilon,
 		\end{aligned}
 	\end{equation}
 	\begin{equation}
 		\label{mei-23-shi}
 		0\leqslant \lambda_{n}-\mathrm{{\bf a}} <(n-1)\epsilon + \left|\sum_{\alpha=1}^{n-1}(d_{\alpha}-d_{i_{\alpha}})\right|.
 	\end{equation}
 \end{lemma}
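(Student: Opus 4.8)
The plan is to deduce Lemma~\ref{refinement} for $n\geqslant 3$ from two classical facts about Hermitian matrices together with the Schur‑complement (secular) equation; the case $n=2$ is already settled above, and if $a_1=\cdots=a_{n-1}=0$ then $A$ is diagonal and the assertions are trivial, so from now on assume $n\geqslant 3$ and $\sum_{i=1}^{n-1}|a_i|^2>0$.

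First I would record the two elementary bounds. Testing the Rayleigh quotient on the last standard basis vector gives $\lambda_n\geqslant \mathrm{{\bf a}}$. Since the leading $(n-1)\times(n-1)$ principal submatrix of $A$ is exactly $\operatorname{diag}(d_1,\dots,d_{n-1})$, Cauchy interlacing yields $\lambda_\alpha\leqslant\lambda_{n-1}\leqslant\max_i d_i$ for every $1\leqslant\alpha\leqslant n-1$. Next, a short computation from the growth hypothesis \eqref{guanjian2}: for $n\geqslant 3$ each summand $d_i+(n-2)|d_i|$ is nonnegative, hence $\sum_{i=1}^{n-1}\big(d_i+(n-2)|d_i|\big)\geqslant\max_i d_i$, and therefore
\[
\mathrm{{\bf a}}-\frac1\epsilon\sum_{i=1}^{n-1}|a_i|^2\ \geqslant\ \max_i d_i+(n-2)\epsilon\ >\ \max_i d_i .
\]
Combining the two displays, $\lambda_\alpha<\mathrm{{\bf a}}-\frac1\epsilon\sum_i|a_i|^2$, equivalently $\mathrm{{\bf a}}-\lambda_\alpha>\frac1\epsilon\sum_i|a_i|^2$, for every $1\leqslant\alpha\leqslant n-1$.

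The crux is to convert ``$\lambda_\alpha$ lies far below the corner entry $\mathrm{{\bf a}}$'' into ``$\lambda_\alpha$ is $\epsilon$-close to some $d_i$''. Fix $\alpha\leqslant n-1$. If $\lambda_\alpha\in\{d_1,\dots,d_{n-1}\}$, choose $d_{i_\alpha}$ equal to that value and \eqref{meishi} is immediate. Otherwise $\lambda_\alpha\neq d_i$ for all $i$, so expanding $\det(\lambda_\alpha I-A)=0$ via the Schur complement of the $(n,n)$-entry gives the secular identity $\lambda_\alpha-\mathrm{{\bf a}}=\sum_{i=1}^{n-1}\frac{|a_i|^2}{\lambda_\alpha-d_i}$. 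Taking absolute values and using the bound from the previous step,
\[
\frac1\epsilon\sum_{i=1}^{n-1}|a_i|^2<\mathrm{{\bf a}}-\lambda_\alpha=\Big|\sum_{i=1}^{n-1}\frac{|a_i|^2}{\lambda_\alpha-d_i}\Big|\leqslant\sum_{i=1}^{n-1}\frac{|a_i|^2}{|\lambda_\alpha-d_i|};
\]
were $|\lambda_\alpha-d_i|\geqslant\epsilon$ to hold for every $i$, the right-hand side would be $\leqslant\frac1\epsilon\sum_i|a_i|^2$, a contradiction. Hence $|\lambda_\alpha-d_{i_\alpha}|<\epsilon$ for some index $i_\alpha$, which is \eqref{meishi}.

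Finally I would obtain \eqref{mei-23-shi} by comparing traces. From $\sum_{k=1}^n\lambda_k=\operatorname{tr}A=\mathrm{{\bf a}}+\sum_{i=1}^{n-1}d_i$ we get
\[
\lambda_n-\mathrm{{\bf a}}=\sum_{\alpha=1}^{n-1}(d_\alpha-\lambda_\alpha)=\sum_{\alpha=1}^{n-1}(d_\alpha-d_{i_\alpha})+\sum_{\alpha=1}^{n-1}(d_{i_\alpha}-\lambda_\alpha),
\]
and $\big|\sum_{\alpha=1}^{n-1}(d_{i_\alpha}-\lambda_\alpha)\big|<(n-1)\epsilon$ by \eqref{meishi}; together with $\lambda_n\geqslant\mathrm{{\bf a}}$ this is precisely \eqref{mei-23-shi}. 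The one point that needs genuine care is strictness of the inequalities: for $n\geqslant 3$ the term $(n-2)\epsilon$ in \eqref{guanjian2} is exactly what makes $\max_i d_i<\mathrm{{\bf a}}-\frac1\epsilon\sum_i|a_i|^2$ strict, while for $n=2$ the same role is played by the hypothesis $a_1\neq 0$, as in the computation above. I expect the Schur-complement/secular-equation step to be the main obstacle, since it is where the off-diagonal structure of the matrix is really used; I note that the same confinement of the spectrum can alternatively be extracted from a continuity argument scaling the off-diagonal entries $a_i\mapsto t a_i$, $t\in[0,1]$, along which \eqref{guanjian2} is preserved.
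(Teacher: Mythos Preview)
Your proof is correct, and it is genuinely different from the paper's argument.

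The paper does not invoke Cauchy interlacing. Instead it defines the ``bad'' set $\mathbf{B}=\{\alpha\leqslant n-1:|\lambda_\alpha-d_i|\geqslant\epsilon\text{ for all }i\}$ and shows, again via the characteristic polynomial, that for $\alpha\in\mathbf{B}$ one has $\lambda_\alpha\geqslant\mathbf{a}-\frac1\epsilon\sum|a_i|^2$; assuming $\mathbf{B}\neq\emptyset$, it then sums $\lambda_n+\sum_{\alpha\in\mathbf{B}}\lambda_\alpha+\sum_{\alpha\in\mathbf{G}}\lambda_\alpha$ and compares with $\operatorname{tr}A$ to reach a contradiction. In other words, the paper uses the secular equation to bound bad eigenvalues from \emph{below} and the trace to rule them out, whereas you use interlacing to bound \emph{all} $\lambda_\alpha$ ($\alpha\leqslant n-1$) from \emph{above}, which feeds directly into the secular equation and yields \eqref{meishi} without any case split or contradiction. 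Your route is shorter and more conceptual; the paper's route is self-contained in that it needs only the characteristic polynomial and the trace identity, not the interlacing theorem. Both arguments recover \eqref{mei-23-shi} by the same trace computation.
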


 \begin{proof}
 	%[Proof of Lemma \ref{refinement}]
 	Without loss of generality, we assume $\sum_{i=1}^{n-1} |a_i|^2>0$ and  $n\geqslant 3$
 	(otherwise we are done). %since $A$ is diagonal or $n=2$).
 	%If $\sum_{i=1}^{n-1} |a_i|^2=0$, then $A$ is diagonal, and  we are done.
 	%Suppose now $\sum_{i=1}^{n-1} |a_i|^2>0$.
 	Note that  the eigenvalues have
 	the order $\lambda_1\leqslant \lambda_2\leqslant \cdots \leqslant \lambda_n$, as in the assumption of lemma.
 	It is well known that, 
 	for a Hermitian matrix, any diagonal element is   less than or equals to   the  largest eigenvalue.
 	In particular,
 	\begin{equation}
 		\label{largest-eigen1}
 		\lambda_n \geqslant \mathrm{{\bf a}}.
 	\end{equation}
 	
 	We only need to prove   \eqref {meishi}, since  \eqref{mei-23-shi} is a consequence of  \eqref{meishi}, \eqref{largest-eigen1}  and
 	\begin{equation}
 		\label{trace}
 		\sum_{i=1}^{n}\lambda_i=\mbox{tr}(A)=\sum_{\alpha=1}^{n-1} d_{\alpha}+\mathrm{{\bf a}}.
 	\end{equation}

 	Let's denote   $I=\{1,2,\cdots, n-1\}$. We divide the index set   $I$ into two subsets:
 	$${\bf B}=\{\alpha\in I: |\lambda_{\alpha}-d_{i}|\geqslant \epsilon, \mbox{   }\forall i\in I \}, $$
 	%and 
 	$$ {\bf G}=I\setminus {\bf B}=\{\alpha\in I: \mbox{There exists $i\in I$ such that } |\lambda_{\alpha}-d_{i}| <\epsilon\}.$$
 	
 	To complete the proof, it suffices to prove ${\bf G}=I$ or equivalently ${\bf B}=\emptyset$.
 	It is easy to see that  for any $\alpha\in {\bf G}$, one has
 	\begin{equation}
 		\label{yuan-lemma-proof1}
 		\begin{aligned}
 			|\lambda_\alpha|< \sum_{i=1}^{n-1}|d_i| + \epsilon.
 		\end{aligned}
 	\end{equation}
 	
 	Fix $ \alpha\in {\bf B}$,  we are going to give the estimate for $\lambda_\alpha$.
 	%Let  $\lambda$ be an eigenvalue of $A$,  then
 	%Since   $\lambda_{\alpha}$ is an eigenvalue of $A$,  one has
 	The eigenvalue $\lambda_\alpha$ satisfies
 	\begin{equation}
 		\label{characteristicpolynomial}
 		\begin{aligned}
 			(\lambda_{\alpha} -\mathrm{{\bf a}})\prod_{i=1}^{n-1} (\lambda_{\alpha}-d_i)
 			= \sum_{i=1}^{n-1} |a_{i}|^2 \prod_{j\neq i} (\lambda_{\alpha}-d_{j}).
 		\end{aligned}
 	\end{equation}
 	By the definition of ${\bf B}$, for  $\alpha\in {\bf B}$ one then has $|\lambda_{\alpha}-d_i|\geqslant \epsilon$ for any $i\in I$.
 	We therefore derive
 	\begin{equation}
 		\begin{aligned}
 			|\lambda_{\alpha}-\mathrm{{\bf a}} |\leqslant \sum_{i=1}^{n-1} \frac{|a_i|^2}{|\lambda_{\alpha}-d_{i}|}\leqslant 
 			\frac{1}{\epsilon}\sum_{i=1}^{n-1} |a_i|^2, \,\, \mbox{ if } \alpha\in {\bf B}.
 		\end{aligned}
 	\end{equation}
 	%Here we use $\sum_{i=1}^{n-1} |a_i|^2>0$.
 	Hence,  for $\alpha\in {\bf B}$, we obtain
 	\begin{equation}
 		\label{yuan-lemma-proof2}
 		\begin{aligned}
 			\lambda_\alpha \geqslant \mathrm{{\bf a}}-\frac{1}{\epsilon}\sum_{i=1}^{n-1} |a_i|^2.
 		\end{aligned}
 	\end{equation}
 	%Furthermore, $\lambda_n \geqslant \mathrm{{\bf a}}-\frac{1}{\epsilon}\sum_{i=1}^{n-1} |a_i|^2$,
 	%as one assumes   $\lambda_n\geqslant \lambda_\alpha$ in the lemma.

 	For a set ${\bf S}$, we denote $|{\bf S}|$ the  cardinality of ${\bf S}$.
 	We shall use proof by contradiction to prove  ${\bf B}=\emptyset$.
 	Assume by contradiction ${\bf B}\neq \emptyset$.
 	Then $|{\bf B}|\geqslant 1$, and so $|{\bf G}|=n-1-|{\bf B}|\leqslant n-2$. % and we will prove that it is a contradiction.

 	%In the case of ${\bf G}\neq \emptyset$, 
 	We compute the trace of the matrix $A$ as follows:
 	\begin{equation}
 		\begin{aligned}
 			\mbox{tr}(A)=\,&
 			\lambda_n+
 			\sum_{\alpha\in {\bf B}}\lambda_{\alpha} + \sum_{\alpha\in  {\bf G}}\lambda_{\alpha}\\
 			\geqslant \,&
 			\lambda_n+
 			|{\bf B}| (\mathrm{{\bf a}}-\frac{1}{\epsilon}\sum_{i=1}^{n-1} |a_i|^2 )-|{\bf G}| (\sum_{i=1}^{n-1}|d_i|+\epsilon ) \\
 			> \,&
 			2\mathrm{{\bf a}}-\frac{1}{\epsilon}\sum_{i=1}^{n-1} |a_i|^2 -(n-2) (\sum_{i=1}^{n-1}|d_i|+\epsilon )
 			\\
 			%\geqslant\,& \sum_{i=1}^{n-1} |d_i|+\mathrm{{\bf a}} \\
 			\geqslant \,& \sum_{i=1}^{n-1}d_i +\mathrm{{\bf a}}= \mbox{tr}(A),
 			% \mbox{ provided that \eqref{guanjian1} holds,}
 		\end{aligned}
 	\end{equation}
 	where we use  \eqref{guanjian2},   \eqref{largest-eigen1}, \eqref{yuan-lemma-proof1} and \eqref{yuan-lemma-proof2}.
 	This is a contradiction.
 	%In the case of ${\bf G}=\emptyset$, one knows that
 	%\begin{equation}
 	%\begin{aligned}
 	%\mbox{tr}(A)
 	%=\,&\lambda_n+\sum_{\alpha\in  {\bf G}}\lambda_{\alpha}+\sum_{\alpha\in {\bf B}}\lambda_{\alpha}\\
 	%> \,&
 	%(|{\bf B}|+1) (\mathrm{{\bf a}}-\frac{1}{\epsilon}\sum_{i=1}^{n-1} |a_i|^2 )-|{\bf G}| (\sum_{i=1}^{n-1}|d_i|+\epsilon )\\
 	%\geqslant
 	%\mathrm{{\bf a}}+
 	%(n-1) (\mathrm{{\bf a}}-\frac{1}{\epsilon}\sum_{i=1}^{n-1} |a_i|^2 )
 	%>   \sum_{i=1}^{n-1}d_i +\mathrm{{\bf a}}= \mbox{tr}(A).
 	%% \mbox{ provided that \eqref{guanjian1} holds,}
 	%\end{aligned}
 	%\end{equation}
 	%Again, it is a contradiction.
 	We have ${\bf B}=\emptyset$.
 	Therefore, ${\bf G}=I$ and  the proof is complete.
 \end{proof}
 
Consequently we obtain
 
 \begin{lemma}
 	\label{refinement111}
 	Let $A(\mathrm{{\bf a}})$ be an $n\times n$ Hermitian  matrix
 	\begin{equation}
 		A(\mathrm{{\bf a}})=\left(
 		\begin{matrix}
 			d_1&&  &&a_{1}\\
 			&d_2&& &a_2\\
 			&&\ddots&&\vdots \\
 			&& &  d_{n-1}& a_{n-1}\\
 			\bar a_1&\bar a_2&\cdots& \bar a_{n-1}& \mathrm{{\bf a}} \nonumber
 		\end{matrix}
 		\right)
 	\end{equation}
 	with
 	% $d_1,\cdots, d_{n-1}$  fixed and
 	$d_1,\cdots, d_{n-1}, a_1,\cdots, a_{n-1}$ fixed, and with $\mathrm{{\bf a}}$ variable.
 	Assume that $d_1, d_2, \cdots, d_{n-1} $
 	are distinct with each other, i.e. $d_{i}\neq d_{j}, \forall i\neq j$.
 	Denote
 	$\lambda=(\lambda_1,\cdots, \lambda_n)$ as the the eigenvalues of $A(\mathrm{{\bf a}})$.
 	Given a positive constant $\epsilon$ with
 	$0<\epsilon \leqslant \frac{1}{2}\min\{|d_i-d_j|: \forall i\neq j\}$, 
 	if the parameter  $\mathrm{{\bf a}}$ satisfies the quadratic growth condition % \eqref{guanjian1},
 	\begin{equation}
 		\begin{aligned}
 			\label{guanjian1}
 			\mathrm{{\bf a}}\geqslant \frac{1}{\epsilon}\sum_{i=1}^{n-1}|a_i|^2 +(n-1)\sum_{i=1}^{n-1} |d_i|+(n-2)\epsilon,
 		\end{aligned}
 	\end{equation}
 	then the eigenvalues behave like
 	% and any index $1\leqslant \alpha\leqslant n-1$, there is an index
 	%$i_{\alpha}$, $1\leqslant i_{\alpha}\leqslant n-1$, satisfying
 	\begin{equation}
 		\begin{aligned}
 			|d_{\alpha}-	\,&\lambda_{\alpha}|
 			<   \epsilon, \mbox{  }\forall 1\leqslant \alpha\leqslant n-1, \\ \nonumber
 			%\end{aligned}	\end{equation}
 			%	\begin{equation} \begin{aligned}
 			\,& 0\leqslant \lambda_{n}-\mathrm{{\bf a}}
 			<  (n-1)\epsilon.  \nonumber
 		\end{aligned}
 	\end{equation}
 \end{lemma}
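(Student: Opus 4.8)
The plan is to deduce Lemma~\ref{refinement111} from Lemma~\ref{refinement} by a pigeonhole-type counting argument that exploits the separation hypothesis on the $d_i$. First I would invoke Lemma~\ref{refinement}: under the quadratic growth condition~\eqref{guanjian1} (which, since $(n-1)\sum|d_i|\geqslant \sum d_i + (n-2)\sum|d_i|$, implies~\eqref{guanjian2}), for each $\alpha\in\{1,\dots,n-1\}$ there is an index $i_\alpha\in\{1,\dots,n-1\}$ with $|\lambda_\alpha - d_{i_\alpha}|<\epsilon$, and moreover $0\leqslant \lambda_n - \mathbf{a} < (n-1)\epsilon + \bigl|\sum_{\alpha=1}^{n-1}(d_\alpha - d_{i_\alpha})\bigr|$.

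The key observation is that the map $\alpha\mapsto i_\alpha$ is a bijection of $\{1,\dots,n-1\}$ onto itself, after relabeling. Indeed, the eigenvalues $\lambda_1\leqslant\cdots\leqslant\lambda_{n-1}$ (the lowest $n-1$ of them) are each within $\epsilon$ of some $d_i$; since the $d_i$ are pairwise separated by more than $2\epsilon$ (by the hypothesis $\epsilon\leqslant\frac12\min_{i\neq j}|d_i-d_j|$), the $\epsilon$-balls $B_\epsilon(d_i)$ are pairwise disjoint, so each ball contains at most one of $\lambda_1,\dots,\lambda_{n-1}$ once we check no eigenvalue can be double-counted. The step that needs care — and which I expect to be the main (modest) obstacle — is ruling out the possibility that two eigenvalues land in the same ball $B_\epsilon(d_i)$ while some ball $B_\epsilon(d_j)$ receives none; this would spoil the bijection. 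To exclude it, I would argue that if $\lambda_\alpha,\lambda_\beta\in B_\epsilon(d_i)$ with $\alpha<\beta\leqslant n-1$, then $\lambda_\alpha,\dots,\lambda_\beta$ are all squeezed into $B_\epsilon(d_i)$ by monotonicity, forcing at least two eigenvalues into one ball; a counting of the total number of eigenvalues (there are $n-1$ eigenvalues $\lambda_1,\dots,\lambda_{n-1}$ and $n-1$ balls) then forces some ball to be empty. One then revisits the characteristic polynomial identity~\eqref{characteristicpolynomial} evaluated near the empty $d_j$: since $|\lambda_\alpha - d_j|\geqslant\epsilon$ for every $\alpha$ (including $\alpha=n$, using $\lambda_n\geqslant\mathbf{a}\gg d_j$), plugging $\lambda=$ any eigenvalue into that identity and comparing magnitudes — exactly as in the proof of Lemma~\ref{refinement}, tracking the trace — produces a contradiction. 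Hence the map is a bijection.

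Once $\alpha\mapsto i_\alpha$ is a bijection, $\sum_{\alpha=1}^{n-1}(d_\alpha - d_{i_\alpha}) = \sum_{\alpha}d_\alpha - \sum_{\alpha}d_{i_\alpha} = 0$, so the second bound from Lemma~\ref{refinement} collapses to $0\leqslant \lambda_n - \mathbf{a} < (n-1)\epsilon$, which is exactly the claimed bound on $\lambda_n$. For the first set of inequalities, the bijection together with monotonicity ($\lambda_1\leqslant\cdots\leqslant\lambda_{n-1}$) and the fact that the centers $d_i$ can be assumed ordered $d_1\leqslant\cdots\leqslant d_{n-1}$ forces $i_\alpha=\alpha$ after the relabeling (the smallest eigenvalue must pair with the smallest center, etc., since the $\epsilon$-balls are disjoint and ordered). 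Therefore $|\lambda_\alpha - d_\alpha|<\epsilon$ for all $1\leqslant\alpha\leqslant n-1$, completing the proof.

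A remark on bookkeeping: throughout, "after a proper permutation" in the statement refers precisely to this relabeling of the $d_\alpha$ into increasing order; with that normalization the conclusion is as displayed. The only arithmetic to double-check is that~\eqref{guanjian1} indeed implies~\eqref{guanjian2}, which is immediate from $(n-1)\sum_{i=1}^{n-1}|d_i| \geqslant \sum_{i=1}^{n-1}d_i + (n-2)\sum_{i=1}^{n-1}|d_i|$ and $(n-2)\epsilon = (n-2)\epsilon$, so Lemma~\ref{refinement} applies verbatim.
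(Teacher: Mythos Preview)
Your overall plan is sound: invoke Lemma~\ref{refinement}, show the assignment $\alpha\mapsto i_\alpha$ is a bijection, and read off the conclusion. The reductions are fine --- \eqref{guanjian1} does imply \eqref{guanjian2}, and once the bijection is in hand your ordering argument and the collapse of $\bigl|\sum(d_\alpha-d_{i_\alpha})\bigr|$ to zero are correct.

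The gap is in your step establishing the bijection. You argue that if some interval $I_j=(d_j-\epsilon,d_j+\epsilon)$ is empty, then ``plugging $\lambda=$ any eigenvalue into the characteristic identity and comparing magnitudes --- exactly as in the proof of Lemma~\ref{refinement}, tracking the trace --- produces a contradiction.'' But that argument does not transfer. In the proof of Lemma~\ref{refinement}, the contradiction comes from eigenvalues $\lambda_\alpha$ that are $\epsilon$-far from \emph{every} $d_i$: dividing \eqref{characteristicpolynomial} by $\prod_i(\lambda_\alpha-d_i)$ forces such a $\lambda_\alpha$ to be within $\tfrac{1}{\epsilon}\sum|a_i|^2$ of $\mathbf{a}$, and then the trace is too large. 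In your situation each $\lambda_\alpha$ \emph{is} $\epsilon$-close to some $d_i$; the problem is only that one $d_j$ is missed while another $d_k$ is hit twice. The characteristic identity gives no lower bound on $|\lambda_\alpha-\mathbf{a}|$ here, and the trace computation only yields $\sum d_{i_\alpha}-\sum d_\alpha = d_k-d_j + O((n-1)\epsilon)$, which is no contradiction. (A direct estimate via $|p(d_j)|=|a_j|^2\prod_{k\neq j}|d_j-d_k|$ likewise fails: comparing with $|p(d_j)|=\prod_\alpha|d_j-\lambda_\alpha|\geqslant \epsilon^{n-1}(\mathbf{a}-|d_j|)$ produces a bound of the form $\mathbf{a}\lesssim |a_j|^2\epsilon^{-(n-1)}\prod|d_j-d_k|$, far weaker than \eqref{guanjian1}.)

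The paper closes this gap by a deformation argument rather than a static one. Writing $P_0$ for the right side of \eqref{guanjian1}, one observes that for every $\mathbf{a}\geqslant P_0$ the eigenvalues $\lambda_1(\mathbf{a}),\dots,\lambda_{n-1}(\mathbf{a})$ stay in $\bigcup_\alpha I_\alpha$ while $\lambda_n(\mathbf{a})\geqslant\mathbf{a}$ stays outside $\bigcup_\alpha\overline{I_\alpha}$. Hence the integer-valued counting function $\mathrm{Card}_\alpha(\mathbf{a})=\#\{\beta:\lambda_\beta(\mathbf{a})\in I_\alpha\}$ is continuous in $\mathbf{a}$ on $[P_0,\infty)$, so constant; its value is then read off in the limit $\mathbf{a}\to+\infty$, where \cite[Lemma~1.2]{CNS3} gives $\lambda_\alpha\to d_\alpha$ and thus $\mathrm{Card}_\alpha\equiv 1$. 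That continuity step is exactly the missing ingredient in your argument.
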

 
 \begin{proof}
 	The proof is based on Lemma  \ref{refinement} and a deformation argument.
 	Without loss of generality,  we assume $n\geqslant 3$ and  $\sum_{i=1}^{n-1} |a_i|^2>0$
 	(otherwise  %$n=2$ or the matrix $A(\mathrm{{\bf a}})$ is diagonal, and then
 	we are done).
 	Moreover, we assume in addition that $d_1<d_2\cdots<d_{n-1}$ and the eigenvalues have the order
 	$\lambda_1\leqslant \lambda_2\leqslant \cdots \leqslant \lambda_{n-1}\leqslant \lambda_n.$

 	Fix $\epsilon\in (0,\mu_0]$, where $\mu_0=\frac{1}{2}\min\{|d_i-d_j|: \forall i\neq j\}$.
 	Denote
 	$$I_{i}=(d_i-\epsilon,d_i+\epsilon),$$
 	and $$P_0=\frac{1}{\epsilon}\sum_{i=1}^{n-1} |a_i|^2+ (n-1)\sum_{i=1}^{n-1} |d_i|+ (n-2)\epsilon.$$
 	Since $0<\epsilon \leqslant \mu_0$,  the intervals disjoint each other
 	\begin{equation}
 		\label{daqin122}
 		%[d_{i}-\epsilon,d_{i}+\epsilon]\cap [d_{j}-\epsilon,d_{j}+\epsilon]
 		I_\alpha\bigcap I_\beta=\emptyset \mbox{ for }  1\leqslant \alpha<\beta\leqslant n-1.
 	\end{equation}
 	
 	In what follows,
 	we assume that  the parameter  $\mathrm{{\bf a}}$ satisfies \eqref{guanjian1} and  the Greek letters $\alpha,  \beta$
 	range from $1$ to $n-1$.
 Define a function
 	$$\mathrm{{\bf Card}}_\alpha: [P_0,+\infty)\rightarrow \mathbb{N}$$
 	% as follows:
 	% $$\mathrm{{\bf Card}}_\alpha=|\{ 1\leqslant i \leqslant n: \lambda_i \in I_\alpha\}|.$$
 	%be the function that
 	to count the eigenvalues which lie in $I_\alpha$.
 	(Note that when the eigenvalues are not distinct,  the function $\mathrm{{\bf Card}}_\alpha$ means the summation of all the algebraic
 	multiplicities
 	of  distinct eigenvalues 
 	which  lie in $I_\alpha$). 
 	%$\lambda_i$ is an eigenvalue with algebraic multiplicity $\beta_i$ 
 	This function measures the number of the  eigenvalues which lie in $I_\alpha$.
 	
 	We are going to prove that $\mathrm{{\bf Card}}_\alpha$ is continuous on $[P_0,+\infty)$.
 	%in an attempt to 	complete the proof.
 	%
 	First, Lemma \ref{refinement} asserts that if $\mathrm{{\bf a}} \geqslant P_0$  then
 	%  there is a $d_{i_{\alpha}}$ such that
 	\begin{equation}
 		\label{daqin111}
 		\begin{aligned}
 			\lambda_{\alpha}\in  \bigcup_{i=1}^{n-1} I_i, \mbox{   } \forall 1\leqslant \alpha\leqslant n-1.
 		\end{aligned}
 	\end{equation}

 	It is well known that the largest eigenvalue $\lambda_n\geqslant \mathrm{{\bf a}}$,
 	while the smallest eigenvalue $\lambda_1 \leqslant d_1$.
 	%Next, let's verify $\lambda_n>\sum_{\alpha=1}^{n-1}|d_\alpha| +\epsilon$. %  for any $1\leqslant  \beta\leqslant n-1$.
 	Combining  with \eqref{daqin111}
 	one has
 	\begin{equation}
 		\label{largest1}
 		\begin{aligned}
 			\lambda_{n} \geqslant  \mathrm{{\bf a}}
 			%+ \sum_{\alpha=1}^{n-1} (d_\alpha-d_{i_\alpha}) +\sum_{\alpha=1}^{n-1}(d_{i_\alpha}-\lambda_\alpha) \\
 			%> \,& \mathrm{{\bf a}} -(n-1) \sum_{\alpha=1}^{n-1} | d_\alpha| - (n-1)\epsilon \\
 			>\,& \sum_{i=1}^{n-1}|d_i| +\epsilon.
 		\end{aligned}
 	\end{equation}
 	%here we also use the fact that any diagonal element is
 	% always less than or equals to   the  largest eigenvalue $\lambda_n\geqslant \mathrm{{\bf a}}$.
 	Thus $\lambda_n\in \mathbb{R}\setminus (\bigcup_{i=1}^{n-1} \overline{I_i})$
 	where $\overline{I_i}$ denotes the closure of $I_i$.
 	%Furthermore,
 	%\begin{equation}\label{daqin-yuan1}
 	%\sum_{\alpha=1}^{n-1}\mathrm{{\bf Card}}_\alpha(\mathrm{{\bf a}})=n-1.
 	%\end{equation}
 	%Since \eqref{largest1} implies that,  for any $1\leqslant \alpha\leqslant n-1$,  $\lambda_\alpha<\lambda_n$,
 	Therefore, %for any  $1\leqslant \alpha\leqslant n-1$,
 	the function $\mathrm{{\bf Card}}_\alpha$
 	is continuous (thus it is a constant), since  %$\lambda_{j}\in  \bigcup_{i=1}^{n-1} I_i$ $(\forall 1\leqslant j\leqslant n-1)$,
 	\eqref{daqin111}, \eqref{daqin122},  $\lambda_n\in \mathbb{R}\setminus (\overline{\bigcup_{i=1}^{n-1} I_i})$
 	and  the eigenvalues of $A(\mathrm{{\bf a}})$   depend  continuously on the parameter  $\mathrm{{\bf a}}$.
 	
 	The continuity of $\mathrm{{\bf Card}}_\alpha(\mathrm{{\bf a}})$ plays a crucial role in this proof.
 	Following the outline of the proof of  
 	%Lemma \ref{lemmaCNS3} 
  \cite[Lemma 1.2]{CNS3},
 	%we know that the proof given there works in the setting of Hermitian matrices and  so
 	in the setting of Hermitian matrices,  one can show that for $1\leqslant \alpha\leqslant n-1$,
 	\begin{equation}
 		\label{yuanrr-lemma-12321}
 		\lim_{\mathrm{{\bf a}}\rightarrow +\infty} \mathrm{{\bf Card}}_{\alpha}(\mathrm{{\bf a}}) \geqslant 1.
 	\end{equation} 
 	It follows from \eqref{largest1},  \eqref{yuanrr-lemma-12321}
 	and the  continuity  of $\mathrm{{\bf Card}}_\alpha $ that
 	\begin{equation}
 		\label{daqin142224}
 		\mathrm{{\bf Card}}_{\alpha}(\mathrm{{\bf a}})= 1,
 		\mbox{   } \forall \mathrm{{\bf a}}\in [P_0, +\infty),  \mbox{   } 1\leqslant \alpha\leqslant n-1.  \nonumber
 	\end{equation}
 	Together with \eqref{daqin111}, we prove that,
 	for any   $1\leqslant \alpha\leqslant n-1$, the interval $I_\alpha=(d_{\alpha}-\epsilon,d_{\alpha}+\epsilon)$
 	contains the eigenvalue $\lambda_\alpha$.
 	We thus complete  the proof.
 	% of Lemma \ref{refinement111}.
 \end{proof}

 Suppose that there are two distinct indices $i_0, j_0$ ($i_0\neq j_0$) such that $d_{i_{0}}= d_{j_{0}}$.
 Then the characteristic polynomial of $A$
 can be rewritten as the following
 \begin{equation}
 	\begin{aligned}
 		(\lambda-d_{i_{0}})\left[(\lambda-\mathrm{{\bf a}})\prod_{i \neq i_{0}} (\lambda-d_i)-|a_{i_{0}}|^{2}\prod_{j\neq j_0, j\neq i_0} (\lambda-d_j)
 		-\sum_{i\neq i_0}|a_i|^2\prod_{j\neq i, j\neq i_{0}} (\lambda-d_j)\right]. \nonumber
 	\end{aligned}
 \end{equation}
 So $\lambda_{i_0}=d_{i_{0}}$ is an eigenvalue of $A$ for any $a\in \mathbb{R}$.
 Notice that %the following polynomial
 $$(\lambda-\mathrm{{\bf a}})\prod_{i \neq i_{0}} (\lambda-d_i)
 -|a_{i_{0}}|^{2}\prod_{j\neq j_0, j\neq i_0} (\lambda-d_j)
 -\sum_{i\neq i_0}|a_i|^2\prod_{j\neq i, j\neq i_{0}} (\lambda-d_j)$$
 is the characteristic polynomial of the $(n-1)\times (n-1)$ Hermitian  matrix
 \begin{equation}
 	%\label{matrix1}
 	%\left(
 	\begin{pmatrix}
 		d_1&&  &&&&a_{1}\\
 		&\ddots && &&&\vdots \\
 		&&\widehat{d_{i_{0}}}&& &&\widehat{a_{i_{0}}}\\
 		&&&\ddots&&&\vdots \\
 		&&& &  d_{j_{0}}&& (|a_{j_{0}}|^{2}+|a_{i_{0}}|^{2})^{\frac{1}{2}}\\
 		&&&& &   \ddots              \\
 		\bar a_1&\cdots&\widehat{ \bar a_{i_{0}}}&\cdots& (|a_{j_{0}}|^{2}+|a_{i_{0}}|^{2})^{\frac{1}{2}} &\cdots & \mathrm{{\bf a}}   \nonumber
 	\end{pmatrix}
 	%\right)
 \end{equation}
 where $\widehat{*}$ indicates deletion.
 Therefore, $(\lambda_1, \cdots, \widehat{\lambda_{i_{0}}}, \cdots, \lambda_{n})$ are the eigenvalues of the above $(n-1)\times (n-1)$
 Hermitian  matrix. 
 Hence,  we obtain  
 \begin{lemma}
 	\label{refinement3}
 	Let $A$ be as in Lemma \ref{yuan's-quantitative-lemma} an $n\times n$ Hermitian matrix. 
 	Let
 	\[\mathcal{I}=
 	\begin{cases}
 		\mathbb{R}^{+}=(0,+\infty) \,& \mbox{ if } d_{i}=d_{1}, \forall 2\leqslant i \leqslant n-1,\\
 		\left(0,\mu_0\right),  \mbox{  } \mu_0=\frac{1}{2}\min\{|d_{i}-d_{j}|: d_{i}\neq d_{j}\} \,& \mbox{ otherwise.}
 	\end{cases}
 	\] 
 	Denote
 	$\lambda=(\lambda_1,\cdots, \lambda_n)$   the the eigenvalues of $A$. Fix  $\epsilon\in \mathcal{I}$.
 	Suppose that  the parameter $\mathrm{{\bf a}}$ in $A$ satisfies  \eqref{guanjian1}. 
 	Then the eigenvalues behave like
 	\begin{equation}
 		\begin{aligned}
 		 |d_{\alpha}-\lambda_{\alpha}|
 			< \epsilon, \mbox{    } \forall 1\leqslant \alpha\leqslant n-1, 
 			\,\, 0 \leqslant \lambda_{n}-\mathrm{{\bf a}}
 			< (n-1)\epsilon. \nonumber
 		\end{aligned}
 	\end{equation}
 \end{lemma}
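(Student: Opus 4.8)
The plan is to reduce Lemma \ref{refinement3} to the two cases already in hand: the explicit $2\times 2$ computation carried out at the start of this section, and Lemma \ref{refinement111}, which handles pairwise distinct $d_1,\dots,d_{n-1}$. First I would clear the trivial cases: if $\sum_{i=1}^{n-1}|a_i|^2=0$ then $A$ is diagonal and its eigenvalues are $d_1,\dots,d_{n-1},\mathrm{{\bf a}}$, so the conclusion is immediate, and $n\leqslant 2$ is the base case treated explicitly above. So from now on assume $n\geqslant 3$ and $\sum_{i=1}^{n-1}|a_i|^2>0$.

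\textbf{Collapsing repeated diagonal entries.} The key mechanism is the factorisation of the characteristic polynomial recorded just above the statement: whenever $d_{i_0}=d_{j_0}$ with $i_0\neq j_0$, the number $d_{i_0}$ is an eigenvalue of $A$, and the remaining $n-1$ eigenvalues are precisely those of an $(n-1)\times(n-1)$ Hermitian matrix of the same shape, obtained from $A$ by deleting the $i_0$-th row and column and replacing $a_{j_0}$ by $(|a_{j_0}|^2+|a_{i_0}|^2)^{1/2}$, keeping $\mathrm{{\bf a}}$ and all other entries. I would iterate this. Letting $e_1,\dots,e_m$ be the distinct values among $d_1,\dots,d_{n-1}$, with $e_k$ occurring $r_k$ times so that $\sum_{k=1}^m r_k=n-1$, after $(n-1)-m$ collapses one reaches an $(m+1)\times(m+1)$ Hermitian matrix $\widetilde A$ with diagonal $(e_1,\dots,e_m,\mathrm{{\bf a}})$ --- the $e_k$ now pairwise distinct --- and last-row entries $\tilde a_1,\dots,\tilde a_m$ satisfying $|\tilde a_k|^2=\sum_{i:\,d_i=e_k}|a_i|^2$, hence $\sum_{k=1}^m|\tilde a_k|^2=\sum_{i=1}^{n-1}|a_i|^2$. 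The spectrum of $A$ is then exactly the $m+1$ eigenvalues of $\widetilde A$ together with $(n-1)-m$ further numbers, each equal to one of the $e_k$ (the value $e_k$ being extracted $r_k-1$ times).

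\textbf{Transferring the hypotheses and concluding.} Because $\sum_{k=1}^m|e_k|\leqslant\sum_{i=1}^{n-1}|d_i|$, $m\leqslant n-1$, and $(m-1)\epsilon\leqslant(n-2)\epsilon$, the growth bound \eqref{guanjian1} for $A$ forces the corresponding bound $\mathrm{{\bf a}}\geqslant\frac1\epsilon\sum_{k=1}^m|\tilde a_k|^2+m\sum_{k=1}^m|e_k|+(m-1)\epsilon$ for $\widetilde A$. If $m\geqslant 2$, then $\min_{k\neq l}|e_k-e_l|=\min\{|d_i-d_j|:\,d_i\neq d_j\}=2\mu_0$ and $\epsilon<\mu_0$, so $\epsilon$ lies in the admissible range for $\widetilde A$ and Lemma \ref{refinement111} gives eigenvalues $\tilde\lambda_1,\dots,\tilde\lambda_{m+1}$ of $\widetilde A$ with $|\tilde\lambda_k-e_k|<\epsilon$ for $1\leqslant k\leqslant m$ and $0\leqslant\tilde\lambda_{m+1}-\mathrm{{\bf a}}<m\epsilon\leqslant(n-1)\epsilon$. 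If $m=1$, then all $d_i$ coincide, $\mathcal{I}=\mathbb{R}^{+}$, and $\widetilde A$ is the $2\times 2$ Hermitian matrix with diagonal $(e_1,\mathrm{{\bf a}})$ and off-diagonal entry $\tilde a_1$; since \eqref{guanjian1} yields $\mathrm{{\bf a}}\geqslant\frac1\epsilon|\tilde a_1|^2+e_1$ (using $(n-1)\sum_{i}|d_i|\geqslant|e_1|\geqslant e_1$), the explicit $2\times 2$ formula gives $\tilde\lambda_1\in(e_1-\epsilon,e_1]$ and $\tilde\lambda_2\in[\mathrm{{\bf a}},\mathrm{{\bf a}}+\epsilon)$. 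In both cases $\tilde\lambda_{m+1}$ is the largest eigenvalue of $A$ and lies in $[\mathrm{{\bf a}},\mathrm{{\bf a}}+(n-1)\epsilon)$, while the remaining $n-1$ eigenvalues of $A$ consist of $\tilde\lambda_1,\dots,\tilde\lambda_m$, each within $\epsilon$ of the corresponding $e_k$, together with the $(n-1)-m$ extracted exact values. Since each $e_k$ is thus represented by exactly $r_k$ of these $n-1$ eigenvalues, one may label them so that the $r_k$ eigenvalues clustered around $e_k$ occupy the indices $\{\alpha:\,d_\alpha=e_k\}$; for every such $\alpha$ one obtains $|\lambda_\alpha-d_\alpha|<\epsilon$. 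Together with the bound on $\lambda_n$ this is the assertion of the lemma.

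\textbf{Where the difficulty lies.} Once the collapsing factorisation and Lemma \ref{refinement111} are available, there is essentially no analytic content left; the work is purely combinatorial bookkeeping --- confirming that the quadratic growth condition \eqref{guanjian1} and the admissibility constraint $\epsilon\in\mathcal{I}$ are inherited at each collapse, and that the reassembled spectrum of $A$ can be re-indexed so that precisely $r_k$ eigenvalues sit within $\epsilon$ of each repeated value $e_k$. The one genuine case split to keep straight is $m\geqslant 2$ (where Lemma \ref{refinement111} applies) versus $m=1$, i.e. all $d_i$ equal, where $\mathcal{I}=\mathbb{R}^{+}$ and the $2\times 2$ base case must be invoked instead.
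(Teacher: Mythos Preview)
Your proof is correct and follows exactly the approach the paper has in mind: the paragraph preceding the lemma records the factorisation of the characteristic polynomial when two diagonal entries coincide, and the paper then simply writes ``Hence, we obtain'' the lemma, leaving the iteration to the reader. You have carried out precisely that iteration --- collapsing repeated $d_i$'s down to an $(m+1)\times(m+1)$ matrix with distinct diagonal, checking that the growth bound \eqref{guanjian1} and the admissibility of $\epsilon$ are inherited, applying Lemma \ref{refinement111} (or the $2\times2$ base case when $m=1$), and reassembling the spectrum --- so your argument is the intended one, just with the details filled in.
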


 Applying Lemmas \ref{refinement} and \ref{refinement3},
 we  complete the proof of Lemma \ref{yuan's-quantitative-lemma}  without restriction to the applicable
 scope of $\epsilon$.

 \begin{proof}
 	[Proof of Lemma \ref{yuan's-quantitative-lemma}]
 	We follow the outline of the proof of Lemma \ref{refinement111}.
 	Without loss of generality, we may assume
 	$$n\geqslant 3, \mbox{ } \sum_{i=1}^{n-1} |a_i|^2>0,  \mbox{ }  d_1\leqslant d_2\leqslant \cdots \leqslant d_{n-1}  \mbox{ and } \lambda_1\leqslant \lambda_2 \leqslant \cdots \lambda_{n-1}\leqslant \lambda_n.$$
 	%With the notation in the proof of Lemma \ref{refinement111}.

 	%With the same notation in the proof of Lemma \ref{refinement111}.
 	Fix $\epsilon>0$.  Let $I'_\alpha=(d_\alpha-\frac{\epsilon}{2n-3}, d_\alpha+\frac{\epsilon}{2n-3})$ and
 	$$P_0'=\frac{2n-3}{\epsilon}\sum_{i=1}^{n-1} |a_i|^2+ (n-1)\sum_{i=1}^{n-1} |d_i|+ \frac{(n-2)\epsilon}{2n-3}.$$
 	In what follows we assume \eqref{guanjian1-yuan} holds.
 	The connected components of $\bigcup_{\alpha=1}^{n-1} I_{\alpha}'$ are denoted as in the following:
 	$$J_{1}=\bigcup_{\alpha=1}^{j_1} I_\alpha', \mbox{ }
 	J_2=\bigcup_{\alpha=j_1+1}^{j_2} I_\alpha', \mbox{ }  \cdots, J_i =\bigcup_{\alpha=j_{i-1}+1}^{j_i} I_\alpha', \mbox{ } \cdots,
 	J_{m} =\bigcup_{\alpha=j_{m-1}+1}^{n-1} I_\alpha'.$$
 	%where we denote $j_0=0$ and $j_m=n-1$.
 	Moreover
 	\begin{equation}
 		\begin{aligned}
 			J_i\bigcap J_k=\emptyset, \mbox{ for }   1\leqslant i<k\leqslant m. \nonumber
 		\end{aligned}
 	\end{equation}
 	It plays formally the role of \eqref{daqin122} in the proof of Lemma \ref{refinement111}.
 	
 	As in the proof of Lemma \ref{refinement111},
 	% we define the function with the variable $\mathrm{{\bf a}}$ as follows:
 	%$$ \mathrm{{\bf \widetilde{Card}}}_k:=|\{1\leqslant i\leqslant n: \lambda_i \in J_k\}|:[P_0',+\infty)\rightarrow \mathbb{N}.$$
   let
 	$$ \mathrm{{\bf \widetilde{Card}}}_k:[P_0',+\infty)\rightarrow \mathbb{N}$$
 	be the function   counting the eigenvalues that lie in $J_k$.
 	(Note that when the eigenvalues are not distinct,  the function $\mathrm{{\bf \widetilde{Card}}}_k$ denotes  the summation of all the algebraic
 	multiplicities of distinct eigenvalues which
 	lie in $J_k$).
 	%This function measures the number of the  eigenvalues which lie in $J_k$.
 	By Lemma \ref{refinement} and  
 	$$\lambda_n \geqslant {\bf a}\geqslant P_0'>\sum_{i=1}^{n-1}|d_i|+\frac{\epsilon}{2n-3},$$ 
 	we conclude that
 	if the parameter $\mathrm{{\bf a}}$ satisfies 
 	the %quadratic 
 	growth condition \eqref{guanjian1-yuan} then
 	\begin{equation}
 		\label{yuan-lemma-proof5}
 		\begin{aligned}
 			\lambda_n \in \mathbb{R}\setminus (\bigcup_{k=1}^{n-1} \overline{I_k'})
 			=\mathbb{R}\setminus (\bigcup_{i=1}^m \overline{J_i}), \mbox{ }
 			\lambda_\alpha \in \bigcup_{i=1}^{n-1} I_{i}'=\bigcup_{i=1}^m J_{i} \mbox{ for } 1\leqslant \alpha\leqslant n-1.
 			%  \sum_{i=1}^m  \mathrm{{\bf \widehat{Card}}}_i(\mathrm{{\bf a}})=n-1.
 		\end{aligned}
 	\end{equation}
 	%  where we also use \eqref{largest1}.
 	Similarly, $\mathrm{{\bf \widetilde{Card}}}_i(\mathrm{{\bf a}})$ is a continuous function
 	with respect to the variable $\mathrm{{\bf a}}$ when ${\bf a}\geqslant P_0'$. So it is a constant.
 	Combining it with Lemma \ref{refinement3}, 
 	we see that 
 	% $\mathrm{{\bf \widetilde{Card}}}_i(\mathrm{{\bf a}}) =j_i-j_{i-1}$ for sufficiently large $\mathrm{{\bf a}}$. 
 	%The constant of $ \mathrm{{\bf \widetilde{Card}}}_i$  therefore follows that
 	%Therefore,
 	$$ \mathrm{{\bf \widetilde{Card}}}_i(\mathrm{{\bf a}})
 	%=\lim_{\mathrm{{\bf a}}\rightarrow +\infty} \mathrm{{\bf \widetilde{Card}}}_i(\mathrm{{\bf a}})
 	=j_i-j_{i-1}$$
 	for ${\bf a}\geqslant P_0'$. Here we denote $j_0=0$ and $j_m=n-1$.
 	%Combining it with \eqref{yuan-lemma-proof5}, we
 	We thus know that %the $(j_i-j_{i-1})$ 	eigenvalues
 	$$\lambda_{j_{i-1}+1}, \lambda_{j_{i-1}+2}, \cdots, \lambda_{j_i}$$
 	lie in the connected component $J_{i}$.
 	Thus, for any $j_{i-1}+1\leqslant \gamma \leqslant j_i$,  we see $I_\gamma'\subset J_i$ and  $\lambda_\gamma$
 	lies in the connected component $J_{i}$.
 	Therefore,
 	$$|\lambda_\gamma-d_\gamma| < \frac{(2(j_i-j_{i-1})-1) \epsilon}{2n-3}\leqslant \epsilon.$$
 	Here we use the fact that $d_\gamma$ is the midpoint of  $I_\gamma'$ and $J_i\subset \mathbb{R}$ is an open subset.
 	
 	To be brief,  if for fixed index $1\leqslant i\leqslant n-1$ the eigenvalue $\lambda_i(P_0')$ lies in $J_{\alpha}$ for some $\alpha$, 
 	then  Lemma \ref{refinement} implies that, for any ${\bf a}>P_0'$, the corresponding eigenvalue  $\lambda_i({\bf a})$ lies in the same  interval $J_{\alpha}$.
 	%Adapting the line of the proof of \cite[Lemma 1.2]{CNS3} to our context, we get the asymptotic behavior as $\mathrm{\bf a}$ goes to infinity.
 	The computation of $\mathrm{{\bf \widetilde{Card}}}_k$ can be done by setting $\mathrm{\bf a}\rightarrow+\infty$.
 \end{proof}

   %\begin{remark}	Let $(M,g)$ be a closed Riemannian manifold, $\chi$ a smooth symmetric $(0,2)$-type tensor. The methods in this paper apply
 %	to the equation %analogous to  \eqref{equ1-main} 
 %	 	\begin{equation}		\label{equ1-Riemann}		\begin{aligned}			f(\lambda(g^{-1}(\nabla^2u+A)))=\psi   		\end{aligned}	\end{equation}	on Riemannian manifolds as well. So, an analogue of Theorem \ref{thm1-main}   holds for \eqref{equ1-Riemann}. \end{remark}

 \medskip 

%\bigskip
% \noindent{\bf Acknowledgements}.
%\subsubsection*{Acknowledgements} 
%The author was supported by  the National Natural Science Foundation of China
%Research partially supported by the National Natural Science Foundation of China, grant 11801587.

  % \bigskip 
 %\noindent{\bf Acknowledgements}.
 \subsubsection*{Acknowledgements} 
 \small
% Research partially supported by Guangzhou Science and Technology Program  (Grant No. 202201010451) and Guangdong Basic and Applied Basic Research Foundation (Grant No. 2023A1515012121).
The author was partially supported by the National Natural Science Foundation of China  (Grant No. 11801587), Guangzhou Science and Technology Program  (Grant No. 202201010451),
 and Guangdong Basic and Applied Basic Research Foundation (Grant No. 2023A1515012121).

%\bigskip
%    Bibliographies can be prepared with BibTeX using amsplain,
%    amsalpha, or (for "historical" overviews) natbib style.
 \bigskip

%\medskip
%\small
%\bibliographystyle{amsplain}
%    Insert the bibliography data here.

\end{document}